\documentclass[11pt]{amsart}

\usepackage[T1]{fontenc}
\usepackage[utf8]{inputenc}
\usepackage[margin=1in]{geometry}
\usepackage{microtype}
\usepackage{mathtools,amssymb,amsmath,amsthm}
\usepackage{mathrsfs}
\usepackage{enumitem}
\usepackage{booktabs}
\usepackage{array}
\usepackage{placeins}
\usepackage{graphicx}
\usepackage[hidelinks]{hyperref}
\hypersetup{
  pdftitle={Many-point tropical relaxation and the Monge--Ampere equation},
  pdfauthor={Nikita Kalinin, Ernesto Lupercio, Higinio Serrano, and Mikhail Shkolnikov},
  pdfsubject={An incidence-driven integral-affine approximation of the planar Aleksandrov Monge--Ampere equation},
  pdfkeywords={tropical geometry, Monge--Ampere equation, Aleksandrov solutions, affine covariance, Abelian sandpile}
}
\usepackage{aliascnt}
\usepackage[capitalize,noabbrev]{cleveref}
\usepackage[backend=biber,style=alphabetic,maxbibnames=20]{biblatex}
\numberwithin{equation}{section}

\theoremstyle{plain}
\newtheorem{theorem}{Theorem}[section]
\newaliascnt{proposition}{theorem}
\newtheorem{proposition}[proposition]{Proposition}
\aliascntresetthe{proposition}
\newaliascnt{lemma}{theorem}
\newtheorem{lemma}[lemma]{Lemma}
\aliascntresetthe{lemma}
\newaliascnt{corollary}{theorem}
\newtheorem{corollary}[corollary]{Corollary}
\aliascntresetthe{corollary}

\theoremstyle{definition}
\newaliascnt{definition}{theorem}
\newtheorem{definition}[definition]{Definition}
\aliascntresetthe{definition}
\newaliascnt{remark}{theorem}
\newtheorem{remark}[remark]{Remark}
\aliascntresetthe{remark}
\newaliascnt{example}{theorem}

\aliascntresetthe{example}

\crefname{theorem}{Theorem}{Theorems}
\crefname{proposition}{Proposition}{Propositions}
\crefname{lemma}{Lemma}{Lemmas}
\crefname{corollary}{Corollary}{Corollaries}
\crefname{definition}{Definition}{Definitions}
\crefname{remark}{Remark}{Remarks}
\crefname{example}{Example}{Examples}
\Crefname{theorem}{Theorem}{Theorems}
\Crefname{proposition}{Proposition}{Propositions}
\Crefname{lemma}{Lemma}{Lemmas}
\Crefname{corollary}{Corollary}{Corollaries}
\Crefname{definition}{Definition}{Definitions}
\Crefname{remark}{Remark}{Remarks}
\Crefname{example}{Example}{Examples}

\newcommand{\R}{\mathbb R}
\newcommand{\Z}{\mathbb Z}
\newcommand{\C}{\mathbb C}
\newcommand{\Ftwo}{\mathbb F_2}
\newcommand{\GL}{\mathrm{GL}}

\newcommand{\MA}{\operatorname{MA}}
\newcommand{\supp}{\operatorname{supp}}
\newcommand{\Lip}{\operatorname{Lip}}
\newcommand{\conv}{\operatorname{conv}}
\newcommand{\Int}{\operatorname{Int}}
\newcommand{\Area}{\operatorname{Area}}
\newcommand{\diam}{\operatorname{diam}}
\newcommand{\dist}{\operatorname{dist}}
\newcommand{\Conf}{\operatorname{Conf}}
\newcommand{\Leb}{\operatorname{Leb}}

\newcommand{\TV}{\operatorname{TV}}
\newcommand{\core}{\mathrm{core}}
\newcommand{\term}{\mathrm{term}}

\newcommand{\inner}[2]{\langle #1,#2\rangle}
\newcommand{\one}{\mathbf 1}
\newcommand{\Hone}{\mathcal H^1}
\newcommand{\prim}{\mathrm{prim}}

\title[Many-point tropical relaxation]
{Many-point tropical relaxation\\and the Monge--Amp\`ere equation}

\author{Nikita Kalinin}
\address{Guangdong Technion--Israel Institute of Technology, Shantou, China}
\email{nikita.kalinin@gtiit.edu.cn}

\author{Ernesto Lupercio}
\address{Departamento de Matem\'aticas, Cinvestav, Mexico City, Mexico;
\newline\hspace*{1.5em}Institute of Mathematics and Informatics,
Bulgarian Academy of Sciences, Sofia, Bulgaria}
\email{lupercio@math.cinvestav.mx; ernesto.lupercio@cinvestav.mx}

\author{Higinio Serrano}
\address{Institute of Mathematics and Informatics, Bulgarian Academy of Sciences, Sofia, Bulgaria}
\email{hserrano@math.bas.bg}

\author{Mikhail Shkolnikov}
\address{Institute of Mathematics and Informatics, Bulgarian Academy of Sciences, Sofia, Bulgaria}
\email{m.shkolnikov@math.bas.bg}

\date{August 2026}
\subjclass[2020]{14T90, 35J96, 52A40, 60K35, 82C27}
\keywords{Monge--Amp\`ere equation, tropical geometry, Aleksandrov solutions, convex geometry, affine covariance, Abelian sandpile}

\begin{document}
\renewcommand{\shortauthors}{Kalinin, Lupercio, Serrano, and Shkolnikov}

\begin{abstract}
For finitely many sources on a rational polygon, the rescaled Abelian-sandpile odometer converges to a zero-boundary tropical relaxation.  We study these concave piecewise integral-affine functions when the number of sources tends to infinity.  Let $P_N\subset K\Subset\Omega$ be universally generic, with $|P_N|=N$, and set
\[
 F_N=G_{P_N}0_\Omega,
 \qquad
 u_N=N^{-1/2}F_N,
 \qquad
 \mu_N=N^{-1}\sum_{p\in P_N}\delta_p.
\]
For every compact $L\Subset\Omega$,
\[
 \left|\int\varphi\,d\bigl(\MA(u_N)-\mu_N\bigr)\right|
 \le \frac{C(\Omega,K,L)}{\sqrt N}
 \bigl(\|\varphi\|_\infty+\|\nabla\varphi\|_\infty\bigr)
\]
for every $\varphi\in C_c^1(\Omega)$ supported in $L$.  Hence, if $\mu_N\rightharpoonup\mu$, then $u_N$ converges uniformly on $\overline\Omega$ to the unique continuous concave zero-boundary Aleksandrov solution of $\MA(F)=\mu$, and $\MA(u_N)\rightharpoonup\mu$ vaguely in $\Omega$.  The marked cut-tree theorem, Euler characteristic, and Pick's formula give the local source--curvature identity; tropical interpolation, minimality, terminal-excess bounds, and coarea and Crofton estimates give the $O(N^{-1/2})$ rate.  The theorem applies to every bounded open convex domain, and the measure $\mu$ may be singular.

For bounded rational convex polygons, strong genericity suffices.  The exact terminal correction accounts for boundary curvature and strengthens vague convergence to weak convergence on the closed polygon after extension by zero to the boundary.  For i.i.d. point clouds drawn from an absolutely continuous law supported in a fixed $K\Subset\Omega$, the potential convergence holds almost surely.  On a rational polygon, suitable proper roundings and a configuration-dependent mesh diagonal give the same limit for the normalized Abelian-sandpile odometers attached to strongly generic source sequences whose empirical measures converge.
\end{abstract}

\maketitle
\tableofcontents

\section{Introduction}
\label{sec:introduction}

The Abelian sandpile stabilizes by a local toppling rule.  An unstable site sends grains to its neighbors, and the \emph{odometer} counts the number of topplings at each site.  For a fixed finite source set, the rescaled odometer has a tropical continuum limit.  Let $\Omega\subset\R^2$ be a bounded rational convex polygon and let $\Omega_h$ be a lattice approximation of mesh $h$.  For a proper lattice rounding of $P\subset\Omega^\circ$, denote by $H_{h,P}$ the odometer obtained from the maximal stable configuration by adding one grain at each rounded point.  Then \cite{KalininShkolnikovSandpile} proves
\[
 \sup_{z\in\Omega_h}
 \bigl|hH_{h,P}(z)-F_{\Omega,P}(z)\bigr|\longrightarrow0
 \qquad (h\to0).
\]
The continuum odometer $F_{\Omega,P}$ is nonnegative, concave, and piecewise linear, with integral slopes and zero boundary values.  Its ridges form a tropical curve.

Intrinsically, $F_{\Omega,P}$ is defined by the tropical point-wave construction of \cite{KalininShkolnikov2018}.  If $0_\Omega$ is the identically zero $\Omega$-tropical series and $G_P$ is the finite-source relaxation operator, then
\[
 F_{\Omega,P}=G_P(0_\Omega).
\]
Equivalently, $F_{\Omega,P}$ is the pointwise-minimal nonnegative concave tropical series with integral slopes, zero boundary values, and corner locus containing $P$.  We call it the \emph{tropical relaxation} of the zero state under the sources $P$, and write $G_P0_\Omega$ when the domain is clear.

We consider configurations $P_N$ with $|P_N|=N\to\infty$.  For each finite configuration, the tropical relaxation is polyhedral and its Aleksandrov Monge--Amp\`ere measure is atomic at the vertices of the tropical curve.  For a continuous concave function $F$, this measure is the Euclidean area of the superdifferential image \cite{Aleksandrov1958,Gutierrez2016,Figalli2017}.  In dimension two,
\[
 \MA(aF)=a^2\MA(F).
\]
Together with the deterministic $O(\sqrt N)$ height bound, this quadratic homogeneity gives the normalization $N^{-1/2}$.  The corresponding continuum Dirichlet problem is
\[
 \MA(F_{\mu,\Omega})=\mu,
 \qquad
 F_{\mu,\Omega}|_{\partial\Omega}=0.
\]

The finite tropical curve satisfies a combinatorial Monge--Amp\`ere identity.  At a tropical vertex, the Monge--Amp\`ere atom is the area of the dual Newton polygon.  For a strongly generic configuration on a rational polygon, the completed marked cut graph is a tree.  Euler characteristic of its restriction to a regular window $U\Subset\Omega^\circ$, together with Pick's formula for the dual Newton polygons, gives
\[
 \MA(F_P)(U)
 =\#(P\cap U)+\frac12B_P(U)-c_P(U)+\Xi_P(U).
\]
Here $B_P(U)$ counts transverse incidences with the component boundaries of $U$, $c_P(U)$ counts the components of the restricted marked cut forest, and $\Xi_P(U)$ is the nonnegative Pick excess.  Minimality eliminates interior lattice points from the relevant Newton polygons and makes every compact internal edge primitive.  Coarea and the $O(\sqrt N)$ length and boundary-degree estimates then bound the remaining terms after integration against a test function.  Positive-level rational cores give exact local polygonal models; along a fixed rational exhaustion the estimates are uniform in the configuration.

\subsection{The many-point limit}

A configuration is \emph{universally generic} in $\Omega$ if it is strongly generic in every rational-coordinate polygon $\Delta\Subset\Omega$ that contains it in its interior.  By \cref{prop:universal-genericity}, universally generic configurations form an intrinsic residual set of full measure.

\begin{theorem}[Many-point tropical Monge--Amp\`ere limit]
\label{thm:intro-convex-main}
Let $\Omega\subset\R^2$ be a bounded open convex domain with nonempty interior, let $P_N\subset K\Subset\Omega$ be universally generic in $\Omega$, and let $|P_N|=N$.  Set
\[
 F_N=G_{P_N}0_\Omega,
 \qquad
 u_N=N^{-1/2}F_N,
 \qquad
 \mu_N=\frac1N\sum_{p\in P_N}\delta_p.
\]
For every compact $L\Subset\Omega$ there is a constant $C=C(\Omega,K,L)$ such that, whenever $\varphi\in C_c^1(\Omega)$ and $\supp\varphi\subset L$,
\[
 \left|
 \int_\Omega\varphi\,d\bigl(\MA(u_N)-\mu_N\bigr)
 \right|
 \le
 \frac{C}{\sqrt N}
 \bigl(\|\varphi\|_\infty+\|\nabla\varphi\|_\infty\bigr).
\]
If, in addition, $\mu_N\rightharpoonup\mu$, where $\mu$ is a probability measure supported in $K$, then
\[
 u_N\longrightarrow F_{\mu,\Omega}
\]
uniformly on $\overline\Omega$, where $F_{\mu,\Omega}$ is the unique continuous concave Aleksandrov solution of
\[
 \MA(F_{\mu,\Omega})=\mu,
 \qquad
 F_{\mu,\Omega}|_{\partial\Omega}=0.
\]
Consequently,
\[
 \MA(u_N)\rightharpoonup\mu
\]
vaguely in $\Omega$.
\end{theorem}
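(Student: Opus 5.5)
The argument has two parts: first the quantitative estimate for the measure discrepancy, then the passage to the limit through Aleksandrov stability.

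\textbf{Reduction to a rational polygon.} Since $K\cup L\Subset\Omega$, I fix a rational-coordinate polygon $\Delta$ with $K\cup L\Subset\Delta\Subset\Omega$, taken from a fixed rational exhaustion of $\Omega$. Universal genericity makes $P_N$ strongly generic in $\Delta$. I do not expect $G_{P_N}0_\Omega$ to coincide with $G_{P_N}0_\Delta$ on $L$. Instead I would use the positive-level rational cores: a superlevel set of $F_N$ at a small positive level is an exact local polygonal model, and on it the cut-tree machinery for strongly generic configurations applies. The constants then depend only on the chosen exhaustion element, hence only on $(\Omega,K,L)$.

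\textbf{Quantitative estimate.} Because $\MA(u_N)=N^{-1}\MA(F_N)$, it suffices to show
\[
\Bigl|\int\varphi\,d\MA(F_N)-\sum_{p\in P_N}\varphi(p)\Bigr|\le C\sqrt N\bigl(\|\varphi\|_\infty+\|\nabla\varphi\|_\infty\bigr).
\]
The plan is to apply the local source--curvature identity $\MA(F_P)(U)=\#(P\cap U)+\tfrac12B_P(U)-c_P(U)+\Xi_P(U)$ to the superlevel sets $U_t=\{\varphi>t\}$ (and similarly to $\varphi^-$), and then integrate in $t$ via the layer-cake formula:
\[
\int\varphi^+\,d(\MA F_N-\textstyle\sum\delta_p)=\int_0^\infty\bigl(\tfrac12B(U_t)-c(U_t)+\Xi(U_t)\bigr)\,dt.
\]
By Sard, almost every level is a regular window after a small perturbation. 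Each remainder term is controlled as follows.
\begin{enumerate}
\item $B(U_t)$ counts intersections of the tropical curve with $\partial U_t$. Coarea and Crofton give $\int B(U_t)\,dt\le C\|\nabla\varphi\|_\infty\cdot\operatorname{length}(\text{curve}\cap L)$. By minimality every internal edge is primitive, and the length bound gives $O(\sqrt N)$.
\item $c(U_t)$ is at most $B(U_t)/2$ plus the number of components of the forest that lie entirely inside $U_t$. The latter components contain sources and touch terminal edges, so they are controlled by the terminal-excess bound.
\item For $\Xi$, minimality kills interior lattice points of the Newton polygons. The leftover Pick excess is concentrated at vertices adjacent to terminal edges, and the terminal-excess bounds together with the $O(\sqrt N)$ boundary-degree bound control it by $C\sqrt N\|\varphi\|_\infty$.
\end{enumerate}
Together these give the stated $C/\sqrt N$ rate.

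\textbf{Limit.} The deterministic height bound $\|F_N\|_\infty\le C\sqrt N$ (uniform in $N$ given $K$) makes the $u_N$ concave, uniformly bounded, and zero on $\partial\Omega$. I also need equicontinuity up to the boundary. For this I would compare $u_N$ with the barrier $C\,\mathrm{dist}(\cdot,\partial\Omega)$-type concave functions vanishing on $\partial\Omega$: since $\MA(u_N)$ has mass $\approx1$ supported near $K$, the Aleksandrov maximum principle gives $u_N(x)\le C\,\dist(x,\partial\Omega)^{1/2}$, which is uniform for convex $\Omega$. Arzel\`a--Ascoli then yields a uniformly convergent subsequence $u_N\to u$.

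To identify the limit, weak continuity of Monge--Amp\`ere under uniform convergence of concave functions gives $\MA(u_{N})\rightharpoonup\MA(u)$ vaguely. The estimate above shows $\MA(u_N)-\mu_N\to0$ on $C^1_c$, hence by density on $C_c$ once total masses on $L$ are bounded. This implies $\MA(u)=\mu$ in $\Omega$ with $u|_{\partial\Omega}=0$. Uniqueness of the Aleksandrov Dirichlet solution identifies $u=F_{\mu,\Omega}$, so the whole sequence converges, and vague convergence of $\MA(u_N)$ to $\mu$ follows.

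\textbf{Main obstacle.} The hardest step is the uniform control of $\Xi$ and $c$, via the terminal-excess bounds, for windows cut out by arbitrary level sets. I would first treat polygonal windows, where the cut-tree identity is exact, and then approximate the superlevel sets of $\varphi$ by such windows, absorbing the error into the Crofton term.
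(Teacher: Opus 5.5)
\paragraph{The reduction to a polygon.} This is where your argument breaks. Universal genericity gives strong genericity only on the countable family $\mathscr P_{\mathbb Q}(\Omega)$ of polygons with rational vertices. The core $\Delta_t=\{F_N\ge t\}$ has rational slopes, but its vertices are typically irrational and depend on $P_N$ itself. So nothing you have guarantees the genus count, the cut tree, or interior mildness for $F_{\Delta_t,P_N}$.

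Even granting these, your control of $\Xi$ and of the length runs through the global polygonal bounds: $D_{\term}\le 2\max_r\|n_r\|D_\partial$, and $D_\partial$ and $\mathcal H^1$ via the symplectic-area comparison. Their constants involve $\max_r\|n_r\|$, $\sum_r\Area_{\rm trop}(S_r)$ and $\min_r\Area_{\rm trop}(S_r)$ of the polygon. These are not bounded over the cores, nor along any exhaustion of a non-polygonal $\Omega$. So your claim that the constants depend only on $(\Omega,K,L)$ is unsupported.

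The paper's proof of the estimate does not use the core. It works on the fixed exhaustion $\Delta_j\in\mathscr P_{\mathbb Q}(\Omega)$, where universal genericity applies for every $j$, and proves local bounds for $F_{\Delta_j,P}$ that do not depend on the boundary fan:
\begin{itemize}
\item active gradients near $L$ are $O(\sqrt N)$, by domain monotonicity $F_{\Delta_j,P}\le F_{\Omega,P}\le C_\Omega\sqrt N$ and concavity;
\item weighted Crofton on a fixed regular rectangle $R\supset L$ then bounds the length;
\item $\Xi(L)\le\frac12 B_w(R)=O(\sqrt N)$, because each terminal germ at a vertex in $L$ leaves $R$ exactly once along its straight edge (\cref{lem:convex-excess-crossing}).
\end{itemize}
It then lets $j\to\infty$ at fixed $N$, using exhaustion stability and Aleksandrov stability.

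Your treatment of $c$ is also wrong as stated. The bound $c\le\frac12B+\#\{\text{interior components}\}$ already fails for a small disk around a single mark, where $B=c=2$. What holds is that no restricted component avoids $\partial U$: the completed cut tree is connected and contains a terminal branch reaching $\partial\Omega$. Hence $0\le c\le B$ and $|\frac12B-c|\le\frac12B$.

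\paragraph{The boundary modulus.} The compactness step fails as well. On a general convex domain, $\MA(u_N)$ does not have mass $\approx1$ near $K$; the total-mass identity is special to rational polygons. As soon as $\partial\Omega$ contains a strictly convex arc (a disk, say), each linearity cell touches that arc in at most one point, because its monomial is a nonnegative affine function vanishing there. This forces infinitely many tropical vertices accumulating at $\partial\Omega$. Each is an atom of mass at least $1/(2N)$, so $\MA(u_N)(\Omega)=+\infty$. Neither the Aleksandrov maximum principle nor a Monge--Amp\`ere comparison barrier can then give a boundary modulus.

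The missing idea is a barrier that comes from minimality of the relaxation rather than from the equation. Because $P_N\subset K$, the function $\min\{\widehat q_{P_N},M D_\Omega\}$ with $M=\lceil H_{P_N}/\min_K D_\Omega\rceil+1=O_{\Omega,K}(\sqrt N)$ is admissible. Hence $0\le u_N\le C_{\Omega,K}D_\Omega$ (\cref{prop:convex-barrier}). Since $D_\Omega$ is continuous on $\overline\Omega$ with zero boundary values, this together with the interior Lipschitz bound on inner parallel bodies gives equicontinuity up to $\partial\Omega$. It also gives zero boundary values for every cluster point.

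With these two ingredients supplied, your identification step matches the paper's: stability, passage from $C^1_c$ to $C_c$ via the local mass bound, and uniqueness of the zero-boundary Dirichlet solution.
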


The limiting measure may be singular.  The theorem includes convex domains with flat boundary pieces, corners, irrational supporting directions, and mixed smooth and polygonal parts.  The fixed compact set $K\Subset\Omega$ gives a common positive distance between the sources and the boundary.

On a bounded rational convex polygon, strong genericity replaces universal genericity.  Let $P\subset K\Subset\Omega^\circ$ be strongly generic, let $|P|=N$, and write $F_P=G_P0_\Omega$.  The complement of its tropical curve has exactly $N$ bounded cells.  After a sufficiently small regular inward truncation, the duals of the uncut marked carriers form a spanning tree in the plane dual, and every edge of the tropical curve whose closure is compactly contained in $\Omega^\circ$ has weight one.  If $D_{\term}(F_P)$ denotes the total weighted multiplicity of the terminal branches, then
\[
 \MA(F_P)(\Omega^\circ)
 =N-1+\frac12D_{\term}(F_P),
 \qquad
 D_{\term}(F_P)
 \le C_\Omega D_\partial(F_P)
 \le C_{\Omega,K}\sqrt N.
\]
Thus
\[
 \MA(N^{-1/2}F_P)(\Omega^\circ)
 =1+O_{\Omega,K}(N^{-1/2}).
\]
The quantitative estimate holds for every strongly generic $P\subset K$, with a constant depending only on $\Omega$ and $K$.  If $P_N\subset K$, $|P_N|=N$, is strongly generic and $\mu_N\rightharpoonup\mu$, then $N^{-1/2}F_{P_N}\to F_{\mu,\Omega}$ uniformly on $\overline\Omega$, and the normalized curvature measures, extended by zero to $\partial\Omega$, converge weakly to $\mu$ on $\overline\Omega$; see \cref{thm:quantitative-curvature,thm:exact-total-mass,thm:deterministic-limit}.

\subsection{Earlier work}
\label{subsec:previous-work}

Kalinin and Shkolnikov introduced the operators $G_P$, constructed finite presentations on rational polygons, and proved the symplectic-area comparison in \cite{KalininShkolnikov2018}.  Their fixed-source odometer theorem appears in \cite{KalininShkolnikovSandpile}.  We study the square-root limit in which the source set grows.  Related developments include continuous tropical sandpiles \cite{KalininEtAl2018}, lattice strings \cite{CaraccioloPaolettiSportiello2010}, and tropical wavefronts and caustics on arbitrary convex domains \cite{MikhalkinShkolnikov2023}.

For generic finite configurations, \cite[Theorem~3.3]{KalininPrieto2023} proves the bounded-face identity $g=|P|$, a tree statement after removal of the marked points, and numerical evidence for square-root degree growth.  The curvature calculation here uses the tropical curve together with its Newton subdivision.  The coefficient deformations associated with the marked-carrier dual tree yield the primitive-edge and curvature estimates.  Tropical interpolation and symplectic-area minimality give the deterministic square-root bounds.

Berman studies point-cloud discretizations of the Monge--Amp\`ere equation through semi-discrete optimal transport \cite{Berman2021}.  Hultgren, Jonsson, Mazzon, and McCleerey study tropical and non-Archimedean Monge--Amp\`ere equations arising from degenerating Calabi--Yau hypersurfaces \cite{HultgrenJonssonMazzonMcCleerey2024}.  For the finite relaxation, incidence-minimal integral-affine geometry produces the Monge--Amp\`ere equation.  The quantitative curvature--source estimate yields the many-point limit; the polygonal terminal formula gives the additional boundary tightness needed for weak convergence on the closed domain.

\subsection{Primal--dual geometry}

A Hahn-field dependence produces a tropical polynomial through the marked points with gradients in a square of radius $O(\sqrt N)$ \cite{Hahn1907,DevelinSantosSturmfels2005}, and a tropical-distance barrier imposes zero boundary values.  Since the relaxation is pointwise minimal, it lies below this competitor; the symplectic-area comparison of \cref{thm:symplectic-area-comparison} bounds the tropical symplectic area of its curve.  Its tropical symplectic area, ordinary length, and boundary quasi-degree are $O_{\Omega,K}(\sqrt N)$.  These estimates hold for every finite configuration, and the fixed depth $P\subset K$ gives the common boundary modulus.

For a strongly generic configuration, finite slope bounds make the coefficient vector semilinear and piecewise affine in the marked points.  A dimension count gives the bounded-face identity, and planar duality then identifies the marked-carrier dual tree.  Coefficient-lowering deformations exclude interior lattice points from the relevant Newton polygons and force compact internal edges to have weight one.  Signed layer cake and edgewise coarea applied to the Euler--Pick identity give the $O(N^{-1/2})$ curvature--source estimate.

\Cref{fig:structural-overview} summarizes this marked primal--dual geometry.

\begin{figure}[t]
  \centering
  \includegraphics[width=\textwidth]{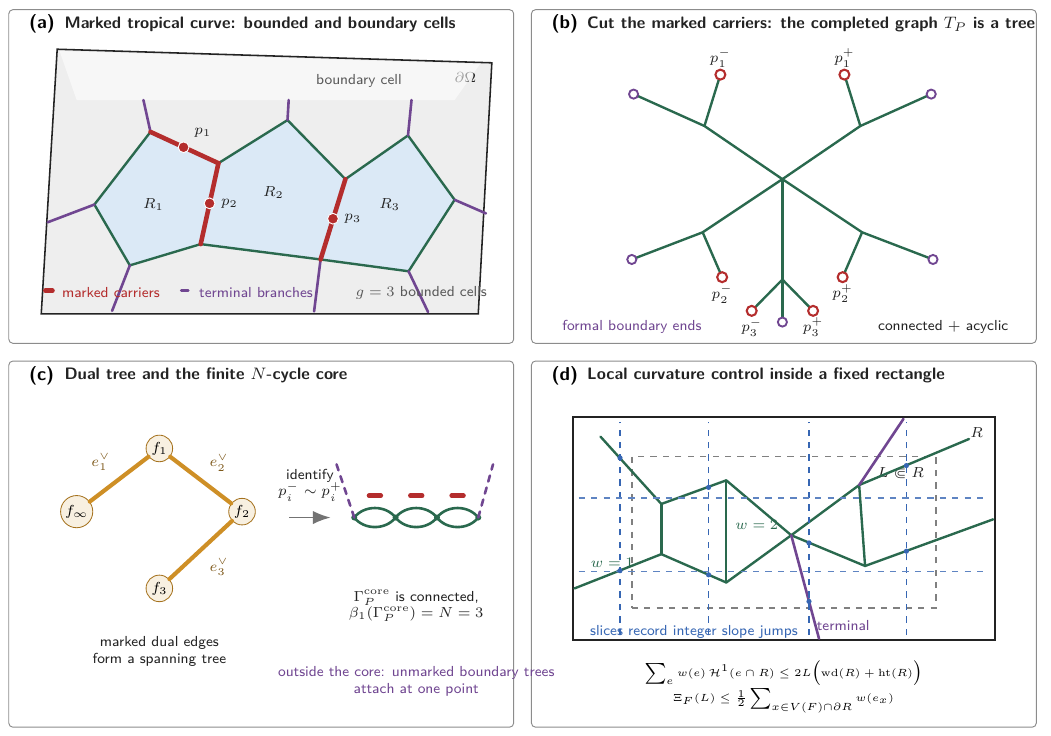}
  \caption{The marked primal--dual geometry for $N=3$.
  \textup{(a)} The marked tropical curve, truncated near the boundary.
  \textup{(b)} Cutting the marked carriers gives the completed tree $T_P$.
  \textup{(c)} The carrier duals form a spanning tree; reidentifying paired marked ends gives a connected core of first Betti number $N$.
  \textup{(d)} Crofton slices in $R$ bound weighted length and the Pick excess on $L\Subset R$.  All panels are schematic.}
  \label{fig:structural-overview}
\end{figure}

For an arbitrary bounded convex domain, each positive level of the direct relaxation agrees, up to its additive normalization, with a zero-boundary relaxation on a rational polygon.  Interior gradient bounds, tropical-distance barriers, and weighted Crofton estimates are uniform along a rational exhaustion.  The resulting local curvature estimate passes to the direct relaxation.  The boundary modulus and concavity give compactness; stability of Aleksandrov measure identifies each limit, and uniqueness of the zero-boundary Dirichlet problem gives convergence of the full sequence.

\subsection{Consequences}

The Monge--Amp\`ere transformation law and uniqueness imply that, for an invertible affine map $T(x)=Ax+b$,
\[
 F_{T_\#\mu,T\Omega}
 =|\det A|^{1/2}F_{\mu,\Omega}\circ T^{-1}.
\]
At finite level, tropical relaxation is covariant under unimodular integral-affine maps.  For i.i.d.\ point clouds with an absolutely continuous law supported in $K$, universal genericity and empirical-measure convergence hold simultaneously on one probability-one event \cite{Varadarajan1958}.

The sandpile diagonal follows by combining the many-point theorem with the fixed-source odometer limit.  Abelian stabilization and least action go back to \cite{BakTangWiesenfeld1987,Dhar1990}; a continuum limit in another scaling appears in \cite{PegdenSmart2013}.  If $\Omega$ is a bounded rational polygon, $P_N\subset K\Subset\Omega^\circ$ is strongly generic, and $\mu_N\rightharpoonup\mu$, there are proper roundings and configuration-dependent meshes $h_N\to0$, with $Nh_N^2\to0$, such that
\[
 \frac{h_N}{\sqrt N}H_{h_N,P_N}\longrightarrow F_{\mu,\Omega}
 \qquad\text{uniformly on the lattice sites.}
\]
The mesh may depend on the full configuration $P_N$.

\subsection{Organization}

Sections~\ref{sec:preliminaries}--\ref{sec:curvature} establish the finite geometry and the quantitative curvature--source estimate.  Sections~\ref{sec:aleksandrov-limit}--\ref{sec:random-affine} prove the limiting theorems and their consequences.  \Cref{app:sandpile-diagonal} derives the sandpile diagonal and deficit law, \cref{app:numerical-diagnostics} contains the numerical checks, and \cref{app:background-results} lists the precise sources for the external results used in the proofs.

\section{Tropical relaxation and curvature}
\label{sec:preliminaries}

The finite relaxation $F_P$ can be read from its linearity cells, its tropical curve, and its Newton subdivision.  The first two determine incidence; the dual Newton polygons carry the Aleksandrov mass.

Tropical polynomials are written in the min-plus convention.  Background on the standard duality between tropical faces and regular subdivisions of a Newton polygon may be found in \cite{Mikhalkin2005,RichterGebertSturmfelsTheobald2005,MaclaganSturmfels2015}; the relation between polyhedral convex functions, Newton polytopes, and Monge--Amp\`ere mass also appears in \cite{PassareRullgard2004}.

Unless stated otherwise in Sections~\ref{sec:preliminaries}--\ref{sec:aleksandrov-limit}, $\Omega$ is a fixed bounded rational convex polygon.  Section~\ref{sec:convex-domains} treats arbitrary bounded convex domains by rational exhaustion.

\subsection{Zero-boundary tropical series}

By a \emph{bounded rational convex polygon} we mean the compact convex polygonal body itself.  Its interior and boundary are denoted by $\Omega^\circ$ and $\partial\Omega$.

Tropical series are defined on $\Omega^\circ$ and are always understood with their continuous extension to the compact body $\Omega$.  When an \emph{open rational polygon} $\mathcal O$ appears, it means the interior of such a compact polygonal body; expressions such as $K\Subset\mathcal O$ refer to compact containment in that open interior.

Let $\Omega\subset\R^2$ be a bounded rational convex polygon.  Choose primitive inward normals $n_1,\dots,n_s\in\Z^2$ and primitive affine functions
\[
 \lambda_r(x)=\inner{n_r}{x}+a_r
\]
such that
\[
 \Omega=\bigcap_{r=1}^s\{\lambda_r\ge0\},
 \qquad
 S_r=\{\lambda_r=0\}\cap\overline\Omega.
\]

\begin{definition}
An $\Omega$-tropical series is a continuous function $F:\overline\Omega\to\R_{\ge0}$ satisfying $F|_{\partial\Omega}=0$ and admitting locally in $\Omega^\circ$ a representation
\[
 F(x)=\min_{m\in A}\bigl(c_m+\inner{m}{x}\bigr),
 \qquad A\subset\Z^2.
\]
With the concave min-plus convention, its corner locus $V(F)$ is the set where the minimum is attained by at least two distinct affine functions.
\end{definition}

For a compact rational polygon and a tropical polynomial, the theory in \cite{KalininShkolnikov2018} gives local finiteness, continuity to the boundary, and a finite small canonical form.  The relevant external statements are listed in \cref{app:background-results}.

For a finite $P\subset\Omega^\circ$, let $\mathcal V(\Omega,P,F)$ be the class of $\Omega$-tropical series $G\ge F$ whose corner loci contain $P$.  The tropical relaxation is
\[
 G_PF(x)=\inf_{G\in\mathcal V(\Omega,P,F)}G(x).
\]
The pointwise infimum has the following properties.

\begin{proposition}[Tropical relaxation]
\label{prop:tropical-relaxation}
For every finite $P\subset\Omega^\circ$ and every $\Omega$-tropical polynomial $F$, the function $G_PF$ is an $\Omega$-tropical series, belongs to $\mathcal V(\Omega,P,F)$, and is pointwise minimal in that class.  For $F=0_\Omega$, its small canonical form is finite.
\end{proposition}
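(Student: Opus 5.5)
The plan is to prove that $\mathcal V(\Omega,P,F)$ is closed under finite pointwise minima, and then to use the local finiteness theory of \cite{KalininShkolnikov2018} (listed in \cref{app:background-results}) to show that the infimum over the class is attained.

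\emph{Step 1: the class is nonempty.} We need a competitor above $F$ whose corner locus contains $P$. For each $p\in P$, take a tropical-distance type function: the minimum of $F$, a large multiple of the primitive affine functions $\lambda_r$, and a sufficiently steep concave integral-slope cone with apex at $p$. This gives an $\Omega$-tropical series that is $\ge F$, vanishes on $\partial\Omega$, and has a corner at $p$. Taking the minimum over $p$ of these functions only adds corners, so $P$ lies in the corner locus.

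\emph{Step 2: closure under minima.} If $G_1,G_2\in\mathcal V$, then $\min(G_1,G_2)$ is again a local min of integral affine functions, is $\ge F$, and vanishes on the boundary. At $p\in P$, suppose one of the two is strictly smaller near $p$. Then the minimum coincides with it locally and has a corner at $p$. If instead $G_1(p)=G_2(p)$, the local minimum is taken over the union of the attaining monomials, each family containing at least two slopes. Hence $p$ is still a corner.

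\emph{Step 3: the infimum lies in the class.} This is the main obstacle. All members of $\mathcal V$ are concave functions vanishing on $\partial\Omega$ and dominated by the Step 1 competitor, so they are uniformly bounded. On each compact $L\subset\Omega^\circ$ their slopes are bounded: concavity, nonnegativity and the upper bound control gradients by $\sup/\dist(L,\partial\Omega)$. Since slopes are integral, only finitely many monomials appear over $L$. A decreasing sequence of minima $G_k=\min(G^1,\dots,G^k)$ with $G_k\downarrow G_PF$ pointwise (using countably many rational points and continuity) therefore has coefficients $c_m$ that are monotone in $k$ over a finite index set. So $G_PF$ is locally a finite tropical polynomial. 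Continuity to the boundary and zero boundary values follow from $0\le G_PF\le$ the competitor, which tends to $0$ at $\partial\Omega$.

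\emph{Step 4: corners at $P$ persist.} Near $p$ the coefficients stabilize or converge within the finite slope set. If $G_PF$ were affine near $p$, say equal to $c+\inner{m}{x}$, then concavity together with a lowering argument would contradict minimality. Concretely, the $G_k$ eventually have the same leading monomial at $p$, and the second monomial attaining at $p$ has coefficient converging to the matching value. A limit of functions each having a corner at $p$, with finitely many slopes, keeps a corner at $p$, because the set of attaining slopes at $p$ is upper semicontinuous. This yields $G_PF\in\mathcal V$, and it is minimal by definition. For $F=0_\Omega$, finiteness of the small canonical form is the cited statement from \cite{KalininShkolnikov2018}: near each boundary side only the monomials $k\lambda_r$ with bounded $k$ can appear, because the gradient is bounded near $\partial\Omega$ by the $O(\sqrt N)$-type competitor.
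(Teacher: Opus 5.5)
Your overall architecture is sound and differs from the paper's, but Step~1 fails as written. A function of the form $\min\{F,\dots\}$ is $\le F$. It therefore belongs to $\mathcal V(\Omega,P,F)$ only if it equals $F$, and $F$ has corners at $P$ only if $P\subset V(F)$ already.

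The claim that ``taking the minimum over $p$ only adds corners'' is also false. If the function built for $p'$ is smooth at $p$ and strictly smaller near $p$, the minimum is smooth at $p$. Your Step~2 works only because both functions there have corners at $p$.

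The repair is the tropical product, i.e.\ the ordinary sum. Choose a nonnegative zero-boundary tropical polynomial $E$ with $P\subset V(E)$, for example $E=\min\{\widehat q_P,MD_\Omega\}$ with an integer $M$ large enough that $MD_\Omega>\widehat q_P$ on $P$. Then set $G_0=F+E$. Since $\partial^+(F+E)(x)=\partial^+F(x)+\partial^+E(x)$, a sum of concave functions is nonsmooth wherever one summand is, so $G_0\in\mathcal V(\Omega,P,F)$.

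Step~3 has a related slip: members of $\mathcal V$ are not all dominated by $G_0$ (add $kD_\Omega$ to see this). By Step~2, however, you may replace each $G$ by $\min\{G,G_0\}$ without changing the infimum, and then your height and slope bounds hold.

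In Step~4, the remarks about a lowering argument and a common leading monomial are unnecessary. The clean argument is this: along a subsequence the $G_k$ share two integral supergradients $m\ne n$ at $p$, since these are bounded integer vectors. The supergradient inequalities pass to the limit, so $m,n\in\partial^+(G_PF)(p)$ and $p\in V(G_PF)$.

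With these repairs your proof is correct, and it is genuinely different from the paper's treatment. The paper gives no argument of its own here; it cites \cite{KalininShkolnikov2018}:
\begin{itemize}
\item Lemma~5.2 for nonemptiness of the class;
\item Lemma~5.7 for tropicality of the infimum;
\item Proposition~6.1 (iterated one-point waves converge to a limit whose corner locus contains $P$) for the corner condition;
\item Remark~9.5 for the finite small canonical form.
\end{itemize}

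You replace the wave dynamics by a direct lattice-and-compactness argument: closure under minima, integral slope bounds, monotone completed coefficients, and persistence of supergradients. This is the same mechanism the paper uses later in \cref{lem:convex-integer-slope-limit} for increasing sequences. Your route is self-contained and elementary. The cited route additionally realizes $G_PF$ as a limit of one-point wave operators, which is the structure behind the sandpile interpretation.

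For the last assertion you, like the paper, rely on Remark~9.5 of \cite{KalininShkolnikov2018}. Your remark about bounded gradients near $\partial\Omega$ is only a sketch, not a proof.
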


We write
\[
 F_P=G_P0_\Omega,
 \qquad
 \Gamma_P=V(F_P).
\]

\subsection{Essential cells}

Let $F$ be an $\Omega$-tropical polynomial.  A two-dimensional linearity cell means the closure of a maximal connected open subset of $\Omega^\circ$ on which $F$ is affine.  Denote the finite set of these cells by $\mathscr C_2(F)$.  For $C\in\mathscr C_2(F)$, choose $x_C\in\operatorname{relint}(C)$.  The function is affine near $x_C$; write
\[
 L_C(x)=F(x_C)+\inner{m_C}{x-x_C},
 \qquad m_C\in\Z^2.
\]
We call $L_C$ the \emph{essential cell monomial} of $C$.

\begin{proposition}[Essential presentation]
\label{prop:essential-presentation}
Every tropical polynomial admits the finite presentation
\[
 \boxed{
 F(x)=\min_{C\in\mathscr C_2(F)}L_C(x)
 }
 \qquad (x\in\overline\Omega).
\]
For every $C\in\mathscr C_2(F)$, the contact set of its essential monomial is exactly the cell closure:
\[
 \boxed{\{x\in\overline\Omega:F(x)=L_C(x)\}=C.}
\]
If $x$ lies in the relative interior of an interior tropical edge, exactly the two essential monomials belonging to the two adjacent two-dimensional cells attain the minimum at $x$.
\end{proposition}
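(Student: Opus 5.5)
The plan is to use only that $F$ is concave and, on the compact body $\overline\Omega$, is a finite minimum of integral affine functions whose corner locus $V(F)$ is a locally finite polyhedral complex. This is the finite small canonical form recorded in \cref{prop:tropical-relaxation} and \cref{app:background-results}. Fix a finite representation $F=\min_{m\in A}(c_m+\inner{m}{x})$ with $A\subset\Z^2$ finite, valid on $\Omega^\circ$ and extended by continuity to $\overline\Omega$.

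\emph{Inequality $F\le\min_C L_C$.} For each $C\in\mathscr C_2(F)$, $F$ agrees with the affine function $L_C$ on the open set $\operatorname{relint}(C)$. A concave function lies below any affine function with which it coincides on a nonempty open set: at $x_C$ the only supergradient is $m_C$, so $L_C$ is a supporting affine function of $F$ at $x_C$. Hence $F\le L_C$ on $\Omega^\circ$, and by continuity on $\overline\Omega$.

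\emph{Inequality $F\ge\min_C L_C$.} The union of the two-dimensional cells is dense in $\Omega^\circ$, because $V(F)$ is a finite union of segments and has empty interior. On each cell $F=L_C$, so $F\ge\min_C L_C$ holds on a dense set. Both sides are continuous, since $\mathscr C_2(F)$ is finite, so the inequality holds on all of $\overline\Omega$. This proves the boxed formula.

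\emph{Contact set.} By definition $F=L_C$ on $C$, so $C$ is contained in the contact set. For the converse, suppose $F(y)=L_C(y)$. Then $\{F=L_C\}$ is convex, because $F-L_C\le0$ is concave and its zero set is the superlevel set $\{F-L_C\ge0\}$. In particular the segment from $x_C$ to $y$ lies in the contact set.

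If $y\notin C$, this segment leaves $C$ at a boundary point $z$, which is either on $\partial\Omega$ or on an edge or vertex of $V(F)$. Near $z$, outside $C$, $F$ coincides with some other cell monomial $L_{C'}$ with $m_{C'}\neq m_C$. Two adjacent cells have different slopes, since otherwise they would not be separated by a corner and would merge into a single maximal cell.

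Because $F=\min(L_C,L_{C'},\dots)$ near $z$, and $F$ equals $L_C$ along the segment beyond $z$, $L_C$ would attain the minimum on a set reaching into the interior of $C'$. There $F=L_{C'}$, so $L_C=L_{C'}$ on a segment crossing transversally into $\operatorname{relint}(C')$. Concavity forces $L_C\ge F=L_{C'}$ on $C'$, with equality only on a face. So the only possibility is that the segment runs along a common boundary line, contradicting the assumption that it enters the region beyond $z$.

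\emph{Boundary points.} If $z\in\partial\Omega$, the segment cannot continue inside $\overline\Omega$ beyond $z$ unless it runs along a flat side. The contact set there still lies in $\overline{C}$ by continuity of the cell closure. Since $C$ is a closed convex polygon, the contact set equals $C$.

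\emph{Main obstacle.} The delicate step is making this convexity-plus-transversality argument rigorous for $y$ in the closure of $\partial\Omega$, and for cells $C$ that are nonconvex a priori. I would first show that each cell is convex. It equals $\{F=L_C\}\cap$ (the closure of the component), and that set is convex by the concavity argument above, so the component coincides with the whole convex contact set.

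\emph{Edge statement.} Let $x$ be in the relative interior of an interior edge $e$ separating $C_1$ and $C_2$. Then $L_{C_1}(x)=L_{C_2}(x)=F(x)$ by continuity. If a third essential monomial $L_{C_3}$ attained the minimum at $x$, the contact-set identity would give $x\in C_3$. A small disc around $x$ meets only $C_1$ and $C_2$, and $C_3$ is a closed convex cell containing $x$. Hence $C_3$ meets the disc in a set of nonempty interior lying in $C_1\cup C_2$, which forces $C_3\in\{C_1,C_2\}$.
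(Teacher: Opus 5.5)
\textbf{There is a genuine gap, in the contact-set identity.} Your first step (the supergradient inequality $F\le L_C$), the reverse inequality via density of the open cells and continuity, and your edge argument (given the contact-set identity) are sound and essentially match the paper.

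You correctly show that $K_C=\{x\in\overline\Omega:F(x)=L_C(x)\}$ is convex and contains $C$. You then try to exclude a point $y\in K_C\setminus C$ by a local analysis at the exit point $z$, and that analysis does not close. What it really shows is that $L_C-L_{C'}\ge0$ on $C'$ vanishes at an interior point, so $L_C=L_{C'}$. Everything therefore hinges on $m_{C'}\neq m_C$. You justify this by saying that adjacent cells with equal slopes would merge into one maximal cell, which is valid only when $C'$ shares an edge with $C$.

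\emph{Vertex exits.} If the segment leaves $C$ through a vertex $z$, the cell $C'$ it enters may meet $C$ only at $z$. Two such cells carrying the same monomial do not merge into a connected open set, so nothing you have written excludes $m_{C'}=m_C$. Excluding it is essentially the content of the identity you are proving.

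\emph{Collinear edges.} The continuation of $[x_C,z]$ beyond $z$ may run along an edge of $V(F)$ collinear with it. For instance, at the vertex of $\min(x,y,0)$ the line $y=0$ passes through the interior of the cell where $F=x$ and continues along the edge $\{y=0,\ x>0\}$. Then the segment enters no cell interior, and your ``contradiction with the assumption that it enters the region beyond $z$'' is vacuous. Chasing that case leads again to two distinct cells with equal monomials.

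\emph{Convexity of cells.} Your proposed proof that cells are convex is circular. Intersecting the convex set $K_C$ with the closure of a possibly nonconvex component need not give a convex set, and ``the component coincides with the whole contact set'' is the claim itself.

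\textbf{The missing idea is to use maximality of the linearity cell globally on $K_C$, not locally at $z$.} The interior of the closed convex set $K_C$ is a convex, hence connected, open subset of $\Omega^\circ$. On it $F=L_C$ is affine, and it contains $\operatorname{relint}(C)$. Maximality forces equality. A closed convex set with nonempty interior is the closure of its interior, so $K_C=C$.

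The paper phrases this as follows. If $y\in K_C\setminus C$, the convex hull of a small ball $B\Subset\operatorname{relint}(C)$ and $y$ lies in $K_C$. This produces a connected open set strictly larger than $\operatorname{relint}(C)$ on which $F$ is affine. This single step handles vertex exits, collinear edges and boundary points at once. It also gives, for free, convexity of every cell and distinctness of the essential monomials of distinct cells.

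Your separate boundary discussion is then unnecessary. In any case, for a convex body the half-open segment $[x_C,y)$ lies in $\Omega^\circ$. So the segment can exit $C$ at a point of $\partial\Omega$ only if that point is $y$ itself, and then $y\in C$.
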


\begin{proof}
At $x_C$, the gradient $m_C$ is a supergradient of the concave function $F$.  Hence
\[
 F(y)\le F(x_C)+\inner{m_C}{y-x_C}=L_C(y)
\]
for every $y\in\overline\Omega$.  Thus every essential monomial lies above $F$.

The closures of the full-dimensional cells cover $\overline\Omega$.  Given $x\in\overline\Omega$, choose $C$ with $x\in C$.  Since $F=L_C$ on the relative interior of $C$, continuity gives $F(x)=L_C(x)$.  Taking the minimum over all cells proves the displayed representation.

Fix $C$ and set
\[
 K_C=\{x\in\overline\Omega:F(x)=L_C(x)\}.
\]
The supporting inequality gives $F\le L_C$.  If $x,y\in K_C$, concavity gives
\[
 F(tx+(1-t)y)\ge tF(x)+(1-t)F(y)=L_C(tx+(1-t)y),
\]
while the reverse inequality follows from $F\le L_C$.  Hence $K_C$ is closed and convex, and it contains $C$.

Suppose instead that it contained a point $y\notin C$, and choose a small open ball $B\Subset\operatorname{relint}(C)$.  The convex hull of $B\cup\{y\}$ contains a connected open set strictly larger than $\operatorname{relint}(C)$ on which $F=L_C$.  This contradicts the maximality of the linearity cell.  Thus $K_C=C$.

At a relative-interior point of an interior edge, the polyhedral subdivision has exactly two adjacent two-dimensional cells.  Their essential monomials attain $F$, and the contact-set identity excludes every other essential monomial.  Hence precisely the two adjacent essential monomials are active.
\end{proof}

\begin{remark}
The small canonical form may contain additional canonical monomials that agree with $F$ only on lower-dimensional faces of the tropical complex.  The essential presentation omits all such terms without changing the function.  We use the small canonical form for finiteness and slope bounds, and the essential presentation for coefficient deformations.
\end{remark}

\subsection{Newton duality and curvature}

Write the essential monomial of a two-dimensional cell $C$ as
\[
 L_C(x)=c_C+\inner{m_C}{x}.
\]
The lifted points $(m_C,c_C)\in\R^3$ determine the regular Newton subdivision of the convex hull of the essential gradients \cite{Mikhalkin2005,MaclaganSturmfels2015}.  Equivalently, if $\tau$ is a closed face of the tropical polyhedral complex, let
\[
 A_\tau=
 \left\{m_C:
 L_C=F\text{ on }\operatorname{relint}(\tau)
 \right\},
 \qquad
 Q_\tau=\conv(A_\tau).
\]
The polytopes $Q_\tau$ are the faces of the regular Newton subdivision dual to the tropical faces.  The correspondence is inclusion-reversing: two-dimensional tropical cells are dual to lattice points, interior tropical edges are dual to lattice segments, and interior tropical vertices are dual to lattice polygons.  In particular,
\[
 Q_v=\partial^+F(v)
\]
for an interior tropical vertex $v$.

Let $e$ be an edge of a tropical curve.  If the two adjacent gradients are $m_0,m_1\in\Z^2$, write
\[
 m_1-m_0=w(e)n_e,
\]
where $n_e$ is primitive.  The integer $w(e)\ge1$ is the tropical weight.  A primitive tangent vector $v_e$ is obtained by rotating $n_e$ by ninety degrees.

\begin{lemma}[Side normal form]
\label{lem:side-normal-form}
Let
\[
 F:\overline\Omega\longrightarrow\R_{\ge0}
\]
be an $\Omega$-tropical polynomial with $F|_{\partial\Omega}=0$.  For every side $S_r$ there is a unique integer $m_F(S_r)\ge0$ such that, for each $x\in\operatorname{relint}(S_r)$, some neighborhood $W_x$ of $x$ in $\R^2$ satisfies
\[
 F=m_F(S_r)\lambda_r
 \qquad\text{on }W_x\cap\overline\Omega.
\]
The integer $m_F(S_r)$ is called the \emph{side quasi-degree}.
\end{lemma}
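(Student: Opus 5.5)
The plan is to work locally near a relative-interior point of the side and use only three inputs: $F$ is a tropical polynomial (finitely many integral-slope pieces, continuous up to $\overline\Omega$), $F\ge0$, and $F|_{S_r}=0$. Fix $x\in\operatorname{relint}(S_r)$.

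\emph{Step 1: a local formula.} Since $F$ has a finite presentation by \cref{prop:essential-presentation}, only finitely many two-dimensional cells meet a small half-disc $W_x\cap\overline\Omega$. Shrinking $W_x$ so that it misses the other sides, we may assume $F|_{W_x\cap\overline\Omega}=\min_{C}L_C$, with the minimum taken over the cells $C$ whose closure contains $x$.

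\emph{Step 2: each such $L_C$ is a multiple of $\lambda_r$.} Take a cell $C$ containing $x$. Near $x$, the cell $C$ is a polyhedral sector, and its closure contains a segment of $S_r$ through $x$ or ending at $x$.

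\begin{itemize}
\item Suppose $C$ contains a nondegenerate subsegment of $S_r$. Then the affine function $L_C$ vanishes on that segment, so $L_C=k_C\lambda_r$ for some real $k_C$. Because $m_C=k_Cn_r\in\Z^2$ and $n_r$ is primitive, $k_C$ is an integer. Positivity of $F$ on the interior forces $k_C\ge0$.
\item Suppose instead $C$ meets $S_r$ only at $x$. Then the cells meeting $S_r$ near $x$, one on each side, are of the first kind.
\end{itemize}

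\emph{Step 3: only one multiple occurs.} I would argue by concavity that the two boundary-touching cells have the same $k$. Let $C_1$ and $C_2$ be the cells adjacent along $S_r$ on either side of $x$ (possibly $C_1=C_2$). Since $F\le L_{C_1}$ and $F\le L_{C_2}$ everywhere, and $F=L_{C_2}=k_2\lambda_r$ inside $C_2$, we get $k_2\le k_1$ in the interior of $C_2$, where $\lambda_r>0$. Symmetrically $k_1\le k_2$, so $k_1=k_2=:k$.

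Every cell $C$ with $x\in C$ then satisfies $L_C\ge F$ on $\overline\Omega$. Concavity also gives $F\le k\lambda_r$ everywhere, because $k\lambda_r$ is the essential monomial of a cell. Hence on the sector between $C_1$ and $C_2$ we have $F\le k\lambda_r$.

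For the reverse inequality, $F$ is concave and vanishes at two points $y_1,y_2\in S_r$ on either side of $x$. The restriction of $F$ to a short segment transverse to $S_r$ at $x$ has a superdifferential containing $kn_r$ at points of $S_r$. I would rather use this argument: the set $\{F=k\lambda_r\}$ is convex, by the contact-set convexity in \cref{prop:essential-presentation}. It contains points of $C_1$ and of $C_2$ arbitrarily close to $x$ and lying off $S_r$, so it contains a neighborhood of $x$ in $\overline\Omega$, namely the convex hull of two small sectors that straddle $x$. This gives $F=k\lambda_r$ on $W_x\cap\overline\Omega$.

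\emph{Step 4: uniqueness and independence of $x$.} The integer $k$ is determined locally as the normal slope $\partial_{n_r}F/|n_r|^2$. By Step 3 it is locally constant along $\operatorname{relint}(S_r)$, and that set is connected, so a single $m_F(S_r)=k$ works for the whole side.

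The main obstacle is Step 3: ruling out distinct slopes on adjacent boundary cells, or a corner edge of $F$ ending exactly at $x$. The convex-contact-set trick handles this cleanly once Step 2 identifies each adjacent monomial as $k\lambda_r$.
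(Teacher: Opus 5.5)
Your argument is correct, but it takes a more geometric route than the paper. The paper never examines the cells along the side. Any essential monomial $L_C$ active at $x\in\operatorname{relint}(S_r)$ satisfies $L_C\ge F\ge0$ on the segment $S_r$ and vanishes at the interior point $x$. So it vanishes on all of $S_r$ and equals $k_C\lambda_r$ with $k_C\in\Z_{\ge0}$. The paper takes $m_F(S_r)$ to be the least $k_C$ over this finite \emph{global} family, and notes that every other essential monomial is strictly positive at $x$. This gives $F=m_F(S_r)\lambda_r$ near every relative-interior point at once, with no filling argument and no connectedness step.

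You instead proceed in four moves:
\begin{enumerate}
\item You locate the cells $C_1,C_2$ containing segments $[y_1,x]$ and $[x,y_2]$ of $S_r$. Their existence deserves the one-line remark that finitely many closed convex cells cover $S_r$ near $x$.
\item You compare their multipliers through the supporting inequality to get $k_1=k_2$. By the contact-set identity this already forces $C_1=C_2$.
\item You fill a half-disc at $x$ using convexity of $\{F=k\lambda_r\}$.
\item You propagate the constant along $\operatorname{relint}(S_r)$ by local constancy.
\end{enumerate}
This makes the boundary geometry explicit (one linearity cell runs along each side). The paper's route is shorter and needs neither the local cell picture nor connectedness.

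One presentational point: the heading of your Step~2 claims more than bullet~(b) proves. You never show that a cell meeting $S_r$ only at $x$ has monomial proportional to $\lambda_r$. This is harmless, because Step~3 shows that such a cell cannot exist and nothing else uses the claim. The paper's observation settles it directly: $L_C(x)=0$ and $L_C\ge0$ on $S_r$ give $L_C|_{S_r}\equiv0$, hence $C=\{F=L_C\}\supset S_r$. The abandoned superdifferential sentence in Step~3 can be deleted.
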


\begin{proof}
Use the finite essential presentation from \cref{prop:essential-presentation}.  If an essential monomial $L_C$ is active at a point $x\in\operatorname{relint}(S_r)$, then
\[
 L_C\ge F\ge0
 \quad\text{on }\overline\Omega,
 \qquad
 L_C(x)=F(x)=0.
\]
The affine restriction of $L_C$ to $S_r$ is nonnegative and vanishes at an interior point of that segment, so it vanishes identically on $S_r$.  Hence
\[
 L_C=k_C\lambda_r
\]
for some $k_C\in\Z$: primitivity of $n_r$ gives integrality, and nonnegativity on the inward side gives $k_C\ge0$.

The same argument applied to every active monomial, together with the essential presentation, shows that the collection $\mathscr A_r$ of essential monomials vanishing on $S_r$ is nonempty and finite.  Set
\[
 m_F(S_r)=\min_{L_C\in\mathscr A_r}k_C.
\]

Fix $x\in\operatorname{relint}(S_r)$.  Every essential monomial outside $\mathscr A_r$ is strictly positive at $x$.  Since the essential presentation is finite, after shrinking a neighborhood $W_x$ these remaining monomials are all strictly larger than $m_F(S_r)\lambda_r$ on $W_x\cap\overline\Omega$.

Among the monomials in $\mathscr A_r$, the smallest value on the inward side is $m_F(S_r)\lambda_r$.  The essential presentation gives the claimed normal form.  Its coefficient is unique because $\lambda_r>0$ in the interior.
\end{proof}

Define
\[
 D_\partial(F)=\sum_{r=1}^s m_F(S_r).
\]
We call $D_\partial(F)$ the \emph{total boundary quasi-degree}; it is not an individual side quasi-degree.

The tropical symplectic area of a finite weighted tropical curve is
\[
 \mathscr A_{\rm symp}(V(F))
 =\sum_e w(e)\ell_{\rm Euc}(e)\|v_e\|,
\]
where $v_e$ is primitive and parallel to $e$.  For a polygon side $S_r$, let $v_r$ be either primitive tangent vector and set
\[
 \Area_{\rm trop}(S_r)=\ell_{\rm Euc}(S_r)\|v_r\|.
\]
Thus $\Area_{\rm trop}(S_r)$ is the symplectic area of the side with weight one.  The boundary identity and its minimality consequence are stated in \cref{thm:symplectic-area-comparison}, together with the admissible class for the interpolating competitor.

\subsection{Embedded graphs}

The corner locus is treated as an embedded polyhedral multigraph.  A transverse crossing is a four-valent embedded vertex.  For an embedded tropical multigraph $\Gamma$ and a vertex $v$, let $\mathscr E_\Gamma(v)$ be the finite set of incident embedded edge germs.  If one edge is incident to $v$ twice, its two germs occur separately.  For $\eta\in\mathscr E_\Gamma(v)$, write $w(\eta)$ for the weight of the underlying edge and $v_\eta$ for its primitive tangent vector directed away from $v$.  All local incidence sums use this germ multiplicity.

In the \emph{uncut graph}, all artificial degree-two subdivision points, including marked points, are suppressed; the marks remain distinguished points of the underlying embedded edges.  Window restrictions are taken componentwise, and every transverse incidence with a window-component boundary is retained as a separate degree-one vertex.

Inserting or suppressing a marked degree-two vertex changes neither the complementary faces nor the Euler characteristic or first Betti number.  Euler counts and planar duals use the uncut graph until an actual marked cut is made.

A two-dimensional linearity cell is called \emph{bounded} if its closure is compactly contained in $\Omega^\circ$; otherwise it is a \emph{boundary cell}.  Thus a strip bounded partly by a tropical edge and partly by $\partial\Omega$ is a boundary cell, not a bounded face.  An edge is \emph{terminal relative to $\Omega$} if its closure meets $\partial\Omega$; an edge whose closure is compactly contained in $\Omega^\circ$ is a \emph{compact internal edge}.

In the polygonal sections, \emph{terminal edge} is the formal term for this global edge.  We use \emph{terminal branch} only descriptively when emphasizing its boundary-directed portion.  A \emph{terminal germ} is the incident edge germ of a terminal edge at its interior tropical endpoint, in the sense of $\mathscr E_\Gamma(v)$; it is not a second global edge.

For any finite tropical corner locus $\Gamma$, define its bounded-face genus by
\[
 g(\Gamma)=
 \#\left\{
 \text{bounded two-dimensional components of }\Omega^\circ\setminus\Gamma
 \right\}.
\]
The definition applies also to disconnected $\Gamma$.

If $\Gamma$ is connected and $G$ is a sufficiently small regular inward truncation retaining all tropical vertices, then the exterior pieces merge into the single unbounded face of the plane graph $G$, and Euler's formula gives
\[
 g(\Gamma)=\beta_1(G).
\]
Pruning terminal leaf tails changes neither side of this equality.

A marked point is assumed to lie in the relative interior of an edge of the uncut graph.  The unique such edge containing $p_i$ is its \emph{marked carrier edge} and is denoted by $e_i$.  Cutting $e_i$ at $p_i$ replaces it by two edges ending at distinct formal degree-one endpoints $p_i^+$ and $p_i^-$.  One cut increases Euler characteristic by one.

\subsection{Aleksandrov measure}

For a concave function $F$ on $\Omega$, define its superdifferential (compare the standard convex-analytic convention in \cite{Rockafellar1970})
\[
 \partial^+F(x)=\left\{p\in\R^2:
 F(y)\le F(x)+\inner{p}{y-x}\ \text{for all }y\in\Omega
 \right\}.
\]
The Aleksandrov Monge--Amp\`ere measure \cite{Aleksandrov1958,Gutierrez2016,Figalli2017} is the Radon measure on $\Omega^\circ$ defined, for every Borel set $B\subset\Omega^\circ$, by
\[
 \MA(F)(B)=\Leb_2\bigl(\partial^+F(B)\bigr).
\]
For a polyhedral concave function, it is atomic at the vertices:
\[
 \MA(F)=\sum_v \Area(Q_v)\delta_v,
 \qquad
 Q_v=\partial^+F(v).
\]
Here area is ordinary Euclidean area in the gradient lattice.  A unimodular trivalent vertex has mass $1/2$, and an elementary nodal parallelogram has mass $1$.

The measure is quadratically homogeneous:
\[
 \MA(aF)=a^2\MA(F),
 \qquad a\ge0.
\]
The Monge--Amp\`ere measure is invariant under addition of an affine function.

\paragraph{Measure convention.}
In \cref{sec:preliminaries,sec:interpolation,sec:genericity,sec:local-geometry,sec:mildness,sec:curvature,sec:aleksandrov-limit}, $\Omega$ denotes a compact polygonal body.  There $\MA(F)$ is regarded first as a Radon measure on the open domain $\Omega^\circ$, and compactly supported test spaces are written as $C_c^k(\Omega^\circ)$.  Whenever weak convergence is stated on the compact body $\overline\Omega=\Omega$, the interior measures are extended by zero to $\partial\Omega$.

In \cref{sec:convex-domains}, $\Omega$ itself is open, and ``vague convergence on $\Omega$'' means convergence against all tests in $C_c(\Omega)$.

Passing to $u=-F$ converts the concave convention into the standard convex convention.  The comparison principle, maximum principle, compactness, stability, and zero-boundary Dirichlet solvability for finite Borel measures in bounded convex domains are recalled in \cref{app:background-results}.  For affine boundary data, the same standard theory applies on arbitrary bounded convex domains, including rational polygons.

\subsection{Strong genericity}

For an open rational polygon $\mathcal O$ satisfying
\[
 K\Subset\mathcal O\Subset\Omega^\circ,
\]
let
\[
 \Conf_N(\mathcal O)=\{(p_1,\dots,p_N)\in\mathcal O^N:p_i\ne p_j\text{ for }i\ne j\}.
\]
Whenever an ordered configuration is used in set notation, as a subscript, or in any other symmetric construction, we identify it with its underlying unordered $N$-point set.
A configuration is called strongly generic when it lies in the open full-measure locus constructed in \cref{thm:strong-genericity}.  On this locus every marked point lies in the interior of a distinct edge, and the gradient-labelled marked combinatorial type is locally stable.

\section{Interpolation and square-root bounds}
\label{sec:interpolation}

Uniform convergence requires estimates independent of the positions of the sources.  Tropical interpolation gives a polynomial through $N$ points whose active gradients have size $O(\sqrt N)$.  A tropical-distance barrier imposes zero boundary values, and symplectic-area minimality yields the same bounds for the minimal relaxation.

\subsection{Tropical dependence}

Let $\mathbb K=\C((t^\R))$ be the Hahn field with real value group \cite{Hahn1907}.  For a nonzero series $z$, let $\operatorname{val}(z)$ be its least exponent, and set $\operatorname{val}(0)=+\infty$.

\begin{lemma}[Non-Archimedean cancellation]
\label{lem:nonarch-cancellation}
If $z_1+\cdots+z_M=0$ in $\mathbb K$ and the terms are not all zero, then $\min_j\operatorname{val}(z_j)$ is attained at least twice.
\end{lemma}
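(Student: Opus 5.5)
We argue by contradiction and compare the minimal valuation with the lowest-order coefficients of the summands. Set $\nu=\min_j\operatorname{val}(z_j)$. Since the terms are not all zero, $\nu$ is a finite real number and the index set
\[
 J=\{j:\operatorname{val}(z_j)=\nu\}
\]
is nonempty. Suppose, contrary to the claim, that $J=\{j_0\}$ is a single index.

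Write each nonzero $z_j=\sum_{\gamma}a_{j,\gamma}t^\gamma$ as a Hahn series. Its support is a well-ordered subset of $\R$, and by the definition of $\operatorname{val}$ its least element is $\operatorname{val}(z_j)$. Addition in $\mathbb K$ is coefficientwise, so for every exponent $\gamma$ the coefficient of $t^\gamma$ in $z_1+\cdots+z_M$ is $\sum_j a_{j,\gamma}$. This is a finite sum because $M$ is finite. We evaluate it at $\gamma=\nu$:
\begin{itemize}[nosep]
\item For $j\ne j_0$, either $z_j=0$ or $\operatorname{val}(z_j)>\nu$. In both cases $\nu$ lies strictly below the support of $z_j$, so $a_{j,\nu}=0$.
\item For $j=j_0$, the exponent $\nu$ is the least element of the support of $z_{j_0}$, so $a_{j_0,\nu}\ne0$.
\end{itemize}
Hence the coefficient of $t^\nu$ in the sum is $a_{j_0,\nu}\ne0$. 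This contradicts $z_1+\cdots+z_M=0$, so $\nu$ is attained at least twice.

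The same argument shows more than the statement: the coefficients $a_{j,\nu}$ with $j\in J$ must sum to zero. With this reading, the lemma is exactly the fact that the tropicalization of a vanishing sum lies in the corner locus of the corresponding tropical polynomial.

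No step is a genuine obstacle. The one point to check is that coefficients in $\mathbb K$ add termwise, and this holds because the union of finitely many well-ordered sets is well ordered. The lemma is simply the ultrametric property $\operatorname{val}(a+b)\ge\min(\operatorname{val}a,\operatorname{val}b)$, with equality when the two valuations differ. One could therefore also argue directly: if the minimum is unique, then $-z_{j_0}=\sum_{j\ne j_0}z_j$ has valuation strictly greater than $\nu$, which is impossible.
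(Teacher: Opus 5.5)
Your proof is correct and is the same argument as the paper's one-line proof, which says that a uniquely minimal valuation cannot cancel against terms of strictly larger valuation. You make this explicit by reading off the nonzero coefficient of $t^\nu$ in the sum, and you correctly handle the zero terms, whose valuation is $+\infty$.
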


\begin{proof}
A uniquely minimal valuation cannot cancel with terms of strictly larger valuation.
\end{proof}

Tropical column dependence over the Hahn field has the following elementary form; compare \cite{DevelinSantosSturmfels2005}.

\begin{lemma}[Tropical column dependence]
\label{lem:tropical-dependence}
Let $X=(x_{ij})\in\R^{N\times M}$ with $M>N$.  There exist $c_1,\dots,c_M\in\R\cup\{+\infty\}$, not all infinite, such that for every row $i$ the minimum
\[
 \min_j(c_j+x_{ij})
\]
is attained at least twice.
\end{lemma}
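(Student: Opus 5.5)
The plan is to lift the tropical matrix to the Hahn field $\mathbb K$ and use ordinary linear algebra there, reading off the conclusion with \cref{lem:nonarch-cancellation}. Define the lifted matrix $\tilde X=(t^{x_{ij}})\in\mathbb K^{N\times M}$. Since $M>N$, its columns are linearly dependent over $\mathbb K$, so there is a nonzero vector $z=(z_1,\dots,z_M)\in\mathbb K^M$ with $\tilde Xz=0$. Set $c_j=\operatorname{val}(z_j)\in\R\cup\{+\infty\}$. Because $z\ne0$, at least one $c_j$ is finite.

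The cancellation step should go as follows. Fix a row $i$; the relation reads $\sum_j t^{x_{ij}}z_j=0$. Each term has valuation
\[
 \operatorname{val}\bigl(t^{x_{ij}}z_j\bigr)=x_{ij}+c_j,
\]
since $t^{x_{ij}}$ is a monomial with unit leading coefficient and $\operatorname{val}$ is additive on products. If some term is nonzero, \cref{lem:nonarch-cancellation} shows that $\min_j(c_j+x_{ij})$ is attained at least twice, which is what we want.

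The main obstacle is the degenerate case. A row might have all terms $t^{x_{ij}}z_j$ equal to zero, but this is impossible: $t^{x_{ij}}\ne0$ and some $z_j\ne0$, so some term is nonzero. Hence every row falls under the cancellation argument. In this convention a minimum equal to $+\infty$ never occurs, because at least one $c_j$ is finite. The only genuine inputs are that $\mathbb K$ is a field, so rank arguments apply, and that $\operatorname{val}$ is multiplicative-additive; both are standard for the Hahn field $\C((t^\R))$.
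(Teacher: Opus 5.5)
Your proposal is correct and follows the paper's proof essentially verbatim: you lift to the matrix $(t^{x_{ij}})$ over the Hahn field, take a nonzero kernel vector $z$, set $c_j=\operatorname{val}(z_j)$, and apply \cref{lem:nonarch-cancellation} row by row. Your extra check that every row contains a nonzero term, so that the cancellation lemma applies and the minimum is finite, is a detail the paper leaves implicit.
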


\begin{proof}
Consider the $N\times M$ matrix $\widetilde X_{ij}=t^{x_{ij}}$ over $\mathbb K$.  Its columns are linearly dependent, so there is a nonzero vector $z=(z_j)$ with
\[
 \sum_j z_jt^{x_{ij}}=0
\]
for every $i$.  Set $c_j=\operatorname{val}(z_j)$.  Then
\[
 \operatorname{val}(z_jt^{x_{ij}})=c_j+x_{ij},
\]
and \cref{lem:nonarch-cancellation} gives the conclusion.
\end{proof}

\begin{theorem}[Tropical interpolation]
\label{thm:tropical-interpolation}
Let $P=\{p_1,\dots,p_N\}\subset\R^2$ and let $A=\{m_1,\dots,m_M\}\subset\Z^2$ with $M>N$.  There is a tropical polynomial
\[
 q(x)=\min_{j\in J}\bigl(c_j+\inner{m_j}{x}\bigr),
 \qquad \varnothing\ne J\subset\{1,\dots,M\},
\]
such that $P\subset V(q)$.
\end{theorem}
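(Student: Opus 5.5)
The plan is to apply \cref{lem:tropical-dependence} to the matrix of monomial values at the marked points.  Set
\[
 x_{ij}=\inner{m_j}{p_i},
 \qquad 1\le i\le N,\ 1\le j\le M .
\]
Since $M>N$, the lemma produces $c_1,\dots,c_M\in\R\cup\{+\infty\}$, not all infinite, such that for every row $i$ the minimum $\min_j(c_j+x_{ij})$ is attained at least twice.  Define
\[
 J=\{j:c_j<+\infty\},
\]
which is nonempty, and let $q(x)=\min_{j\in J}(c_j+\inner{m_j}{x})$.  Restricting to $J$ does not change any row minimum, because the discarded terms are $+\infty$ while at least one finite term is present.

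The check then runs as follows.  For each $i$ the value $q(p_i)=\min_j(c_j+x_{ij})$ is finite, since $J\neq\varnothing$.  This minimum is attained by at least two indices $j\ne j'$ in $J$.  Because $A$ is a set of $M$ listed points, $j\ne j'$ gives $m_j\ne m_{j'}$.  So two distinct affine functions attain the minimum at $p_i$, which means $p_i\in V(q)$ by the corner-locus definition.

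The main obstacle is only bookkeeping.  First, the lemma's conclusion concerns the minimum over all columns, and one must make sure the twice-attained minimum is finite, so that it is attained by genuine monomials of $q$.  Nonemptiness of $J$ guarantees this, as explained above.  Second, ``distinct affine functions'' needs the gradients to be distinct, and this is exactly where distinctness of the elements of $A$ is used.  Nothing else is required: the dependence lemma already encodes the Hahn-field linear algebra.
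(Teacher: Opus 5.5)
Your proposal is correct and is essentially the paper's proof: apply \cref{lem:tropical-dependence} to $x_{ij}=\inner{m_j}{p_i}$ and discard the indices with infinite coefficient. Your extra checks, that the twice-attained row minimum is finite and that distinct indices give distinct gradients because $A$ is a set, are exactly the bookkeeping the paper leaves implicit.
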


\begin{proof}
Apply \cref{lem:tropical-dependence} to $x_{ij}=\inner{m_j}{p_i}$ and omit the indices with infinite coefficient.
\end{proof}

\subsection{Square-root slopes}

For $d\ge0$, set
\[
 A_d=[-d,d]^2\cap\Z^2,
 \qquad |A_d|=(2d+1)^2.
\]
Choose
\[
 d_N=\left\lceil\frac{\sqrt{N+1}-1}{2}\right\rceil.
\]
Then $|A_{d_N}|>N$.

\begin{corollary}[Square-root-gradient interpolant]
\label{cor:sqrt-interpolant}
For every integer $N\ge1$, every $N$-point set $P\subset\R^2$ lies on the corner locus of a tropical polynomial $q_P$ whose active gradients satisfy
\[
 \|m\|\le \sqrt2d_N=O(\sqrt N).
\]
After replacing $q_P$ by
\[
 \widehat q_P=q_P-\min_\Omega q_P,
\]
one has
\[
 \widehat q_P\ge0\quad\text{on }\Omega,
 \qquad
 P\subset V(\widehat q_P),
\]
\[
 \Lip(\widehat q_P)\le C\sqrt N,
 \qquad
 0\le\widehat q_P\le C_\Omega\sqrt N.
\]
\end{corollary}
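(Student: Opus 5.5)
The plan is to apply \cref{thm:tropical-interpolation} with $A=A_{d_N}$, which is admissible because $|A_{d_N}|=(2d_N+1)^2>N$. This first needs the elementary check that $2d_N+1\ge\sqrt{N+1}$, so that $(2d_N+1)^2\ge N+1>N$.

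The theorem gives a tropical polynomial
\[
 q_P(x)=\min_{j\in J}\bigl(c_j+\inner{m_j}{x}\bigr),
\]
with $J\ne\varnothing$, finite coefficients, and $P\subset V(q_P)$. Every active gradient lies in $[-d_N,d_N]^2$, so $\|m_j\|\le\sqrt2\,d_N$. Since $d_N\le(\sqrt{N+1}+1)/2$, this is $O(\sqrt N)$ uniformly in $N\ge1$.

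The Lipschitz bound comes next. A finite minimum of affine functions is Lipschitz with constant at most the maximal gradient norm. Hence $\Lip(q_P)\le\sqrt2\,d_N\le C\sqrt N$, with an absolute constant $C$.

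Subtracting the constant $\min_\Omega q_P$ does three things:
\begin{itemize}
 \item The minimum is attained, since $q_P$ is continuous and $\Omega$ is compact.
 \item The Lipschitz constant is unchanged.
 \item The corner locus is unchanged, because adding a constant to every $c_j$ does not change which monomials attain the minimum. So $P\subset V(\widehat q_P)$.
\end{itemize}
By construction $\widehat q_P\ge0$ on $\Omega$, and it vanishes at some $x_0\in\Omega$. Therefore, for $x\in\Omega$,
\[
 \widehat q_P(x)\le\Lip(\widehat q_P)\,|x-x_0|\le C\sqrt N\diam\Omega,
\]
which gives $C_\Omega=C\diam\Omega$.

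There is no real obstacle here; the only point needing care is the counting inequality $|A_{d_N}|>N$ together with the uniform bound $d_N=O(\sqrt N)$. Note that $P$ need not lie in $\Omega$ for the interpolation step, but the corollary only uses the set $\Omega$ for the normalization.
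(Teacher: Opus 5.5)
Your proposal is correct and follows the paper's proof essentially verbatim: apply the tropical interpolation theorem with $A=A_{d_N}$, bound every gradient by $\sqrt2\,d_N$, use that a minimum of affine functions with gradients of norm at most $R$ is $R$-Lipschitz, and bound the oscillation on $\Omega$ by $R\diam(\Omega)$. Your explicit checks that $(2d_N+1)^2\ge N+1$ and that subtracting a constant leaves the corner locus unchanged fill in details the paper leaves implicit.
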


\begin{proof}
Apply \cref{thm:tropical-interpolation} with $A=A_{d_N}$.  A minimum of affine functions with gradients of norm at most $R$ is $R$-Lipschitz.  The oscillation on $\Omega$ is therefore at most $R\diam(\Omega)$.
\end{proof}

\subsection{Height bounds}

Define the primitive tropical distance barrier
\[
 D_\Omega(x)=\min_{1\le r\le s}\lambda_r(x).
\]
It is nonnegative, zero on $\partial\Omega$, and strictly positive in $\Omega^\circ$.

\begin{proposition}[$K$-free height bound]
\label{prop:k-free-height}
Let $N\ge1$ and let $P\subset\Omega^\circ$ be an $N$-point set.  Then
\[
 0\le F_P\le \widehat q_P\le C_\Omega\sqrt N
 \qquad\text{on }\overline\Omega,
\]
where $\widehat q_P$ is any normalized interpolant given by \cref{cor:sqrt-interpolant}.  In particular,
\[
 \|F_P\|_{L^\infty(\Omega)}\le C_\Omega\sqrt N.
\]
The constant is independent of the distance from $P$ to $\partial\Omega$.
\end{proposition}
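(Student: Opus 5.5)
The plan is to exhibit an explicit competitor in the class $\mathcal V(\Omega,P,0_\Omega)$ that lies below $\widehat q_P$. Then pointwise minimality (\cref{prop:tropical-relaxation}) gives $F_P\le\widehat q_P$, and \cref{cor:sqrt-interpolant} supplies the bound $\widehat q_P\le C_\Omega\sqrt N$. The lower bound $F_P\ge0$ needs no argument, since every $\Omega$-tropical series is nonnegative by definition.

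The normalized interpolant $\widehat q_P$ itself is not admissible, because it need not vanish on $\partial\Omega$. We correct this with the primitive tropical distance barrier $D_\Omega$. For a positive integer $M$, set
\[
 G_M=\min\bigl(\widehat q_P,\;M D_\Omega\bigr).
\]

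\textbf{Step 1: $G_M$ is an $\Omega$-tropical series.} It is continuous and nonnegative, since both pieces are. It vanishes on $\partial\Omega$, because $0\le G_M\le MD_\Omega=0$ there. It is locally a finite minimum of affine functions with integral gradients: the gradients of $\widehat q_P$ lie in $A_{d_N}$, and those of $MD_\Omega$ are the vectors $Mn_r$. Real constant terms are allowed in the definition. Finally $G_M\ge0=0_\Omega$.

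\textbf{Step 2: choice of $M$ and the corner condition.} Since $P$ is finite and $D_\Omega>0$ on $\Omega^\circ$, we have $\delta=\min_{p\in P}D_\Omega(p)>0$. Choose an integer $M$ with $M\delta>\max_\Omega\widehat q_P$. Then $MD_\Omega>\widehat q_P$ at every $p\in P$, and by continuity on a neighbourhood of $p$. On that neighbourhood $G_M=\widehat q_P$, so the local minimum defining $G_M$ near $p$ is attained exactly by the monomials of $\widehat q_P$. Hence $p\in V(\widehat q_P)$ implies $p\in V(G_M)$. The set $V(\widehat q_P)$ equals $V(q_P)$, because subtracting a constant does not change which monomials attain the minimum, and $V(q_P)\supset P$ by \cref{cor:sqrt-interpolant}. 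Therefore $G_M\in\mathcal V(\Omega,P,0_\Omega)$.

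\textbf{Step 3: conclusion.} By the definition of $G_P$ as a pointwise infimum, $F_P\le G_M\le\widehat q_P\le C_\Omega\sqrt N$, and taking the supremum gives the $L^\infty$ bound. The integer $M$ depends on $\delta$, hence on the distance from $P$ to $\partial\Omega$, but $M$ never enters the final bound. The constant $C_\Omega$ comes only from the Lipschitz bound $\sqrt2 d_N$ on the gradients and from $\diam\Omega$, so it is independent of that distance.

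The only delicate point is Step 2: one must check that truncating by the barrier does not destroy the corner condition at the marked points. This is why $M$ is chosen depending on $P$, while the final estimate is stated without reference to it.
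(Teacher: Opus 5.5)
Your proposal is correct and follows essentially the same route as the paper: truncate the normalized interpolant as $\min\{\widehat q_P, M D_\Omega\}$, where $M$ is an integer chosen depending on $P$ so that the strict inequality at the marked points preserves the corner condition. Pointwise minimality of the relaxation then gives $F_P\le\widehat q_P$, and the bound $\widehat q_P\le C_\Omega\sqrt N$ from the corollary does not involve $M$.
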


\begin{proof}
Because $P$ is finite and $D_\Omega$ is positive in $\Omega^\circ$, one may choose an integer $L_P\ge1$ such that
\[
 L_PD_\Omega(p)>\widehat q_P(p)
 \qquad(p\in P).
\]
Then
\[
 E_P^{\mathrm{ht}}=\min\{\widehat q_P,L_PD_\Omega\}
\]
is a nonnegative zero-boundary $\Omega$-tropical polynomial.  The strict inequality persists near each marked point, so $P\subset V(E_P^{\mathrm{ht}})$.  Thus $E_P^{\mathrm{ht}}$ is admissible for the relaxation defining $F_P$, and pointwise minimality gives
\[
 0\le F_P\le E_P^{\mathrm{ht}}\le\widehat q_P.
\]
The last inequality in the statement is \cref{cor:sqrt-interpolant}.
\end{proof}

\subsection{A zero-boundary competitor}

Fix $K\Subset\Omega^\circ$, let $N\ge1$, and let $P\subset K$ be an $N$-point set.  Set
\[
 \delta_K=\min_KD_\Omega>0,
 \qquad
 H_P=\max_\Omega\widehat q_P,
\]
and choose
\[
 M_{K,P}=\left\lceil\frac{H_P}{\delta_K}\right\rceil+1.
\]
Then $M_{K,P}=O_{\Omega,K}(\sqrt N)$ and
\[
 M_{K,P}D_\Omega(p)>\widehat q_P(p)
\]
for every $p\in P$.

\begin{proposition}[Dirichlet competitor]
\label{prop:dirichlet-competitor}
The tropical polynomial
\[
 E_P=\min\{\widehat q_P,M_{K,P}D_\Omega\}
\]
satisfies
\[
 E_P\ge0,
 \qquad
 E_P|_{\partial\Omega}=0,
 \qquad
 P\subset V(E_P).
\]
Every gradient occurring in $E_P$ has norm $O_{\Omega,K}(\sqrt N)$, and every side quasi-degree satisfies
\[
 m_{E_P}(S_r)\le M_{K,P}.
\]
\end{proposition}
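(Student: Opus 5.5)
The plan is to check the four claims one at a time, reusing the argument of \cref{prop:k-free-height} with the $P$-dependent integer $L_P$ replaced by the uniform integer $M_{K,P}$. \textbf{Nonnegativity and boundary values.} By \cref{cor:sqrt-interpolant} we have $\widehat q_P\ge0$ on $\Omega$. Each $\lambda_r$ is nonnegative on $\Omega$, so $D_\Omega\ge0$ there, and $M_{K,P}\ge1$. Hence $E_P\ge0$. On $\partial\Omega$ some $\lambda_r$ vanishes, so $D_\Omega=0$ and
\[
 0\le E_P\le M_{K,P}D_\Omega=0 .
\]
$E_P$ is a minimum of finitely many integral-gradient affine functions, namely the monomials of $q_P$ shifted by $-\min_\Omega q_P$ and the functions $M_{K,P}\lambda_r$. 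It is therefore a zero-boundary $\Omega$-tropical polynomial.

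\textbf{Incidence.} Fix $p\in P\subset K$. We have $D_\Omega(p)\ge\delta_K$, and by the choice of $M_{K,P}$,
\[
 M_{K,P}D_\Omega(p)\ge\Bigl(\tfrac{H_P}{\delta_K}+1\Bigr)\delta_K>H_P\ge\widehat q_P(p).
\]
Both functions are continuous, so the strict inequality $M_{K,P}D_\Omega>\widehat q_P$ holds on a neighborhood $W_p$ of $p$. On $W_p$ we then have $E_P=\widehat q_P$, and the active monomials of $E_P$ near $p$ are exactly those of $\widehat q_P$. Since $p\in V(\widehat q_P)$, the minimum is attained by two distinct affine functions, so $p\in V(E_P)$.

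\textbf{Gradient bounds.} The gradients of $E_P$ are of two kinds. Those of $\widehat q_P$ have norm at most $\sqrt2 d_N=O(\sqrt N)$. Those of the second piece are $M_{K,P}n_r$, with norm at most $M_{K,P}\max_r\|n_r\|$. For the growth of $M_{K,P}$ we use $H_P\le C_\Omega\sqrt N$, which also comes from \cref{cor:sqrt-interpolant}. This gives
\[
 M_{K,P}\le C_\Omega\sqrt N/\delta_K+2=O_{\Omega,K}(\sqrt N).
\]
Both kinds of gradient are therefore $O_{\Omega,K}(\sqrt N)$.

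\textbf{Side quasi-degree.} Take $x\in\operatorname{relint}(S_r)$. Near $x$ only $\lambda_r$ is small; the other $\lambda_{r'}$ stay positive. So on a small neighborhood $W_x\cap\overline\Omega$ we have $M_{K,P}D_\Omega=M_{K,P}\lambda_r$, and hence
\[
 0\le E_P\le M_{K,P}\lambda_r .
\]
By \cref{lem:side-normal-form}, $E_P=m_{E_P}(S_r)\lambda_r$ on such a neighborhood. Comparing the two at interior points, where $\lambda_r>0$, gives $m_{E_P}(S_r)\le M_{K,P}$.

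The step that needs the most care is the incidence claim: $V(E_P)$ must contain each $p$, and this requires that the barrier term is strictly inactive at $p$, not merely that $E_P(p)=\widehat q_P(p)$. This is why $M_{K,P}$ carries the ``$+1$'', which makes the inequality strict uniformly over $K$. The other steps are direct bookkeeping.
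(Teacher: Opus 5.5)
Your proof is correct and follows the paper's argument. The $+1$ in $M_{K,P}$ makes $\widehat q_P<M_{K,P}D_\Omega$ strict at each mark, so the corner of $\widehat q_P$ survives; near each $x\in\operatorname{relint}(S_r)$, the identity $D_\Omega=\lambda_r$ together with the side normal form gives $m_{E_P}(S_r)\le M_{K,P}$. The only difference is that you verify the gradient bound explicitly through $M_{K,P}\le C_\Omega\sqrt N/\delta_K+2$, whereas the paper leaves this to the remark $M_{K,P}=O_{\Omega,K}(\sqrt N)$ made just before the statement.
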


\begin{proof}
The function is a finite minimum of affine functions with integer gradients.  On the boundary, $M_{K,P}D_\Omega=0$ and $\widehat q_P\ge0$.

At every marked point the inequality $\widehat q_P<M_{K,P}D_\Omega$ is strict and persists in a neighborhood, so the corner condition of $\widehat q_P$ survives.  Pointwise minimality gives
\[
 F_P\le E_P\le\widehat q_P,
 \qquad
 F_P\le E_P\le M_{K,P}D_\Omega.
\]
The inequality $F_P\le E_P\le\widehat q_P$ gives a height bound independent of $K$.  The inequality $F_P\le E_P\le M_{K,P}D_\Omega$ gives a boundary-proportional bound at fixed source depth.

Fix $x\in\operatorname{relint}(S_r)$.  After shrinking a neighborhood of $x$, one has $D_\Omega=\lambda_r$, while \cref{lem:side-normal-form} gives
\[
 E_P=m_{E_P}(S_r)\lambda_r.
\]
Since $E_P\le M_{K,P}D_\Omega$ and $\lambda_r>0$ on the inward side, it follows that $m_{E_P}(S_r)\le M_{K,P}$.
\end{proof}

\begin{remark}[Configuration-dependent source depth]
\label{rem:configuration-depth}
For a single nonempty finite configuration $P\subset\Omega^\circ$, put $N=|P|\ge1$, set $H_P=\max_{\overline\Omega}\widehat q_P$, and define
\[
 \rho_\Omega(P)=\min_{p\in P}D_\Omega(p)>0.
\]
Replacing $\delta_K$ by $\rho_\Omega(P)$ in the same construction gives
\[
 M_{\Omega,P}:=
 \left\lceil\frac{H_P}{\rho_\Omega(P)}\right\rceil+1
 \le \frac{C_\Omega\sqrt N}{\rho_\Omega(P)}+2
\]
and the configuration-dependent estimate
\[
 F_P(x)\le M_{\Omega,P}D_\Omega(x).
\]
Along a sequence with $\rho_\Omega(P_N)\to0$, the coefficient $M_{\Omega,P_N}$ may diverge.  The fixed-depth hypothesis that the sources lie in $K\Subset\Omega$ makes the boundary modulus, the $O(\sqrt N)$ boundary quasi-degree, and the terminal correction uniform.
\end{remark}

\subsection{Symplectic-area comparison}

The global $O(\sqrt N)$ complexity estimate follows by comparing the symplectic areas of the explicit competitor and the minimal relaxation.  For a finite set $P\subset\Omega^\circ$, let
\[
 \mathscr T(\Omega,P)=
 \left\{
 E:\overline\Omega\to\mathbb R_{\ge0}:
 \begin{array}{l}
 E\text{ is a finite }\Omega\text{-tropical polynomial},\\
 E|_{\partial\Omega}=0,\quad P\subset V(E)
 \end{array}
 \right\}.
\]
Here ``finite'' means that $E$ is the minimum of finitely many affine functions with gradients in $\mathbb Z^2$.  In particular, every $E\in\mathscr T(\Omega,P)$ has a finite weighted corner locus and finite tropical symplectic area.

\begin{theorem}[Symplectic-area comparison on a rational polygon]
\label{thm:symplectic-area-comparison}
Let $\Omega$ be a bounded rational convex polygon with sides $S_1,\dots,S_s$.  For every zero-boundary $\Omega$-tropical polynomial $E$ one has the exact identity
\begin{equation}
\label{eq:symplectic-boundary-identity}
 \mathscr A_{\rm symp}(V(E))
 =\sum_{r=1}^s m_E(S_r)\Area_{\rm trop}(S_r),
\end{equation}
where
\[
 \mathscr A_{\rm symp}(V(E))
 =\sum_{e\subset V(E)}w(e)\ell_{\rm Euc}(e)\|v_e\|,
 \qquad
 \Area_{\rm trop}(S_r)=\ell_{\rm Euc}(S_r)\|v_r\|,
\]
$v_e$ and $v_r$ being primitive tangent vectors.  If $P\subset\Omega^\circ$ is finite, then $F_P=G_P0_\Omega$ has a finite small canonical form and
\begin{equation}
\label{eq:symplectic-minimality}
 \mathscr A_{\rm symp}(\Gamma_P)
 \le \mathscr A_{\rm symp}(V(E))
 \qquad
 \text{for every }E\in\mathscr T(\Omega,P).
\end{equation}
\end{theorem}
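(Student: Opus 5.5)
The identity \eqref{eq:symplectic-boundary-identity} comes from a divergence (Stokes) argument applied to the gradient of $E$, a piecewise-constant integer vector field. The inequality \eqref{eq:symplectic-minimality} then follows because the side quasi-degrees are monotone under pointwise order.

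\emph{Step 1: the identity.} Fix a zero-boundary tropical polynomial $E$ with finite essential presentation (\cref{prop:essential-presentation}). For each edge $e$ of $V(E)$, the gradient jump across $e$ is $w(e)n_e$, and $\|n_e\|=\|v_e\|$ because $v_e$ is $n_e$ rotated by ninety degrees. Hence
\[
 \int_e|[\nabla E]|\,d\Hone = w(e)\ell_{\rm Euc}(e)\|v_e\|.
\]
Rather than using absolute values, I would use a signed pairing. Take a constant test vector $a\in\R^2$ and compute $\int_\Omega \inner{a}{\nabla E}\,dx$ in two ways, or better, apply the distributional identity $\operatorname{div}(\nabla E)=\sum_e w(e)\,\inner{n_e}{\nu_e}\Hone|_e$, which is a nonpositive measure by concavity, together with the boundary flux. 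Concretely:
\[
 -\int_{\Omega^\circ} d(\Delta E) = -\int_{\partial\Omega}\partial_\nu E.
\]
Here $\Delta E$ is the singular measure $-\sum_e w(e)\|n_e\|\,\Hone|_e$. Its mass is $\sum_e w(e)\ell(e)\|n_e\|=\mathscr A_{\rm symp}$, since the jump is normal to the edge with magnitude $w(e)\|n_e\|$. By \cref{lem:side-normal-form}, on each side $S_r$ one has $E=m_E(S_r)\lambda_r$ near relative interior points. So the inward normal derivative is $m_E(S_r)\|n_r\|$, and the boundary flux over $S_r$ equals $m_E(S_r)\ell(S_r)\|n_r\|=m_E(S_r)\Area_{\rm trop}(S_r)$, again because $\|n_r\|=\|v_r\|$.

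To make the Gauss--Green step rigorous, I would apply the divergence theorem cellwise to the constant field $\nabla E|_C$ on each polygonal linearity cell $C$ and sum. The interior edge contributions combine into jumps, which are exactly $w(e)\|n_e\|$ per unit length, with the sign fixed by concavity. The boundary contributions give the side fluxes. Finitely many vertices and corners of $\Omega$ have measure zero for $\Hone$, so they do not contribute.

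\emph{Step 2: minimality.} Let $E\in\mathscr T(\Omega,P)$. \Cref{prop:tropical-relaxation} gives that $F_P$ has a finite small canonical form, so Step 1 applies to $F_P$. Since $E$ is admissible and $F_P$ is pointwise minimal, $0\le F_P\le E$. Near a relative interior point of $S_r$, \cref{lem:side-normal-form} gives $m_{F_P}(S_r)\lambda_r\le m_E(S_r)\lambda_r$ with $\lambda_r>0$ inside, so $m_{F_P}(S_r)\le m_E(S_r)$ for every $r$. The weights $\Area_{\rm trop}(S_r)$ are positive, so summing via the identity for both functions gives \eqref{eq:symplectic-minimality}.

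\emph{Main obstacle.} The delicate point is the finiteness needed to apply Step 1 to $F_P$. One must ensure that $F_P$ is globally a finite tropical polynomial up to $\partial\Omega$, so that there are no infinitely many edges accumulating at the boundary. This is the content cited from \cite{KalininShkolnikov2018} in \cref{prop:tropical-relaxation} and \cref{app:background-results}. A secondary point is that cells touching polygon corners are polygons, which is guaranteed by the side normal form.
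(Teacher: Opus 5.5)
Your proof is correct. Step~2 is the paper's own proof of \eqref{eq:symplectic-minimality}. Pointwise minimality gives $F_P\le E$, comparing the two side normal forms of \cref{lem:side-normal-form} at an interior point near $S_r$ gives $m_{F_P}(S_r)\le m_E(S_r)$, and summing with the positive weights $\Area_{\rm trop}(S_r)$ finishes.

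The difference is in the identity \eqref{eq:symplectic-boundary-identity}. The paper does not prove it; it imports it from \cite[Definition~14.1 and Lemma~14.6]{KalininShkolnikov2018}. You instead give a self-contained cellwise Gauss--Green argument, and it works as you describe:
\begin{itemize}
\item Each cell $C$ of the finite essential presentation is a convex polygon on which $\nabla E\equiv m_C$, so $\int_{\partial C}\inner{m_C}{\nu_C}\,d\Hone=0$.
\item If $e$ separates $C$ from $C'$ and $\nu$ is the unit normal pointing into $C'$, the two boundary terms along $e$ sum to $-\inner{m_{C'}-m_C}{\nu}\,\ell_{\rm Euc}(e)=w(e)\|n_e\|\,\ell_{\rm Euc}(e)$. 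Concavity fixes this sign.
\item The relative interior of $S_r$ contributes $-m_E(S_r)\|n_r\|\,\ell_{\rm Euc}(S_r)$ by \cref{lem:side-normal-form}.
\item The ninety-degree rotations give $\|n_e\|=\|v_e\|$ and $\|n_r\|=\|v_r\|$.
\end{itemize}

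Your route makes the identity transparent. It identifies $\mathscr A_{\rm symp}(V(E))$ with the total mass of $-\Delta E$ and the right-hand side with the boundary flux of $\nabla E$. It uses only finiteness of the presentation and the side normal form. The only external input left is finiteness of the small canonical form of $F_P$, which you and the paper both take from \cite{KalininShkolnikov2018}. The paper's citation is shorter and ties the normalization directly to that source's definition of tropical symplectic area.

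One small misattribution: polygonality of the cells, including those at corners of $\Omega$, follows from finiteness alone, since each cell is $\Omega\cap\bigcap_{C'}\{L_C\le L_{C'}\}$. The side normal form is what you need for the boundary flux, not for polygonality.
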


\begin{proof}
The area normalization is \cite[Definition~14.1]{KalininShkolnikov2018}.  In the terminology of that source, a compact $Q$-polygon is exactly a bounded rational convex polygon, a $\Delta$-tropical polynomial is a nonnegative integral-slope tropical polynomial with zero boundary values, and its quasi-degree on a side is the integer $m_E(S_r)$ used here.

Lemma~14.6 of that source gives \eqref{eq:symplectic-boundary-identity} with the displayed normalization.  Finiteness of the small canonical form of $F_P$ on a compact rational polygon is \cite[Remark~9.5]{KalininShkolnikov2018}.

The minimality conclusion also follows directly from the boundary identity and the pointwise relaxation order.  If $E\in\mathscr T(\Omega,P)$, then \cref{prop:tropical-relaxation} gives $F_P\le E$ on $\overline\Omega$.

Fix $x_r\in\operatorname{relint}(S_r)$.  On a common neighborhood given by \cref{lem:side-normal-form}, one has
\[
 F_P=m_{F_P}(S_r)\lambda_r,
 \qquad
 E=m_E(S_r)\lambda_r.
\]
Evaluating at an interior point with $\lambda_r=t>0$ gives
\[
 m_{F_P}(S_r)\le m_E(S_r)
 \qquad (1\le r\le s).
\]
Multiplying by the positive numbers $\Area_{\rm trop}(S_r)$, summing over $r$, and using \eqref{eq:symplectic-boundary-identity} proves \eqref{eq:symplectic-minimality}.  This is the required specialization of \cite[Corollary~14.7]{KalininShkolnikov2018}.
\end{proof}

\begin{lemma}[The interpolating competitor satisfies the comparison hypotheses]
\label{lem:dirichlet-competitor-admissible}
The function $E_P$ of \cref{prop:dirichlet-competitor} belongs to $\mathscr T(\Omega,P)$.  Moreover,
\[
 m_{E_P}(S_r)\le M_{K,P}
 \qquad (1\le r\le s),
\]
so its symplectic area is finite and obeys
\begin{equation}
\label{eq:competitor-area-bound}
 \mathscr A_{\rm symp}(V(E_P))
 \le M_{K,P}\sum_{r=1}^s\Area_{\rm trop}(S_r).
\end{equation}
\end{lemma}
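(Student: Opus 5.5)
The plan is to check the membership conditions for $\mathscr T(\Omega,P)$ directly from \cref{prop:dirichlet-competitor}, and then combine the side bound with the boundary identity of \cref{thm:symplectic-area-comparison}.

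\textbf{Membership in $\mathscr T(\Omega,P)$.}
\begin{enumerate}
\item Write $\widehat q_P=q_P-\min_\Omega q_P$. Here $q_P$ is a minimum of finitely many affine functions $c_j+\inner{m_j}{x}$ with $m_j\in A_{d_N}\subset\Z^2$ (\cref{thm:tropical-interpolation}, \cref{cor:sqrt-interpolant}). Subtracting a constant keeps this form.
\item $M_{K,P}D_\Omega=\min_r M_{K,P}\lambda_r$ is a minimum of $s$ affine functions with integer gradients $M_{K,P}n_r$, since $M_{K,P}$ is an integer and $n_r\in\Z^2$.
\item Hence $E_P$ is a minimum of finitely many integral-slope affine functions, i.e.\ a finite tropical polynomial.
\item The remaining conditions $E_P\ge0$, $E_P|_{\partial\Omega}=0$ and $P\subset V(E_P)$ are exactly the conclusions of \cref{prop:dirichlet-competitor}. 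In particular $E_P$ is a nonnegative continuous function on $\overline\Omega$, hence an $\Omega$-tropical series.
\end{enumerate}
Together these give $E_P\in\mathscr T(\Omega,P)$. Finiteness of the weighted corner locus is automatic, because $E_P$ has finitely many monomials.

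\textbf{Side bound.} The inequality $m_{E_P}(S_r)\le M_{K,P}$ is already stated in \cref{prop:dirichlet-competitor}. Its proof compares the normal forms near an interior point of each side: there $D_\Omega=\lambda_r$, \cref{lem:side-normal-form} gives $E_P=m_{E_P}(S_r)\lambda_r$, and $E_P\le M_{K,P}D_\Omega$. I would simply cite that proposition.

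\textbf{Area bound.} Apply \eqref{eq:symplectic-boundary-identity} to $E=E_P$, which is a zero-boundary $\Omega$-tropical polynomial:
\[
 \mathscr A_{\rm symp}(V(E_P))=\sum_r m_{E_P}(S_r)\Area_{\rm trop}(S_r).
\]
Each term satisfies $m_{E_P}(S_r)\le M_{K,P}$, and each $\Area_{\rm trop}(S_r)$ is positive. Bounding termwise yields \eqref{eq:competitor-area-bound}, and in particular shows the area is finite.

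The only step that needs care is confirming that $E_P$ fits the hypotheses of the cited external identity (\cite[Lemma~14.6]{KalininShkolnikov2018}). That identity requires a nonnegative integral-slope polynomial vanishing on $\partial\Omega$, and this holds by the membership check above. So no real obstacle is expected.
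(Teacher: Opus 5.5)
Your proposal is correct and follows the paper's own proof essentially step for step. Membership in $\mathscr T(\Omega,P)$ comes from writing $E_P$ as a finite minimum of the integral-slope monomials of $\widehat q_P$ and $M_{K,P}D_\Omega$, together with the sign, boundary, and corner conclusions of \cref{prop:dirichlet-competitor}; the side bound is quoted from that proposition, and the area bound follows by termwise substitution into \eqref{eq:symplectic-boundary-identity}.
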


\begin{proof}
By construction, $E_P$ is the minimum of the finitely many affine monomials occurring in $\widehat q_P$ and in $M_{K,P}D_\Omega$; all gradients are integral.  By \Cref{prop:dirichlet-competitor}, one has $E_P\ge0$; moreover, $E_P$ vanishes continuously on $\partial\Omega$, and every point of $P$ lies in $V(E_P)$.  Hence $E_P\in\mathscr T(\Omega,P)$.

The same proposition gives $m_{E_P}(S_r)\le M_{K,P}$.  Substitution in the boundary identity \eqref{eq:symplectic-boundary-identity} gives \eqref{eq:competitor-area-bound}.
\end{proof}

\subsection{Square-root complexity}

\begin{theorem}[Square-root complexity]
\label{thm:sqrt-complexity}
There is a constant $C=C(\Omega,K)$ such that, for every $P\subset K$ with $|P|=N\ge1$,
\[
 \mathscr A_{\rm symp}(\Gamma_P)\le C\sqrt N,
\]
\[
 \mathcal H^1(\Gamma_P)\le C\sqrt N,
\]
and
\[
 D_\partial(F_P)\le C\sqrt N.
\]
No genericity hypothesis is required.
\end{theorem}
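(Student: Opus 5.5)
The plan is to combine the symplectic-area comparison with the explicit competitor, and then convert symplectic area into Euclidean length and boundary quasi-degree. First, by \cref{lem:dirichlet-competitor-admissible}, $E_P\in\mathscr T(\Omega,P)$. Inequality \eqref{eq:symplectic-minimality} of \cref{thm:symplectic-area-comparison} together with \eqref{eq:competitor-area-bound} then gives
\[
 \mathscr A_{\rm symp}(\Gamma_P)\le M_{K,P}\sum_{r=1}^s\Area_{\rm trop}(S_r).
\]
The sum over sides depends only on $\Omega$. The remaining input for the first bound is that $M_{K,P}=\lceil H_P/\delta_K\rceil+1$ with $H_P=\max_\Omega\widehat q_P\le C_\Omega\sqrt N$ by \cref{cor:sqrt-interpolant}, and $\delta_K=\min_K D_\Omega>0$ depends only on $(\Omega,K)$. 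Hence $M_{K,P}\le C_\Omega\sqrt N/\delta_K+2\le C(\Omega,K)\sqrt N$ for $N\ge1$. This uses nothing about genericity: the interpolant exists for every $N$-point set, and $F_P$ has a finite canonical form by \cref{thm:symplectic-area-comparison}.

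For the length bound, I compare termwise. Each edge $e$ of $\Gamma_P$ has weight $w(e)\ge1$, and its primitive tangent vector $v_e\in\Z^2\setminus\{0\}$ satisfies $\|v_e\|\ge1$. Therefore
\[
 \mathcal H^1(\Gamma_P)=\sum_e\ell_{\rm Euc}(e)\le\sum_e w(e)\ell_{\rm Euc}(e)\|v_e\|=\mathscr A_{\rm symp}(\Gamma_P).
\]
One point needs care: the finitely many edges of the finite canonical form must exhaust $\Gamma_P$ up to $\mathcal H^1$-null sets, including the terminal edges running to $\partial\Omega$. This is part of the finite polyhedral structure and is implicit in the definition of $\mathscr A_{\rm symp}$.

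For the boundary quasi-degree, I use the identity \eqref{eq:symplectic-boundary-identity} applied to $E=F_P$:
\[
 \sum_r m_{F_P}(S_r)\Area_{\rm trop}(S_r)=\mathscr A_{\rm symp}(\Gamma_P).
\]
Setting $a_\Omega=\min_r\Area_{\rm trop}(S_r)>0$, this gives $D_\partial(F_P)\le a_\Omega^{-1}\mathscr A_{\rm symp}(\Gamma_P)\le C\sqrt N$. A more direct route is also available: the proof of \cref{thm:symplectic-area-comparison} gives $m_{F_P}(S_r)\le m_{E_P}(S_r)\le M_{K,P}$, so $D_\partial(F_P)\le sM_{K,P}$.

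The only step that is not bookkeeping is the uniformity of $M_{K,P}$. It rests on two facts: the height bound $H_P=O(\sqrt N)$ is independent of the positions of the points, and the depth $\delta_K$ is fixed by $K$. Both are already in hand, so I expect no real obstacle.
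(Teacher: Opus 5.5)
Your proposal is correct and follows the paper's proof essentially step for step. You bound the symplectic area via the competitor $E_P$ and symplectic-area minimality, the length via $w(e)\|v_e\|\ge1$, and $D_\partial(F_P)$ via the boundary identity with $\min_r\Area_{\rm trop}(S_r)>0$. Your alternative bound $D_\partial(F_P)\le sM_{K,P}$, obtained from $m_{F_P}(S_r)\le m_{E_P}(S_r)$, is also valid and already appears inside the paper's proof of the comparison theorem.
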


\begin{proof}
By \cref{lem:dirichlet-competitor-admissible}, the explicit function $E_P$ lies in the admissible comparison class $\mathscr T(\Omega,P)$.  The minimality part of \cref{thm:symplectic-area-comparison} and \eqref{eq:competitor-area-bound} therefore give
\[
 \mathscr A_{\rm symp}(\Gamma_P)
 \le \mathscr A_{\rm symp}(V(E_P))
 \le M_{K,P}\sum_{r=1}^s\Area_{\rm trop}(S_r)
 \le C_{\Omega,K}\sqrt N.
\]

For every edge $e$ of $\Gamma_P$, both $w(e)$ and $\|v_e\|$ are at least one.  Hence
\[
 \mathcal H^1(\Gamma_P)
 =\sum_e\ell_{\rm Euc}(e)
 \le\sum_e w(e)\ell_{\rm Euc}(e)\|v_e\|
 =\mathscr A_{\rm symp}(\Gamma_P).
\]

Finally, the exact boundary identity \eqref{eq:symplectic-boundary-identity}, applied to $F_P$, gives
\[
 \mathscr A_{\rm symp}(\Gamma_P)
 =\sum_{r=1}^s m_{F_P}(S_r)\Area_{\rm trop}(S_r)
 \ge a_{\min}D_\partial(F_P),
 \qquad
 a_{\min}=\min_r\Area_{\rm trop}(S_r)>0.
\]
The three asserted bounds follow with a constant depending only on $\Omega$ and $K$.
\end{proof}

\section{Genericity and marked topology}
\label{sec:genericity}

For fixed $N$, the square-root complexity bound restricts the small-canonical gradients to a finite set.  The relaxation therefore varies in a finite semilinear family as the marked points move.  Its generic strata determine both the bounded-cell incidence rank and the marked-tree topology.

\subsection{Finite slope sets}

Fix an integer $N\ge1$ and an open rational polygon $\mathcal O$ with
\[
 K\Subset\mathcal O\Subset\Omega^\circ.
\]
The complexity estimate of \cref{thm:sqrt-complexity} is uniform for configurations in $\mathcal O$: there is $C=C(\Omega,\mathcal O)$ such that
\[
 D_\partial(F_P)\le C\sqrt N
 \qquad(P\in\Conf_N(\mathcal O)).
\]
This estimate makes the set of possible slopes finite.

\begin{lemma}[Uniform slope support]
\label{lem:uniform-slope-support}
For fixed $N$, there is a finite set
\[
 A_N\subset\Z^2
\]
containing every gradient in the small canonical form of $F_P$, for every $P\in\Conf_N(\mathcal O)$.
\end{lemma}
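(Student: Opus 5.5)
The plan is to bound every gradient of the small canonical form of $F_P$ by a constant depending only on $N$, $\Omega$ and $\mathcal O$; one can then take $A_N$ to be the lattice points of a square of that radius.  Two ingredients will be used.  The first is the uniform boundary quasi-degree bound $D_\partial(F_P)\le C\sqrt N$ for $P\in\Conf_N(\mathcal O)$, which comes from \cref{thm:sqrt-complexity} applied with $K$ replaced by $\overline{\mathcal O}$.  The second is concavity of $F_P$: every gradient is controlled by the gradients that occur near the boundary.

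\textbf{Step 1: gradients on the linearity cells.}  Let $C\in\mathscr C_2(F_P)$ with essential gradient $m_C$.  By \cref{prop:essential-presentation}, $L_C\ge F_P\ge0$ on $\overline\Omega$.  I would compare $L_C$ with the boundary behaviour given by \cref{lem:side-normal-form}.  Near each $x\in\operatorname{relint}(S_r)$ one has $F_P=m_{F_P}(S_r)\lambda_r$, so the supergradients of $F_P$ at boundary-adjacent points are $m_{F_P}(S_r)n_r$.  Since $F_P$ is concave, its superdifferential image $\partial^+F_P(\Omega^\circ)$ is contained in the convex hull of the gradients attained on the boundary cells.  Those gradients are $m_{F_P}(S_r)n_r$, together with the gradients at the polygon's vertices, which lie in the appropriate cone.

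A more robust way to get the same bound is directly through $L_C\ge0$ on $\overline\Omega$.  Write $L_C=c_C+\inner{m_C}{x}$.  We have $L_C(x_C)=F_P(x_C)\le C_\Omega\sqrt N$ by \cref{prop:k-free-height}.  Positivity of $L_C$ at a vertex of $\Omega$ in the direction $-m_C$ then gives
\[
 \inner{m_C}{x_C-y}\le C_\Omega\sqrt N
\]
for every $y\in\overline\Omega$.  This bounds $\|m_C\|$ only if $x_C$ is at definite distance from $\partial\Omega$, which fails for boundary cells.  For boundary cells I would use the side normal form instead: a cell adjacent to $S_r$ has gradient $m_{F_P}(S_r)n_r$, of norm $\le C\sqrt N\max_r\|n_r\|$.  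For bounded cells the concavity argument gives the bound: any interior gradient is a supergradient and lies in $\partial^+F_P(\Omega^\circ)$, and I would show this set is contained in $\conv\{m_{F_P}(S_r)n_r\}\cup\{0\}$ up to the corner cones.  I would argue this via the Newton subdivision.  The essential gradients form a regular subdivision of their convex hull, and the vertices of that hull correspond to unbounded, i.e.\ boundary, cells.  Hence every essential gradient lies in the convex hull of the boundary-cell gradients, each of norm $\le C\sqrt N$.

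\textbf{Step 2: passing to the small canonical form.}  The remark after \cref{prop:essential-presentation} says that the extra canonical monomials agree with $F_P$ only on lower-dimensional faces.  Such a monomial is still a supergradient at some point of $\overline\Omega$, so it lies in the same convex hull.  Near $\partial\Omega$ the relevant supergradients are nonnegative multiples of $n_r$ bounded by $m_{F_P}(S_r)$.  Alternatively, I would invoke the finiteness and slope statements from \cite{KalininShkolnikov2018} listed in \cref{app:background-results}, which bound canonical gradients by the quasi-degrees.

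\textbf{Main obstacle.}  The delicate point is the claim that the extreme points of the Newton polytope come from boundary cells, including cells touching a corner of $\Omega$ where two sides meet.  At a corner, the adjacent cell's gradient must vanish on neither side individually.  Its coefficient and gradient are still pinned by $F_P\ge0$ together with $F_P\le C\sqrt N$, and by the bounded quasi-degrees on both sides.  I expect to handle this by a local analysis at each vertex of $\Omega$: all monomials active there lie in the cone spanned by the two inward normals, and are bounded by the two side quasi-degrees.  The resulting bound is $O_{\Omega,\mathcal O}(\sqrt N)$, and in particular it is finite for fixed $N$.
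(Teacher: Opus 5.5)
Your skeleton matches the paper's. You take a uniform bound on the side quasi-degrees from \cref{thm:sqrt-complexity}, with $\overline{\mathcal O}$ in place of $K$, and then place every gradient in the convex hull of the boundary gradients $m_{F_P}(S_r)n_r$. The paper obtains the hull containment in one line from \cite[Remark~9.7]{KalininShkolnikov2018}; you try to prove it yourself.

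For essential gradients your proof works. The step hidden in ``unbounded, i.e.\ boundary'' is the following. Suppose $m_C$ is a vertex of the hull of essential gradients. Then the region of $\R^2$ where $L_C$ is minimal among the essential monomials is convex and unbounded, and it contains the open cell, so it meets $\partial\Omega$. By the contact-set identity of \cref{prop:essential-presentation}, the trace of that region on $\overline\Omega$ is exactly $C$, so $C$ is a boundary cell.

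The corner analysis you anticipate is exactly \cref{lem:corner-newton-chain}. That lemma uses only \cref{lem:side-normal-form} and the essential presentation, so there is no circularity. It gives $m=\alpha n_r+\beta n_{r+1}$ with $0\le\alpha\le k_r$ and $0\le\beta\le k_{r+1}$. The height-bound paragraph can be dropped.

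The weak point is Step~2. The supergradient argument controls only those extra monomials that touch $F_P$ at an interior point $x$, where $\partial^+F_P(x)=\conv\{m_C:x\in C\}$. At boundary points the superdifferential is unbounded. For every $s\ge0$, the completed monomial with gradient $(m_{F_P}(S_r)+s)n_r$ is $(m_{F_P}(S_r)+s)\lambda_r$. It lies above $F_P$, because $F_P\le m_{F_P}(S_r)\lambda_r$ and $\lambda_r\ge0$ on $\Omega$. It also touches $F_P$ along all of $S_r$.

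So your sentence that near $\partial\Omega$ the relevant supergradients are multiples of $n_r$ bounded by $m_{F_P}(S_r)$ is false for supergradients as such. What excludes these boundary-only contacts is the definition of the \emph{small} canonical form. The resulting hull statement is precisely \cite[Remark~9.7]{KalininShkolnikov2018}. Your ``alternative'' citation is therefore not optional: it is the step the paper's proof consists of, and with it your argument is complete.
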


\begin{proof}
Let $S_1,\dots,S_s$ be the sides of $\Omega$, with primitive inward normals $n_1,\dots,n_s$.  By \cref{lem:side-normal-form}, the boundary gradient associated with $S_r$ is $m_{F_P}(S_r)n_r$.  Since each side quasi-degree is bounded by $D_\partial(F_P)$, these boundary gradients lie in a fixed Euclidean ball, depending on $N,\Omega,\mathcal O$.

By the convex-hull description of the small-canonical gradients in \cite[Remark 9.7]{KalininShkolnikov2018}, every gradient of $F_P$ belongs to the convex hull of the boundary gradients.  Hence all such gradients lie in a fixed bounded subset of $\R^2$.  Its intersection with $\Z^2$ is the required finite set $A_N$.
\end{proof}

For $c=(c_m)_{m\in A_N}\in\R^{A_N}$, write
\[
 f_c(x)=\min_{m\in A_N}\bigl(c_m+\inner{m}{x}\bigr).
\]
If a tropical function $f$ has small-canonical support contained in $A_N$, define its completed coefficient at $m\in A_N$ by
\[
 \kappa_m(f)=\max_{x\in\overline\Omega}\bigl(f(x)-\inner{m}{x}\bigr).
\]
Every completed monomial lies above $f$.  Since the original small-canonical monomials remain among the completed terms, one has
\[
 f(x)=\min_{m\in A_N}\bigl(\kappa_m(f)+\inner{m}{x}\bigr).
\]
Set
\[
 c(P)=\bigl(\kappa_m(F_P)\bigr)_{m\in A_N}.
\]
Thus $f_{c(P)}=F_P$ in a fixed coefficient space independent of $P$.

\begin{lemma}[Uniqueness of completed coordinates]
\label{lem:completed-coordinate-uniqueness}
Let $A\subset\Z^2$ be finite and let $f$ admit a presentation
\[
 f(x)=\min_{m\in A}\bigl(a_m+\inner{m}{x}\bigr)
 \qquad(x\in\overline\Omega).
\]
Then the completed vector
\[
 \kappa_A(f)=\left(\max_{x\in\overline\Omega}
 \bigl(f(x)-\inner{m}{x}\bigr)\right)_{m\in A}
\]
is the unique vector $c\in\R^A$ satisfying both $f=f_c$ and
$c_m=\max_{\overline\Omega}(f-\inner{m}{\cdot})$ for every $m\in A$.
Consequently, the coordinates $c(P)$ carry neither an additive gauge nor an ambiguity coming from redundant monomials.
\end{lemma}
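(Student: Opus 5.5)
The plan has two parts: first show that the completed vector $\kappa_A(f)$ satisfies both conditions, then show that any vector satisfying both conditions must equal it. The second condition alone almost forces uniqueness, so most of the work is in the existence part.

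\textbf{Existence.} Set $k_m=\kappa_A(f)_m=\max_{\overline\Omega}(f-\inner{m}{\cdot})$. The maximum is attained because $f$ is continuous on the compact body $\overline\Omega$. From the given presentation, $f\le a_m+\inner{m}{\cdot}$ on $\overline\Omega$, so $k_m\le a_m$ and therefore $f_{k}\le f_a=f$. In the other direction, the definition of $k_m$ gives $f(x)-\inner{m}{x}\le k_m$ for every $x$, hence $f\le k_m+\inner{m}{\cdot}$ for each $m$, and so $f\le f_k$. Together these give $f=f_k$, and the identity $k_m=\max_{\overline\Omega}(f-\inner{m}{\cdot})$ holds by definition.

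\textbf{Uniqueness.} Suppose $c\in\R^A$ satisfies $f=f_c$ and $c_m=\max_{\overline\Omega}(f-\inner{m}{\cdot})$ for every $m$. The second condition is exactly the defining formula for $\kappa_A(f)_m$, which depends only on the function $f$. Hence $c=\kappa_A(f)$ coordinatewise.

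\textbf{Consequences for $c(P)$.} No additive gauge remains, because each coordinate of $c(P)$ is a maximum determined by the function $F_P$ alone. Redundant monomials cause no ambiguity either: if a coefficient $a_m$ is inactive, lowering it to $\kappa_m$ makes the monomial touch $f$ at some point without changing $f$. So every presentation of $F_P$ with support in $A_N$ yields the same completed vector. This is precisely why $c(P)$ is a well-defined function of $P$.

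There is no real obstacle here. The one point to record is that the maximum over the compact set $\overline\Omega$ is attained, which uses the continuity of $f$ up to the boundary.
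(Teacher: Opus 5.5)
Your proof is correct and follows essentially the same route as the paper: you get $\kappa_m\le a_m$ from the presentation, show that the completed terms reproduce $f$ on $\overline\Omega$, and note that uniqueness is immediate because the completion identity is the defining formula for $\kappa_A(f)$. The only cosmetic difference is that you obtain $f_{\kappa_A(f)}\le f$ from monotonicity of the minimum in the coefficients, whereas the paper exhibits at each point an active index $m$ with $\kappa_m(f)=a_m$.
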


\begin{proof}
Every affine term in a presentation of $f$ lies above $f$, so
$\kappa_m(f)\le a_m$.  At a point $x$, choose an index $m$ active in the given presentation.  Then
\[
 a_m=f(x)-\inner{m}{x}
 \le \max_{y\in\overline\Omega}
       \bigl(f(y)-\inner{m}{y}\bigr)=\kappa_m(f),
\]
so $\kappa_m(f)=a_m$ for at least one active index at every point.  Hence the completed terms reproduce $f$.

If $c$ is another completed vector representing $f$, then its defining completion identity gives
$c_m=\kappa_m(f)$ for each $m$.  Thus $c=\kappa_A(f)$.  In particular, coefficients of monomials that are nowhere essential are fixed by completion rather than left free, and an additive shift would change the represented zero-boundary function.
\end{proof}

\subsection{Semilinear relaxation}

A subset of a finite-dimensional real vector space is called \emph{semilinear} if it is a finite union of relatively open polyhedra.  Its dimension is the maximum of the affine dimensions of the polyhedra in such a decomposition, with $\dim\varnothing=-\infty$.

\begin{proposition}[Semilinear dimension facts]
\label{prop:semilinear-dimension-facts}
Semilinear sets have the following properties.
\begin{enumerate}[label=\textup{(\roman*)}]
\item They are closed under finite Boolean operations and linear projections.
\item If $\pi$ is linear and $X$ is semilinear, then
\[
 \dim \pi(X)\le \dim X.
\]
\item If $X\subset\R^a\times\R^b$ is semilinear and every nonempty fiber $X_x$ has dimension at most $d$, then
\[
 \dim X\le \dim\pi(X)+d,
\]
where $\pi:\R^a\times\R^b\to\R^a$ is the coordinate projection.
\item If a semilinear set $G\subset\R^a\times\R^b$ is the graph of a single-valued map, then the domain admits a finite semilinear partition on every cell of which the map is affine.
\item For a finite family of polyhedra defined by affine equalities and inequalities whose right-hand sides depend affinely on parameters, the parameter loci on which a member is nonempty, has a prescribed dimension, or satisfies a prescribed closure-intersection or face-containment relation are semilinear.
\end{enumerate}
\end{proposition}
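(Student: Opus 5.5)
The plan is to reduce everything to the standard structure theory of polyhedra, that is, to Fourier--Motzkin elimination and the cell decomposition of semilinear sets.

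\emph{Normal form.} First I show that every semilinear set is a finite \emph{disjoint} union of relatively open polyhedra. Given an arbitrary finite union of relatively open polyhedra, take the finitely many affine functions appearing in their defining equalities and inequalities. Their sign vectors partition space into relatively open polyhedral cells. Each original piece is a union of such cells, so the original set is a disjoint union of cells. With this normal form, dimension is well defined: it equals the maximal dimension of a cell in any cell decomposition. Well-definedness follows because the dimension of the affine hull of an open subset of the set is intrinsic; concretely, it is the largest $k$ for which the set contains a relatively open $k$-dimensional disc.

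\emph{Item (i).} Closure under intersection and complement is immediate from the sign-vector refinement, since the complement of a union of cells is the union of the remaining cells. Closure under linear projection uses Fourier--Motzkin elimination: the projection of a relatively open polyhedron is again defined by finitely many strict and non-strict affine inequalities, hence is semilinear.

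\emph{Item (ii).} A linear map sends a $k$-dimensional affine piece into an affine space of dimension at most $k$, and the image of a finite union is the union of the images.

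\emph{Item (iii).} Decompose $X$ into cells $\sigma$. For each cell, $\pi|_{\sigma}$ is the restriction of a linear map, so
\[
 \dim\sigma=\dim\pi(\sigma)+\dim(\text{fiber of }\pi|_\sigma).
\]
The fiber of $\pi|_\sigma$ is a relatively open polyhedron contained in $X_x$, so its dimension is at most $d$. Moreover $\pi(\sigma)\subset\pi(X)$. Taking the maximum over cells gives the bound.

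\emph{Item (iv).} I refine the cell decomposition of the graph $G$ and project to the domain. For a cell $\sigma\subset G$, injectivity of $\pi|_G$ makes $\pi|_\sigma$ an injective affine map, hence an affine isomorphism onto $\pi(\sigma)$. Its inverse, composed with the second projection, is an affine map on $\pi(\sigma)$. The images $\pi(\sigma)$ are disjoint because the map is single-valued, so they form the required partition.

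\emph{Item (v).} Consider the total set in (parameter, point) space. It is a polyhedron, since the constraints are jointly affine. Nonemptiness is then a projection of this polyhedron. For a prescribed dimension, I first refine by the sign patterns of the constraints so that, on each parameter cell, the set of implicit equalities of the fiber is constant; the fiber dimension then equals $b$ minus the rank of those equalities. Closure and face relations are expressed with finitely many quantifiers over points. For example, $\overline{P_1}\cap\overline{P_2}\neq\varnothing$ becomes: there is a point satisfying the non-strict versions of both systems. Face containment is handled similarly, as a universally quantified linear condition. By (i), a universal quantifier is a complement of a projection of a complement, so these loci are semilinear.

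The main obstacle is (v): one must check that closure and face relations of parametric polyhedra are first-order expressible in linear arithmetic. This holds because the closure of a nonempty relatively open polyhedron is obtained by relaxing its strict inequalities. I would simply cite quantifier elimination for the theory of ordered vector spaces over $\R$, which yields (i) and (v) at once.
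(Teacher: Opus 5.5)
Your proposal is correct and follows the paper's proof essentially step by step. Both use quantifier elimination or Fourier--Motzkin elimination for (i), cellwise linear images for (ii), rank--nullity on each cell for (iii), and injectivity of the projection on each graph cell for (iv). For (v), both express the loci as existential or universal affine formulas and read off dimension from the rank of the active (implicit) equality system, whose matrix is constant in the parameter.

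Your sign-vector normal form and the intrinsic description of $\dim$ make explicit two points the paper leaves tacit: well-definedness of dimension, and disjointness of the projected cells in (iv). The one step that deserves an extra line is in (iv): injectivity of $\pi$ on a relatively open cell forces injectivity on that cell's affine hull.
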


\begin{proof}
Closure under Boolean operations and projection is quantifier elimination for ordered real vector spaces; projection may also be obtained directly by Fourier--Motzkin elimination \cite[\S 12.2]{Schrijver1986}.  Decompose a semilinear set into finitely many relatively open polyhedra.  The linear image of each cell has dimension no larger than that cell, proving (ii).

For (iii), let $Q$ be one cell of such a decomposition of $X$.  On the affine hull of $Q$, rank--nullity for $\pi$ gives
\[
 \dim Q=\dim\pi(Q)+\dim\bigl(\operatorname{aff}(Q)\cap\ker\pi\bigr).
\]
Over the relative interior of $\pi(Q)$, the last dimension is the dimension of the fibers of $Q$, and is therefore at most $d$.  Since $\dim\pi(Q)\le\dim\pi(X)$, taking the maximum over $Q$ proves the fiber inequality.

For (iv), decompose the graph into relatively open polyhedra.  The projection is injective on the affine hull of every graph cell: a nonzero vertical direction would, by relative openness, give two nearby graph points over the same base point.  Hence the inverse of the projected affine map is affine on that cell.  Refining the projected cells gives the required partition.

Finally, (v) follows from the same finite elimination procedure.  Nonemptiness is an existential affine formula.  After a finite subdivision, dimension is determined by the ranks of the active equality systems.  A closure intersection $\overline P\cap\overline Q\ne\varnothing$ is again an existential affine formula with weak inequalities.

A face-containment relation, such as $Q\subset\overline P$, is the complement of the projection of the semilinear set of parameter--point pairs with the point in $Q\setminus\overline P$.  Finite Boolean combinations preserve semilinearity.
\end{proof}

Let $\mathscr A_N\subset\Conf_N(\mathcal O)\times\R^{A_N}$ be the set of pairs $(P,c)$ such that $f_c$ is nonnegative on $\Omega$, zero on $\partial\Omega$, and nonsmooth at every point of $P$.

\begin{lemma}[Polyhedral admissibility]
\label{lem:admissible-semilinear}
The set $\mathscr A_N$ is semilinear.
\end{lemma}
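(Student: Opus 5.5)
The plan is to write $\mathscr A_N$ as a finite Boolean combination of sets cut out by affine conditions in $(P,c)\in\R^{2N}\times\R^{A_N}$, with only finitely many existential quantifiers over points of $\R^2$. Then \cref{prop:semilinear-dimension-facts}(i) gives semilinearity. The key device is to partition $\R^{2N}\times\R^{A_N}$ by which index $m\in A_N$ attains the minimum in $f_c(x)$ at a given point $x$. For fixed $(m,m')$, the conditions $c_m+\inner{m}{x}\le c_{m'}+\inner{m'}{x}$ are affine in the joint variables $(x,c)$, because $m,m'$ are fixed integer vectors. The nonlinearity one might fear, namely the term $\inner{m}{p_i}$, is in fact linear in $p_i$ since $m$ is a constant.

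The three conditions are handled one at a time.

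\emph{Nonnegativity.} We need $f_c\ge0$ on $\Omega$, which means $c_m+\inner{m}{x}\ge0$ for all $m\in A_N$ and all $x\in\Omega$. Since $\Omega$ is a compact polygon, it suffices to impose this at the finitely many vertices of $\Omega$. This gives a finite system of linear inequalities in $c$ alone, so its solution set is a polyhedron.

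\emph{Zero boundary values.} We need $f_c=0$ on $\partial\Omega$. Given nonnegativity, this says that for every $x\in S_r$ some $m$ has $c_m+\inner{m}{x}=0$. Rather than quantify universally, I would write the complement: the set of $c$ for which some $x\in S_r$ has $c_m+\inner{m}{x}>0$ for every $m$. This is the projection to $c$-space of a semilinear subset of $\R^2\times\R^{A_N}$, since $S_r$ is a polyhedron and the inequalities are affine in $(x,c)$. By \cref{prop:semilinear-dimension-facts}(i) the projection, and hence its complement, is semilinear.

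\emph{Nonsmoothness at every $p_i$.} The point $p_i\in V(f_c)$ exactly when the minimum at $p_i$ is attained by two distinct indices $m\ne m'$. For distinct integer gradients, two active affine pieces are equivalent to a corner at that point. The condition is therefore the finite union over pairs $m\ne m'$ of
\[
 c_m+\inner{m}{p_i}=c_{m'}+\inner{m'}{p_i}\le c_{m''}+\inner{m''}{p_i}\qquad(m''\in A_N).
\]
Each of these systems is affine in $(p_i,c)$. Intersecting over $i=1,\dots,N$ and with the open semilinear set $\Conf_N(\mathcal O)$ keeps the result semilinear. Here $\Conf_N(\mathcal O)$ is an open polygon power minus the diagonals, which is a Boolean combination of polyhedra.

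The step needing the most care is the universal quantifier over boundary points. It is cleanly removed by passing to the complement and projecting, using Fourier--Motzkin elimination via \cref{prop:semilinear-dimension-facts}(i). Beyond that, one only has to check the equivalence between "nonsmooth at $p$" and "two distinct indices active". This is immediate, because distinct gradients make $f_c$ fail to be differentiable wherever two of them tie at the minimum.
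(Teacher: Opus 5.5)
Your proof is correct. Two of its three steps match the paper's exactly:
\begin{itemize}
\item nonnegativity reduces to the finitely many inequalities $c_m+\inner{m}{v}\ge0$ at the vertices $v$ of $\Omega$;
\item $p_i\in V(f_c)$ is a finite disjunction, over pairs $m\ne m'$, of tie-and-minimality systems that are affine in $(p_i,c)$.
\end{itemize}

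The route differs only for the zero-boundary condition. You remove the universal quantifier over $x\in S_r$ by passing to the complement. You project the semilinear set of pairs $(x,c)$ with $x\in S_r$ and $c_m+\inner{m}{x}>0$ for all $m\in A_N$, and then invoke \cref{prop:semilinear-dimension-facts}(i).

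The paper instead shows directly that, once nonnegativity holds, $f_c\equiv0$ on $S_r$ if and only if a single monomial vanishes identically on $S_r$. The reason is that each nonnegative affine restriction to the segment $S_r$ has zero set that is empty, a point, or all of $S_r$. Finitely many such sets can cover $S_r$ only if one of them is all of $S_r$. This gives a quantifier-free description of $\mathscr A_N$: one vanishing monomial chosen per side and one tied pair per marked point. It also isolates the coefficient-pinning mechanism that drives \cref{lem:side-normal-form} and \cref{lem:moduli-bound}.

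Your version avoids that covering argument and works for any boundary condition expressible by affine inequalities. The cost is that it leans on the projection (quantifier-elimination) black box and gives only an implicit description. The paper needs that black box anyway in \cref{prop:semilinear-coefficient-graph}, so nothing is lost for this lemma.
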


\begin{proof}
Each admissibility condition is equivalent to finitely many affine equalities and inequalities.

First, $f_c\ge0$ on $\Omega$ if and only if every affine term is nonnegative on $\Omega$.  An affine function attains its minimum on a compact polygon at a polygon vertex, so this is equivalent to
\[
 c_m+\inner{m}{v}\ge0
\]
for every $m\in A_N$ and every vertex $v$ of $\Omega$.

Assume these inequalities.  On a side $S$, the equality $f_c=0$ holds identically if and only if at least one monomial vanishes identically on $S$.  One implication is immediate.  Conversely, the zero sets on the interval $S$ of the finitely many nonnegative affine restrictions are each empty, a point, or all of $S$.  If their union covers $S$, one of them must equal all of $S$.  Vanishing identically on a fixed side is a finite affine system in the coefficient $c_m$.

Finally, $p_i\in V(f_c)$ if and only if there are distinct $m,n\in A_N$ such that
\[
 c_m+\inner{m}{p_i}=c_n+\inner{n}{p_i}
 \le c_r+\inner{r}{p_i}
 \qquad(r\in A_N).
\]
This is a finite disjunction over the pairs $(m,n)$.  The conjunction of the three families of conditions is therefore semilinear.
\end{proof}

\begin{lemma}[Least coefficient vector]
\label{lem:least-coefficients}
For every $P$, the completed vector $c(P)$ belongs to the fiber $\mathscr A_N(P)$ and satisfies
\[
 c(P)\le d
\]
coordinatewise for every $d\in\mathscr A_N(P)$.
\end{lemma}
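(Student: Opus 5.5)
We must show two things: that $c(P)$ lies in $\mathscr A_N(P)$, and that it is coordinatewise below every admissible $d$.

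\emph{Membership.} By \cref{lem:uniform-slope-support}, the small-canonical support of $F_P$ lies in $A_N$. The completion identity preceding \cref{lem:completed-coordinate-uniqueness} then gives $f_{c(P)}=F_P$. By \cref{prop:tropical-relaxation}, $F_P$ is a nonnegative $\Omega$-tropical series with zero boundary values whose corner locus contains $P$. Since $F_P=f_{c(P)}$ pointwise, the corner locus $V(f_{c(P)})=V(F_P)$ contains $P$. Corner loci are intrinsic to the function: a point is non-smooth for the min-of-affine presentation iff the function is not affine near it, and the surplus completed monomials never change this. Hence $(P,c(P))\in\mathscr A_N$.

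\emph{Minimality.} Let $d\in\mathscr A_N(P)$. Then $f_d$ is a finite minimum of integer-gradient affine functions. It is nonnegative on $\Omega$, zero on $\partial\Omega$, and nonsmooth at every point of $P$, so it is an $\Omega$-tropical polynomial in the class $\mathcal V(\Omega,P,0_\Omega)$. Pointwise minimality in \cref{prop:tropical-relaxation} gives $F_P\le f_d$ on $\overline\Omega$. Fix $m\in A_N$. For every $x\in\overline\Omega$,
\[
 F_P(x)-\inner{m}{x}\le f_d(x)-\inner{m}{x}\le d_m ,
\]
because the term $d_m+\inner{m}{x}$ is one of those in the minimum defining $f_d$. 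Taking the maximum over $x$ yields $\kappa_m(F_P)\le d_m$, that is, $c(P)_m\le d_m$.

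The only delicate point is the membership step, specifically identifying nonsmoothness of $f_{c(P)}$ with that of $F_P$. This holds because both are the same function. The condition defining $\mathscr A_N$ is the geometric condition ``nonsmooth at $p$'', not a condition on which monomials are active. The equivalence with the two-active-monomials formula in \cref{lem:admissible-semilinear} is then automatic for any presentation: if only one monomial is active at $p$, it stays strictly minimal nearby and $f$ is affine there. Conversely, if $f$ is affine near $p$, some active monomial coincides with that affine germ. Every other monomial that is active at $p$ lies above it with equal value at an interior point, so it equals it as a function. Its gradient $m$ is therefore the same, which means it is the same index in $A_N$ and not a distinct one. So ``nonsmooth at $p$'' holds exactly when two distinct indices are active at $p$, and no further obstacle remains.
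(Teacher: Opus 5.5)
Your proof is correct and follows the paper's argument: membership holds because $f_{c(P)}=F_P$ is itself admissible, and minimality follows from $F_P\le f_d$ together with $f_d(x)\le d_m+\inner{m}{x}$, maximized over $\overline\Omega$. Your extra paragraph, showing that ``nonsmooth at $p$'' is independent of the presentation and so unaffected by redundant completed monomials, is a valid elaboration of a point the paper takes for granted.
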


\begin{proof}
The vector $c(P)$ represents $F_P$, which is nonnegative, zero on the boundary, and nonsmooth at every marked point; hence $c(P)\in\mathscr A_N(P)$.  If $d\in\mathscr A_N(P)$, then $f_d$ is admissible for the relaxation defining $F_P$.  Pointwise minimality gives $F_P\le f_d$.  Thus, for each $m\in A_N$,
\[
 \begin{aligned}
 c_m(P)
 &=\max_{x\in\overline\Omega}\bigl(F_P(x)-\inner{m}{x}\bigr)\\
 &\le\max_{x\in\overline\Omega}\bigl(f_d(x)-\inner{m}{x}\bigr)\\
 &\le d_m,
 \end{aligned}
\]
because $f_d(x)\le d_m+\inner{m}{x}$.
\end{proof}

\begin{proposition}[Semilinear graph]
\label{prop:semilinear-coefficient-graph}
The graph
\[
 \mathscr G_N=\{(P,c(P)):P\in\Conf_N(\mathcal O)\}
\]
is semilinear.
\end{proposition}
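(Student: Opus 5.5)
The plan is to characterize the graph as the set of coordinatewise minimal points of the fibres of $\mathscr A_N$ and express that by a first-order affine formula. By \cref{lem:least-coefficients}, for every $P\in\Conf_N(\mathcal O)$ the vector $c(P)$ lies in the fiber $\mathscr A_N(P)$ and is a coordinatewise lower bound for it. A fiber has at most one such least element, since two of them would be mutually $\le$. Hence
\[
 \mathscr G_N=\bigl\{(P,c)\in\mathscr A_N:\ \text{for all } d,\ (P,d)\in\mathscr A_N\Rightarrow c\le d\bigr\}.
\]
The inclusion $\subset$ is \cref{lem:least-coefficients}. For $\supset$, if $(P,c)$ lies in the right-hand side then $c$ and $c(P)$ are both least elements of $\mathscr A_N(P)$, so $c=c(P)$.

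Semilinearity then follows from \cref{prop:semilinear-dimension-facts}(i) applied to \cref{lem:admissible-semilinear}. Consider, for each $m\in A_N$, the set
\[
 B_m=\{(P,c,d):(P,c)\in\mathscr A_N,\ (P,d)\in\mathscr A_N,\ d_m<c_m\}\subset\Conf_N(\mathcal O)\times\R^{A_N}\times\R^{A_N}.
\]
This is an intersection of two preimages of $\mathscr A_N$ under coordinate projections with an open half-space. Preimages under coordinate projections of semilinear sets are semilinear, being products with a full space, so $B_m$ is semilinear. Projecting away $d$ gives the semilinear set $\pi(B_m)$ of pairs $(P,c)\in\mathscr A_N$ that fail minimality in coordinate $m$. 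Therefore
\[
 \mathscr G_N=\mathscr A_N\setminus\bigcup_{m\in A_N}\pi(B_m)
\]
is semilinear, by closure under finite Boolean operations.

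The one point needing care is that $\Conf_N(\mathcal O)$ is itself semilinear, so that all sets live in a semilinear ambient space. It is the intersection of the open polyhedron $\mathcal O^N$ with the complements of the linear subspaces $\{p_i=p_j\}$. I do not expect a genuine obstacle, since the substantive input (least elements exist and equal the completed coordinates) is already \cref{lem:least-coefficients}. The only step to check is the quantifier elimination, which is exactly \cref{prop:semilinear-dimension-facts}(i).
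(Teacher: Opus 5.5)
Your proposal is correct and matches the paper's proof: both use \cref{lem:least-coefficients} to identify $\mathscr G_N$ with the pairs $(P,c)\in\mathscr A_N$ for which no $d\in\mathscr A_N(P)$ has $d_m<c_m$ in some coordinate, then project the semilinear set of witness triples, take its complement, and intersect with $\mathscr A_N$. Your uniqueness-of-least-element check and your remark that $\Conf_N(\mathcal O)$ is semilinear spell out points the paper leaves implicit.
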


\begin{proof}
By \cref{lem:least-coefficients}, a pair $(P,c)$ belongs to $\mathscr G_N$ precisely when $(P,c)\in\mathscr A_N$ and there is no $d\in\mathscr A_N(P)$ with $d_m<c_m$ for at least one $m\in A_N$.  The set of triples $(P,c,d)$ satisfying the latter strict inequality in some coordinate is semilinear.

By \cref{prop:semilinear-dimension-facts}(i), its projection to $(P,c)$ is semilinear.  Taking the complement and intersecting with $\mathscr A_N$ gives $\mathscr G_N$.
\end{proof}

\begin{proposition}[Piecewise-affine dependence]
\label{prop:piecewise-affine-relaxation}
There is a finite polyhedral partition of $\Conf_N(\mathcal O)$ such that $P\mapsto c(P)$ is affine on every cell.
\end{proposition}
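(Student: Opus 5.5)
The plan is to read the statement off \cref{prop:semilinear-dimension-facts}(iv) applied to the graph $\mathscr G_N$ of \cref{prop:semilinear-coefficient-graph}. Only two things need checking: that $\mathscr G_N$ is the graph of a single-valued map on the whole of $\Conf_N(\mathcal O)$, and that the cells produced by (iv) can be taken to be polyhedra.

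\emph{Single-valuedness and domain.} By \cref{lem:completed-coordinate-uniqueness}, the completed vector $c(P)=\kappa_{A_N}(F_P)$ is uniquely determined by $F_P$. By \cref{prop:tropical-relaxation}, $F_P$ is uniquely determined by $P$. Here $A_N$ is the fixed finite slope set of \cref{lem:uniform-slope-support}, so $c(P)$ lies in the fixed space $\R^{A_N}$ for every $P\in\Conf_N(\mathcal O)$. Hence $\mathscr G_N\subset\Conf_N(\mathcal O)\times\R^{A_N}$ meets each fiber over $\Conf_N(\mathcal O)$ in exactly one point. It is therefore the graph of the map $P\mapsto c(P)$, and its projection is all of $\Conf_N(\mathcal O)$.

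One point needs care. $\Conf_N(\mathcal O)$ consists of ordered tuples, while $F_P$ depends on the underlying set. This causes no problem: the map $(p_1,\dots,p_N)\mapsto c(\{p_1,\dots,p_N\})$ is still single-valued on ordered tuples. $\Conf_N(\mathcal O)$ is itself semilinear, being $\mathcal O^N$ minus finitely many linear diagonals, with $\mathcal O$ an open rational polygon. So all the sets involved live in $\R^{2N}\times\R^{A_N}$, as (iv) requires.

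\emph{Applying (iv).} \cref{prop:semilinear-dimension-facts}(iv) now gives a finite semilinear partition of $\Conf_N(\mathcal O)$ such that $c$ is affine on each piece. A semilinear set is by definition a finite union of relatively open polyhedra. Refining each piece into such polyhedra turns the partition into a finite partition of $\Conf_N(\mathcal O)$ into relatively open polyhedra. The map $c$ stays affine on each refined cell, because it was affine on the piece containing that cell. This finite polyhedral partition is the one claimed.

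\emph{Main obstacle.} The only substantive step is (iv) itself, namely that the graph's relatively open cells project injectively. The proof of \cref{prop:semilinear-dimension-facts} already supplies this. A vertical direction in the affine hull of a relatively open graph cell would produce two graph points over one base point, contradicting single-valuedness. Everything else is bookkeeping already done in \cref{lem:admissible-semilinear,lem:least-coefficients,prop:semilinear-coefficient-graph}. If one wants to avoid relying on (iv), one can argue directly instead. Decompose $\mathscr G_N$ into relatively open polyhedra $Q_1,\dots,Q_k$. On $\operatorname{aff}(Q_j)$ the projection is injective, so $Q_j$ is the graph of an affine map over the relatively open polyhedron $\pi(Q_j)$. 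Then take the common refinement of the finitely many polyhedra $\pi(Q_j)$, which cover $\Conf_N(\mathcal O)$.
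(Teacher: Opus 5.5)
Your proposal is correct and follows the paper's own proof. The paper decomposes $\mathscr G_N$ into relatively open polyhedra, uses uniqueness of the completed coefficient vector to exclude vertical directions in each cell's affine hull, and then refines the projected polyhedra; this is exactly your ``direct'' argument, which is also the content of \cref{prop:semilinear-dimension-facts}(iv) that your main route invokes.
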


\begin{proof}
Decompose the semilinear graph $\mathscr G_N$ into finitely many relatively open polyhedra.  The projection
\[
 \pi:\mathscr G_N\longrightarrow\Conf_N(\mathcal O)
\]
is injective because the completed coefficient vector is unique.

On the affine hull of each graph cell, the kernel of the linear part of $\pi$ is therefore trivial: a nonzero vertical direction would produce two graph points over the same configuration.  Hence the restriction of $\pi$ has an affine inverse on its image.  Each graph cell is the graph of an affine map over a polyhedron.

Refining the finitely many projected polyhedra gives the desired partition.
\end{proof}

\subsection{Active sets}

Let $\mathscr F(\Omega)$ denote the finite face lattice of the compact polygon $\Omega$: the polygon itself, its closed sides, and its vertices.  For $\sigma\in\mathscr F(\Omega)$, fix once and for all an affine description of $\operatorname{relint}\sigma$ by constant-coefficient affine equalities and strict inequalities.

Let $Q$ be a relatively open polyhedron in the partition of \cref{prop:piecewise-affine-relaxation}.  For $P\in Q$, the completed coefficient vector $c(P)$ is affine in $P$.  Set
\[
 \ell_m(P,x)=c_m(P)+\inner{m}{x}
 \qquad(m\in A_N).
\]
For a nonempty subset $B\subset A_N$, choose a distinguished element $b(B)\in B$ and define the exact active-face fiber
\begin{equation}
\label{eq:exact-active-face-fiber}
 Z_{B,\sigma}(P)=\left\{x\in\operatorname{relint}\sigma:
 \begin{array}{ll}
 \ell_m(P,x)=\ell_{b(B)}(P,x) & (m\in B),\\
 \ell_{b(B)}(P,x)<\ell_r(P,x) & (r\in A_N\setminus B)
 \end{array}
 \right\}.
\end{equation}
Thus $B$ is the exact set of active monomials at every point of $Z_{B,\sigma}(P)$, while $\sigma$ specifies whether the face lies in the interior, in a side, or at a vertex of $\overline\Omega$.

Replacing the strict inequalities in \eqref{eq:exact-active-face-fiber} and in the description of $\operatorname{relint}\sigma$ by weak inequalities defines a closed candidate polyhedron, denoted $\widehat Z_{B,\sigma}(P)$.

\begin{lemma}[Parameterized exact active faces]
\label{lem:parameterized-active-faces}
For each pair $(B,\sigma)$, the total set
\[
 \mathfrak Z_{B,\sigma}
 =\{(P,x)\in Q\times\overline\Omega:x\in Z_{B,\sigma}(P)\}
\]
is semilinear.  More precisely, there are constant matrices $E_{B,\sigma}$ and $G_{B,\sigma}$ and affine functions $q_{B,\sigma}(P)$ and $h_{B,\sigma}(P)$ such that
\[
 Z_{B,\sigma}(P)
 =\{x:E_{B,\sigma}x=q_{B,\sigma}(P),\ 
       G_{B,\sigma}x<h_{B,\sigma}(P)\}.
\]
Whenever this fiber is nonempty, it is a connected relatively open convex polyhedron and
\[
 \overline{Z_{B,\sigma}(P)}=\widehat Z_{B,\sigma}(P),
\]
where the closure is taken in $\overline\Omega$.  Its dimension is the dimension of the affine solution space of the displayed equality system.

If $\operatorname{rank}E_{B,\sigma}=2$ and the equality system is consistent, it has a unique \emph{candidate point} $v_{B,\sigma}(P)$, affine in $P$.  The exact fiber is either empty or the singleton $\{v_{B,\sigma}(P)\}$, according to the strict inequalities.  In particular:
\begin{enumerate}[label=\textup{(\roman*)}]
\item an interior vertex with exact active set $B$ is the unique solution of
\[
 (m-b(B))\mathbin{\cdot}x=c_{b(B)}(P)-c_m(P)
 \qquad(m\in B\setminus\{b(B)\});
\]
\item an endpoint on the relative interior of a side $S$ is the unique solution of the same active equalities together with the fixed affine equation of $S$;
\item contact with a polygon vertex is evaluated at that fixed vertex.
\end{enumerate}
Finally, the nonempty fibers $Z_{B,\sigma}(P)$, over all $(B,\sigma)$, form a disjoint partition of $\overline\Omega$.
\end{lemma}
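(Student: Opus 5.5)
We unwind the definition \eqref{eq:exact-active-face-fiber} into an explicit linear system whose right-hand sides are affine in $P$, and then read off every assertion of the statement from elementary polyhedral geometry. On the cell $Q$, \cref{prop:piecewise-affine-relaxation} makes each $c_m(P)$ affine in $P$. The conditions defining $Z_{B,\sigma}(P)$ fall into four groups:
\begin{itemize}
\item the equalities $\inner{m-b(B)}{x}=c_{b(B)}(P)-c_m(P)$ for $m\in B$;
\item the strict inequalities $\inner{b(B)-r}{x}<c_r(P)-c_{b(B)}(P)$ for $r\notin B$;
\item the fixed equalities describing $\operatorname{aff}\sigma$;
\item the fixed strict inequalities cutting out $\operatorname{relint}\sigma$ inside its affine hull.
\end{itemize}
Stacking the constant left-hand sides gives the matrices $E_{B,\sigma}$ and $G_{B,\sigma}$, and stacking the right-hand sides gives the affine functions $q_{B,\sigma}(P)$ and $h_{B,\sigma}(P)$. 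The total set $\mathfrak Z_{B,\sigma}$ is then cut out of $Q\times\R^2$ by finitely many affine equalities and strict inequalities in the joint variables $(P,x)$. It is therefore a relatively open polyhedron, and in particular semilinear; one may also invoke \cref{prop:semilinear-dimension-facts}(v).

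For fixed $P$, the fiber is the intersection of an affine subspace with finitely many open half-spaces. It is therefore convex, hence connected, and relatively open in that affine subspace. When it is nonempty, its dimension equals the dimension of the solution space of $E_{B,\sigma}x=q_{B,\sigma}(P)$, because a nonempty relatively open set inside an affine space has full dimension there.

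The closure identity is the standard fact that a nonempty set $\{Ex=q,\ Gx<h\}$ has closure $\{Ex=q,\ Gx\le h\}$. The inclusion $\subseteq$ is immediate. For $\supseteq$, take any point $y$ of the weak system and any point $z$ of the strict system. Every point $(1-t)y+tz$ with $t\in(0,1]$ satisfies the strict system, and these points tend to $y$ as $t\to0$. The weak system is exactly $\widehat Z_{B,\sigma}(P)$, and it lies in $\overline\Omega$, so the closure may equally be taken in $\overline\Omega$.

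If $\operatorname{rank}E_{B,\sigma}=2$ and the system is consistent, the solution is unique. It is obtained by applying a fixed left inverse of $E_{B,\sigma}$ to $q_{B,\sigma}(P)$, so it is affine in $P$. The fiber is then $\{v_{B,\sigma}(P)\}$ or empty, according to whether $v_{B,\sigma}(P)$ satisfies the strict inequalities. Cases (i)--(iii) are specializations:
\begin{itemize}
\item case (i) is $\sigma=\Omega$, where only the active equalities appear;
\item case (ii) adds the single equation of the side $S$;
\item in case (iii), $\operatorname{aff}\sigma$ is a fixed point, so $x$ is simply evaluated there.
\end{itemize}

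For the partition, take $x\in\overline\Omega$. The point $x$ lies in $\operatorname{relint}\sigma$ for exactly one face $\sigma\in\mathscr F(\Omega)$, because relative interiors of faces partition a polytope. The set $B(x)=\{m:\ell_m(P,x)=\min_r\ell_r(P,x)\}$ is nonempty and uniquely determined by $x$. Then $x\in Z_{B(x),\sigma}(P)$, since the minimum value equals $\ell_{b(B(x))}(P,x)$; this step uses that the definition does not depend on the choice of $b(B)$. Conversely, membership in $Z_{B,\sigma}(P)$ forces $B=B(x)$ and forces $\sigma$ to be that face. Hence the fibers are disjoint and cover $\overline\Omega$.

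The only real subtlety is the closure claim, since a boundary point of the weak polyhedron could in principle fail to be a limit of fiber points. The convex-combination argument above disposes of this as soon as the fiber is nonempty, and nonemptiness is exactly the hypothesis of that claim.
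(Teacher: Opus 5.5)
Your proposal is correct and follows the paper's proof essentially step for step: the same stacked system with constant left-hand sides and right-hand sides affine on $Q$, the same segment argument for $\overline{Z_{B,\sigma}(P)}=\widehat Z_{B,\sigma}(P)$, and the same face-plus-exact-active-set argument for the partition. The only cosmetic difference is that you get the affine candidate point $v_{B,\sigma}(P)$ from a fixed left inverse of $E_{B,\sigma}$, whereas the paper inverts a fixed pair of independent rows and also records that consistency is the affine condition $L_{B,\sigma}q_{B,\sigma}(P)=0$ for a matrix $L_{B,\sigma}$ whose rows span the left kernel.
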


\begin{proof}
Choose $b=b(B)$.  The active equalities are
\[
 \inner{m-b}{x}=c_b(P)-c_m(P)
 \qquad(m\in B\setminus\{b\}),
\]
and the strict minimality inequalities are
\[
 \inner{b-r}{x}<c_r(P)-c_b(P)
 \qquad(r\notin B).
\]
Appending the fixed equalities and strict inequalities defining $\operatorname{relint}\sigma$ gives the asserted constant matrices and affine right-hand sides.  The rank of $E_{B,\sigma}$ is therefore independent of $P$.  If $L_{B,\sigma}$ is any matrix whose rows span the left kernel of $E_{B,\sigma}$, consistency is exactly the affine condition
\[
 L_{B,\sigma}q_{B,\sigma}(P)=0.
\]
Hence the total set, as well as the consistency locus of its equality system, is semilinear.

Each fiber is the intersection of an affine subspace with finitely many open half-spaces, so it is relatively open, convex, and connected.  If it is nonempty, every weak boundary point can be approached from a strict point along a segment contained in the weak polyhedron.  Its closure is therefore exactly the polyhedron obtained by replacing all strict inequalities by weak ones.  Strict inequalities do not lower the affine dimension, proving the dimension assertion.

If the equality matrix has rank two, choose once and for all two independent rows.  Their constant $2\times2$ matrix is invertible and their right-hand side is affine in $P$, so the candidate solution $v_{B,\sigma}(P)$ is affine in $P$.  Consistency with the remaining equalities and membership in the strict inequalities are separate affine predicates.  When those inequalities hold, the exact fiber is the singleton candidate point.  The three displayed cases are the corresponding systems for the interior, a side, and a polygon vertex.

Every point $x\in\overline\Omega$ lies in exactly one relatively open polygon face $\sigma$ and has one exact active set $B=\{m:\ell_m(P,x)=f_{c(P)}(x)\}$.  The resulting fibers form the stated partition.
\end{proof}

\begin{proposition}[Finite parameterized description of the embedded tropical type]
\label{prop:parameterized-active-complex}
Let $Q$ be as above.  There is a finite semilinear refinement of $Q$ such that on every resulting parameter cell $Q'$ the following data are constant.
\begin{enumerate}[label=\textup{(\roman*)}]
\item For every pair $(B,\sigma)$, consistency or inconsistency of its equality system, nonemptiness or emptiness of $Z_{B,\sigma}(P)$, and the dimension of every nonempty fiber.  In particular, the label set
\[
 \mathscr L(Q')=\{(B,\sigma):Z_{B,\sigma}(P)\ne\varnothing\}
\]
is fixed.
\item For every two nonempty labels $\alpha=(B,\sigma)$ and $\beta=(C,\tau)$ in $\mathscr L(Q')$, the closure-incidence relation
\[
 Z_{C,\tau}(P)\subset\overline{Z_{B,\sigma}(P)}.
\]
\item For every consistent rank-two candidate system, the fixed affine formula for $v_{B,\sigma}(P)$; for every pair of such candidates, whether they coincide; and, for candidates on one polygon side, the weak and strict order relations in a fixed affine coordinate along that side.  Exact membership of each candidate in its prescribed active region is also fixed.
\item For each marked point $p_i$, the unique label $(B_i,\Omega)$ for which $p_i\in Z_{B_i,\Omega}(P)$.
\end{enumerate}
These data determine the complete gradient-labelled embedded polyhedral subdivision of $\overline\Omega$ induced by $f_{c(P)}$, and hence the complete embedded multigraph type of its corner locus.  They distinguish parallel edges and all endpoint incidences.  At every interior vertex they determine the incident edge germs and their cyclic order, including transverse nodes.  The direction and weight of every interior edge are constant on $Q'$.
\end{proposition}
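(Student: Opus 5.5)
The refinement of $Q$ will be built as the common refinement of finitely many semilinear partitions, one for each of the finitely many predicates listed in (i)--(iv).  Each predicate is a semilinear subset of $Q$, so it splits $Q$ into two semilinear pieces.  By \cref{prop:semilinear-dimension-facts}(i), the atoms of the Boolean algebra these sets generate are semilinear.  Decomposing each atom into relatively open polyhedra then gives a finite partition of $Q$ into cells $Q'$ on which every predicate has constant truth value.  Since $A_N$ and $\mathscr F(\Omega)$ are finite, there are only finitely many labels $(B,\sigma)$, and hence only finitely many predicates.

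The semilinearity of each predicate comes from \cref{lem:parameterized-active-faces}.
\begin{enumerate}[label=\textup{(\roman*)}]
\item Consistency is the affine condition $L_{B,\sigma}q_{B,\sigma}(P)=0$.  Nonemptiness is the projection of the semilinear total set $\mathfrak Z_{B,\sigma}$.  The dimension of a nonempty fiber equals $2-\operatorname{rank}E_{B,\sigma}$, which is already constant on $Q$, so it needs no refinement.
\item Closure-incidence $Z_{C,\tau}(P)\subset\widehat Z_{B,\sigma}(P)$ is a face-containment relation between polyhedra whose right-hand sides are affine in $P$.  It is therefore semilinear by \cref{prop:semilinear-dimension-facts}(v).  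Its complement is the projection of the set of pairs $(P,x)$ with $x\in Z_{C,\tau}(P)$ violating one weak inequality of $\widehat Z_{B,\sigma}(P)$.
\item For candidate points, the formula $v_{B,\sigma}(P)$ is already affine on $Q$.  Coincidence of two candidates, and weak or strict order along a side in a fixed affine coordinate, are affine equalities and inequalities in $P$.  Membership of a candidate in its strict region is a finite conjunction of strict affine inequalities.
\item The condition $p_i\in Z_{B,\Omega}(P)$ is a conjunction of affine equalities and strict inequalities jointly affine in $(P,p_i)$, hence semilinear in $P$.  Because the fibers partition $\overline\Omega$, exactly one $B$ works for each $P$.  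So we refine by the finitely many pieces indexed by $B$.
\end{enumerate}

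The main work is the reconstruction claim: the data must determine the gradient-labelled embedded subdivision and the multigraph type.  I would argue as follows.
\begin{itemize}
\item By the partition statement of \cref{lem:parameterized-active-faces}, the nonempty fibers are exactly the open faces of the subdivision of $\overline\Omega$ induced by $f_{c(P)}$.  Each face carries its gradient label $B$ and its boundary stratum $\sigma$.
\item Their dimensions are fixed by (i), and their closure incidences by (ii).  This yields a fixed labelled face poset.
\item Vertices are the zero-dimensional fibers.  Edges are the one-dimensional fibers with $|B|\ge2$ and $\sigma=\Omega$.  Endpoints of an edge are read off from the incidences in (ii).  Because faces are labelled by their exact active sets, parallel edges with different label sets are automatically distinguished.
\item An edge with $B=\{m,n\}$ has direction orthogonal to $m-n$ and weight equal to the lattice length of $m-n$, both independent of $P$.
\item At an interior vertex with label $B$, the incident edges correspond to the edges of the Newton polygon $\conv(B)$ that are realized, i.e.\ to labels $\{m,n\}\subset B$ that are nonempty and incident to the vertex.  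Their cyclic order is the cyclic order of outward normals of $\conv(B)$, hence constant.  A transverse node corresponds to a parallelogram label set.
\item For side and vertex contacts, endpoint positions and their order along each side are fixed by (iii).
\item The marks are placed by (iv).
\end{itemize}

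The step I expect to need the most care is showing that the abstract face poset really recovers the embedded type, i.e.\ that no two cells with the same poset data could be non-isotopic embeddings.  I would handle this with the convexity and connectedness of each fiber from \cref{lem:parameterized-active-faces}.  Every face is a convex polygon whose boundary faces are known, and the embedding is then determined up to isotopy by the ordered boundary incidences.  These come from the Newton-polygon cyclic order at interior vertices and from the side orders fixed in (iii).
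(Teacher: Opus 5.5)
Your proposal is correct and follows essentially the same route as the paper: refine $Q$ by the finitely many semilinear predicates supplied by \cref{lem:parameterized-active-faces}, then read off the embedded type from the fixed labelled face poset, the affine candidate formulas, and the fixed boundary orders. One small correction: because the coefficients are completed, the exact active set $B$ of an edge of weight $w\ge2$ generally also contains the intermediate lattice points of its dual segment, so $|B|>2$. You should therefore read the direction and weight from the extreme elements of $B$ (equivalently, from the two adjacent two-dimensional labels, as the paper does) rather than from a two-element label $\{m,n\}$. Likewise, the edges at a vertex with label $B$ are the labels $B'\subset B$ for which $\conv(B')$ is an edge of $\conv(B)$.
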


\begin{proof}
There are only finitely many pairs $(B,\sigma)$.  By \cref{lem:parameterized-active-faces}, consistency is a finite affine system and nonemptiness of the exact fiber is semilinear.  Its dimension is determined by the rank of its constant equality matrix once consistency is imposed.  Refining by all these predicates proves (i).

For labels $\alpha=(B,\sigma)$ and $\beta=(C,\tau)$, once $Z_\alpha(P)$ is nonempty its closure is the weak polyhedron $\widehat Z_\alpha(P)$.  The failure of the containment in (ii) is expressed by the existence of a point
\[
 x\in Z_\beta(P)\setminus\widehat Z_\alpha(P).
\]
This is an existential Boolean combination of affine equalities and strict or weak inequalities in $(P,x)$.  Its projection to $P$ is semilinear, so refining by its truth value fixes every closure incidence.

By \cref{lem:parameterized-active-faces}, every consistent rank-two system has a candidate point $v_{B,\sigma}(P)$ given by a fixed affine formula.  Membership of that point in the prescribed exact region is a finite list of affine equalities and strict inequalities.  Coincidence of two candidate points is a finite affine equality system.

If the candidates lie on a common side, choose a fixed affine coordinate $t_S$ on that side; the sign of
$t_S(v_{B,\sigma}(P))-t_S(v_{C,\tau}(P))$ is the sign of an affine function of $P$.  Refining by equality and the two strict signs fixes all candidate mergers, exact memberships, and boundary orders, proving (iii).

For a mark $p_i$, the condition that its exact active set is $B$ is the finite system
\[
 \ell_m(P,p_i)=\ell_{b(B)}(P,p_i)\ (m\in B),
 \qquad
 \ell_{b(B)}(P,p_i)<\ell_r(P,p_i)\ (r\notin B).
\]
Because both $c(P)$ and the coordinate $p_i$ are affine in the configuration parameters, these are affine predicates.  Refinement by their truth values proves (iv).

The nonempty exact fibers are connected and partition $\overline\Omega$ by \cref{lem:parameterized-active-faces}; the fixed closure relation is therefore their complete labeled face poset.  A two-dimensional interior fiber has a singleton active set and is labelled by its gradient.  A one-dimensional interior fiber is incident, through the fixed face poset, to its adjacent two-dimensional gradient cells.

If the extreme adjacent gradients are $m_-$ and $m_+$, its supporting line is orthogonal to $m_+-m_-$ and its weight is the lattice length of the dual gradient segment; both are fixed.  Endpoint labels and closure incidences distinguish one-dimensional labels with the same direction, so parallel edges and multigraph incidences remain distinct.

At a zero-dimensional interior fiber, the incident one-dimensional labels are fixed by the face poset.  Their direction vectors depend only on the fixed gradients.  Thus every oriented determinant sign between two incident directions is constant, and their cyclic order in the oriented plane is fixed; at a transverse node this also determines the four germs.

Since every one-dimensional exact fiber is convex and connected, its closure is a segment and the incident zero-dimensional labels are its at most two endpoints.  The affine positions of all vertices and boundary endpoints, together with the fixed order of boundary contacts, determine every embedded edge segment.

Nonemptiness and dimension rule out appearances and disappearances, the coincidence and order predicates rule out mergers, and the closure predicates fix incidence.  Thus the complete gradient-labelled embedded subdivision and corner-locus multigraph are constant on $Q'$.
\end{proof}

\begin{lemma}[Active sets determine the tropical complex]
\label{lem:active-face-determination}
For a fixed coefficient vector, the nonempty exact active-face regions, their dimensions, their closure incidences, and their ordered boundary contacts determine the complete gradient-labelled embedded face poset of the corner locus.  In particular, they determine all adjacent two-dimensional cells, all interior edge directions and weights, the cyclic order at every vertex, and every boundary contact.
\end{lemma}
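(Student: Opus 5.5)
The plan is to reconstruct the face poset from the labelled active regions, in order of increasing codimension, using only the facts recorded in \cref{lem:parameterized-active-faces} for the fixed coefficient vector $c$.

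\textbf{Step 1: the regions are the faces.} The nonempty exact fibers $Z_{B,\sigma}$ partition $\overline\Omega$. Each is relatively open, convex and connected, and its closure is the weak polyhedron $\widehat Z_{B,\sigma}$. It follows that the closures of these regions, ordered by the closure-incidence relation, form a regular polyhedral cell decomposition of $\overline\Omega$. The open cells are exactly the $Z_{B,\sigma}$, and the dimension of each cell is part of the given data.

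\textbf{Step 2: cells, edges and weights.}
\begin{itemize}
\item Two-dimensional interior regions have a singleton active set $B=\{m\}$, which is the gradient label. By \cref{prop:essential-presentation}, these are exactly the relative interiors of the linearity cells.
\item A one-dimensional interior region lies in the corner locus, since $|B|\ge2$. Its adjacent two-dimensional cells are read off from the incidence relation: they are the $Z_{\{m\},\Omega}$ whose closures contain it.
\item By \cref{prop:essential-presentation}, exactly two such cells occur, with gradients $m_0$ and $m_1$. The direction of the edge is then orthogonal to $m_1-m_0$.
\item Its weight is the lattice length of $m_1-m_0$.
\end{itemize}

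\textbf{Step 3: vertices and cyclic order.} Zero-dimensional interior regions are the vertices. Their incident edge germs are the one-dimensional regions whose closures contain them. An edge whose closure contains the same vertex at both ends is impossible, because closures of convex one-dimensional regions are segments. So germ multiplicity equals incidence multiplicity.

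The direction of each germ pointing away from $v$ is determined as follows. Take the primitive vector orthogonal to $m_1-m_0$ and fix its sign by requiring that the point $v+\epsilon(\text{direction})$ lie in the closed segment $\overline{Z}$. This is decided by the ordering data: the segment's other endpoint is a labelled vertex or boundary contact with a known position. Determinant signs of these directions then give the cyclic order, and a transverse node appears as four germs in two opposite pairs.

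\textbf{Step 4: boundary contacts.}
\begin{itemize}
\item Regions with $\sigma$ a side or a polygon vertex, together with their incidences with interior edges, give the endpoints of terminal edges.
\item The ordered boundary contacts along each side fix how these endpoints interleave along $\partial\Omega$.
\item Combined with the incidences of boundary regions of two-dimensional cells, this recovers the boundary cells and their gradient labels.
\end{itemize}

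The main obstacle is Step 3: showing that closure incidences alone distinguish parallel edges and orient germs without extra metric input. I would handle it by using the fact that each one-dimensional region has at most two endpoint labels, and that distinct regions are disjoint. Two parallel edges sharing both endpoints would then bound a cell whose label separates them, so the labelled poset already distinguishes them.
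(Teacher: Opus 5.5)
Your proposal is correct and is essentially the paper's argument. The paper proves the lemma by running the reconstruction at the end of the proof of \cref{prop:parameterized-active-complex} with the parameter space reduced to a single point, which is exactly your sequence: singleton-labelled two-dimensional regions; edges read off from closure incidences, with direction orthogonal to the difference of the adjacent gradients and weight equal to its lattice length; germs and cyclic order from determinant signs; and boundary contacts from the side orders. (Your closing worry is vacuous, since two straight edges cannot share both endpoints, and distinct one-dimensional fibers are already distinct labels $(B,\sigma)$, which is all the paper uses to separate edges of the same direction.)
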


\begin{proof}
Apply the reconstruction in the final paragraph of the proof of \cref{prop:parameterized-active-complex} with the parameter space reduced to one point.
\end{proof}

\begin{proposition}[Stable-type stratification]
\label{prop:stable-type-stratification}
There is a finite semilinear stratification of $\Conf_N(\mathcal O)$ such that, on each stratum:
\begin{enumerate}[label=(\roman*)]
\item the complete gradient-labelled embedded face poset of $\Gamma_P$ is constant;
\item all edge directions, weights, cyclic incidences, and boundary incidences are constant;
\item the exact face containing every marked point is constant;
\item corresponding tropical vertices and edge endpoints depend affinely on $P$.
\end{enumerate}
On the interior of a full-dimensional stratum these data persist under every sufficiently small independent perturbation of the marked points.
\end{proposition}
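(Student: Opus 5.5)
The plan is to assemble the stratification from the two finite refinements already constructed.

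First, apply \cref{prop:piecewise-affine-relaxation} to obtain a finite polyhedral partition of $\Conf_N(\mathcal O)$ into relatively open cells $Q$ on which $P\mapsto c(P)$ is affine. On each such $Q$, apply \cref{prop:parameterized-active-complex} to refine $Q$ into finitely many semilinear cells $Q'$. The union over all $Q$ of these refinements is again a finite semilinear partition of $\Conf_N(\mathcal O)$. Using \cref{prop:semilinear-dimension-facts}(i), decompose each $Q'$ further into relatively open polyhedra so that every stratum is a relatively open polyhedron. I take these polyhedra as the strata.

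Next, read off (i)--(iv) on a stratum $Q'$. The data (i)--(iv) of \cref{prop:parameterized-active-complex} are constant on $Q'$. By that proposition, together with \cref{lem:active-face-determination}, they determine the complete gradient-labelled embedded face poset. Since $f_{c(P)}=F_P$, this gives item (i). The same statement yields the constancy of directions, weights, cyclic orders, and the boundary contacts and their order along each side, which is item (ii). Item (iii) is part (iv) of the proposition: the exact label $(B_i,\Omega)$ containing $p_i$ is fixed, so the face containing $p_i$ is constant. For item (iv), each tropical vertex, and each endpoint on a side or at a polygon vertex, is the candidate point $v_{B,\sigma}(P)$ of a fixed rank-two label. \Cref{lem:parameterized-active-faces} shows this point is given by a fixed affine formula on $Q$, and hence on $Q'$. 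Corresponding vertices are identified across the stratum through their constant labels.

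Finally, for the persistence claim, let $S$ be a full-dimensional stratum, so $\dim S=2N=\dim\Conf_N(\mathcal O)$. Since $S$ is a relatively open polyhedron of full dimension, it is open in $\R^{2N}$. Every $P$ in its interior therefore has a neighborhood contained in $S$. Any sufficiently small independent perturbation of the $N$ marked points stays in that neighborhood, and by (i)--(iv) all listed data are unchanged.

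The main obstacle I expect is a bookkeeping point. The partition must be finite, and "interior of a full-dimensional stratum" must be genuinely open, not merely relatively open. I would handle this by insisting that the final strata be relatively open polyhedra, and by noting that the refinement of \cref{prop:parameterized-active-complex} is taken inside each affine cell $Q$, where all predicates are affine in $P$. The constancy of the data is then simply inherited from that proposition.
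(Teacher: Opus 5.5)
Your proposal is correct and follows essentially the same route as the paper. You refine the affine cells of \cref{prop:piecewise-affine-relaxation} by the predicates of \cref{prop:parameterized-active-complex}, take relatively open polyhedral cells as strata, and get persistence because a full-dimensional relatively open cell is open in $\Conf_N(\mathcal O)$; the paper instead takes relative interiors of one common finite polyhedral complex, which is slightly tidier but not needed for the listed properties.
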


\begin{proof}
Start with the finite polyhedral partition from \cref{prop:piecewise-affine-relaxation}.  On each cell $Q$, the coefficient vector $c(P)$ is affine and, by \cref{lem:completed-coordinate-uniqueness}, has no residual additive or redundant-monomial ambiguity.  Apply \cref{prop:parameterized-active-complex} to each $Q$ and take a common finite semilinear refinement.

The resulting cells have constant complete embedded gradient-labelled type, constant marked-face labels, and affine vertex and endpoint coordinates.  Taking relative interiors in one common finite polyhedral complex gives a stratification.  A full-dimensional stratum is open in the ambient configuration space, so all of its defining strict inequalities and all noncollision signs persist under every sufficiently small independent perturbation of the marked points.
\end{proof}

\subsection{Incidence rank}

Let $R_1,\dots,R_g$ be the bounded complementary cells of $\Gamma_P$, with essential cell monomials $L_1,\dots,L_g$.  For each marked point, define
\[
 A_i=\{j:p_i\in\overline{R_j}\},
 \qquad
 a_i=\one_{A_i}\in\Ftwo^g.
\]

\begin{lemma}
\label{lem:bounded-monomial-positive}
For every bounded complementary cell $R_j$,
\[
 \min_{\overline\Omega}L_j>0.
\]
\end{lemma}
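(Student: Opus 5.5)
Since $L_j=L_{R_j}$ is affine, its minimum over the compact polygon $\overline\Omega$ is attained at a vertex of $\Omega$. By \cref{prop:essential-presentation}, $L_j\ge F_P\ge0$ on $\overline\Omega$, so $\min_{\overline\Omega}L_j\ge0$. It therefore suffices to rule out $\min_{\overline\Omega}L_j=0$.

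Suppose instead that $L_j(y)=0$ for some $y\in\overline\Omega$. Then $F_P(y)=0$ as well, since $0\le F_P\le L_j$. The point $y$ lies in the contact set $\{F_P=L_j\}$. By the contact-set identity of \cref{prop:essential-presentation}, this set is exactly the cell closure $\overline{R_j}$. Hence $y\in\overline{R_j}$.

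Next we show that $F_P$ is strictly positive on $\Omega^\circ$. By concavity and zero boundary values, if $F_P$ vanished at an interior point, it would vanish on a neighbourhood of that point. Indeed, $F_P$ is concave and nonnegative, so an interior zero is an interior minimum of a concave function, which forces $F_P\equiv0$ on all of $\Omega^\circ$. But $F_P$ has nonempty corner locus containing $P$ whenever $P\neq\varnothing$, and the zero function has empty corner locus. So $F_P>0$ on $\Omega^\circ$.

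Consequently the point $y$, where $F_P(y)=0$, must lie on $\partial\Omega$. Now $R_j$ is a bounded cell, which by definition means $\overline{R_j}\Subset\Omega^\circ$. This contradicts $y\in\overline{R_j}\cap\partial\Omega$, and proves $\min_{\overline\Omega}L_j>0$.

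The main point to check carefully is the use of the contact-set identity on the whole of $\overline\Omega$, rather than only on $\Omega^\circ$. This is exactly how \cref{prop:essential-presentation} is stated, since $K_C$ is defined as a subset of $\overline\Omega$. So the argument goes through without extra work.
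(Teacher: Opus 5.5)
Your proof is correct and is essentially the paper's argument. The paper likewise notes that a zero of $L_j$ on $\partial\Omega$ would make $L_j$ active there, so its convex active locus, which contains $R_j$, would reach $\partial\Omega$, contrary to boundedness. It excludes interior zeros because a nonnegative affine function vanishing at an interior point is identically zero, which would force $F_P\equiv0$. You get the first step directly from the contact-set identity $\{F_P=L_j\}=\overline{R_j}$ of \cref{prop:essential-presentation}, and the second from interior positivity of the nonzero concave function $F_P$; this is the same reasoning in slightly cleaner form.
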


\begin{proof}
Every essential cell monomial lies above $F_P\ge0$ by \cref{prop:essential-presentation}.  If $L_j$ vanished at a boundary point, then $L_j=F_P=0$ there and the monomial would be active.  Its active locus is an intersection of affine half-spaces and is convex.  Since it contains the open cell $R_j$, activity at the boundary point would force the closure of the corresponding two-dimensional active region to reach $\partial\Omega$, contrary to boundedness of $R_j$.

A nonnegative affine function that vanishes at an interior point is identically zero; this would force the nonzero relaxation $F_P$ to vanish on an open set and hence everywhere.  Thus $L_j$ is strictly positive on the compact set $\overline\Omega$.
\end{proof}

\begin{lemma}[Binary spanning]
\label{lem:binary-spanning}
The vectors $a_1,\dots,a_N$ span $\Ftwo^g$.  In particular,
\[
 g\le N.
\]
\end{lemma}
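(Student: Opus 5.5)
We argue by contradiction through a coefficient-lowering deformation that uses the minimality of \cref{lem:least-coefficients}. Suppose the vectors $a_1,\dots,a_N$ do not span $\Ftwo^g$. Then there is a nonzero linear functional on $\Ftwo^g$ that vanishes on every $a_i$. Equivalently, there is a nonempty set $S\subset\{1,\dots,g\}$ such that $|A_i\cap S|$ is even for every $i$. The bound $g\le N$ then follows at once, since $N$ vectors span a space of dimension at most $N$.

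The deformation lowers the essential monomials $L_j$ for $j\in S$ simultaneously by a small $\varepsilon>0$:
\[
 F^\varepsilon=\min\Bigl(\min_{j\in S}(L_j-\varepsilon),\ \min_{C\notin S}L_C\Bigr),
\]
where the second minimum runs over all other two-dimensional cells, as in the essential presentation of \cref{prop:essential-presentation}. By \cref{lem:bounded-monomial-positive}, each $L_j$ with $j\in S$ satisfies $\min_{\overline\Omega}L_j>0$. Hence, for small $\varepsilon$, $F^\varepsilon$ is still nonnegative and still vanishes on $\partial\Omega$, because the boundary-active monomials are untouched and the lowered ones stay positive. Moreover $F^\varepsilon\le F_P$, with strict inequality on the cells $R_j$, $j\in S$. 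Once we show that $P\subset V(F^\varepsilon)$, the function $F^\varepsilon$ becomes a strictly smaller admissible competitor, which contradicts pointwise minimality.

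The main obstacle is checking that every marked point stays in the corner locus after the deformation. The parity condition alone does not guarantee this: a mark on an edge between $R_j\in S$ and a cell outside $S$ has $|A_i\cap S|=1$, which is odd, so such marks are excluded by the hypothesis. The remaining cases are handled as follows.
\begin{itemize}
 \item If $p_i$ is adjacent to no lowered cell, the active monomials near $p_i$ are unchanged, so $p_i$ remains a corner.
 \item If $p_i$ lies on an edge between two lowered cells, both monomials drop by $\varepsilon$ and still tie, so $p_i$ remains a corner.
 \item If $p_i$ is a vertex, the active set may shrink, but the parity condition together with the fact that $p_i$ was a corner should leave at least two active monomials.
\end{itemize}
The delicate sub-case is the vertex case in which exactly two of the adjacent cells are lowered and they are not adjacent along an edge. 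In that case the two lowered monomials still tie at $p_i$, but they may fail to be minimal there. I would first try to avoid this case with the marked-carrier structure: generically each mark lies in the relative interior of an edge (strong genericity, cf.\ \cref{thm:strong-genericity}). If genericity is not available, I would instead lower by amounts chosen so that the two lowered monomials still dominate near $p_i$.

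If the simultaneous-lowering argument proves awkward, the alternative is a dimension count using \cref{prop:piecewise-affine-relaxation}. On a full-dimensional stratum, the $g$ bounded-cell coefficients depend affinely on $2N$ coordinates, and each mark imposes one linear incidence constraint on the coefficients of its adjacent cells. Minimality then forces these constraints, reduced modulo $2$, to determine the coefficients, which again gives a rank condition. I expect the direct deformation argument above to be cleaner, with the verification that the corner condition survives at every mark being the main work.
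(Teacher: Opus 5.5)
Your deformation is the one the paper uses: lower every $L_j$, $j\in S$, by one common $\varepsilon$, keep all other essential monomials, and contradict pointwise minimality. Your positivity, zero-boundary and strict-decrease steps are fine.

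The gap is in the corner check at the marks. You leave the vertex case open and assert that, when two lowered cells that are not edge-adjacent meet at a vertex mark, the two lowered monomials ``may fail to be minimal there.'' That is false, and no case distinction on the position of $p_i$ is needed. By \cref{prop:essential-presentation} the contact set of $L_j$ is exactly $\overline{R_j}$. So $L_j(p_i)=F_P(p_i)$ if $j\in A_i$, and $L_j(p_i)>F_P(p_i)$ otherwise. Every unlowered monomial is $\ge F_P(p_i)$ at $p_i$. Hence, if $|A_i\cap S|\ge2$, the new minimum at $p_i$ equals $F_P(p_i)-\varepsilon$. It is attained precisely by the lowered monomials $L_j-\varepsilon$ with $j\in A_i\cap S$, and these are at least two distinct affine functions. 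So $p_i\in V(F^\varepsilon)$ whether $p_i$ is an edge point or a vertex. If $A_i\cap S=\varnothing$, you must take $\varepsilon<L_j(p_i)-F_P(p_i)$ for every $j\in S$, so that no lowered inactive monomial undercuts the original tied pair. Your ``for small $\varepsilon$'' should therefore be made explicit as
\[
 0<\varepsilon<\min\Bigl\{\min_{j\in S}\min_{\overline\Omega}L_j,\ \min_{i,\ j\in S\setminus A_i}\bigl(L_j(p_i)-F_P(p_i)\bigr)\Bigr\},
\]
omitting the second term if there are no such pairs. Parity rules out $|A_i\cap S|=1$, and this completes the proof.

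Your proposed escape routes would not close the gap.
\begin{itemize}
\item Restricting to strongly generic configurations would be circular. The lemma is needed for arbitrary $P$: the paper invokes it in \cref{prop:generic-discriminants} precisely for types in which some marked point is a tropical vertex, to obtain $g_\tau\le N$ there. \Cref{thm:strong-genericity} is then derived from that proposition.
\item Lowering different selected cells by different amounts would in general destroy the tie at a mark lying on an edge between two lowered cells.
\item The dimension count is not a substitute. On full-dimensional strata it yields the opposite inequality $g\ge N$ (\cref{thm:generic-genus}). It says nothing about the lower-dimensional strata with vertex-marked points, which is where \cref{prop:generic-discriminants} needs the bound.
\end{itemize}
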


\begin{proof}
Suppose that the vectors do not span.  Choose a nonzero vector $x=(x_1,\dots,x_g)\in\Ftwo^g$ orthogonal to every $a_i$, and let
\[
 S=\{j:x_j=1\}.
\]
Then $S$ is nonempty and $|A_i\cap S|$ is even for every $i$; it is therefore zero or at least two.

Set
\[
 \alpha=\min_{j\in S}\min_{\overline\Omega}L_j>0.
\]
For every pair $(i,j)$ with $j\in S\setminus A_i$, the selected monomial is inactive at $p_i$, so
\[
 \gamma_{ij}=L_j(p_i)-F_P(p_i)>0.
\]
Let $\beta$ be the minimum of these finitely many gaps, with $\beta=+\infty$ if there are no such pairs.  Choose
\[
 0<\varepsilon<\min\{\alpha,\beta\}.
\]
Use the essential presentation from \cref{prop:essential-presentation}.  Lower every bounded-cell monomial $L_j$, $j\in S$, by $\varepsilon$, leaving every other essential cell monomial unchanged.

Every shifted monomial remains positive on $\overline\Omega$, and all essential monomials belonging to boundary cells are unshifted.  Hence the deformed tropical polynomial is nonnegative and zero on $\partial\Omega$.  If $A_i\cap S$ is empty, every shifted term remains above the old minimum at $p_i$ by the choice of $\beta$, so the original active terms remain tied and minimal.  If $|A_i\cap S|\ge2$, at least two active monomials are lowered equally; every selected inactive monomial remains above them by its original positive gap, and every unshifted active monomial has value $F_P(p_i)>F_P(p_i)-\varepsilon$.  Thus every marked corner condition persists.  On the interior of each selected cell, the new function is at most $F_P-\varepsilon$.  This contradicts pointwise minimality of $F_P$.
\end{proof}

\subsection{Generic incidence}

A \emph{gradient-labelled type} consists of the embedded face poset, the gradient attached to every two-dimensional cell, every edge weight, all incidences with the sides and vertices of $\Omega$, and the face containing every marked point.  There are only finitely many such types because $A_N$ is finite.

\begin{definition}[Realization space of a gradient-labelled type]
\label{def:type-realization-space}
Fix a gradient-labelled type $\tau$.  Its \emph{curve realization space} $\mathcal M_\tau$ is the set of completed coefficient vectors $c\in\R^{A_N}$ for which $f_c$ realizes the unmarked gradient-labelled embedded face poset, edge weights, and boundary incidences prescribed by $\tau$.  The positions of the marked points along their prescribed faces are not included in $\mathcal M_\tau$.  Completed coefficient vectors are unique, so no quotient by additive constants or redundant monomials is taken.  Applying \cref{prop:parameterized-active-complex} with the coefficient vector itself as the affine parameter makes the type conditions semilinear.  The completion condition is semilinear as well: for every $m\in A_N$ it requires a nonempty exact active face $Z_{B,\sigma}$ with $m\in B$.  Hence $\mathcal M_\tau$ is semilinear.
\end{definition}

\begin{lemma}[Moduli bound]
\label{lem:moduli-bound}
For a fixed gradient-labelled type $\tau$ with $g_\tau$ bounded complementary cells, the realization space $\mathcal M_\tau$ has dimension at most $g_\tau$.
\end{lemma}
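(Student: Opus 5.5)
We bound the dimension of $\mathcal M_\tau$ through the linear structure of the coefficient vector once the type is fixed. Fix $c\in\mathcal M_\tau$ and let $C_0,\dots$ be the two-dimensional cells of $f_c$, with essential gradients $m_C$ prescribed by $\tau$ and essential coefficients $c_C$. By \cref{lem:completed-coordinate-uniqueness}, the completed vector is determined by the function $f_c$. By \cref{prop:essential-presentation}, $f_c$ is determined by the essential coefficients $(c_C)_{C\in\mathscr C_2}$, since $f_c=\min_C(c_C+\inner{m_C}{x})$. So the map $c\mapsto(c_C)_C$ is injective on $\mathcal M_\tau$. It is also piecewise affine, because the completion coordinates $\kappa_m$ are affine in $(c_C)$ on each cell of a finite semilinear partition refining the type. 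It therefore suffices to bound the dimension of the image, which is a semilinear subset of $\R^{\mathscr C_2(\tau)}$, by \cref{prop:semilinear-dimension-facts}(ii).

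Next we use the boundary conditions to pin the boundary-cell coefficients. Every boundary cell $C$ of type $\tau$ meets $\partial\Omega$. If it contains a point of the relative interior of a side $S_r$, then \cref{lem:side-normal-form} forces $L_C=m(S_r)\lambda_r$, so $c_C$ equals the fixed number $m(S_r)a_r$. Here $m_C=m(S_r)n_r$ is already prescribed by $\tau$. If $C$ meets $\partial\Omega$ only at a polygon vertex $v$, then $L_C(v)=f_c(v)=0$ gives $c_C=-\inner{m_C}{v}$, again fixed. Hence, on $\mathcal M_\tau$, every boundary-cell coefficient is a constant determined by $\tau$.

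The only free coordinates left are the $g_\tau$ bounded-cell coefficients $c_{R_1},\dots,c_{R_{g_\tau}}$. The essential-coefficient image of $\mathcal M_\tau$ therefore lies in an affine subspace of dimension $g_\tau$. By injectivity and \cref{prop:semilinear-dimension-facts}(ii) applied to the inverse piecewise-affine map, each cell of $\mathcal M_\tau$ has dimension at most $g_\tau$.

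The main obstacle is the treatment of boundary cells that touch $\partial\Omega$ only at polygon vertices, or along degenerate contacts. We must check that $\tau$, through its recorded side and vertex incidences, really fixes one zero of $L_C$ on $\partial\Omega$ for every non-bounded cell. This holds by the definition of a boundary cell: its closure meets $\partial\Omega$ at some point $x$. At $x$, continuity gives $L_C(x)=f_c(x)=0$. If $x$ lies in a side, then $L_C\ge f_c\ge0$ vanishes on the whole side, which is the argument of \cref{lem:side-normal-form}. Either way one affine constraint with prescribed data fixes $c_C$. A secondary technical point is the injectivity of the passage from essential to completed coordinates. This follows from the uniqueness statement in \cref{lem:completed-coordinate-uniqueness}: $c$ is recovered from $f_c$ through the maxima $\kappa_m$.
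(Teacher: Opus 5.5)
Your proof is correct and follows the paper's argument. Boundary-cell coefficients are pinned by vanishing on a prescribed side or at a prescribed polygon vertex, and the projection of $\mathcal M_\tau$ onto the remaining $g_\tau$ bounded-cell coefficients is injective, because the essential monomials determine $f_c$ and completion then determines $c$. (Two small points: since $c$ is completed, $c\mapsto(c_C)_C$ is literally the coordinate projection $c\mapsto(c_{m_C})_C$. Also, the dimension transfer must go through the inverse map or the zero-fiber case of \cref{prop:semilinear-dimension-facts}(iii); (ii) applied to the projection itself only bounds the image by the domain, which is the wrong direction for your first ``it suffices'' sentence.)
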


\begin{proof}
Each two-dimensional cell has a unique essential cell monomial.  If the closure of a cell meets the relative interior of a prescribed side, its nonnegative active monomial must vanish identically on that side, which fixes its coefficient.  If it meets only a prescribed polygon vertex, vanishing at that vertex fixes the coefficient.  Thus every boundary-cell coefficient is fixed by the labelled type.

Only coefficients attached to bounded complementary cells may vary.  Once those $g_\tau$ coefficients are specified, the function is the minimum of the corresponding fixed-gradient monomials and the fixed boundary monomials.  Any completed monomial active only on a lower-dimensional face is determined by the resulting function.

Therefore the coordinate projection from $\mathcal M_\tau$ to the $g_\tau$ bounded-cell coefficients is injective, and $\dim\mathcal M_\tau\le g_\tau$.
\end{proof}

\begin{lemma}[Marked incidence dimension]
\label{lem:marked-incidence-dimension}
Fix a gradient-labelled type $\tau$.  If $e$ marked points are assigned to specified edge interiors and the remaining $N-e$ points are assigned to specified vertices, the corresponding semilinear incidence space has dimension at most
\[
 g_\tau+e.
\]
\end{lemma}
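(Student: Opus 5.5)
The incidence space is the semilinear set
\[
 \mathcal I_{\tau,e}=\{(c,P)\in\mathcal M_\tau\times\Conf_N(\mathcal O):\ p_i\in\text{prescribed face of }f_c\ (1\le i\le N)\}.
\]
We bound its dimension by projecting to the coefficient factor and applying the fiber inequality \cref{prop:semilinear-dimension-facts}(iii). The set is semilinear by the same finite elimination used in \cref{prop:parameterized-active-complex}. Membership of $p_i$ in an exact active face $Z_{B,\Omega}$ is a finite affine system in $(c,p_i)$ with equalities and strict inequalities. This is the argument of \cref{lem:parameterized-active-faces}, now with $c$ itself as the parameter.

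Let $\pi:\mathcal I_{\tau,e}\to\R^{A_N}$ be the coordinate projection. Its image lies in $\mathcal M_\tau$, so \cref{prop:semilinear-dimension-facts}(ii) and \cref{lem:moduli-bound} give $\dim\pi(\mathcal I_{\tau,e})\le g_\tau$.

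Next we bound the fibers. Fix $c\in\pi(\mathcal I_{\tau,e})$. The fiber consists of configurations $P$ in which each $p_i$ lies in its specified face of the fixed curve $V(f_c)$. These conditions are independent across the points, so the fiber is contained in a product of $N$ factors:
\begin{itemize}
\item For a point assigned to the interior of an edge, the factor is that open segment, of dimension $1$.
\item For a point assigned to a vertex, the factor is a single point, of dimension $0$.
\end{itemize}
A product of polyhedra has dimension equal to the sum of the dimensions, so every nonempty fiber has dimension at most $e$. The distinctness condition $p_i\ne p_j$ only removes a subset and cannot raise the dimension.

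Applying \cref{prop:semilinear-dimension-facts}(iii) to $\pi$ then gives
\[
 \dim\mathcal I_{\tau,e}\le g_\tau+e.
\]
The same bound holds for the projection of $\mathcal I_{\tau,e}$ to configuration space, by part (ii).

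The only point needing care is that the type is defined in terms of exact faces, which are relatively open. For fixed $c$, the prescribed edge is therefore a single relatively open segment of affine dimension $1$, and the prescribed vertex is an exact zero-dimensional fiber; both facts come from the dimension statement of \cref{lem:parameterized-active-faces}. Once this is checked, the remaining step is routine bookkeeping with rank–nullity.
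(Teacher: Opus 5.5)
Your proposal is correct and follows the paper's argument: fiber the incidence space over the coefficient vector, bound the base by $g_\tau$ using \cref{lem:moduli-bound}, note that each fiber has dimension at most $e$ (one parameter per point on an edge interior, none per point at a vertex), and conclude with the fiber inequality \cref{prop:semilinear-dimension-facts}(iii). One minor correction: the bound $\dim\pi(\mathcal I_{\tau,e})\le\dim\mathcal M_\tau$ follows from monotonicity of semilinear dimension under inclusion, not from part (ii), which is about linear images.
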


\begin{proof}
The underlying curve contributes at most $g_\tau$ parameters by \cref{lem:moduli-bound}.  For a fixed curve, a point in a specified edge interior contributes one scalar parameter along that edge, while a point fixed at a specified vertex contributes none.  The fiber-dimension inequality in \cref{prop:semilinear-dimension-facts}(iii) gives the stated bound.
\end{proof}

\begin{proposition}[Marked-vertex and same-edge discriminants]
\label{prop:generic-discriminants}
The locus where at least one marked point is a tropical vertex is contained in a finite union of semilinear sets of dimension at most $2N-1$.  The locus where two distinct marked points lie on one tropical edge is contained in a finite union of affine hypersurfaces.
\end{proposition}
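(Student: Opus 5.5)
We work cell by cell in the stable-type stratification of \cref{prop:stable-type-stratification}, together with the dimension count of \cref{lem:marked-incidence-dimension}. There are only finitely many gradient-labelled types $\tau$ and finitely many assignments of marked points to faces. It therefore suffices to bound the dimension of each incidence locus $\mathcal I_{\tau,\text{assign}}\subset\Conf_N(\mathcal O)\times\mathcal M_\tau\times(\text{positions})$ and then project to $\Conf_N(\mathcal O)$. By \cref{prop:semilinear-dimension-facts}(ii), projection does not raise dimension.

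\textbf{Marked-vertex discriminant.} Suppose at least one marked point sits at a tropical vertex. Then in the corresponding assignment the number $e$ of points on edge interiors is at most $N-1$. Points in two-dimensional cells are excluded, since every $p_i$ lies in $V(F_P)$. \Cref{lem:marked-incidence-dimension} gives
\[
\dim\le g_\tau+e\le g_\tau+N-1.
\]
We then use \cref{lem:binary-spanning}, which gives $g_\tau\le N$ for every realized relaxation. Here we restrict to types that actually occur as $\Gamma_P$, so the bound applies to their bounded-cell count. This yields dimension at most $2N-1$.

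The map $P\mapsto c(P)$ is a single-valued semilinear graph by \cref{prop:semilinear-coefficient-graph}. Hence the locus of $P$ of this type is the projection of a semilinear subset of the incidence space, and it has dimension at most $2N-1$. Taking the finite union over $\tau$ and over assignments gives the first claim. One point needs care: the incidence space parametrizes positions of the marked points, not $P$ independently. But $P$ is determined by the curve and the positions, so the configuration locus is an image of that space, and (ii) applies.

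\textbf{Same-edge discriminant.} Work on a cell $Q'$ of the refinement of \cref{prop:parameterized-active-complex}. There the type is constant and $c(P)$ is affine in $P$. Suppose $p_i$ and $p_j$ lie on the same interior edge with exact active set $B=\{m,n\}$. Then both satisfy the single equation
\[
\inner{m-n}{x}=c_n(P)-c_m(P).
\]
Subtracting the two equations gives
\[
\inner{m-n}{p_i-p_j}=0,
\]
which is a linear equation in $P$. The coefficient $m-n\ne0$, so this is a nontrivial hyperplane in $\R^{2N}$. This holds independently of the affine expression for $c(P)$, which cancels in the subtraction. Running over the finitely many pairs $(i,j)$ and the finitely many gradient pairs $(m,n)$ in $A_N$, the same-edge locus lies in a finite union of hyperplanes $\{\inner{m-n}{p_i-p_j}=0\}$.

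\textbf{Main obstacle.} The delicate step is justifying the bound $g_\tau\le N$ inside the dimension count. \Cref{lem:binary-spanning} applies to the actual relaxation, so we must restrict each incidence space to types that are realized by some $F_P$, and we must make sure that the fiber bound of \cref{lem:marked-incidence-dimension} is applied to the relevant semilinear set, not a larger one. Cells of the stratification of lower dimension cause no difficulty, since they are already of dimension at most $2N-1$.
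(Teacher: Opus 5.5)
Your proposal is correct and follows the paper's proof. The vertex discriminant is bounded by $g_\tau+e\le N+(N-1)$ using \cref{lem:binary-spanning} and \cref{lem:marked-incidence-dimension}, followed by projection, and your hyperplanes $\inner{m-n}{p_i-p_j}=0$ are the paper's $\det(p_i-p_j,w)=0$, because the edge direction $w$ is orthogonal to the gradient jump $m-n$. One minor point: the exact active set of completed monomials on an edge can contain more than two gradients, for instance intermediate lattice points of the dual segment when the weight exceeds one, so it need not equal $\{m,n\}$. Taking $m,n$ to be the two adjacent cell gradients, which are both active there, leaves your subtraction argument unchanged.
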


\begin{proof}
For a type with at least one vertex-marked point, the number $e$ of edge-interior points satisfies $e\le N-1$.  The binary spanning lemma gives $g_\tau\le N$, and \cref{lem:marked-incidence-dimension} gives dimension at most $N+(N-1)=2N-1$.  Projection to the point coordinates cannot increase semilinear dimension by \cref{prop:semilinear-dimension-facts}(ii).  Taking the finite union over types and incidence assignments proves the first claim.

Only finitely many edge directions occur because all gradients belong to $A_N$.  If $p_i$ and $p_j$ lie on an edge of primitive direction $w$, then
\[
 \det(p_i-p_j,w)=0.
\]
For fixed $i,j,w$ this is a proper affine hypersurface.  The finite union over all choices proves the second claim.
\end{proof}

\begin{proposition}[Boundary-parallel carriers create no generic genus defect]
\label{prop:boundary-parallel-carriers}
Fix a gradient-labelled type $\tau$ on a rational polygon $\Omega$.
\begin{enumerate}[label=\textup{(\roman*)}]
\item If a tropical edge is parallel to a side of $\Omega$ and, together with part of $\partial\Omega$, bounds a strip, that strip is a boundary cell and does not contribute to $g(\Gamma)$.
\item The Euclidean length of such a carrier introduces no additional modulus in $\mathcal M_\tau$: every coefficient belonging to a boundary cell is pinned by the zero-boundary condition, and the realization-space bound remains $\dim\mathcal M_\tau\le g_\tau$.
\item The locus on which two distinct marked points lie on one carrier is contained in the same-edge discriminant of \cref{prop:generic-discriminants} and has positive codimension.
\end{enumerate}
Consequently, arbitrarily long boundary-parallel marked carriers do not alter the full-dimensional dimension count used to prove $g(\Gamma_P)=N$.
\end{proposition}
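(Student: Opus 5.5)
The plan is to prove the three items separately; each follows from definitions already fixed and from earlier lemmas. No new dimension count is needed.

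For (i), I will use the definitions in the subsection on embedded graphs. A strip bounded by a tropical edge parallel to a side $S_r$ and by part of $\partial\Omega$ is a two-dimensional linearity cell whose closure meets $\partial\Omega$. By definition it is a boundary cell, not a bounded face, so it is not among the bounded components counted by $g(\Gamma)$. I will also check the converse direction needed later: the cell's essential monomial is active at a point of $\operatorname{relint}(S_r)$. Hence, by \cref{lem:side-normal-form}, that monomial equals $m_F(S_r)\lambda_r$ and is strictly positive on the carrier, which is the strip's inner boundary. This is consistent with the carrier being a genuine interior edge.

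For (ii), I will rerun the argument of \cref{lem:moduli-bound} specifically for the strip cell. Its monomial vanishes identically on $S_r$, so its coefficient is $-\inner{m_F(S_r)n_r}{x}$ evaluated on $S_r$, i.e. it is determined by the gradient recorded in $\tau$. The carrier line is the equality locus between this pinned monomial and the monomial of the adjacent cell on the other side. If the adjacent cell is bounded, its coefficient is one of the $g_\tau$ free coordinates. If it is a boundary cell, its coefficient is pinned too, and the carrier position has no freedom at all. In either case the carrier's distance to $S_r$, and hence its Euclidean length (determined by the endpoints, which are intersections with other active loci), is a function of the bounded-cell coefficients. The injectivity of the projection to bounded-cell coefficients is therefore unaffected, and $\dim\mathcal M_\tau\le g_\tau$ is unchanged. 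The only subtlety I expect here is a carrier whose endpoints lie on adjacent sides or vertices of $\Omega$. These endpoints are still solutions of rank-two candidate systems as in \cref{lem:parameterized-active-faces}, with one fixed affine equation coming from a side, so they introduce no free parameter.

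For (iii), two marked points $p_i,p_j$ on one carrier of primitive direction $w$ satisfy $\det(p_i-p_j,w)=0$ with $w$ in the finite direction set. This is exactly the hypersurface in the second part of \cref{prop:generic-discriminants}, which has positive codimension in $\Conf_N(\mathcal O)$. For the final consequence, I will feed these facts into \cref{lem:marked-incidence-dimension}. A full-dimensional stratum requires $g_\tau+e\ge 2N$ with $e\le N$, and the earlier binary-spanning bound gives $g_\tau\le N$; together these force $g_\tau=N$. Long boundary-parallel carriers add neither to $g_\tau$ (by (i)) nor to the moduli (by (ii)), and they cannot absorb two marks generically (by (iii)). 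I expect (ii) to be the main point requiring care, because one must rule out that the carrier length acts as a hidden extra modulus. The argument above resolves this by expressing the length through pinned and bounded-cell coefficients.
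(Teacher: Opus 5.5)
Your proposal is correct and matches the paper's proof: (i) follows from the definition of a boundary cell, (ii) reruns the pinning argument of \cref{lem:moduli-bound} so that the carrier's endpoints and length are determined by the coefficients, and (iii) uses the same-edge discriminant $\det(p_i-p_j,w)=0$. Your additional remarks are consistent with the paper but not needed, namely that the strip monomial equals $m_F(S_r)\lambda_r$ and the case split on whether the adjacent cell is bounded.
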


\begin{proof}
A strip whose closure contains a nontrivial boundary segment meets $\partial\Omega$, so it is a boundary cell by definition and is absent from the bounded-face genus.

For a fixed labelled type, the coefficient of any cell meeting a side is fixed by vanishing on that side, and the coefficient of a cell meeting only a polygon vertex is fixed by vanishing at that vertex.  Intersections of the resulting affine lines determine the endpoints and length of the carrier; its length is determined by the coefficients rather than constituting an independent parameter.  This is the argument of \cref{lem:moduli-bound}.

Finally, if two marked points $p_i,p_j$ lie on an edge of primitive direction $w$, then $\det(p_i-p_j,w)=0$, one of the affine hypersurfaces appearing in \cref{prop:generic-discriminants}.  The last assertion follows.
\end{proof}

\begin{theorem}[Strong genericity]
\label{thm:strong-genericity}
For each $N$, there is an open dense full-measure semilinear subset
\[
 \Conf_N^{\rm gen}(\mathcal O)\subset\Conf_N(\mathcal O)
\]
such that for every $P$ in this subset:
\begin{enumerate}[label=(\roman*)]
\item every marked point lies in the relative interior of an edge of $\Gamma_P$;
\item distinct marked points lie on distinct edges;
\item the complete gradient-labelled marked tropical type is stable under all sufficiently small independent perturbations of $P$.
\end{enumerate}
\end{theorem}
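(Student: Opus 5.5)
The plan is to define $\Conf_N^{\rm gen}(\mathcal O)$ as the union of the interiors of the full-dimensional strata of the stratification in \cref{prop:stable-type-stratification}, minus the two discriminant loci of \cref{prop:generic-discriminants}, and then to check openness, density, full measure, and properties (i)--(iii).

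Let $\mathcal S_1,\dots,\mathcal S_k$ be the strata of \cref{prop:stable-type-stratification}. Since the stratification is finite and semilinear, the union $Y$ of the strata of dimension $<2N$ is semilinear of dimension at most $2N-1$. Let $D_v$ be the marked-vertex locus, which by \cref{prop:generic-discriminants} lies in a finite union of semilinear sets of dimension $\le 2N-1$. Let $D_e$ be the same-edge locus, which lies in a finite union of affine hypersurfaces $\{\det(p_i-p_j,w)=0\}$. Set
\[
 \Conf_N^{\rm gen}(\mathcal O)=\Conf_N(\mathcal O)\setminus\overline{Y\cup D_v\cup D_e}.
\]
Replace $D_v$ and $D_e$ by the containing finite unions of lower-dimensional semilinear sets. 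Closures of semilinear sets are semilinear of the same dimension, and a semilinear set of dimension $\le 2N-1$ is Lebesgue-null with nowhere dense closure. So the complement is open, dense, of full measure, and semilinear by \cref{prop:semilinear-dimension-facts}(i). Every point of it lies in an open full-dimensional stratum, because the strata partition $\Conf_N(\mathcal O)$ and the lower-dimensional ones have been removed.

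For (i): a point of $\Conf_N^{\rm gen}$ avoids $D_v$, so no marked point is a tropical vertex. Every marked point lies in $V(F_P)$ by \cref{prop:tropical-relaxation}, and it lies in $\Omega^\circ$. It therefore lies in the relative interior of an interior edge; boundary faces are excluded since $P\subset\mathcal O\Subset\Omega^\circ$.

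For (ii): avoiding $D_e$ means no two marks satisfy $\det(p_i-p_j,w)=0$ for any occurring direction $w$. Two marks on one edge would satisfy exactly this, so marks lie on distinct edges.

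For (iii): by the last sentence of \cref{prop:stable-type-stratification}, on the interior of a full-dimensional stratum the gradient-labelled face poset, the weights, the boundary incidences and the marked faces persist under small independent perturbations. Openness of $\Conf_N^{\rm gen}$ keeps perturbed configurations inside the generic set as well.

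The main obstacle is the dimension bookkeeping behind $D_v$, specifically that $g_\tau\le N$ holds on the whole incidence space and not only at relaxations. I will handle this by intersecting the incidence space with the graph $\mathscr G_N$, so that every point considered is an actual relaxation. \cref{lem:binary-spanning} then applies pointwise, and \cref{lem:marked-incidence-dimension} gives dimension $\le 2N-1$ before projecting via \cref{prop:semilinear-dimension-facts}(ii). A secondary point is that the closure of a finite union of semilinear sets of dimension $\le 2N-1$ still has dimension $\le 2N-1$. This holds because each relatively open polyhedron has closure of equal dimension.
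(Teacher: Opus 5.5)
Your proposal is correct and follows essentially the paper's proof: you remove the marked-vertex and same-edge discriminant covers of \cref{prop:generic-discriminants} together with the lower-dimensional part of the stable-type stratification, then read off (i)--(ii) from avoiding the discriminants and (iii) from the persistence statement in \cref{prop:stable-type-stratification}; the only difference is that the paper takes a common polyhedral refinement adapted to the discriminant covers, where you remove the closure of their union. Your extra step of intersecting the incidence space with $\mathscr G_N$ is harmless but not needed: $g_\tau$ is determined by the type, so any type realized by a relaxation already has $g_\tau\le N$, and unrealized types contribute nothing to the marked-vertex locus.
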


\begin{proof}
Let $\mathcal B$ be the union of the finite semilinear cover of the marked-vertex locus and the finite affine-hypersurface cover of the same-edge locus from \cref{prop:generic-discriminants}, together with the collision diagonals.  Thus $\mathcal B$ is semilinear and has dimension at most $2N-1$.

Choose a common finite polyhedral refinement of the stable-type stratification in \cref{prop:stable-type-stratification} that is adapted to every set in these two finite covers.  Equivalently, each such set, and hence $\mathcal B$, is a union of cells of the refinement.  Define $\Conf_N^{\rm gen}(\mathcal O)$ as the union of the full-dimensional relatively open cells disjoint from $\mathcal B$.

Each retained cell is open in configuration space, and the retained union is semilinear because it is a finite union of relatively open polyhedra.  Its complement lies in the codimension-one skeleton of the refinement together with $\mathcal B$, so it is nowhere dense and has Lebesgue measure zero.  The retained cells avoid both discriminants, giving \textup{(i)} and \textup{(ii)}.  On each retained cell, all strict inequalities defining the labelled type persist under sufficiently small independent perturbations, giving \textup{(iii)}.
\end{proof}

\subsection{The marked-tree theorem}

\paragraph{Dimension count for a fixed combinatorial type.}
Fix a full-dimensional strongly generic chamber $Q$ of gradient-labelled type $\tau$, and prescribe the edge interior containing each mark.  Let $\mathcal I_\tau$ be the semilinear incidence space of tuples $(c,p_1,\dots,p_N)$ with $c\in\mathcal M_\tau$ and $p_i$ on its prescribed edge.  The graph $P\mapsto(c(P),P)$ over $Q$ lies in $\mathcal I_\tau$ and projects bijectively onto the $2N$-dimensional chamber $Q$.  This gives the lower bound.

For the upper bound, completion removes the additive gauge and the redundant monomials.  The zero-boundary condition fixes every boundary-cell coefficient, and at most the $g_\tau$ coefficients of bounded complementary cells remain variable; all other coefficients are then determined.

Once the curve is fixed, each mark contributes one scalar parameter along its prescribed edge, so the incidence fiber has dimension at most $N$.  Relative-interior, distinctness, and stable-type conditions only restrict this space.  Boundary contacts, carrier endpoints, and carrier lengths are determined by the same coefficients and the fixed boundary data.  Hence $\dim\mathcal I_\tau\le g_\tau+N$.

\begin{theorem}[Generic genus]
\label{thm:generic-genus}
For every strongly generic configuration,
\[
 g(\Gamma_P)=N.
\]
Moreover, the $N$ bounded-face incidence vectors $a_1,\dots,a_N$ form a basis of $\Ftwo^N$.
\end{theorem}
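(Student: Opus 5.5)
The proof combines two inequalities.

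The upper bound $g(\Gamma_P)\le N$ is already \cref{lem:binary-spanning}. It holds for every configuration, because the incidence vectors $a_1,\dots,a_N$ span $\Ftwo^g$.

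For the lower bound $g(\Gamma_P)\ge N$ we use the dimension count in the paragraph preceding the statement. Fix a strongly generic $P$. By \cref{thm:strong-genericity}(iii) and \cref{prop:stable-type-stratification}, $P$ lies in an open full-dimensional chamber $Q\subset\Conf_N(\mathcal O)$. On $Q$ the gradient-labelled marked type $\tau$ is constant, and so is the edge interior containing each mark; by \cref{thm:strong-genericity}(i),(ii) these edges are distinct. The map $P'\mapsto(c(P'),P')$ embeds $Q$ into the incidence space $\mathcal I_\tau$, so $\dim\mathcal I_\tau\ge2N$.

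For the matching upper bound on $\mathcal I_\tau$, we project to the coefficient factor. The image lies in $\mathcal M_\tau$, which has dimension at most $g_\tau$ by \cref{lem:moduli-bound}. Once $c$ is fixed, every mark $p_i$ is confined to a prescribed one-dimensional edge, so each fiber has dimension at most $N$. \Cref{prop:semilinear-dimension-facts}(iii) then gives $\dim\mathcal I_\tau\le g_\tau+N$. Combining the two bounds,
\[
 2N\le g_\tau+N,
 \qquad\text{so}\qquad
 g(\Gamma_P)=g_\tau\ge N.
\]
\Cref{prop:boundary-parallel-carriers} guarantees that boundary strips and boundary-parallel carriers contribute nothing extra to $g_\tau$ or to $\dim\mathcal M_\tau$.

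With $g=N$, the $N$ vectors $a_1,\dots,a_N$ span $\Ftwo^N$ by \cref{lem:binary-spanning}. Hence they form a basis.

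The main obstacle is justifying the fiber and base dimensions rigorously. Three points need care.
\begin{enumerate}[label=\textup{(\alph*)}]
\item The image of $\mathcal I_\tau$ in coefficient space must genuinely lie in $\mathcal M_\tau$. This holds because $c(P')$ is the completed vector, so \cref{lem:completed-coordinate-uniqueness} applies and no gauge freedom is hidden.
\item The boundary-cell coefficients must be pinned by the type $\tau$ alone, independently of $P'$. This is the content of \cref{lem:moduli-bound}.
\item The map from the chamber is injective and semilinear. This is the point of \cref{prop:semilinear-coefficient-graph} and \cref{prop:piecewise-affine-relaxation}.
\end{enumerate}
I would check these in that order, treating the chamber as a relatively open polyhedron of dimension $2N$ on which $c$ is affine. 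The lower bound then follows directly from \cref{prop:semilinear-dimension-facts}(ii) applied to the projection $\mathcal I_\tau\to\Conf_N(\mathcal O)$, whose image contains $Q$.
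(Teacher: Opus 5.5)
Your proposal is correct and is essentially the paper's proof: $g\le N$ comes from the binary spanning lemma, $g\ge N$ comes from $2N\le\dim\mathcal I_\tau\le g_\tau+N$, and spanning with $g=N$ gives a basis of $\Ftwo^N$. Your derivation of the upper estimate on $\dim\mathcal I_\tau$ from the moduli bound and the fiber inequality is exactly the content of the marked-incidence-dimension lemma that the paper cites, and the three care points you list are handled by the results you name.
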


\begin{proof}
Let $P$ lie in a full-dimensional generic chamber of gradient-labelled type $\tau$, and let $g=g_\tau$.  All $N$ marked points lie in specified edge interiors.  The corresponding incidence space contains the graph $P\mapsto(c(P),P)$ over the chamber, and projection to the configuration coordinates identifies this graph with the chamber.  It therefore has dimension at least $2N$.  By \cref{prop:boundary-parallel-carriers}, this remains true when some marked carriers are arbitrarily long and parallel to polygon sides.

By \cref{lem:marked-incidence-dimension}, the same incidence space has dimension at most $g+N$.  Hence $g\ge N$.  The binary spanning lemma gives $g\le N$, so $g=N$.

The $N$ incidence vectors span the $N$-dimensional vector space $\Ftwo^N$ and therefore form a basis.
\end{proof}

The marked-tree theorem rests on a connectivity property valid for every nonzero tropical polynomial with zero boundary values.

\begin{lemma}[Facet adjacency of a convex subdivision]
\label{lem:facet-adjacency}
Let $\mathscr P$ be a finite polyhedral subdivision of a convex polygon.  Form a graph whose vertices are the two-dimensional cells of $\mathscr P$, with an edge between two cells that share a one-dimensional facet.  This graph is connected.
\end{lemma}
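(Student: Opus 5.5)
The plan is to show that the dual graph is connected by path-connecting interiors of cells through a generic segment. Fix two two-dimensional cells $C,C'$ of $\mathscr P$ and choose points $x\in\operatorname{relint}C$, $y\in\operatorname{relint}C'$. Since the ambient polygon is convex, the segment $[x,y]$ lies in it. The goal is to perturb $x$ and $y$ inside their open cells so that $[x,y]$ avoids every zero-dimensional cell of $\mathscr P$ and is not contained in the supporting line of any one-dimensional cell. Reading off the sequence of two-dimensional cells that the segment traverses will then give a path from $C$ to $C'$ in the adjacency graph.

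\textbf{Step 1 (general position).} The subdivision is finite, so there are finitely many vertices $v_1,\dots,v_k$ and finitely many supporting lines $\ell_1,\dots,\ell_r$ of one-dimensional cells. Fix $y$. For each vertex $v_j$, the set of $x$ for which $v_j\in[x,y]$ lies on the line through $y$ and $v_j$, or is empty or a single point if $v_j=y$. Since $y$ is interior to $C'$, it is not a vertex. Likewise, the condition $[x,y]\subset\ell_i$ forces $x\in\ell_i$. Both bad sets are therefore finite unions of lines, and they have empty interior, so some $x$ in the open set $\operatorname{relint}C$ avoids all of them.

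\textbf{Step 2 (crossing structure).} With this choice, the segment $[x,y]$ meets the one-skeleton in finitely many points $t_1<\dots<t_q$, each in the relative interior of a one-dimensional cell, and it crosses transversally there. Between consecutive crossing parameters, the open subsegment misses the one-skeleton. It is connected, so it lies in the interior of a single two-dimensional cell, because the relative interiors of the two-dimensional cells are the connected components of the complement of the one-skeleton in the polygon's interior.

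\textbf{Step 3 (adjacency at crossings).} Near a crossing point $z$ in the relative interior of a one-dimensional facet $e$, a small disc around $z$ is divided by $e$ into two half-discs. Each half-disc lies in a single two-dimensional cell, and these two cells are exactly the two cells containing $e$ as a facet. Since the crossing is transversal, the segment passes from one half-disc to the other. Consecutive cells in the sequence are therefore either equal or share the facet $e$, which yields a walk $C=C_0,C_1,\dots,C_q=C'$ in the adjacency graph.

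The main obstacle is the local claim in Step 3. The difficulty is to justify that a point in the relative interior of an interior one-dimensional cell has exactly two adjacent two-dimensional cells, one on each side. Also, because the polygon is convex, $e$ cannot lie on the boundary once the segment crosses it transversally from interior to interior. I would derive this from the defining properties of a polyhedral subdivision: cells cover the polygon and meet in common faces. Near $z$, only the cells containing $e$ are present, and each of them is a convex polygon lying locally on one side of the line $\ell_e$. Covering then forces at least one such cell on each side, and the face-intersection condition forbids two distinct cells on the same side.
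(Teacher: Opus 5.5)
Your proof is correct and takes essentially the paper's approach: join interior points of the two cells by a generic straight path that meets the one-skeleton only transversally, in relative interiors of facets, and read off the successive cells it passes through. The differences are minor. The paper keeps $x,y$ fixed and bends the path at an auxiliary point $z$, using $[x,z]\cup[z,y]$, whereas you perturb $x$ and use a single segment. You also justify explicitly that an interior facet has exactly one cell on each side, a local structure the paper leaves implicit.
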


\begin{proof}
Choose interior points $x$ and $y$ in two prescribed cells.  Because the subdivision has finitely many vertices and edges, one can choose an intermediate point $z\in\Omega^\circ$ such that the polygonal path $[x,z]\cup[z,y]$ avoids every vertex of the subdivision and contains no nontrivial segment of its one-dimensional skeleton.  Each of its two segments therefore crosses the skeleton only through relative interiors of one-dimensional facets.  Reading the successive two-dimensional cells met by the path gives a facet-adjacency chain between the prescribed cells.
\end{proof}

\begin{lemma}[No interior leaves]
\label{lem:no-interior-leaves}
After suppressing all artificial degree-two subdivision points, the uncut corner locus of a tropical polynomial has no one-valent vertex in $\Omega^\circ$.
\end{lemma}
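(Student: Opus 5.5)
The approach is the tropical balancing condition at an interior vertex of the corner locus. Let $v\in\Omega^\circ$ be a genuine vertex of the uncut corner locus $V(F)$, that is, a point that is not an artificial degree-two subdivision point. Suppose, for contradiction, that exactly one edge germ $\eta$ of $V(F)$ is incident to $v$. We derive a contradiction from the local structure of $F$ near $v$.

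First we describe the local picture. Choose a small disk $B$ around $v$ that meets $V(F)$ only in the germ $\eta$; this is possible because $F$ is a tropical polynomial, so its corner locus is a finite polyhedral complex. Then $B\setminus\eta$ is connected, since removing a single segment ending at the centre leaves the punctured disk path-connected around the far side. The set $B\setminus V(F)=B\setminus\eta$ therefore lies in a single linearity cell, so $F$ is given on it by one affine function $L$ with a single gradient $m$. By continuity, $F=L$ on all of $B$. Hence $F$ is affine near every point of $\eta\cap B$, and the minimum is attained by a single affine function there.

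This contradicts the assumption that $\eta\subset V(F)$. Concretely, by \cref{prop:essential-presentation}, at a relative-interior point of the edge underlying $\eta$ exactly two essential monomials are active, and they belong to two distinct adjacent two-dimensional cells whose gradients differ by $w(e)n_e\neq0$. But both of those cells meet $B\setminus\eta$, so they coincide. Equivalently, the Newton-dual description breaks down: the dual polygon $Q_v$ would have to be a segment with only one edge direction attached, whereas balancing $\sum_{\eta'}w(\eta')v_{\eta'}=0$ over the incident germs is impossible with a single nonzero term.

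The one point that needs care is the choice of $B$. We must ensure that $B$ meets $V(F)$ only along germs at $v$, and that "one-valent after suppression" really means one germ rather than a degree-two point with both germs on the same line. Local finiteness of the polyhedral complex, which holds here because $F$ is a finite minimum of affine functions, gives the first. The germ-multiplicity convention of $\mathscr E_\Gamma(v)$ handles the second: a suppressed degree-two point has two germs and is not a vertex, while a genuine one-valent vertex has exactly one germ. With these in place the argument is purely local, so it applies to every tropical polynomial, including disconnected corner loci.
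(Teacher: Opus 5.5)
Your proof is correct, but it takes a different route from the paper's.

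The paper's proof is a one-line appeal to the tropical balancing condition $\sum_{\eta\in\mathscr E_\Gamma(v)}w(\eta)v_\eta=0$ at an interior vertex. A single nonzero term cannot sum to zero. Balancing itself is imported from the standard theory; it is the statement that the boundary of the dual Newton polygon $Q_v$ closes up.

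Your main argument is instead purely local and topological:
\begin{enumerate}
\item A small disk around $v$ minus one radial germ is a connected slit disk.
\item Off $V(F)$ the function $F$ is locally affine, so it is affine on the slit disk.
\item By continuity $F$ is affine on the whole disk, so the germ cannot lie in the corner locus.
\end{enumerate}

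The only inputs are local finiteness and the fact that $V(F)$ is exactly the non-affine locus of $F$. You obtain the latter through \cref{prop:essential-presentation}. It also follows directly: if $F=L$ near $x$, any active monomial $M\ge L$ that touches at $x$ must equal $L$.

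Your version is self-contained and needs neither balancing nor Newton duality. The paper's version is shorter once balancing is granted. Your closing sentence about balancing is essentially the paper's proof and is not needed for your argument.
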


\begin{proof}
Writing $\Gamma=V(F)$, the weighted primitive outgoing tangent vectors at every interior tropical vertex satisfy the tropical balancing condition
\[
 \sum_{\eta\in\mathscr E_\Gamma(v)}w(\eta)v_\eta=0.
\]
A one-valent vertex would leave a single nonzero vector in this sum, which is impossible.  Thus every genuine leaf of the uncut tropical graph lies on the polygon boundary.
\end{proof}

\begin{lemma}[Connectivity of the full tropical curve]
\label{lem:full-curve-connected}
Let $F$ be a nonzero $\Omega$-tropical polynomial.  Then its corner locus
\[
 \Gamma=V(F)\subset\Omega^\circ
\]
is connected.
\end{lemma}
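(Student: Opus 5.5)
The plan is to argue by contradiction.  Suppose $\Gamma=V(F)$ splits as a disjoint union $\Gamma=\Gamma_1\sqcup\Gamma_2$ of nonempty relatively closed pieces, and then build a coefficient deformation that contradicts either concavity or the zero-boundary normal form.  Throughout I would work with the essential presentation of \cref{prop:essential-presentation}: every two-dimensional cell $C$ carries one essential monomial $L_C$, and two cells are facet-adjacent exactly when they share an edge of $\Gamma$.

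First I would note that $\Gamma\neq\varnothing$.  If $\Gamma$ were empty, $F$ would be affine on the convex set $\Omega^\circ$.  An affine function vanishing on all of $\partial\Omega$, which spans the plane, is identically zero, and this contradicts $F\ne0$.

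Next I would look at the complementary components of $\Omega^\circ\setminus\Gamma$, i.e.\ the cells.  By \cref{lem:facet-adjacency} the cell-adjacency graph is connected.  Each edge of this graph belongs to exactly one of $\Gamma_1$ or $\Gamma_2$, so I colour it accordingly.  The key topological step is this: since $\Gamma_1$ is a compact-in-$\Omega^\circ$ union of edges together with terminal edges reaching $\partial\Omega$, and it is disjoint from $\Gamma_2$, some connected region of $\overline\Omega\setminus\Gamma_1$ must contain all of $\Gamma_2$.  I would separate the two pieces by a simple arc or closed curve $\gamma$ in $\Omega^\circ\setminus\Gamma$, running boundary to boundary or closed, avoiding $\Gamma$ entirely.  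Such a $\gamma$ lies inside a single cell, because it meets no edge and is connected.  Hence the cell $C_0$ containing $\gamma$ is adjacent, across edges, to both $\Gamma_1$ and $\Gamma_2$, and $\gamma$ disconnects $C_0$.  But $C_0$ is convex: it is the contact set of $L_{C_0}$, by \cref{prop:essential-presentation}.  A convex set minus $\Gamma$ is still convex here, so its relative interior is connected.  For a closed loop $\gamma$, convexity of $C_0$ means the loop bounds a disc inside $C_0$, and that disc meets no edge, so the loop separates nothing.  The remaining case is a boundary-to-boundary arc cutting $C_0$ into two parts, each touching $\partial\Omega$.  I would make this rigorous with the Jordan curve theorem on the disc $\overline\Omega$.

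To finish, I would rule out the boundary-to-boundary case.  The cell $C_0$ is a convex region bounded by edges of $\Gamma_1$ on one side and of $\Gamma_2$ on the other, with two boundary arcs of $\partial\Omega$ in between.  On those boundary arcs $L_{C_0}=F=0$.  Two disjoint boundary arcs of a convex polygon along which an affine nonnegative function vanishes force $L_{C_0}$ to vanish on two non-collinear points, or on collinear segments.  In the first case $L_{C_0}\equiv0$, so $C_0$ is the zero cell; the contact set $\{F=0\}$ is then convex, yet it contains two far-apart boundary arcs and hence their convex hull, which contains interior points where $F>0$ near $\Gamma$.  The edges of $\Gamma_1$ and $\Gamma_2$ bordering $C_0$ lie inside this hull, a contradiction.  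The main obstacle I expect is the degenerate collinear case: both arcs lie in a single side line, i.e.\ $C_0$ is a parallel strip between two boundary-parallel carriers.  That configuration is impossible because a convex polygon meets a line in a single segment, so the two arcs would merge.  I would make this careful via the side normal form in \cref{lem:side-normal-form}.
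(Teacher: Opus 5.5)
There is a genuine gap at the step you call key: the existence of a simple closed curve or boundary-to-boundary arc $\gamma\subset\Omega^\circ\setminus\Gamma$ that separates $\Gamma_1$ from $\Gamma_2$. You assert it but do not prove it. For an arbitrary splitting it is also false as a statement of planar topology. Suppose $\Gamma_1$ contains a cycle $Z$ and $\Gamma_2$ has one piece inside $Z$ and one outside. Then no single closed curve or cross-cut avoiding $\Gamma$ has all of $\Gamma_1$ on one side and all of $\Gamma_2$ on the other. The sentence ``some connected region of $\overline\Omega\setminus\Gamma_1$ must contain all of $\Gamma_2$'' fails for the same reason. Because you argue by contradiction, you cannot rule out such configurations in advance.

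Your later steps actually need something weaker: some curve avoiding $\Gamma$, possibly ending at a polygon vertex where terminal edges of both pieces meet, with points of $\Gamma$ on both sides. Even that is a Janiszewski/unicoherence-type theorem. It is the whole topological content of your route, so it cannot be deferred to ``the Jordan curve theorem on the disc.''

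The detour is also unnecessary, because the colouring you introduce and then drop already finishes the proof. The facet-adjacency graph is connected by \cref{lem:facet-adjacency}, and both colours occur. So along a path from a $\Gamma_1$-edge to a $\Gamma_2$-edge there is a cell $C_0$ whose boundary carries edges of both colours. Your final paragraph, once cleaned up, shows that $C_0\cap\partial\Omega=C_0\cap\{L_{C_0}=0\}$. This holds because the zero set on $\Omega$ of the nonnegative affine function $L_{C_0}\not\equiv0$ is a proper face of $\Omega$. That set is convex, hence a single (possibly empty) proper face of $C_0$. Therefore $\partial C_0\cap\Omega^\circ$ is a connected subset of $\Gamma$ that meets both relatively clopen pieces, which is a contradiction.

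This is exactly the paper's proof, run directly: each $B_j=\partial R_j\cap\Omega^\circ$ is a connected arc or circle, and adjacent $B_j$ share an edge.

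Two smaller repairs are needed in your last paragraph. First, when the two boundary contacts do not lie on a common side, the contradiction is simply $0\le F\le L_{C_0}\equiv0$. Your claim that the edges bordering $C_0$ lie in the convex hull of the two boundary arcs is unjustified and not needed. Second, ``two non-collinear points'' should read ``two points not on a common side''; that is what makes the joining segment enter $\Omega^\circ$.
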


\begin{proof}
Let $R_1,\dots,R_M$ be the closures of the two-dimensional linearity cells of $F$, and let $L_j$ be its essential cell monomial from \cref{prop:essential-presentation}.  Each $R_j$ is a compact convex polygon.

The set
\[
 B_j:=\partial R_j\cap\Omega^\circ
\]
is connected.  Since $F=0$ on $\partial\Omega$ and every monomial lies above $F$, a point of $R_j\cap\partial\Omega$ satisfies $L_j=0$.  The affine function $L_j$ is nonnegative on $\Omega$, so its zero set on $\Omega$ is a face of the convex polygon $\Omega$: it is empty, a point, or a segment contained in one side.  It cannot equal all of $\Omega$, since $F$ is nonzero.  Thus $R_j\cap\partial\Omega$ is a connected proper face of the polygon $R_j$.

Removing this face from the boundary circle of $R_j$ leaves a connected arc; this arc is precisely $B_j$.  If $R_j$ is disjoint from $\partial\Omega$, then $B_j=\partial R_j$ is connected as well.

By \cref{lem:facet-adjacency}, the two-dimensional cells form a connected facet-adjacency graph.  If $R_j$ and $R_k$ are adjacent, then $B_j\cap B_k$ contains their common tropical edge and is nonempty.  Therefore the union
\[
 \bigcup_{j=1}^M B_j=\Gamma
\]
is connected.
\end{proof}

\begin{lemma}[Existence of a terminal branch]
\label{lem:terminal-existence}
Let $F$ be a nonzero nonnegative concave tropical polynomial on a bounded rational convex polygon $\Omega$, with $F=0$ on $\partial\Omega$.  Then $V(F)$ has an edge whose closure meets $\partial\Omega$.
\end{lemma}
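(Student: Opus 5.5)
The plan is to argue through the essential presentation and the positivity of $F$ in the interior, using the boundary-cell structure rather than any genericity.

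First, $V(F)$ is nonempty. Otherwise $F$ would be affine on $\Omega^\circ$, hence on $\overline\Omega$ by continuity. An affine function vanishing on all of $\partial\Omega$, which contains three non-collinear vertices, is identically zero. This contradicts $F\ne0$.

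Next, pick a point $x_0$ in the relative interior of a side $S_r$. By \cref{lem:side-normal-form}, near $x_0$ we have $F=m\lambda_r$ with $m=m_F(S_r)\ge0$. I claim that $m\ge1$ for at least one side.

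Suppose instead that $m_F(S_r)=0$ for every $r$. Then $F$ vanishes on a neighborhood in $\overline\Omega$ of every relative-interior point of every side. Let $C$ be a two-dimensional cell containing such a neighborhood. Its essential monomial $L_C$ is then identically zero, so by \cref{prop:essential-presentation} the contact set $\{F=L_C\}=\{F=0\}$ equals the convex cell $C$. This single convex cell contains neighborhoods of interior points of all sides, so its convex hull contains $\Omega^\circ$. Hence $F\equiv0$, a contradiction.

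So fix a side with $m:=m_F(S_r)\ge1$, and let $C$ be the boundary cell adjacent to $S_r$ near $x_0$, with monomial $m\lambda_r$. The contact set $C=\{F=m\lambda_r\}$ is a compact convex polygon meeting $S_r$ in a nondegenerate segment.

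Now argue that $C\ne\overline\Omega$. If $C$ were all of $\overline\Omega$, then $F=m\lambda_r$ everywhere. This is impossible, because $m\lambda_r>0$ at the other vertices of $\Omega$, while $F$ vanishes there; recall $\Omega$ has at least three sides.

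Therefore the relative boundary $B_C=\partial C\cap\Omega^\circ$ is nonempty. As in the proof of \cref{lem:full-curve-connected}, it is a connected arc contained in $V(F)$. The endpoints of this arc lie on $\partial\Omega$, because $C\cap\partial\Omega$ is a proper nonempty face of $C$ and $B_C$ is its complementary arc in $\partial C$.

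The arc consists of finitely many tropical edges. The edge of $B_C$ adjacent to one of its endpoints has closure meeting $\partial\Omega$, which is the required terminal edge.

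The main obstacle is making this last step precise. We must confirm that the arc $B_C$ is a union of closures of edges of $V(F)$, and that its extremal edge genuinely ends on $\partial\Omega$ rather than at an interior vertex. The second point follows because the arc is closed in $\Omega^\circ$ only up to its two boundary endpoints, which exist since $C\cap\partial\Omega\neq\varnothing$ and $C\neq\overline\Omega$.

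For the first point, polyhedrality and local finiteness (\cref{prop:essential-presentation}) give finitely many edges, so the extremal segment is an edge closure containing a boundary point. \cref{lem:no-interior-leaves} provides a consistency check: following edges from any point of $V(F)$ without backtracking must eventually exit to $\partial\Omega$ or close a cycle. The boundary-cell argument above is what guarantees that an exit actually occurs.
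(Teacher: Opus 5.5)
Your proof is correct, but it takes a different route from the paper.

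\emph{The paper's route.} It argues by contradiction. If $V(F)$ were compactly contained in $\Omega^\circ$, a thin connected collar of $\partial\Omega$ would lie in a single linearity region. The affine function on that region vanishes on all of $\partial\Omega$, so it is zero. Then $F$ vanishes on an open set, and concavity forces $F\equiv0$. This uses little beyond finiteness of $V(F)$ and concavity, but it says nothing about where the terminal edge lies.

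\emph{Your route.} You argue constructively. Using \cref{lem:side-normal-form} and the contact-set identity of \cref{prop:essential-presentation}, you identify the boundary cell $C=\{F=m\lambda_r\}$ of a side with $m=m_F(S_r)\ge1$. You then show that $\partial C\cap\Omega^\circ$ is an arc in $V(F)$ whose closure adds two points of $\partial\Omega$, and you take its extreme edges. This needs more structure, plus one small check: the initial straight piece of $\partial C$ at an arc endpoint contains no vertex of $V(F)$, which holds because $V(F)$ has finitely many vertices. In return you get more information. The side normal form gives $F=m\lambda_r$ near every relative-interior point of $S_r$, so $C\supset S_r$. Hence the arc runs between the two polygon vertices of $S_r$, and you obtain terminal edges ending at polygon vertices, anticipating \cref{lem:boundary-vertex-localization}.

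\emph{Simplifications.}
\begin{itemize}
\item The nonemptiness of $V(F)$ is never used.
\item A single side with $m_F(S_r)=0$ already gives $F=0$ on an open subset of $\Omega^\circ$, hence $F\equiv0$ by the concavity argument. So in fact every side quasi-degree is positive.
\item The closing appeal to \cref{lem:no-interior-leaves} is not a proof step and can be dropped.
\end{itemize}
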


\begin{proof}
Suppose that $V(F)$ were compactly contained in $\Omega^\circ$.  A sufficiently thin connected collar of $\partial\Omega$ would then lie in one affine linearity region of $F$.  The affine monomial defining that region agrees with $F=0$ on the whole polygonal boundary and is therefore identically zero.  Hence $F$ vanishes on a nonempty open subset of $\Omega^\circ$.

If $F(y)>0$ at some point $y$, choose $x$ in the interior of that zero set.  For all sufficiently small $t>0$, the point $(1-t)x+ty$ remains in the zero set, whereas concavity gives
\[
 F((1-t)x+ty)\ge tF(y)>0,
\]
a contradiction.  Thus $F\equiv0$, contrary to the hypothesis.
\end{proof}

\begin{lemma}[One interior endpoint for every terminal edge]
\label{lem:terminal-interior-endpoint}
Let $F$ be a nonzero $\Omega$-tropical polynomial.  If the closure of an edge $e\subset V(F)$ meets $\partial\Omega$, then its other endpoint is an interior tropical vertex.  In particular, no tropical edge has both endpoints on $\partial\Omega$.
\end{lemma}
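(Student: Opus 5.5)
The claim is that a terminal edge $e$ cannot have both endpoints on $\partial\Omega$, and that its non-boundary endpoint is a genuine interior tropical vertex. The plan is to argue by contradiction using the essential presentation of \cref{prop:essential-presentation} together with the zero-boundary structure from \cref{lem:side-normal-form}. The main tool is that an edge of $V(F)$ separates two cells whose essential monomials $L_0,L_1$ agree along the edge. Let $x$ be an endpoint of $\overline e$ on $\partial\Omega$. Both $L_0$ and $L_1$ are active at $x$, and both are $\ge F\ge0$ on $\overline\Omega$. Hence $L_0(x)=L_1(x)=F(x)=0$.

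\emph{Step 1: both endpoints on the boundary is impossible.} Suppose $\overline e$ meets $\partial\Omega$ at two distinct points $x\neq y$. Then $L_0$ and $L_1$ both vanish at $x$ and at $y$, and both are nonnegative on the convex body $\Omega$. The zero set of a nonnegative affine function on $\Omega$ is a face of $\Omega$, as in the proof of \cref{lem:full-curve-connected}. So $x$ and $y$ lie in a common face of $\Omega$, namely a single side $S_r$, and each $L_k$ vanishes on the whole segment $[x,y]\subset S_r$.

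The open segment $e$ lies in $\Omega^\circ$, and $e$ is the straight segment joining $x$ and $y$. Since $x,y\in S_r$, this segment is contained in $S_r\subset\partial\Omega$, contradicting $e\subset\Omega^\circ$. To be careful, I will check the degenerate situation where both $L_k$ vanish identically: by \cref{lem:side-normal-form} each would be $k\lambda_r$ with $k\ge0$ near $S_r$. Two distinct essential monomials of this form have distinct $k$, and they agree only on $\{\lambda_r=0\}$. So their tie locus in $\Omega^\circ$ is empty, which again contradicts $e\subset V(F)\cap\Omega^\circ$.

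\emph{Step 2: the other endpoint is an interior vertex.} By Step 1, the other endpoint $z$ of $\overline e$ lies in $\Omega^\circ$, since the edge is bounded and $\Omega$ is compact. It must be a genuine vertex of the uncut graph, because edges are maximal segments once degree-two points are suppressed. Its valence is at least two, since $e$ is incident to it. By \cref{lem:no-interior-leaves} the valence is in fact at least three after balancing, so $z$ is an interior tropical vertex.

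I expect the main subtlety to be in Step 1. One has to make sure that ``endpoint on $\partial\Omega$'' genuinely forces both adjacent monomials to vanish there, which needs continuity of the representation up to the boundary. One also has to handle the case where the edge is collinear with a side, which is dispatched by the $k\lambda_r$ normal form.
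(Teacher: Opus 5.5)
Your argument is correct, but it takes a more local route than the paper's.

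The paper argues globally. An edge with both endpoints on $\partial\Omega$ contains no interior vertex, so it is a connected component of $V(F)$. By \cref{lem:full-curve-connected} it is then the entire corner locus. The resulting chord splits $\Omega^\circ$ into exactly two cells, and the zero sets of their two essential monomials would be two proper faces of $\Omega$ covering $\partial\Omega$. That is impossible for a polygon with at least three sides.

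You use only one adjacent monomial $L_0$. Since $F=L_0$ on $e$ and both extend continuously to $\overline\Omega$, $L_0$ vanishes at both endpoints. Its zero face therefore contains $\overline e$, which pushes $e$ into a side of $\Omega$. An even shorter version: $F$ is affine on $\overline e$ and zero at both ends, hence zero on $e\subset\Omega^\circ$, contradicting the strict interior positivity of a nonzero zero-boundary concave function.

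Your route needs neither connectivity of the curve nor the count of sides. The paper's route relies on the preceding connectivity lemma and a covering argument on $\partial\Omega$, and your proof shows both are unnecessary for this statement.

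A few small points:
\begin{itemize}
\item The face step needs $L_0\not\equiv0$. This follows from $0\le F\le L_0$ and $F\ne0$.
\item The ``degenerate'' paragraph is redundant once the contradiction is reached. Also, writing a monomial that vanishes on $S_r$ as a multiple of $\lambda_r$ is elementary; it does not come from \cref{lem:side-normal-form}.
\item In Step~2, incidence of $e$ gives valence at least one, not two. The interior endpoint is a vertex simply because edges of the uncut graph end at vertices. Valence at least three then follows from \cref{lem:no-interior-leaves} together with the suppression of degree-two points.
\end{itemize}
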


\begin{proof}
Suppose that an edge $e$ had two boundary endpoints and no interior endpoint.  The relative interior of a tropical edge cannot meet another edge without producing an interior tropical vertex.  Hence $e$ is a connected component of $V(F)$.  By \cref{lem:full-curve-connected}, it is the whole corner locus.

The chord $e$ divides $\Omega^\circ$ into exactly two two-dimensional linearity cells.  Let $L_1$ and $L_2$ be their essential cell monomials from \cref{prop:essential-presentation}.  Each $L_i$ is nonnegative on $\Omega$, and its zero set on $\Omega$ is a proper exposed face: either a polygon vertex or a segment contained in one side.  Since $F=0$ on $\partial\Omega$, every boundary point must lie in the zero set of $L_1$ or of $L_2$.  The boundary of a two-dimensional polygon, which has at least three sides, cannot be covered by two proper faces.  This contradiction proves the claim.
\end{proof}

Choose a sufficiently small regular polygonal truncation
\[
 \Omega'\Subset\Omega
\]
that contains every marked point and every tropical vertex, avoids all marked points and vertices on its boundary, and crosses each edge leaving this compact set transversely exactly once.  Let $G_0$ be the finite uncut plane multigraph with support
\[
 |G_0|=\Gamma_P\cap\overline{\Omega'}.
\]
Every transverse node is retained as a four-valent vertex, every crossing with $\partial\Omega'$ as a degree-one vertex, and all remaining artificial degree-two subdivision points, including the marks, are suppressed.  The points $p_i$ remain distinguished points of $|G_0|$.  By strong genericity there is a unique edge $e_i\in E(G_0)$ with $p_i\in e_i^\circ$, and the edges $e_1,\dots,e_N$ are pairwise distinct.

The curve $\Gamma_P$ is connected by \cref{lem:full-curve-connected}.  Every portion removed in passing to $G_0$ is a terminal open edge segment issuing from a unique interior vertex by \cref{lem:terminal-interior-endpoint}.  Thus $G_0$ is obtained by pruning leaf tails and remains connected.  Pruning such tails creates no bounded complementary face, so the bounded faces of $G_0$ are exactly the bounded complementary cells of $\Gamma_P$.  By \cref{thm:generic-genus}, there are $N$ of them; equivalently,
\[
 \beta_1(G_0)=N.
\]

\begin{lemma}[No bounded self-incidence]
\label{lem:no-bounded-self-incidence}
An interior tropical edge cannot have the same bounded linearity cell on both sides.
\end{lemma}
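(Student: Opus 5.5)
The plan is to use the essential presentation of \cref{prop:essential-presentation}.  Suppose an interior tropical edge $e$ had the same two-dimensional linearity cell $C$ on both sides.  At a point $x$ in the relative interior of $e$, the last assertion of \cref{prop:essential-presentation} says that exactly the two essential monomials of the two adjacent cells attain the minimum.  Since both adjacent cells are $C$, only one essential monomial $L_C$ is active near $x$.  I will turn this into a contradiction in two ways, both coming from the contact-set identity.

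\emph{Convexity argument.}  By \cref{prop:essential-presentation}, the contact set $\{F=L_C\}$ equals the closure $C$.  The proof there shows this contact set is convex, being cut out by $F\le L_C$ together with concavity.  A point of $\operatorname{relint}(e)$ with $C$ on both sides has small half-discs on either side of $e$ lying in $C$.  By convexity, $C$ then contains a full disc around $x$, so $F=L_C$ is affine near $x$.  That contradicts $x\in V(F)$.

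\emph{Gradient argument.}  Alternatively, the two gradients across $e$ would both equal $m_C$, so $m_1-m_0=0$.  This contradicts the requirement that the weight $w(e)\ge1$, i.e.\ that $e$ is a genuine corner edge.

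The only point needing care is the meaning of "the same cell on both sides" in the embedded multigraph.  A cell is the closure of a \emph{maximal connected} open region of affinity, so a region wrapping around from both sides is still a single convex set.  Convexity of cell closures then rules this out immediately, so no real obstacle is expected.  Boundedness of the cell is not needed; the argument works for any cell, and the lemma is stated for bounded cells only because that is the case needed for the duality with the marked tree.
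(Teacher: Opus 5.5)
Your convexity argument is correct and is essentially the paper's proof. The paper takes points $x_\pm$ of the convex open cell on the two sides of $e$ and notes that $[x_+,x_-]\subset R$ crosses $e\subset V(F_P)$, which is disjoint from $R$; you instead put a whole disc around $x$ into the convex closure $C$ and contradict $x\in V(F)$, which is the same idea. Your gradient variant (both adjacent gradients equal $m_C$, so $F$ has no jump across $e$) also works and shows that convexity is not really needed, and you are right that boundedness of the cell plays no role in the argument itself.
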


\begin{proof}
Suppose that a bounded open linearity cell $R$ occurred on both sides of an edge $e$.  Choose a point in the relative interior of $e$ and nearby points $x_+,x_-\in R$ on its two opposite sides.  The cell $R$ is convex, so the segment $[x_+,x_-]$ is contained in $R$.  This segment crosses $e$, while $e\subset V(F_P)$ is disjoint from the open linearity cell $R$, a contradiction.
\end{proof}

\begin{lemma}[Planar separation for cycles and attached trees]
\label{lem:planar-separation}
Let $G$ be a finite plane multigraph, with every transverse intersection treated as a vertex.
\begin{enumerate}[label=\textup{(\roman*)}]
\item Let $C\subset G$ be an embedded simple cycle, allowing a two-edge cycle formed by parallel edges.  If two faces of $G$ lie on different sides of $C$, then every path between their vertices in the planar dual $G^\vee$ contains the dual of an edge of $C$.
\item Let $G\subset\Omega^\circ$, let $R$ be a bounded face of $G$, and let $T\subset\overline\Omega$ be a finite embedded tree such that
\[
 T\cap G=\{a\}
\]
for one attachment point $a$.  If every leaf of $T$ other than possibly $a$ lies on $\partial\Omega$, then
\[
 T\cap R=\varnothing.
\]
\end{enumerate}
\end{lemma}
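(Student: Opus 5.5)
For part (i), I will use the Jordan curve theorem for the simple closed polygonal curve $|C|$.  Write the plane as $\R^2\setminus|C|=\mathrm{Ins}(C)\sqcup\mathrm{Out}(C)$, the bounded and unbounded components.  Every face of $G$ is a connected open set disjoint from $|G|\supset|C|$, so each face lies entirely in one of the two components; this makes ``different sides'' well defined.  A dual edge $e^\vee$ joins the two faces incident to $e$.  If $e\notin E(C)$, then the relative interior of $e$ is disjoint from $|C|$.  A small transverse arc through an interior point of $e$ then joins the two incident faces without meeting $|C|$, so they lie on the same side.

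Consequently, along any dual path the side can change only when the path traverses the dual of an edge of $C$.  A path from a face inside to a face outside must therefore use such a dual edge.  The two-edge cycle formed by parallel edges is still a simple closed curve, so it needs no separate treatment.  Loops, which are edges incident to the same face on both sides, are handled by the same argument.

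For part (ii), I argue by contradiction: suppose some point $y\in T\cap R$.  Since $R$ is a face of $G$, we have $R\cap|G|=\varnothing$, so $y\neq a$.  Consider the connected set $T\setminus\{a\}$.  Its components are finitely many, one for each branch of $T$ at $a$, and each is disjoint from $|G|$ because $T\cap G=\{a\}$.  Each component is therefore contained in a single face of $G$, and the component containing $y$ lies in $R$.

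That component is a subtree with at least one leaf of $T$ different from $a$.  This holds because every branch at $a$, followed away from $a$, ends at a leaf of $T$: the tree is finite, and a branch that returned to $a$ would create a cycle.  By hypothesis this leaf lies on $\partial\Omega$.  So $\overline R$ would contain a point of $\partial\Omega$.

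On the other hand, $R$ is a bounded face of $G\subset\Omega^\circ$.  The boundary of a bounded face of a plane graph is contained in $|G|$, and the face is the bounded region enclosed by $|G|$.  Since $|G|\subset\Omega^\circ$ and $\Omega^\circ$ is convex, hence simply connected, the bounded faces are contained in $\Omega^\circ$ and their closures lie in $\Omega^\circ$; only the unbounded face reaches $\partial\Omega$.  This contradicts the previous paragraph, and therefore $T\cap R=\varnothing$.

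The main obstacle is the claim that the closure of a bounded face of $G$ lies in $\Omega^\circ$.  I will justify it as follows.  A bounded face has frontier in $|G|$, which is a compact subset of $\Omega^\circ$.  The complement of the compact set $\Omega^\circ\cap\{\text{small neighborhood of }|G|\}^c$ near the outside is connected and unbounded.  Hence every point of $\partial\Omega$ lies in the unbounded face of $G$, so no bounded face meets $\partial\Omega$.  The leaf in the component containing $y$ would then have to lie in a bounded face while sitting on $\partial\Omega$, which is impossible.
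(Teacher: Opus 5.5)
Your proof is correct and follows essentially the paper's route. Part (i) is the same Jordan-curve argument. For (ii), both you and the paper produce a leaf of $T$ other than $a$ lying inside $R$: you take the branch of $T$ at $a$ containing $y$, while the paper takes the closure of a component of $T\cap R$.

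The step you single out, which the paper uses only tacitly, is simpler than your last paragraph makes it. Your phrase about ``the complement of the compact set'' is garbled and not needed. Instead, $\R^2\setminus\Omega^\circ$ is connected, unbounded, and disjoint from $|G|$, so it lies in the unbounded face. Hence every bounded face satisfies $\overline R\subset R\cup|G|\subset\Omega^\circ$.
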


\begin{proof}
For (i), the image of $C$ is a Jordan curve.  A dual path from a face in its bounded complementary component to a face in its unbounded complementary component must cross that curve.  A dual edge crosses the primal graph only through the interior of its corresponding primal edge, so the crossing dual edge is dual to an edge of $C$.

For (ii), suppose that $T$ met $R$, and let $K$ be the closure in $T$ of one component of $T\cap R$.  Because $\partial R\subset G$ and $T\cap G=\{a\}$, the tree $K$ can meet $\partial R$ only at $a$.  A nontrivial finite tree has at least two leaves, so $K$ has a leaf $b\ne a$.  Since $b\in R$, any edge of $T$ incident to $b$ also begins inside $R$ and therefore belongs to $K$; hence $b$ is a leaf of $T$.  It lies in $\Omega^\circ$, contradicting the hypothesis that every leaf other than possibly $a$ lies on $\partial\Omega$.
\end{proof}

Let $G_0^\vee$ be the planar dual, with vertices
\[
 f_\infty,f_1,\dots,f_N,
\]
where $f_\infty$ is the unbounded face.

Since $p_i$ lies in the relative interior of the uncut carrier $e_i\in E(G_0)$, \cref{lem:no-bounded-self-incidence} shows that the reduced incidence column of $e_i^\vee$ is exactly $a_i$.  It is $e_r+e_s$ when the two adjacent faces are the distinct bounded faces $f_r,f_s$, it is $e_r$ when one adjacent face is $f_\infty$, and it is zero only when the exterior face occurs on both sides.

\begin{lemma}[Marked dual tree]
\label{lem:marked-dual-tree}
The dual edges
\[
 e_1^\vee,\dots,e_N^\vee
\]
form a spanning tree of $G_0^\vee$.
\end{lemma}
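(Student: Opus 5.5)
The plan is to reduce the statement to linear algebra over $\Ftwo$ on the planar dual, using the basis property from \cref{thm:generic-genus}.

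Since $G_0$ is a connected plane multigraph with $\beta_1(G_0)=N$, its dual $G_0^\vee$ is connected and has exactly $N+1$ vertices $f_\infty,f_1,\dots,f_N$. A spanning tree therefore consists of exactly $N$ edges, and $N$ edges form a spanning tree precisely when they contain no cycle. So I only need to show that $e_1^\vee,\dots,e_N^\vee$ are acyclic. These edges are pairwise distinct because the carriers $e_i$ are distinct by strong genericity.

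For acyclicity I will use the reduced incidence matrix of $G_0^\vee$ over $\Ftwo$. This is the vertex--edge incidence matrix with the row of $f_\infty$ deleted. A standard fact says that a set of edges of a connected graph is a forest if and only if their reduced incidence columns are linearly independent over $\Ftwo$. The direction used here is elementary: if the edges contain a cycle, the sum of the cycle's columns is zero, because each vertex of the cycle meets an even number of cycle edges. A loop also has zero column.

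By the observation just before the statement, which relies on \cref{lem:no-bounded-self-incidence}, the reduced column of $e_i^\vee$ is exactly $a_i$. By \cref{thm:generic-genus}, the vectors $a_1,\dots,a_N$ form a basis of $\Ftwo^N$. In particular every $a_i$ is nonzero, so no $e_i^\vee$ is a loop, and no nonempty subset of the $a_i$ sums to zero. Hence the edges $e_i^\vee$ contain no cycle. They are $N$ acyclic edges on $N+1$ vertices, so they form a spanning tree.

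The main obstacle is making sure the identification of columns with $a_i$ is correct in the multigraph setting. The faces adjacent to $e_i$ in $G_0$ must be exactly the bounded cells $R_j$ whose closures contain $p_i$. Boundary cells must all merge into $f_\infty$ after truncation, which holds because the pruned terminal tails create no bounded face. Parallel dual edges and faces meeting $p_i$ only through the carrier must also be handled correctly. I would check this by noting that $p_i$ lies in the relative interior of $e_i$ and not at a vertex, so exactly two cell germs are adjacent at $p_i$, and these are the two sides of $e_i$.
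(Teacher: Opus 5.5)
Your proof is correct and is essentially the paper's argument. Both identify the reduced $\Ftwo$-incidence columns of the $e_i^\vee$ with the basis $a_1,\dots,a_N$ from \cref{thm:generic-genus} and finish by counting $N$ edges on $N+1$ face vertices; the only difference is that the paper uses $\operatorname{rank}=\#\text{vertices}-\#\text{components}$ to get connectivity, whereas you use the elementary fact that a cycle or loop gives a zero column sum to get acyclicity.
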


\begin{proof}
The reduced incidence matrix of these $N$ dual edges is the matrix whose columns are $a_1,\dots,a_N$.  By \cref{thm:generic-genus}, these columns form a basis of $\Ftwo^N$.  The rank of a reduced incidence matrix equals the number of vertices minus the number of connected components, so the marked dual subgraph is connected on the $N+1$ face vertices.  It has exactly $N$ edges, hence it is a tree.
\end{proof}

\begin{lemma}[Terminal edges are not marked]
\label{lem:terminal-unmarked}
No terminal edge of $\Gamma_P$ contains a marked point.
\end{lemma}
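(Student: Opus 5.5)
The plan is to argue by contradiction, combining the marked dual tree of \cref{lem:marked-dual-tree} with a coefficient-lowering deformation in the style of \cref{lem:binary-spanning}.

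Suppose a terminal edge $e$ of $\Gamma_P$ carries a mark $p_i$, so $e=e_i$ (or rather its truncation in $G_0$). The closure of $e$ meets $\partial\Omega$. Both linearity cells adjacent to $e$ therefore have closures reaching $\partial\Omega$, so both are boundary cells. After truncation, both lie in the exterior face $f_\infty$ of $G_0$, since only bounded cells become bounded faces of $G_0$. Thus the dual edge $e_i^\vee$ has $f_\infty$ on both sides, which makes it a loop in $G_0^\vee$, and its reduced incidence column $a_i$ is zero. Equivalently, no bounded cell closure contains $p_i$: a bounded cell cannot touch $e$ at an interior point of $e$ without being one of its two adjacent cells.

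A zero column contradicts \cref{thm:generic-genus}, which says $a_1,\dots,a_N$ form a basis of $\Ftwo^N$. Equivalently, a loop cannot belong to the spanning tree of \cref{lem:marked-dual-tree}. That already finishes the argument.

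The step needing care is that the truncated edge of $G_0$ really has the exterior face on both sides. There are two things to check.
\begin{itemize}
\item Each boundary cell adjacent to $e$ is a single linearity cell whose closure meets $\partial\Omega$. After the regular inward truncation, the part of that cell inside $\Omega'$ is connected to the collar $\Omega\setminus\Omega'$, so it merges into the unbounded face. This is the same merging statement used to get $g(\Gamma)=\beta_1(G)$.
\item The truncated piece of $e$ inside $\Omega'$ still contains $p_i$. This holds because $\Omega'$ contains every marked point and the truncation crosses $e$ transversely once. Hence the carrier $e_i$ in $G_0$ is exactly this piece, ending at a degree-one vertex on $\partial\Omega'$, with unbounded face on both sides.
\end{itemize}

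As a fallback independent of the planar bookkeeping, one can use minimality directly. Since $a_i=0$ and the $a_j$ form a basis, no bounded-cell monomial is active at $p_i$. By the spanning property, choose a set $S$ of bounded cells whose indicator is orthogonal to all $a_j$ with $j\neq i$ and has odd pairing with $a_i$. Pairing with $a_i=0$ is always even, so this is impossible, which confirms the contradiction in linear-algebra form.
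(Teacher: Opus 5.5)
Your argument is correct and is essentially the paper's. Both show that the marked carrier's dual edge $e_i^\vee$ is a loop, i.e.\ $a_i=0$, contradicting the basis statement of \cref{thm:generic-genus}; the paper gets ``same face on both sides'' from the truncated carrier being a leaf edge and hence a bridge of $G_0$, whereas you get it from both adjacent cells being boundary cells, and your direct observation $A_i=\varnothing$ is the cleanest form of that. The opening mention of a coefficient-lowering deformation and the closing ``fallback'' paragraph are not needed: the latter uses no minimality and only restates the same contradiction, so both can be dropped.
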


\begin{proof}
Pass to the uncut finite inward truncation $G_0$.  If a terminal edge contained $p_i$, its truncation would be the carrier $e_i\in E(G_0)$.  This would be a leaf edge of $G_0$ and hence a bridge.  In a plane graph, a bridge has the same face on both sides, so $e_i^\vee$ is a loop and its reduced incidence column is zero.  By \cref{thm:generic-genus}, the reduced incidence columns of the marked carrier edges form a basis of $\Ftwo^N$ and in particular are nonzero.  Therefore a terminal edge cannot be marked.
\end{proof}

\begin{lemma}[Planar tree--cotree complement]
\label{lem:tree-cotree-complement}
Let
\[
 S=\{e_1,\dots,e_N\}\subset E(G_0),
 \qquad
 T_0=\bigl(V(G_0),E(G_0)\setminus S\bigr).
\]
Then $|S|=N$, and the spanning subgraph $T_0$ is a tree.
\end{lemma}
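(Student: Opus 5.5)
The plan is to combine planar tree--cotree duality with the counts already in hand. The set $S$ has exactly $N$ elements because strong genericity places the marks on pairwise distinct edges $e_1,\dots,e_N$ of $G_0$. For the tree statement, I will use the standard fact for a connected plane multigraph $G$ with dual $G^\vee$: a set $S\subset E(G)$ satisfies that $\{e^\vee:e\in S\}$ is a spanning tree of $G^\vee$ if and only if $E(G)\setminus S$ is the edge set of a spanning tree of $G$. \Cref{lem:marked-dual-tree} supplies the dual half, and $G_0$ is connected, as shown before \cref{lem:no-bounded-self-incidence}. The conclusion then follows.

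To keep the argument self-contained, I will not just cite this duality. Instead I will show directly that $T_0$ is acyclic and count edges.

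\emph{Acyclicity.} Suppose $T_0$ contains an embedded simple cycle $C$, possibly a two-edge cycle formed by parallel edges. Its image is a Jordan curve in $\Omega'$. The unbounded face $f_\infty$ lies outside this curve. The bounded region it encloses is nonempty and is a union of faces of $G_0$, so it contains at least one bounded face $f_r$. By \cref{lem:planar-separation}(i), every dual path from $f_r$ to $f_\infty$ uses the dual of an edge of $C$. But \cref{lem:marked-dual-tree} gives such a path inside the marked dual tree, whose edges are duals of edges in $S$. Since $C\cap S=\varnothing$, this is a contradiction. Hence $T_0$ is a forest.

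\emph{Counting.} Euler's formula for the connected plane graph $G_0$, whose bounded faces number $\beta_1(G_0)=N$, gives
\[
|E(G_0)|=|V(G_0)|-1+N .
\]
So $T_0$ has $|V(G_0)|-1$ edges on all of $V(G_0)$. A forest with $|V|-1$ edges on $|V|$ vertices is connected, hence a spanning tree.

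The main point of care is the degenerate configurations. These are loops in $G_0$, degree-one truncation vertices, and carriers whose two sides are the same face. Such an edge would dualize to a loop, and we must be sure no marked carrier is of this kind. \Cref{lem:terminal-unmarked}, together with the nonzero incidence columns from \cref{thm:generic-genus}, rules this out: every $e_i$ separates two distinct faces. A loop edge of $G_0$ is itself a cycle, so if it lay in $T_0$ the separation argument above would already exclude it. The Jordan-curve step for two-edge cycles and loops is covered by \cref{lem:planar-separation}(i) as stated.
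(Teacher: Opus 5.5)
Your proposal is correct and follows essentially the paper's proof: the paper likewise cites the cycle--bond duality, obtains $|E(G_0)\setminus S|=|V(G_0)|-1$ from Euler's formula with $N+1$ dual vertices, and proves that $T_0$ is acyclic by applying the Jordan-curve separation of \cref{lem:planar-separation}(i) to a path in the marked dual tree. Your explicit choice of a bounded face inside $C$ and of $f_\infty$ outside makes the paper's ``a face on each side of $C$'' concrete, and your closing discussion of degenerate carriers is redundant, since a spanning tree of $G_0^\vee$ already contains no dual loops.
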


\begin{proof}
The cycle--bond formulation of plane duality underlying this tree--cotree correspondence is given in \cite[Proposition~4.6.1]{Diestel2017}.

Euler's formula and the fact that $G_0^\vee$ has $N+1$ vertices give
\[
 |E(G_0)\setminus S|
 =|E(G_0)|-N
 =|V(G_0)|-1.
\]

Suppose that $T_0$ contained a cycle, and choose an embedded simple cycle $C$ inside it.  The marked dual tree contains a path joining a face on one side of $C$ to a face on the other.  By \cref{lem:planar-separation}(i), this path must contain the dual of an edge of $C$.  But every edge of $C$ lies outside $S$, whereas the marked dual tree contains only the duals of the edges in $S$.  This contradiction shows that $T_0$ is acyclic.  A spanning acyclic graph with $|V(G_0)|-1$ edges is a tree.
\end{proof}

By \cref{lem:terminal-unmarked}, no carrier edge is terminal.  Cut each $e_i$ at $p_i$.  Starting from the spanning tree $T_0$, this attaches two leaf edges, one at each endpoint of every deleted carrier, with new formal endpoints $p_i^+$ and $p_i^-$.  The crossing vertices on $\partial\Omega'$ are retained as formal terminal ends.

The resulting completed cut graph is therefore a tree.  Removing only its formal degree-one endpoints, as in the open graph $\Gamma_P\setminus(P\cup\partial\Omega)$, preserves connectedness and acyclicity.

\begin{theorem}[Completed cut-tree theorem]
\label{thm:internal-marked-tree}
For every strongly generic configuration, the completed graph obtained from
\[
 \Gamma_P\setminus(P\cup\partial\Omega)
\]
by adjoining the two formal ends $p_i^\pm$ at every marked cut and one formal end at every terminal branch is a finite tree.  In particular,
\[
 \Gamma_P\setminus(P\cup\partial\Omega)
\]
is connected and contains no topological cycle.
\end{theorem}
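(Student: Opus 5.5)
The plan is to assemble the theorem from the finite-graph statements just proved, working inside the regular inward truncation $\Omega'$ and the uncut plane multigraph $G_0$ with $|G_0|=\Gamma_P\cap\overline{\Omega'}$. First I will show that the completed cut graph is a tree. By \cref{lem:tree-cotree-complement}, deleting the $N$ marked carriers $e_1,\dots,e_N$ from $G_0$ leaves a spanning tree $T_0$. By \cref{lem:terminal-unmarked}, no carrier is a terminal (leaf) edge, so each $e_i$ joins two vertices of $T_0$. These endpoints may coincide if $e_i$ is a loop, which is harmless since the two germs are cut separately. Cutting $e_i$ at $p_i$ replaces it by two half-edges, each ending at a new degree-one vertex $p_i^\pm$. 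Attaching a pendant edge with a fresh leaf to a tree yields a tree, so inductively $T_0$ with $2N$ pendant half-edges added is a tree $T_P$. The degree-one crossing points with $\partial\Omega'$ are already leaves of $T_0$; they serve as the formal terminal ends.

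Next I will identify $T_P$ with the completion of $\Gamma_P\setminus(P\cup\partial\Omega)$. The part of $\Gamma_P$ outside $\Omega'$ consists of terminal open segments. By \cref{lem:terminal-interior-endpoint}, each such segment issues from a unique interior vertex and runs to $\partial\Omega$. It meets $\partial\Omega'$ exactly once, transversely, and contains no vertex or mark. Reattaching these segments only subdivides the pendant leaf edges of $T_P$ at their $\partial\Omega'$ crossings, and the remaining open end tends to $\partial\Omega$. Placing one formal end at each such boundary-directed branch therefore gives a graph homeomorphic to $T_P$, with the artificial degree-two points suppressed. Marks and transverse nodes were treated exactly as in the definition of $G_0$, so the combinatorial type agrees with the one in the statement.

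Finally, removing formal degree-one endpoints from a finite tree leaves a connected, acyclic topological space, namely a tree with some half-open leaf edges. This gives the "in particular" assertion. The step needing most care is the identification in the second paragraph: showing that the truncation discards no cycle or component. Here I will invoke connectedness (\cref{lem:full-curve-connected}) and the fact that each truncated piece is a boundary-reaching segment without interior vertices, so no additional incidences between distinct branches can occur outside $\Omega'$. The spanning-tree property itself is already established by \cref{lem:tree-cotree-complement}.
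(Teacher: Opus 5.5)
Your proposal is correct and follows the paper's proof. Like the paper, you take the spanning tree $T_0$ from \cref{lem:tree-cotree-complement}, use \cref{lem:terminal-unmarked} so that each cut attaches two pendant half-edges $p_i^\pm$, keep the $\partial\Omega'$ crossings as formal terminal ends, and remove the degree-one ends to get the ``in particular'' clause. Your second paragraph, which matches the truncated graph with the completion of $\Gamma_P\setminus(P\cup\partial\Omega)$ via \cref{lem:terminal-interior-endpoint}, only spells out the identification that the paper states briefly in the construction before the theorem.
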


\begin{proof}
The construction above identifies the completed cut graph with the tree obtained from $T_0$ by attaching the marked half-edges and retaining the truncation crossings as formal terminal ends.  Hence it is connected and acyclic.
\end{proof}

\subsection{Marked tree and finite core}

Complete the graph after removing $P$ and $\partial\Omega$ by adjoining two marked ends $p_i^\pm$ at every cut and one degree-one endpoint at every boundary branch.  Denote the resulting finite graph by $T_P$.

\begin{theorem}[Marked-tree and finite-core theorem]
\label{thm:marked-tree}
For every strongly generic configuration:
\begin{enumerate}[label=(\roman*)]
\item $T_P$ is a finite tree;
\item the minimal subtree $H_P\subset T_P$ containing all $2N$ marked ends is compactly contained in $\Omega^\circ$;
\item identifying $p_i^+\sim p_i^-$ produces a connected finite core $\Gamma_P^{\core}$ with
\[
 \beta_1(\Gamma_P^{\core})=N;
\]
\item every cycle of $\Gamma_P$ lies in the core;
\item every component outside the core is an unmarked tree attached to the core at one point, and every other leaf of that tree lies on $\partial\Omega$.
\end{enumerate}
\end{theorem}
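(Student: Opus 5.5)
The plan is to deduce everything from the completed cut-tree theorem (\cref{thm:internal-marked-tree}), the tree--cotree complement (\cref{lem:tree-cotree-complement}), and the fact that marked carriers are non-terminal (\cref{lem:terminal-unmarked}). Part (i) is \cref{thm:internal-marked-tree} itself: $T_P$ is, by construction, the same completed graph as in that theorem. For the remaining parts I will work throughout in the truncated model $G_0$. Here $T_0=G_0\setminus S$ is a spanning tree, and $T_P$ is $T_0$ with the $2N$ marked half-edges attached and the terminal tails extended to formal boundary ends. The pruned terminal tails are open segments issuing from interior vertices (\cref{lem:terminal-interior-endpoint}), so they only add leaves.

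For (ii), every leaf of $H_P$ is a marked end $p_i^\pm$. A minimal subtree spanned by a set of leaves has no other leaves. So $H_P$ is the union of the tree paths between marked ends. None of these paths can pass through a terminal formal end, because such an end has degree one and is not marked. Each marked end lies on a non-terminal carrier $e_i$ whose endpoints are interior vertices (\cref{lem:terminal-unmarked}). Hence $H_P$ lies in the union of the carriers and of $T_0$ minus terminal edges, i.e.\ in $\Gamma_P\cap\overline{\Omega'}$ minus terminal segments. I will argue it is a finite union of compact segments with endpoints at interior vertices, so it is compactly contained in $\Omega^\circ$. The point to check is that a path in $T_P$ between two interior points never uses a terminal edge: such an edge leads only to a degree-one boundary end, so a path entering it could not leave it again.

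For (iii), identifying $p_i^+\sim p_i^-$ inside $H_P$ restores the carrier $e_i$. Each identification of two vertices in a connected graph raises $\beta_1$ by exactly one. Starting from the tree $H_P$ and making $N$ identifications gives a connected graph $\Gamma_P^{\core}$ with $\beta_1=N$. I will also note that $\Gamma_P^{\core}$ contains all $e_i$, which consistently matches $\beta_1(G_0)=N$ from the generic genus count.

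For (iv) and (v), the whole graph $\Gamma_P$ equals $T_P$ with the same identifications. The components of $T_P\setminus H_P$ are subtrees attached to $H_P$ at one vertex each, because $T_P$ is a tree. They carry no marked ends, by minimality of $H_P$. Their leaves are not marked, so they are terminal ends on $\partial\Omega$; interior leaves are excluded by \cref{lem:no-interior-leaves}. These pendant trees are untouched by the identifications, so they contribute no cycles. Every cycle of $\Gamma_P$ therefore lies in $\Gamma_P^{\core}$, and each outside component is attached to the core at one point. This is exactly (v), and it is consistent with \cref{lem:planar-separation}(ii).

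The main obstacle I expect is bookkeeping in (ii). It requires showing that the minimal subtree avoids all terminal segments and formal boundary ends. It must also be checked that passing from the truncation $\Omega'$ back to $\Omega$ (extending tails to $\partial\Omega$) does not change the tree structure. Both reduce to \cref{lem:terminal-interior-endpoint} and the degree-one nature of terminal ends.
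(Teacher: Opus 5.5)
Your proposal is correct and follows the paper's proof essentially step for step: (i) from the completed cut-tree theorem, (ii) by writing $H_P$ as the union of paths between marked ends, which cannot pass through degree-one boundary ends, (iii) by the Euler-characteristic count over the $N$ identifications, and (iv)--(v) via the pendant subtrees together with \cref{lem:no-interior-leaves}. The only cosmetic difference is in (iv). There the paper cuts a cycle at its marked points to obtain paths lying in the hull, whereas you argue that pendant trees attached at single points cannot carry cycles; both arguments follow immediately from $T_P$ being a tree.
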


\begin{proof}
The completed cut-tree theorem gives (i).  The marked hull is a finite union of the unique paths between marked ends.  A degree-one boundary end cannot lie on any such path, which proves (ii).

The hull is a tree and has Euler characteristic one.  Each of the $N$ identifications lowers the Euler characteristic by one, giving (iii).

Cutting any cycle at its marked points produces paths between marked ends.  These paths lie in the hull, giving (iv).

Finally, a component outside the hull attaches to the tree at one point.  Every other leaf lies on the boundary by \cref{lem:no-interior-leaves}, proving (v).
\end{proof}

The cut-and-reidentification construction, together with the corresponding marked dual tree, is illustrated schematically in \cref{fig:structural-overview}.

\section{Local source--curvature balance}
\label{sec:local-geometry}

The marked-tree theorem converts finite topology into curvature.  On a regular window, graph Gauss--Bonnet counts the cut tropical graph, while Pick's formula computes the areas of the dual Newton polygons.  Together they give an exact balance between Monge--Amp\`ere mass and marked sources.

\subsection{Pick excess}

Let $F$ be a tropical polynomial and $\Gamma=V(F)$.  A \emph{regular window} is an open set
\[
 U=\bigsqcup_{\alpha=1}^r U_\alpha\Subset\Omega^\circ,
 \qquad r<\infty,
\]
whose $U_\alpha$ are its connected components, such that, for each $\alpha$, $\partial U_\alpha$ is a finite disjoint union of piecewise $C^1$ Jordan curves.  The component boundaries contain no tropical vertex and, in the marked setting, no marked point.  Every point of $\partial U_\alpha\cap\Gamma$ lies at a $C^1$ boundary point and in the relative interior of a tropical edge, where the intersection is transverse.  The componentwise union of these intersections is finite.  We allow $r=0$, so the empty set is a regular window.

For a regular window set
\[
 H_F(U)=\bigsqcup_{\alpha=1}^r
       (\Gamma\cap\overline U_\alpha),
 \qquad
 \mathcal B_F(U)=\bigsqcup_{\alpha=1}^r
       (\Gamma\cap\partial U_\alpha).
\]
These are componentwise abstract disjoint unions.  Thus, if the same geometric point lies on the boundaries of two window components, its two boundary incidences remain distinct.

Each element of $\mathcal B_F(U)$ is retained as a degree-one vertex of the corresponding truncated graph: it represents the unique edge germ lying inside that window component.  Define
\[
 B_F(U)=\#\mathcal B_F(U),
 \qquad
 c_F(U)=\#\pi_0(H_F(U)),
 \qquad
 \beta_F(U)=\beta_1(H_F(U)).
\]
In particular,
\[
 B_F(\varnothing)=c_F(\varnothing)=\beta_F(\varnothing)=0.
\]

At a tropical vertex $v$, let
\[
 Q_v=\partial^+F(v),
 \qquad
 \mu_F(v)=\Area(Q_v),
\]
and let $d(v)=\#\mathscr E_\Gamma(v)$.

\begin{definition}
The singularity excess at $v$ is
\[
 \sigma_F(v)=\mu_F(v)-\frac{d(v)-2}{2}.
\]
The associated atomic measure is
\[
 \Xi_F=\sum_v\sigma_F(v)\delta_v.
\]
\end{definition}

\begin{proposition}[Pick-theoretic excess]
\label{prop:pick-excess}
Let $I(Q_v)$ be the number of interior lattice points of $Q_v$.  Then
\[
 \sigma_F(v)
 =I(Q_v)+\frac12
  \sum_{\eta\in\mathscr E_\Gamma(v)}(w(\eta)-1).
\]
In particular, $\sigma_F(v)\ge0$.
\end{proposition}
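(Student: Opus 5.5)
The plan is to apply Pick's formula to the dual Newton polygon $Q_v$ and read off its boundary lattice points from the incident edge germs at $v$. Since $v$ is an interior tropical vertex, $Q_v=\partial^+F(v)$ is a lattice polygon: its vertices are the essential gradients $m_C$ of the cells around $v$, which lie in $\Z^2$. It is two-dimensional, because a vertex of the corner locus is a point where at least three gradients not on a common line are active. Pick's formula then gives
\[
 \Area(Q_v)=I(Q_v)+\frac{b(Q_v)}2-1,
\]
where $b(Q_v)$ is the number of lattice points on $\partial Q_v$.

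Next I would compute $b(Q_v)$ from the germs at $v$. By the inclusion-reversing Newton duality recalled before \cref{lem:side-normal-form}, the edges of $Q_v$ correspond bijectively to the edge germs $\eta\in\mathscr E_\Gamma(v)$. This includes the case of an edge incident to $v$ twice, where the two germs give two distinct sides of $Q_v$. The side dual to $\eta$ is the segment joining the gradients of the two adjacent cells. Its difference vector is $w(\eta)n_\eta$ with $n_\eta$ primitive, so the side has lattice length $w(\eta)$. Going around the boundary of $Q_v$, each side contributes $w(\eta)$ lattice points once one endpoint is dropped, so
\[
 b(Q_v)=\sum_{\eta\in\mathscr E_\Gamma(v)}w(\eta).
\]
The cyclic order of the germs around $v$ matches the cyclic order of the sides of $Q_v$, since the germ directions are the sides rotated by ninety degrees. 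One point needs checking: no two consecutive germs can give collinear sides that merge, and no side has length zero. Both hold because consecutive cells around $v$ have distinct gradients and the polygon is the convex hull of exactly these gradients.

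Substituting, I get
\[
 \mu_F(v)=I(Q_v)+\frac12\sum_\eta w(\eta)-1
 =I(Q_v)+\frac12\sum_\eta\bigl(w(\eta)-1\bigr)+\frac{d(v)}2-1,
\]
and subtracting $\frac{d(v)-2}{2}$ yields the stated formula for $\sigma_F(v)$. Nonnegativity follows because $I(Q_v)\ge0$ and $w(\eta)\ge1$ for every germ.

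The main obstacle is the bijection between germs and sides of $Q_v$ in nonstandard situations. At a transverse four-valent node, $Q_v$ is a parallelogram and the four germs give four sides, so $d(v)=4$ matches. The other delicate cases are when two germs are collinear through $v$, or when a single edge is counted twice. There I would argue directly from the local picture: near $v$ the cells form a fan, each consecutive pair of cells is separated by exactly one germ, and the corresponding gradient differences are the successive boundary vectors of $Q_v$. These vectors are nonzero and turn monotonically by the balancing condition, so they are exactly the sides of the convex polygon $Q_v$, including parallel opposite sides.
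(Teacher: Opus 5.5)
Your proposal is correct and is essentially the paper's proof. Both arguments apply Pick's theorem to $Q_v$, identify each side's lattice length with the weight of the dual germ to get $B(Q_v)=\sum_{\eta}w(\eta)$, use the germ--side correspondence to give $Q_v$ exactly $d(v)$ sides, and subtract $(d(v)-2)/2$; your balancing check that consecutive sides cannot merge spells out the correspondence the paper asserts in one line.
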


\begin{proof}
Pick's theorem \cite{Pick1899} gives
\[
 \Area(Q_v)=I(Q_v)+\frac{B(Q_v)}2-1.
\]
The lattice length of a side of $Q_v$ equals the weight of the dual tropical edge, so
\[
 B(Q_v)=\sum_{\eta\in\mathscr E_\Gamma(v)}w(\eta).
\]
The sides of $Q_v$ correspond, with incidence multiplicity, to the germs in $\mathscr E_\Gamma(v)$; hence the polygon has $d(v)$ sides.  Subtracting $(d(v)-2)/2$ yields the formula.
\end{proof}

\subsection{Graph Gauss--Bonnet}

\begin{lemma}
\label{lem:graph-gauss-bonnet}
For every regular window $U$,
\[
 \sum_{v\in U}\frac{d(v)-2}{2}
 =\beta_F(U)+\frac12B_F(U)-c_F(U).
\]
\end{lemma}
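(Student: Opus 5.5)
The plan is to apply the Euler characteristic and the degree-sum (handshake) identity to the finite abstract graph $H=H_F(U)$, one window component at a time, and then add the results.

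First I fix the cell structure. Take $V(H)$ to be the tropical vertices of $\Gamma$ lying in $U$, together with the boundary incidences $\mathcal B_F(U)$, each kept as a separate degree-one vertex. All artificial degree-two points are suppressed, and transverse nodes are counted as four-valent vertices. The edges of $H$ are the pieces of tropical edges inside $\overline U_\alpha$, cut at the boundary crossings. Because $U\Subset\Omega^\circ$, no edge of $H$ reaches $\partial\Omega$. Regularity of the window says the boundary meets $\Gamma$ only transversally, in edge interiors and away from vertices. So each boundary incidence has degree exactly one in $H$, and each interior vertex $v\in U$ has degree $d(v)=\#\mathscr E_\Gamma(v)$ in $H$. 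Edge germs that stay inside $U$ are all retained, with loops counted twice as in the germ convention.

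One subtlety has to be handled here. A tropical edge might run from a boundary crossing to another boundary crossing with no vertex in between, or it might form a closed component with no vertices at all. The first case is a single edge with two degree-one ends. The second cannot occur: a tropical edge is a straight segment, so a component of $H$ with no vertex has to be a segment joining two boundary incidences. I will say explicitly that the graph stays a genuine finite graph in these cases. Since the union of intersections is finite and $\Gamma$ is locally finite, $H$ is finite.

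Next I count. The handshake lemma gives
\[
 2|E(H)|=\sum_{v\in U}d(v)+B_F(U).
\]
Euler's relation for a finite graph gives
\[
 |V(H)|-|E(H)|=c_F(U)-\beta_F(U),
 \qquad
 |V(H)|=\#\{v\in U\}+B_F(U).
\]
Substituting the first identity into the second, I get
\[
 \#\{v\in U\}+B_F(U)-\tfrac12\sum_{v\in U}d(v)-\tfrac12B_F(U)=c_F(U)-\beta_F(U).
\]
Rearranging yields $\sum_{v\in U}\frac{d(v)-2}{2}=\beta_F(U)+\frac12B_F(U)-c_F(U)$.

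Both sides are additive over the components $U_\alpha$, because $H$ and $\mathcal B_F(U)$ are defined as componentwise disjoint unions. So it is enough to carry out the computation on each $H_F(U_\alpha)$. The empty window gives $0=0$.

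I expect the main obstacle to be the bookkeeping check that the vertex-degree data in $H$ agree exactly with $d(v)$ and with the count of degree-one boundary vertices. Several cases need care: a single tropical edge that leaves and re-enters the same component, giving several boundary incidences on one edge; one geometric point lying on the boundaries of two components; and loops at a vertex. I will handle all three by working with edge germs and with the abstract disjoint union, so that each transverse crossing subdivides an edge and contributes exactly one degree-one vertex. The counts then match exactly.
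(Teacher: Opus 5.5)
Your proposal is correct and follows the same route as the paper: the handshake identity $\sum_v d(v)+B_F(U)=2E$ together with Euler's relation $V+B_F(U)-E=c_F(U)-\beta_F(U)$ on the componentwise truncated multigraph, then eliminating $E$. Your extra bookkeeping on germs, vertex-free segment components, and componentwise disjoint unions only makes explicit what the paper's short proof assumes.
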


\begin{proof}
Set $H=H_F(U)$, and let $V$ be the number of genuine tropical vertices of $H$, $E$ its number of truncated edges, and $B=B_F(U)$ its number of degree-one boundary vertices.  Counting incidences in this componentwise truncated multigraph, with both germs counted when one edge is incident twice at the same vertex, gives
\[
 \sum_vd(v)+B=2E,
\]
while Euler's formula gives
\[
 V+B-E=c_F(U)-\beta_F(U).
\]
Eliminating $E$ proves the identity.
\end{proof}

\begin{theorem}[Local Euler--Monge--Amp\`ere identity]
\label{thm:local-euler-ma}
For every regular window $U$,
\[
 \MA(F)(U)
 =\beta_F(U)+\frac12B_F(U)-c_F(U)+\Xi_F(U).
\]
\end{theorem}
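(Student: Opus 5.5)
The identity follows by combining the atomic description of the Aleksandrov measure with the definition of the singularity excess and the graph Gauss--Bonnet count of \cref{lem:graph-gauss-bonnet}. First, since $F$ is polyhedral, $\MA(F)=\sum_v\Area(Q_v)\delta_v$, with atoms exactly at the interior tropical vertices. The component boundaries of a regular window contain no tropical vertex, so every vertex charged by $\MA(F)(U)$ lies in the open set $U$. The measure is summed over the abstract disjoint union of components, but the components $U_\alpha$ are disjoint open sets, so there is no double counting. Thus
\[
 \MA(F)(U)=\sum_{v\in U}\mu_F(v).
\]

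Next, write $\mu_F(v)=\frac{d(v)-2}{2}+\sigma_F(v)$, which is the definition of the singularity excess. Summing over $v\in U$ gives
\[
 \MA(F)(U)=\sum_{v\in U}\frac{d(v)-2}{2}+\Xi_F(U),
\]
because $\Xi_F$ is the atomic measure with weights $\sigma_F(v)$ and has no atoms on $\partial U$. By \cref{lem:graph-gauss-bonnet}, the first sum equals $\beta_F(U)+\frac12B_F(U)-c_F(U)$, and this gives the claim. The empty window is consistent with this, since all terms vanish.

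The only point needing care is the degree convention. The degree $d(v)=\#\mathscr E_\Gamma(v)$ used in $\sigma_F$ must coincide with the germ count used in the incidence identity $\sum_v d(v)+B=2E$ of the Gauss--Bonnet lemma. It must also coincide with the number of sides of $Q_v$ used in \cref{prop:pick-excess}. This holds because artificial degree-two points, including marks, are suppressed, and every genuine tropical vertex in $U$ keeps all its incident germs inside the truncated graph $H_F(U)$. The boundary crossings are counted only through $B_F(U)$ and are not vertices carrying Monge--Amp\`ere mass.

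One should also check that suppressed marked points in $U$ contribute nothing. A marked point that is not a tropical vertex has zero $\MA$ mass, and as a degree-two point it would contribute $(2-2)/2=0$ to the Gauss--Bonnet sum. So the identity is insensitive to whether marks are inserted. No further obstacle is expected: the argument is a bookkeeping identity.
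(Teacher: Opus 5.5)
Your proposal is correct and matches the paper's proof: sum the defining relation $\mu_F(v)=\frac{d(v)-2}{2}+\sigma_F(v)$ over the vertices in $U$ and apply \cref{lem:graph-gauss-bonnet}. Your extra bookkeeping is consistent with the paper's conventions: no vertices on $\partial U$, germ-multiplicity degrees, and zero contribution from suppressed marks. The remark about the sides of $Q_v$ is unnecessary here, since the relation is just the definition of $\sigma_F$ and Pick's formula plays no role.
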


\begin{proof}
Sum
\[
 \mu_F(v)=\frac{d(v)-2}{2}+\sigma_F(v)
\]
over the vertices in $U$ and apply \cref{lem:graph-gauss-bonnet}.
\end{proof}

\subsection{Source--curvature balance}

Let $P\subset V(F)$ be a set of distinct edge-interior marked points, and let $T_{F,P}(U)$ be obtained componentwise from $H_F(U)$ by cutting at the points of $P\cap U$, with the two new degree-one endpoints retained separately.  Each cut increases Euler characteristic by one.  Denote its component count by $c_{F,P}^{\rm cut}(U)$ and its first Betti number by $\beta_{F,P}^{\rm cut}(U)$.

\begin{corollary}[Marked local balance]
\label{cor:marked-local-general}
For every regular window $U$,
\[
 \MA(F)(U)-\#(P\cap U)
 =\beta_{F,P}^{\rm cut}(U)-c_{F,P}^{\rm cut}(U)
 +\frac12B_F(U)+\Xi_F(U).
\]
\end{corollary}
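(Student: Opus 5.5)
The plan is to derive the corollary directly from \cref{thm:local-euler-ma} by tracking how the Euler characteristic changes when we cut at the marked points in $U$. Write $\chi_F(U)=c_F(U)-\beta_F(U)$ for the Euler characteristic of the componentwise truncated graph $H_F(U)$, and $\chi^{\rm cut}_{F,P}(U)=c^{\rm cut}_{F,P}(U)-\beta^{\rm cut}_{F,P}(U)$ for that of $T_{F,P}(U)$. The identity to prove is then equivalent to
\[
 \MA(F)(U)=\#(P\cap U)-\chi^{\rm cut}_{F,P}(U)+\tfrac12B_F(U)+\Xi_F(U),
\]
while \cref{thm:local-euler-ma} states $\MA(F)(U)=-\chi_F(U)+\frac12B_F(U)+\Xi_F(U)$. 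It therefore suffices to show
\[
 \chi^{\rm cut}_{F,P}(U)=\chi_F(U)+\#(P\cap U).
\]

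First I check that only marks in $U$ matter. Because the boundary of a regular window contains no marked point, every $p\in P\cap\overline U$ lies in the open set $U$. Such a point lies in the interior of a unique window component $U_\alpha$. By hypothesis it also lies in the relative interior of a tropical edge, so it sits in the interior of exactly one truncated edge of $H_F(U)$. Marks outside $\overline U$ do not touch $H_F(U)$ at all.

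Next I compute the effect of one cut. Insert $p$ as a degree-two vertex, which changes neither the Euler characteristic nor $\beta_1$. Then replace the vertex $p$ by two separate degree-one endpoints $p^\pm$. This adds one vertex and leaves the edge count unchanged, so $\chi$ increases by exactly one. The cuts are made at distinct points and are performed successively, so each contributes $+1$, giving $\chi^{\rm cut}=\chi_F(U)+\#(P\cap U)$.

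Two bookkeeping points need checking. The boundary incidences $\mathcal B_F(U)$ are unchanged by the cuts: the new endpoints $p^\pm$ are formal cut ends, not window-boundary vertices, so $B_F(U)$ keeps its original meaning. Likewise the tropical vertex set, and hence $\Xi_F(U)$, is unchanged, because the marks are not tropical vertices.

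I expect no real obstacle here; the only care needed is the bookkeeping convention that marks on loop edges or parallel edges are still edge-interior. The Euler-characteristic count above is insensitive to that, since it only adds one vertex per cut.
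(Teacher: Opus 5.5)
Your proposal is correct and is the paper's proof: the paper states the cut Euler relation $c^{\mathrm{cut}}_{F,P}(U)-\beta^{\mathrm{cut}}_{F,P}(U)=c_F(U)-\beta_F(U)+\#(P\cap U)$ and substitutes it into \cref{thm:local-euler-ma}. Your extra checks only make explicit what the paper asserts in one line: marks avoid window boundaries, each cut adds one vertex without changing the edge count, and $B_F(U)$ and $\Xi_F(U)$ are unaffected.
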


\begin{proof}
The cut Euler relation is
\[
 c_{F,P}^{\rm cut}(U)-\beta_{F,P}^{\rm cut}(U)
 =c_F(U)-\beta_F(U)+\#(P\cap U).
\]
Insert it into \cref{thm:local-euler-ma}.
\end{proof}

For $F=F_P$ with $P$ strongly generic, write
\[
 B_P(U)=B_{F_P}(U),
 \qquad
 c_P(U)=c_{F_P,P}^{\rm cut}(U),
 \qquad
 \Xi_P=\Xi_{F_P}.
\]
Complete the marked cut graph by its formal marked and terminal ends.  It is a tree by \cref{thm:marked-tree}.

\begin{lemma}[Components of an interior cut forest]
\label{lem:window-components}
Let $U\Subset\Omega^\circ$ be regular.  The componentwise restriction of the completed marked cut tree to the sets $\overline U_\alpha$, with every boundary incidence retained as a separate degree-one vertex, is a forest.  Every component contains such a boundary-incidence vertex, and therefore
\[
 c_P(U)\le B_P(U).
\]
\end{lemma}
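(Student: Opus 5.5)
The plan has two parts: acyclicity, obtained by comparing the restricted graph with the global completed cut tree $T_P$ of \cref{thm:marked-tree}, and the boundary-vertex claim, obtained from the structure of $T_P$ near its leaves.

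\textbf{Forest.} Fix a component $U_\alpha$. The restricted cut graph $T_{F_P,P}(U_\alpha)$ is built from $\Gamma_P\cap\overline U_\alpha$ by cutting at the marks in $U_\alpha$ and keeping each boundary incidence as a separate degree-one vertex. As a topological space, each component is a subspace of the completed tree $T_P$: cutting at $P\cap U_\alpha$ and retaining endpoints is compatible with the global cuts. Transversality at $\partial U_\alpha$ and the absence of vertices and marks there mean that restricting only truncates edges at interior points. The formal boundary-incidence vertices are the ends of these truncated segments and create no new identifications. A cycle in the restricted graph would therefore be a topological cycle in $T_P$, which is impossible by \cref{thm:marked-tree}(i). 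Taking the disjoint union over $\alpha$ keeps the graph a forest.

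\textbf{Each component meets the boundary.} Let $K$ be a component of the restricted forest. It is a nonempty finite tree, since a component with no edge would be an isolated vertex, and no tropical vertex is isolated. So $K$ has a leaf, or is a single edge with two leaves. Suppose no leaf of $K$ is a boundary-incidence vertex. Then every leaf lies in $U_\alpha$ and is either a genuine tropical vertex of degree one, which \cref{lem:no-interior-leaves} excludes, or a cut end $p_i^\pm$. Consider the maximal connected subset of $T_P$ containing $K$ and contained in $\overline U_\alpha$. Because no boundary crossing occurs, this subset equals $K$ and is a union of whole edges of $T_P$ together with marked ends. Since $T_P$ is connected and also contains terminal ends on $\partial\Omega$, which lie outside $\overline U\Subset\Omega^\circ$, some edge of $T_P$ must leave $K$. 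It can only leave through a vertex of $K$. That vertex is a tropical vertex in $U_\alpha$, all of whose germs lie in $U_\alpha$ near the vertex, so the germs belong to $K$, or it is a cut end, which has degree one in $T_P$. In neither case can an edge leave, a contradiction. Hence $K$ contains a boundary-incidence vertex.

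\textbf{Counting.} Assigning to each component one of its boundary-incidence vertices gives an injection from components to $\mathcal B_{F_P}(U)$, so $c_P(U)\le B_P(U)$.

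The main obstacle is the second step: making precise that the only ways out of $\overline U_\alpha$ within $T_P$ are the transverse boundary incidences. The delicate points are edges re-entering the same component, and a single geometric point lying on the boundaries of two components. Here I will lean on the componentwise abstract disjoint union convention and on the regularity assumptions on $U$.
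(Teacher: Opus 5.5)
Your argument is correct and is essentially the paper's proof: the restriction is a subgraph of the tree $T_P$, hence a forest. A component with no boundary-incidence vertex would be closed under adjacency in $T_P$, hence a whole connected component and so all of $T_P$, which is impossible because $T_P$ has terminal ends on $\partial\Omega$, outside $\overline U$. The one point to make explicit is that such a terminal branch exists, which is \cref{lem:terminal-existence} (the paper also cites \cref{lem:terminal-unmarked}); your leaf analysis via \cref{lem:no-interior-leaves} is harmless but not needed.
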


\begin{proof}
A subgraph of a tree is a forest.  Suppose that one component of the componentwise restriction had no boundary-incidence vertex.  It would then be a connected component of the global completed cut tree unless it were the whole tree.

The first alternative contradicts global connectedness.  The second is impossible because \cref{lem:terminal-existence,lem:terminal-unmarked} give a terminal branch that survives every marked cut and reaches $\partial\Omega$, whereas $U\Subset\Omega^\circ$.

Thus every restricted component contains a boundary-incidence vertex.  Distinct components contain disjoint such vertices, so choosing one from each component gives the asserted inequality.
\end{proof}

\begin{theorem}[Exact source--curvature balance]
\label{thm:exact-local-balance}
Let $P$ be strongly generic and set $F=F_P$.  For every regular $U\Subset\Omega^\circ$,
\[
 \MA(F_P)(U)
 =\#(P\cap U)+\frac12B_P(U)-c_P(U)+\Xi_P(U),
\]
where $c_P(U)$ is the number of components of the componentwise restricted marked cut forest.  Moreover,
\[
 0\le c_P(U)\le B_P(U),
\]
and therefore
\[
 \left|
 \MA(F_P)(U)-\#(P\cap U)-\Xi_P(U)
 \right|
 \le\frac12B_P(U).
\]
\end{theorem}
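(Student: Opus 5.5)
The plan is to specialize \cref{cor:marked-local-general} to $F=F_P$ and then show that the cut Betti number vanishes. By the corollary,
\[
 \MA(F_P)(U)-\#(P\cap U)
 =\beta_{F_P,P}^{\rm cut}(U)-c_P(U)+\tfrac12B_P(U)+\Xi_P(U).
\]
The first step is to show $\beta_{F_P,P}^{\rm cut}(U)=0$. The graph $T_{F_P,P}(U)$ is built componentwise from $\Gamma_P\cap\overline U_\alpha$ and cut at the marks in $U$. By strong genericity (\cref{thm:strong-genericity}), each mark lies in the relative interior of a distinct carrier. Since the window boundaries avoid vertices and marks, each component of $T_{F_P,P}(U)$ maps into the completed marked cut tree $T_P$ of \cref{thm:marked-tree}. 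The map sends boundary-incidence vertices to subdivision points of edges of $T_P$, and sends the cut ends $p_i^\pm$ to the formal ends of $T_P$.

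One point needs care here. Two window components may share a geometric boundary point, and the abstract disjoint union keeps such incidences separate. This does not matter: each component $U_\alpha$ is handled separately, and within a single $\overline U_\alpha$ the map to $T_P$ is injective on edges. So each component is a subgraph of the tree $T_P$, hence acyclic. This is exactly the forest statement of \cref{lem:window-components}, and it gives $\beta^{\rm cut}=0$ and the displayed exact identity.

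Next come the bounds. The lower bound $c_P(U)\ge0$ is trivial, and $c_P(U)\le B_P(U)$ is \cref{lem:window-components}. That lemma uses \cref{lem:terminal-existence,lem:terminal-unmarked}: since $U\Subset\Omega^\circ$, no restricted component can exhaust the global tree, because a terminal branch reaching $\partial\Omega$ survives the cuts. Then
\[
 \MA(F_P)(U)-\#(P\cap U)-\Xi_P(U)=\tfrac12B_P(U)-c_P(U)\in\bigl[-\tfrac12B_P(U),\tfrac12B_P(U)\bigr],
\]
which gives the absolute-value estimate.

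The main obstacle is the identification in the first step: checking that the componentwise restricted and cut graph really embeds as a subgraph of the global completed tree $T_P$. The delicate cases are marks lying outside $U$, where the carrier stays uncut but its restriction is still a subgraph, and edges that leave and re-enter a component. For the re-entry case, each arc of an edge inside $\overline U_\alpha$ becomes its own edge with two degree-one endpoints, so it is still a subdivision of part of an edge of $T_P$ and creates no cycle. I would state this as a short subdivision argument: acyclicity is preserved under subdividing edges and passing to subgraphs.
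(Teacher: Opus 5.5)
Your proposal is correct and follows essentially the same route as the paper: specialize \cref{cor:marked-local-general} to $F_P$, note that the componentwise restricted cut graph embeds in a subdivision of the global completed cut tree $T_P$ of \cref{thm:marked-tree}, so $\beta^{\rm cut}_{F_P,P}(U)=0$, and take $0\le c_P(U)\le B_P(U)$ from \cref{lem:window-components}. Your treatment of shared boundary points, uncut carriers whose marks lie outside $U$, and re-entering arcs spells out the one-line step ``a subgraph of a tree is a forest'' that the paper leaves implicit.
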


\begin{proof}
The global cut tree has no cycles, so the cut Betti number in \cref{cor:marked-local-general} vanishes.  The displayed identity follows from that corollary, and \cref{lem:window-components} gives $c_P(U)\le B_P(U)$.  Since $c_P(U)\ge0$, the final estimate follows.
\end{proof}

\section{Minimality and Pick excess}
\label{sec:mildness}

The local balance contains the nonnegative Pick excess $\Xi_P$.  Localized monomial lowering eliminates interior lattice points from the relevant Newton polygons, and coefficient deformations along the marked tree make every compact internal edge primitive.  The excess is therefore determined by terminal-edge weights.

\subsection{Unmarked faces}

Use the essential presentation from \cref{prop:essential-presentation}:
\[
 F(x)=\min_{C\in\mathscr C_2(F)}L_C(x).
\]
Let $Q$ be a face of the regular Newton subdivision, with vertices $m_1,\dots,m_r$.  These vertices correspond to essential cell monomials $L_{m_1},\dots,L_{m_r}$.  Let
\[
 q\in\operatorname{relint}(Q)\cap\Z^2
\]
be a lattice point that is not a vertex of $Q$.  Choose
\[
 q=\sum_{j=1}^r\lambda_jm_j,
 \qquad
 \lambda_j>0,
 \qquad
 \sum_j\lambda_j=1,
\]
and define the affine function
\[
 L_q=\sum_{j=1}^r\lambda_jL_{m_j}.
\]

\begin{lemma}
\label{lem:interpolating-monomial}
One has $L_q\ge F$ on $\Omega$, and equality holds exactly on the closed tropical face $\tau_Q$ dual to $Q$.
\end{lemma}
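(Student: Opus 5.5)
The inequality $L_q\ge F$ follows from convexity and the essential presentation.  Each $L_{m_j}$ is an essential cell monomial, so $L_{m_j}\ge F$ on $\overline\Omega$ by \cref{prop:essential-presentation}.  A convex combination with positive weights summing to one then gives
\[
 L_q=\sum_j\lambda_jL_{m_j}\ge\sum_j\lambda_jF=F .
\]
Note also that $L_q$ has gradient $\sum_j\lambda_jm_j=q$ and is affine, so it is a genuine monomial with integral slope $q$.

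For the equality set, observe that $L_q(x)=F(x)$ holds if and only if every summand is tight.  Indeed, $L_q-F=\sum_j\lambda_j(L_{m_j}-F)$ is a positive combination of nonnegative functions, so it vanishes exactly where $L_{m_j}(x)=F(x)$ for every $j$.  Thus
\[
 \{L_q=F\}=\bigcap_{j=1}^r\{L_{m_j}=F\}=\bigcap_{j=1}^r C_{m_j},
\]
using the contact-set identity of \cref{prop:essential-presentation}, where $C_{m_j}$ is the closed two-dimensional cell with gradient $m_j$.  The remaining task is to identify this intersection of closed cells with the closed tropical face $\tau_Q$ dual to $Q$.

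One inclusion comes from the definition of $Q$.  Take $x\in\operatorname{relint}\tau_Q$.  By the definition of $A_\tau$ and $Q_\tau$, the vertices $m_j$ of $Q$ lie in $A_{\tau_Q}$, so all $L_{m_j}$ equal $F$ on $\operatorname{relint}\tau_Q$.  By continuity they also agree with $F$ on $\tau_Q$, which gives $\tau_Q\subset\bigcap_jC_{m_j}$.

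For the reverse inclusion, let $x\in\bigcap_jC_{m_j}$ and let $\rho$ be the unique face of the tropical complex containing $x$ in its relative interior.  Each $m_j$ is then active on $\operatorname{relint}\rho$, since the contact sets are closed convex cells that are unions of faces, and $x$ sits in the relative interior of $\rho$.  Hence $\{m_j\}\subset A_\rho$ and $Q\subset Q_\rho$.  By the inclusion-reversing duality between tropical faces and Newton subdivision faces, $Q\subset Q_\rho$ forces $\rho\subset\tau_Q$, so $x\in\tau_Q$.

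The expected obstacle is this last step, in particular the claim that activity at the single point $x$ propagates to all of $\operatorname{relint}\rho$.  I plan to argue it directly: the exact active set is constant on relative interiors of faces, because the active-face fibers $Z_{B,\sigma}$ of \cref{lem:parameterized-active-faces} partition $\overline\Omega$ and the faces are exactly these fibers.  The condition $Q\subset Q_\rho\Rightarrow\rho\subset\tau_Q$ is the standard order-reversal of regular subdivision duality, which I will cite rather than reprove.

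A degenerate case is when the intersection meets $\partial\Omega$.  There the same argument applies with $\rho$ taken to be a boundary face, and this is harmless because $\tau_Q$ is taken closed in $\overline\Omega$.
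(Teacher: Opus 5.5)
Your proof is correct and follows the same route as the paper: the convex combination gives $L_q\ge F$, and positivity of the $\lambda_j$ forces every vertex monomial to be active wherever equality holds. The paper then identifies that common contact locus with $\tau_Q$ in a single line, whereas you justify it through the inclusion-reversing face duality. That expansion is faithful; the constancy of the active set on $\operatorname{relint}\rho$ also follows directly, because $F$ is affine on $\rho$, so $L_{m_j}-F$ is a nonnegative affine function on $\rho$ vanishing at a relative interior point and hence zero on all of $\rho$.
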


\begin{proof}
Every essential monomial $L_{m_j}$ lies above $F$, so their convex combination does as well.  Since every coefficient $\lambda_j$ is positive, equality holds precisely when all the vertex monomials $L_{m_j}$ are active.  Their common contact locus is the closed tropical face dual to $Q$.
\end{proof}

\begin{lemma}[Localized face-monomial lowering]
\label{lem:missing-monomial}
Assume $\tau_Q\Subset\Omega^\circ$, and let $W$ be any neighborhood of $\tau_Q$ compactly contained in $\Omega^\circ$.  Then, for sufficiently small $\varepsilon>0$,
\[
 F_{\varepsilon,q}=\min\{F,L_q-\varepsilon\}
\]
is nonnegative, zero on $\partial\Omega$, agrees with $F$ on $\Omega\setminus W$, and is strictly smaller than $F$ near $\tau_Q$.
\end{lemma}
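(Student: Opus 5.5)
The plan is to compare $L_q-\varepsilon$ with $F$ on three regions: a neighborhood of $\tau_Q$, the compact set $\overline\Omega\setminus W$, and $\partial\Omega$.

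\emph{Positivity of the gap away from $\tau_Q$.} By \cref{lem:interpolating-monomial}, $L_q-F\ge0$ on $\overline\Omega$, with equality exactly on $\tau_Q$. Both functions are continuous on the compact body $\overline\Omega$. Since $\tau_Q\Subset W$, the set $\overline\Omega\setminus W$ is compact and disjoint from $\tau_Q$. Hence
\[
 \delta:=\min_{\overline\Omega\setminus W}(L_q-F)>0 .
\]
This minimum is attained, and it is positive because the zero set of $L_q-F$ is $\tau_Q$. For every $0<\varepsilon<\delta$ we get $L_q-\varepsilon>F$ on $\overline\Omega\setminus W$. Therefore $F_{\varepsilon,q}=F$ there. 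In particular $F_{\varepsilon,q}$ vanishes on $\partial\Omega$, because $\partial\Omega\subset\overline\Omega\setminus W$ (recall $W\Subset\Omega^\circ$) and $F|_{\partial\Omega}=0$.

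\emph{Nonnegativity.} On $\overline\Omega\setminus W$ we have $F_{\varepsilon,q}=F\ge0$. On $\overline W$, use that $L_q$ is a convex combination of essential monomials, each of which lies above $F\ge0$. A simpler route is to note that $\tau_Q\Subset\Omega^\circ$ and $F$ is positive on $\Omega^\circ$. Indeed, if $F\not\equiv0$ vanished at an interior point, concavity together with $F\ge0$ would force $F\equiv0$, by the argument of \cref{lem:terminal-existence}. Hence $F\ge\eta>0$ on the compact set $\overline W$, and so $L_q\ge F\ge\eta$ there. Choosing $\varepsilon<\min\{\delta,\eta\}$ makes $L_q-\varepsilon>0$ on $\overline W$. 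Thus $F_{\varepsilon,q}$, as the minimum of two nonnegative functions, is nonnegative on $\overline W$. If $F\equiv0$, there are no faces and the statement is vacuous.

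\emph{Strict decrease near $\tau_Q$.} On $\tau_Q$ we have $L_q=F$, so $L_q-\varepsilon=F-\varepsilon<F$. By continuity, $L_q-\varepsilon<F$ on an open neighborhood of $\tau_Q$, and there $F_{\varepsilon,q}=L_q-\varepsilon<F$.

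Finally, $F_{\varepsilon,q}$ is a finite minimum of integral-slope affine functions, since $q\in\Z^2$, so it is again an $\Omega$-tropical polynomial. I expect no real obstacle. The only care needed is to take $\varepsilon$ smaller than both the gap $\delta$ off $W$ and the interior lower bound $\eta$ of $F$ on $\overline W$.
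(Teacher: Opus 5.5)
Your proof is correct and follows the paper's argument. Like the paper, you take $\varepsilon$ below both the positive gap $\min(L_q-F)$ away from $\tau_Q$ and the positive lower bound of $F$ on a compact neighborhood of $\tau_Q$. The paper's only refinement is an intermediate open set $\tau_Q\subset W_0\Subset W$, so that the gap is a minimum over the compact set $\Omega\setminus W_0$. If $W$ is not assumed open, you should likewise minimize over $\overline\Omega\setminus\operatorname{int}W$.

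Your citation of \cref{lem:terminal-existence} for interior positivity is slightly off, since that lemma treats vanishing on an open set. The fact you need, that a nonzero nonnegative concave function with zero boundary values is positive on $\Omega^\circ$, follows by extending the segment from a positive point slightly past an interior zero. The paper simply asserts this fact.
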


\begin{proof}
Set $g=L_q-F$.  By \cref{lem:interpolating-monomial}, its zero set is $\tau_Q$.  Choose
\[
 \tau_Q\subset W_0\Subset W\Subset\Omega^\circ.
\]
Then
\[
 \eta:=\min_{\Omega\setminus W_0}g>0.
\]
A nonzero nonnegative concave function with zero boundary values is strictly positive in the interior, so
\[
 a:=\min_{\overline{W_0}}F>0.
\]
Choose
\[
 0<\varepsilon<\min\{\eta,a\}.
\]
Outside $W_0$, one has $L_q-\varepsilon>F$, so the function is unchanged there, in particular on $\Omega\setminus W$ and near $\partial\Omega$.  Where the lowered monomial can be active, the point lies in $W_0$ and
\[
 L_q-\varepsilon\ge F-\varepsilon\ge a-\varepsilon>0.
\]
On $\tau_Q$ one has $L_q-\varepsilon=F-\varepsilon<F$, and the strict inequality persists on a neighborhood of $\tau_Q$.
\end{proof}

\begin{remark}
The affine function $L_q$ may already occur in the full small canonical form because it agrees with $F$ on the lower-dimensional face $\tau_Q$.  The operation above is therefore a lowering of its interpolating coefficient, not an assertion that the canonical term is absent.  The essential presentation is used only to identify the vertex monomials of $Q$ and to represent $F$ without lower-dimensional-only terms.
\end{remark}

If $W$ avoids the marked set, the deformation preserves all marked corner constraints.  Minimality then excludes such a lowering.

\begin{theorem}[Empty interior Newton polygons]
\label{thm:empty-newton}
For a strongly generic relaxation $F_P$, every interior tropical vertex $v$ has
\[
 \Int(Q_v)\cap\Z^2=\varnothing.
\]
\end{theorem}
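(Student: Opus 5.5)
The proof will be by contradiction, using the localized face-monomial lowering of \cref{lem:missing-monomial} and pointwise minimality of $F_P$. Fix an interior tropical vertex $v$ and suppose that $Q_v=\partial^+F_P(v)$ contains a lattice point $q\in\Int(Q_v)\cap\Z^2$. Such a $q$ is automatically in $\operatorname{relint}(Q_v)$ and is not a vertex of $Q_v$. We may therefore write $q=\sum_j\lambda_jm_j$ with all $\lambda_j>0$ over the vertices $m_j$ of $Q_v$ and form $L_q=\sum_j\lambda_jL_{m_j}$. The face of the tropical complex dual to the two-dimensional polygon $Q_v$ is the vertex itself, $\tau_{Q_v}=\{v\}$. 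By \cref{lem:interpolating-monomial}, $L_q\ge F_P$ on $\Omega$, with equality exactly at $v$.

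The key step is to choose the neighborhood $W$ in \cref{lem:missing-monomial} so that it avoids the marked set. This is where strong genericity enters. By \cref{thm:strong-genericity}(i), every marked point lies in the relative interior of an edge of $\Gamma_P$, so $v\notin P$. Since $P$ is finite and $v\in\Omega^\circ$, we can take a small ball $W$ around $v$ with $\overline W\subset\Omega^\circ$ and $\overline W\cap P=\varnothing$.

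\Cref{lem:missing-monomial} then yields, for small $\varepsilon>0$, the function $F_{\varepsilon,q}=\min\{F_P,L_q-\varepsilon\}$. It is nonnegative and zero on $\partial\Omega$, it agrees with $F_P$ off $W$, and it is strictly smaller than $F_P$ near $v$. It is a minimum of finitely many integral-slope affine functions, because $q\in\Z^2$ and $F_P$ has finite small canonical form by \cref{prop:tropical-relaxation}. Hence it is an $\Omega$-tropical polynomial.

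It remains to check the corner constraints. Each $p\in P$ lies outside $\overline W$, and $F_{\varepsilon,q}=F_P$ on a neighborhood of $p$ (shrink so the neighborhood misses $\overline W$). Hence $p\in V(F_{\varepsilon,q})$. Therefore $F_{\varepsilon,q}\in\mathcal V(\Omega,P,0_\Omega)$, and pointwise minimality gives $F_P\le F_{\varepsilon,q}$, contradicting $F_{\varepsilon,q}(v)<F_P(v)$.

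The main point requiring care is the identification $\tau_{Q_v}=\{v\}$ together with the hypothesis $\tau_Q\Subset\Omega^\circ$ of \cref{lem:missing-monomial}. For the first, the vertex monomials of $Q_v$ are exactly the essential monomials active at $v$, and their common contact locus is $v$ because $Q_v$ is two-dimensional: the equalities $L_{m_j}=L_{m_k}$ for affinely spanning gradients pin down a single point. The second is immediate since $v$ is an interior vertex. Note that only property (i) of strong genericity is used, so the argument in fact holds whenever no marked point is a tropical vertex.
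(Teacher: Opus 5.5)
Your proposal is correct and follows the paper's own argument. Strong genericity keeps every marked point off the vertex $v$, so \cref{lem:missing-monomial} can be applied on a neighborhood of $v$ disjoint from $P$, which yields a strictly smaller admissible tropical polynomial and contradicts minimality. Your added checks, that $\tau_{Q_v}=\{v\}$ because the vertex gradients affinely span $\R^2$ and that each marked corner survives because the function is unchanged near every $p\in P$, are exactly the details the paper's two-line proof leaves implicit.
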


\begin{proof}
A marked point is never a tropical vertex.  If $q\in\Int(Q_v)\cap\Z^2$, choose a small neighborhood of $v$ disjoint from $P$ and apply \cref{lem:missing-monomial}.  The result is a strictly smaller admissible tropical polynomial.
\end{proof}

\begin{theorem}[Primitive unmarked edges]
\label{thm:primitive-unmarked}
Let $P\subset\Omega^\circ$ be finite, and let $e\subset\Gamma_P$ be a tropical edge with $\overline e\Subset\Omega^\circ$ and $\overline e\cap P=\varnothing$.  Then $w(e)=1$.
\end{theorem}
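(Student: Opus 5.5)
The plan is to argue by contradiction, using the localized face-monomial lowering of \cref{lem:missing-monomial} with the dual Newton segment of $e$ in place of a two-dimensional Newton polygon. Suppose $w(e)\ge2$. Let $m_0,m_1\in\Z^2$ be the gradients of the two two-dimensional cells $C_0,C_1$ adjacent to $e$, so that $m_1-m_0=w(e)n_e$ with $n_e$ primitive. The dual face of the Newton subdivision is $Q=[m_0,m_1]$, and
\[
 q=m_0+n_e\in\operatorname{relint}(Q)\cap\Z^2
\]
is a lattice point that is not a vertex of $Q$. Write $q=\lambda m_0+(1-\lambda)m_1$ with $\lambda=1-1/w(e)\in(0,1)$, and set $L_q=\lambda L_{C_0}+(1-\lambda)L_{C_1}$. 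Here $L_{C_0},L_{C_1}$ are the essential cell monomials from \cref{prop:essential-presentation}. That presentation applies because $F_P$ is a tropical polynomial: its small canonical form is finite by \cref{prop:tropical-relaxation}.

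First I will check the hypotheses of \cref{lem:interpolating-monomial,lem:missing-monomial}. By the contact-set identity in \cref{prop:essential-presentation}, $L_q=F_P$ holds exactly on $C_0\cap C_1$. This set is convex, lies in the corner locus, and contains $e$, so it is the common closed face $\tau_Q=\overline e$. By hypothesis $\overline e\Subset\Omega^\circ$. Since $P$ is finite and $\overline e\cap P=\varnothing$, I choose a neighborhood $W$ of $\overline e$ with $W\Subset\Omega^\circ$ and $\overline W\cap P=\varnothing$. Note also that $F_P\not\equiv0$, since its corner locus contains the edge $e$; this gives the positivity $a=\min_{\overline{W_0}}F_P>0$ used in the proof of \cref{lem:missing-monomial}.

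Then \cref{lem:missing-monomial} yields, for small $\varepsilon>0$, the function
\[
 F_{\varepsilon,q}=\min\{F_P,L_q-\varepsilon\}.
\]
The term $L_q-\varepsilon$ has integral gradient $q$, so $F_{\varepsilon,q}$ is again an $\Omega$-tropical polynomial with integral slopes. By that lemma it is nonnegative, vanishes on $\partial\Omega$, agrees with $F_P$ on $\Omega\setminus W$, and is strictly smaller than $F_P$ near $\overline e$. Because $F_{\varepsilon,q}=F_P$ on the open set $\Omega^\circ\setminus\overline W$, which contains every marked point, the corner condition at each $p\in P$ persists. Hence $F_{\varepsilon,q}\in\mathcal V(\Omega,P,0_\Omega)$. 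Pointwise minimality in \cref{prop:tropical-relaxation} then forces $F_P\le F_{\varepsilon,q}$, which contradicts the strict decrease near $e$. Therefore $w(e)=1$. No genericity is needed, matching the statement for arbitrary finite $P$.

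The only delicate point is the identification of the exact contact set of $L_q$ with the closed edge $\overline e$ rather than some larger set. Everything in the lowering argument depends on the strict gap $\eta>0$ away from $\overline e$, so I would verify this carefully via the convexity of $C_0\cap C_1$ and the maximality of linearity cells. A bounded self-incidence is impossible by \cref{lem:no-bounded-self-incidence}, so $C_0\ne C_1$. If an unbounded cell appeared on both sides, the same convexity argument used in that lemma still applies, so the two adjacent cells are distinct in all cases.
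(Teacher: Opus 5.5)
Your proposal is correct and follows the paper's proof essentially verbatim. The paper also takes $q=m_0+n$ in the interior of the dual segment $[m_0,m_1]$, applies \cref{lem:missing-monomial} on a neighborhood $W$ of $\overline e$ disjoint from $P$, and contradicts the pointwise minimality of $F_P$. Your extra checks are details the paper leaves implicit in \cref{lem:interpolating-monomial,lem:missing-monomial}: that the contact set of $L_q$ is exactly $\overline e$, that $F_P\not\equiv0$, and that choosing $\overline W\cap P=\varnothing$ leaves the marked corners untouched.
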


\begin{proof}
Let $L_{m_0}$ and $L_{m_1}$ be the two essential monomials of the adjacent two-dimensional cells.  If
\[
 m_1-m_0=wn,
 \qquad w\ge2,
\]
with $n$ primitive, then $q=m_0+n$ is an interior lattice point of the dual segment $[m_0,m_1]$.  Since $\overline e$ is compact and disjoint from the finite set $P$, choose a neighborhood $W$ with
\[
 \overline e\subset W\Subset\Omega^\circ,
 \qquad
 W\cap P=\varnothing,
\]
and apply \cref{lem:missing-monomial} on $W$.  The resulting polynomial preserves every marked corner, is strictly smaller than $F_P$ near $\overline e$, and contradicts the pointwise minimality of $F_P$.
\end{proof}

\subsection{Marked edges}

Let $G_0$ be the uncut finite plane multigraph constructed in \cref{sec:genericity}.  Thus marked degree-two points are suppressed as graph vertices, while each $p_i$ is retained as a distinguished point in the relative interior of its unique carrier edge $e_i$.  Put
\[
 S=\{e_1,\dots,e_N\}\subset E(G_0),
 \qquad
 T_0=\bigl(V(G_0),E(G_0)\setminus S\bigr).
\]

\begin{lemma}[Primal spanning tree]
\label{lem:marked-primal-spanning-tree}
The spanning subgraph $T_0$ is a tree.
\end{lemma}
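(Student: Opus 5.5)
This is the statement already proved as \cref{lem:tree-cotree-complement}, restated for the marked-edge analysis. The plan is to repeat that planar tree--cotree argument, which needs two inputs. The first is \cref{lem:marked-dual-tree}: the duals $e_1^\vee,\dots,e_N^\vee$ form a spanning tree of $G_0^\vee$. The second is the Euler count on the connected plane multigraph $G_0$. That count uses $\beta_1(G_0)=N$ from \cref{thm:generic-genus}, together with the fact that $G_0$ arises from the connected curve $\Gamma_P$ (\cref{lem:full-curve-connected}) by pruning terminal leaf tails. It gives
\[
 |E(G_0)\setminus S| = |E(G_0)|-N = |V(G_0)|-1 .
\]

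The steps, in order, are as follows. First, record that $G_0$ is connected and has $N+1$ faces $f_\infty,f_1,\dots,f_N$, so Euler's formula yields the edge count above. Second, show that $T_0$ is acyclic. Suppose instead that $T_0$ contains a simple cycle $C$, possibly a two-edge cycle of parallel edges. Its Jordan curve encloses at least one bounded face of $G_0$ and leaves $f_\infty$ outside. The marked dual tree spans all face vertices, so it contains a dual path from an enclosed face to $f_\infty$. By \cref{lem:planar-separation}(i), that path uses the dual of some edge of $C$. But the path consists only of duals of edges in $S$, while $C\subset E(G_0)\setminus S$, which is a contradiction. Third, a spanning acyclic subgraph with $|V(G_0)|-1$ edges is connected, hence a tree.

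The one point needing care is that the Jordan curve of $C$ really separates two faces of $G_0$. This holds because $C$ is a genuine embedded cycle of the plane graph: its interior is a nonempty bounded region of the plane, which is a union of closures of bounded faces, while $f_\infty$ lies outside. Transverse nodes are vertices of $G_0$, so $C$ has no self-crossings. Marks are suppressed degree-two points and do not alter the embedding. As a variant, one could deduce connectivity of $T_0$ directly: the $N$ dual edges are independent in the cycle space of the dual, so by plane duality \cite[Proposition~4.6.1]{Diestel2017} $S$ contains no bond of $G_0$, hence $G_0\setminus S$ is connected. Acyclicity would then come from the same edge count.
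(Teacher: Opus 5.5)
Your proposal is correct and follows the paper's route. The paper proves this lemma by citing \cref{lem:tree-cotree-complement}, whose proof is the Euler count $|E(G_0)\setminus S|=|V(G_0)|-1$ together with acyclicity from the marked dual tree and \cref{lem:planar-separation}(i), exactly as you outline. Your bond-based variant via \cite[Proposition~4.6.1]{Diestel2017} is also valid.
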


\begin{proof}
This is \cref{lem:tree-cotree-complement}, with the uncut carrier convention made explicit.
\end{proof}

Let $G_0^\vee$ be the planar dual multigraph.  Parallel edges are allowed, and transverse tropical nodes remain primal vertices.

\begin{lemma}[Dual marked-edge tree]
\label{lem:deformation-dual-tree}
The dual edges
\[
 S^\vee=\{e_1^\vee,\dots,e_N^\vee\}
\]
form a spanning tree of $G_0^\vee$.
\end{lemma}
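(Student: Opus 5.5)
This is \cref{lem:marked-dual-tree} restated in the deformation setting, so the plan is to re-derive it from the same ingredients.

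First, recall that $G_0$ is connected. This follows from \cref{lem:full-curve-connected} together with \cref{lem:terminal-interior-endpoint}: the truncation only prunes terminal leaf tails. Its bounded faces are exactly the $N$ bounded complementary cells of $\Gamma_P$, by \cref{thm:generic-genus}. Hence $G_0^\vee$ has exactly the $N+1$ vertices $f_\infty,f_1,\dots,f_N$. Since $|S^\vee|=N$, it suffices to show that the subgraph spanned by $S^\vee$ is connected, or equivalently acyclic.

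Next, compute the reduced incidence column of each $e_i^\vee$ over $\Ftwo$, deleting the row of $f_\infty$. By \cref{lem:no-bounded-self-incidence}, no bounded face lies on both sides of the carrier $e_i$. So the column is $e_r+e_s$, $e_r$, or $0$, according to whether the two sides of $e_i$ are two bounded faces, one bounded face and $f_\infty$, or $f_\infty$ twice. Since $p_i$ lies in the relative interior of $e_i$, the bounded faces on the two sides of $e_i$ are exactly those whose closures contain $p_i$. Thus the column is the vector $a_i$, and the incidence matrix of $S^\vee$ has columns $a_1,\dots,a_N$.

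By \cref{thm:generic-genus}, these columns form a basis of $\Ftwo^N$, so the matrix has rank $N$. The rank of a reduced incidence matrix of a graph on $N+1$ vertices equals $(N+1)$ minus its number of components. Rank $N$ therefore forces the graph to be connected. A connected graph on $N+1$ vertices with $N$ edges is a tree, and it spans $G_0^\vee$.

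An alternative route would dualize \cref{lem:marked-primal-spanning-tree} through the tree–cotree correspondence of \cite[Proposition~4.6.1]{Diestel2017}. In a connected plane multigraph, the complement of a spanning tree dualizes to a spanning tree of the dual.

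The main obstacle is the identification of the dual incidence column with $a_i$. It requires that $\overline{R_j}\ni p_i$ holds exactly when $R_j$ is adjacent to $e_i$. This uses strong genericity: $p_i$ is not a vertex, so only the two faces beside $e_i$ have closures containing it. It also requires that truncating $\Gamma_P$ to $G_0$ neither merges nor creates bounded faces. I would check both facts carefully before invoking the rank argument.
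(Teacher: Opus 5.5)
Your argument is correct and is essentially the paper's. The paper proves this lemma by citing \cref{lem:marked-dual-tree}, whose proof is exactly your chain: identify the reduced incidence columns with $a_1,\dots,a_N$, take the basis property from \cref{thm:generic-genus}, and use the rank count $|V|-c$ to force a connected $N$-edge graph on $N+1$ vertices. Do not use your ``alternative route'', however: in the paper \cref{lem:marked-primal-spanning-tree} (via \cref{lem:tree-cotree-complement}) is itself derived from the marked dual tree, so dualizing it back would be circular.
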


\begin{proof}
This is the marked dual-tree statement of \cref{lem:marked-dual-tree}, applied before any carrier is cut.
\end{proof}

Only after $G_0^\vee$ has been formed do we cut the carriers at their distinguished points.  Replacing each $e_i$ by its two half-edges ending at $p_i^+$ and $p_i^-$ attaches two marked leaves to $T_0$; retaining the truncation crossings as formal boundary ends gives the completed cut tree $T_P$.

Fix a marked carrier edge $e=e_i$.  Deleting $e^\vee$ from the dual tree $S^\vee$ produces two components.  Let $\mathcal C_e$ be the component not containing the exterior dual vertex $f_\infty$, and let $\mathcal R_e$ be the corresponding collection of faces of $G_0$.

\begin{lemma}[Bounded selected face block]
\label{lem:selected-face-block}
The collection $\mathcal R_e$ is nonempty and consists of bounded complementary cells of the full tropical curve $\Gamma_P$.  The edge $e$ is the unique marked carrier edge with one adjacent cell in $\mathcal R_e$ and one outside it.
\end{lemma}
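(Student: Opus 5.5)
The plan is to read all three assertions off the spanning-tree structure of $S^\vee$ in $G_0^\vee$ from \cref{lem:deformation-dual-tree}. The vertex set of $G_0^\vee$ consists of $f_\infty$ together with the $N$ bounded faces $f_1,\dots,f_N$.

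\emph{Nonemptiness.} In a tree, deleting one edge leaves exactly two components, and each contains one endpoint of the deleted edge. The edge $e^\vee$ is not a loop: by \cref{lem:terminal-unmarked} and the remark preceding \cref{lem:marked-dual-tree}, its reduced incidence column $a_i$ is nonzero. Hence $e^\vee$ has two distinct endpoints. The component $\mathcal C_e$ avoiding $f_\infty$ contains at least one of them, so $\mathcal C_e$ is nonempty.

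\emph{Boundedness.} Every vertex of $\mathcal C_e$ is a dual vertex different from $f_\infty$, that is, a bounded face of $G_0$. In the discussion before \cref{lem:no-bounded-self-incidence}, the bounded faces of $G_0$ were identified with the bounded complementary cells of $\Gamma_P$. This identification rests on two facts: pruning terminal tails creates no bounded face, and the number of bounded faces equals $N$ by \cref{thm:generic-genus}. It follows that $\mathcal R_e$ consists of bounded complementary cells of $\Gamma_P$.

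\emph{Uniqueness of the crossing marked edge.} Let $e_j$ be a marked carrier with exactly one adjacent face in $\mathcal R_e$. Then its dual $e_j^\vee$ is an edge of $S^\vee$ joining a vertex of $\mathcal C_e$ to a vertex outside $\mathcal C_e$. After $e^\vee$ is deleted, the edges of $S^\vee\setminus\{e^\vee\}$ all lie inside one of the two components, since these components are the components of the forest $S^\vee\setminus\{e^\vee\}$. Therefore no such edge can connect $\mathcal C_e$ to its complement, which forces $e_j^\vee=e^\vee$ and hence $e_j=e$. Conversely, $e$ itself does cross: one endpoint of $e^\vee$ lies in $\mathcal C_e$ and the other lies in the component containing $f_\infty$. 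The two endpoints are distinct because $e^\vee$ is not a loop.

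\emph{Main obstacle.} The delicate step is the non-loop property of $e^\vee$, which could fail if the same face lay on both sides of $e$. \Cref{lem:no-bounded-self-incidence} rules out a bounded face on both sides. The exterior face $f_\infty$ on both sides is ruled out because a loop column $a_i=0$ would contradict the basis property in \cref{thm:generic-genus}. Once this is settled, the remaining assertions are elementary tree combinatorics.
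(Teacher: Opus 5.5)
Your proposal is correct and follows the paper's proof: deleting $e^\vee$ from the dual tree $S^\vee$ gives nonemptiness, the truncation construction identifies the bounded faces of $G_0$ with the bounded cells of $\Gamma_P$, and uniqueness holds because $e^\vee$ is the only tree edge joining the two components. The step you call the main obstacle is redundant, though your argument for it is sound: $S^\vee$ is a tree by \cref{lem:deformation-dual-tree}, and a tree contains no loops, so $e^\vee$ automatically has two distinct endpoints.
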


\begin{proof}
Both components obtained by deleting an edge of a tree are nonempty.  Since $\mathcal C_e$ does not contain $f_\infty$, all its vertices represent bounded faces of $G_0$.  By the construction of the inward truncation, the bounded faces of $G_0$ are exactly the bounded complementary cells of $\Gamma_P$; in particular, they are complete two-dimensional linearity cells compactly contained in $\Omega^\circ$.

A marked carrier edge $e_j$ has adjacent faces on opposite sides of this partition precisely when its dual edge $e_j^\vee$ joins the two components of $S^\vee\setminus\{e^\vee\}$.  In a tree, the deleted edge is the unique edge joining those components.  Therefore $e$ is the unique marked crossing.
\end{proof}

For each $R\in\mathcal R_e$, let
\[
 L_R(x)=c_R+\inner{m_R}{x}
\]
be its essential cell monomial from \cref{prop:essential-presentation}.

\begin{lemma}[Positivity of the selected monomials]
\label{lem:selected-monomial-positive}
For every $R\in\mathcal R_e$,
\[
 \min_{\overline\Omega}L_R>0.
\]
\end{lemma}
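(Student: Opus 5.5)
This is essentially \cref{lem:bounded-monomial-positive} applied to each selected cell. By \cref{lem:selected-face-block}, every $R\in\mathcal R_e$ is a bounded complementary cell of $\Gamma_P$, so its closure is compactly contained in $\Omega^\circ$. The plan is to quote that lemma directly. For completeness, I would also re-run its short argument in two steps.

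\emph{First step: $L_R$ has no zero on $\partial\Omega$.} By \cref{prop:essential-presentation}, $L_R\ge F_P\ge0$ on $\overline\Omega$. Suppose $L_R(x)=0$ at some $x\in\partial\Omega$. Since $F_P(x)=0$, the monomial $L_R$ would be active at $x$. But the contact set $\{F_P=L_R\}$ is exactly the closed cell $\overline R$ by the contact-set identity of \cref{prop:essential-presentation}. Then $x\in\overline R\cap\partial\Omega$, which contradicts $\overline R\Subset\Omega^\circ$.

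\emph{Second step: $L_R$ has no zero in $\Omega^\circ$.} A nonnegative affine function vanishing at an interior point of a convex body is identically zero. In that case $F_P=0$ on the open cell $R$. Because $F_P$ is concave and nonnegative with zero boundary values, the concavity argument of \cref{lem:terminal-existence} would then give $F_P\equiv0$. That contradicts $P\subset V(F_P)\ne\varnothing$ (with $N\ge1$).

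Since $\overline\Omega$ is compact and $L_R$ is continuous, the minimum is attained. By the two steps it is strictly positive.

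There is no real obstacle here. The only point needing care is that the essential-presentation contact set is taken in $\overline\Omega$, including boundary points. This is exactly how \cref{prop:essential-presentation} is stated, so boundary activity forces the closed cell to meet $\partial\Omega$.
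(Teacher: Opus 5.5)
Your proposal is correct and follows the paper's own argument: a boundary zero of $L_R$ would make it active there, which is impossible for the bounded cell $R$; an interior zero would force $L_R\equiv0$ and hence the nonzero relaxation to vanish; compactness then gives a strictly positive minimum. Citing the exact contact-set identity $\{F_P=L_R\}=\overline R$ from \cref{prop:essential-presentation} is slightly cleaner than the paper's appeal to convexity of the active locus. In the second step you can skip the concavity detour, since $0\le F_P\le L_R\equiv0$ already gives $F_P\equiv0$ on all of $\overline\Omega$.
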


\begin{proof}
Every essential cell monomial satisfies $L_R\ge F_P\ge0$ by \cref{prop:essential-presentation}.  If $L_R$ vanished at a boundary point, it would be active there.  Its active locus is convex and contains the open cell $R$, so the closure of its two-dimensional active region would reach $\partial\Omega$, contrary to the boundedness established in \cref{lem:selected-face-block}.  An interior zero of a nonnegative affine function would make it identically zero, which is impossible for the nonzero relaxation.  Compactness gives the strict positive minimum.
\end{proof}

Let the two monomials adjacent to $e$ be
\[
 L_u=c_u+\inner{u}{x},
 \qquad
 L_v=c_v+\inner{v}{x},
\]
with the $v$-cell selected.  Write
\[
 v-u=mn,
 \qquad n\in\Z^2\text{ primitive},
 \qquad m=w(e).
\]
Assume for contradiction that $m\ge2$, set
\[
 q=v-n,
\]
and define
\[
 L_q=\frac1mL_u+\frac{m-1}{m}L_v.
\]

\begin{lemma}[Primitive intermediate monomial]
\label{lem:primitive-intermediate-monomial}
The gradient of $L_q$ is the lattice point $q\in\Z^2$, the segment $[q,v]$ has lattice length one, and
\[
 L_q\ge F_P
\]
on $\Omega$, with equality exactly on the closed edge $\overline e$.  In particular,
\[
 \min_{\overline\Omega}L_q>0.
\]
\end{lemma}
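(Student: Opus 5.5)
The lemma is mostly convex-combination bookkeeping, following the pattern of \cref{lem:interpolating-monomial}. I would treat the gradient, the lattice length, the lower bound with its equality set, and positivity in that order.

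\emph{Gradient and lattice length.} Since $L_q=\frac1mL_u+\frac{m-1}mL_v$, its gradient is
\[
 \tfrac1m u+\tfrac{m-1}m v=v-\tfrac1m(v-u)=v-n=q,
\]
which lies in $\Z^2$. The segment $[q,v]$ equals $[v-n,v]$. Because $n$ is primitive, this segment has lattice length one. Since $m\ge2$, the point $q$ lies strictly between $u$ and $v$ on the dual segment $[u,v]$.

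\emph{Lower bound and equality set.} Both $L_u$ and $L_v$ are essential cell monomials, so each lies above $F_P$ by \cref{prop:essential-presentation}. Their convex combination with weights $\frac1m,\frac{m-1}m\in(0,1)$ therefore also lies above $F_P$. Equality at a point $x$ forces $L_u(x)=F_P(x)=L_v(x)$, because both weights are strictly positive. By the contact-set identity of \cref{prop:essential-presentation}, this means $x\in C_u\cap C_v$, where $C_u,C_v$ are the closures of the two cells adjacent to $e$.

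\emph{Identifying the equality set with $\overline e$.} This is the main step. The two closed cells are convex polygons with disjoint interiors, so they meet in a common face. That face contains $\overline e$, which is a one-dimensional facet of both. Two convex polygons with disjoint interiors sharing a segment meet exactly in a segment on the separating line $\{L_u=L_v\}$. That segment is the full common edge $\overline e$, since the edge is a maximal one-dimensional face of the subdivision. The intersection cannot extend beyond $\overline e$: any extra common point would lie on the same line and would create a longer common face, contradicting maximality of the edge. I expect the main obstacle to be ruling out a stray isolated common vertex. This is handled by convexity, since $C_u\cap C_v$ is convex and already contains $\overline e$ on the line $\{L_u=L_v\}$, so it is a segment containing $\overline e$, hence equal to it. As a cross-check, the same set is the closed face $\tau_Q$ dual to $Q=[u,v]$, in the sense of \cref{lem:interpolating-monomial}.

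\emph{Positivity.} I would use the selected cell. By \cref{lem:selected-monomial-positive}, since the $v$-cell lies in $\mathcal R_e$, we have $\min_{\overline\Omega}L_v>0$. Also $L_u\ge F_P\ge0$. Hence
\[
 L_q\ge\tfrac{m-1}m\,\min_{\overline\Omega}L_v>0,
\]
using $m\ge2$. Alternatively, $L_q\ge F_P$ with equality only on $\overline e$, together with $F_P>0$ in $\Omega^\circ$ and $\overline e$ terminal-free, gives positivity on the boundary. The first argument is cleaner and is the one I would write.
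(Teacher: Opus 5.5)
Your proof is correct. The gradient computation, the lattice-length claim, and the bound $L_q\ge F_P$ via a strict convex combination are the same as in the paper. For the equality locus the paper simply invokes face duality: equality holds exactly where both monomials are active, which is the closed tropical edge dual to $[u,v]$. Your contact-set argument, which identifies this set with $C_u\cap C_v=\overline e$ using \cref{prop:essential-presentation} and convexity, is a more explicit version of the same fact. The maximality step is most cleanly stated as follows. At a relative-interior point of the segment $C_u\cap C_v$, the two cells contain opposite half-discs. Hence no third essential monomial is active there, the point is not a vertex, and the open segment is exactly $e$.

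The genuine difference lies in the positivity claim.

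\begin{itemize}
\item The paper uses \cref{lem:terminal-unmarked} to get $\overline e\Subset\Omega^\circ$. Interior positivity of $F_P$ then gives $L_q=F_P>0$ on $\overline e$, while $L_q>F_P\ge0$ off $\overline e$. Compactness yields the positive minimum, so positivity is derived from the equality locus.
\item You apply \cref{lem:selected-monomial-positive} to the selected monomial $L_v$ and combine it with $L_u\ge0$. This gives the explicit bound $L_q\ge\frac{m-1}{m}\min_{\overline\Omega}L_v\ge\frac12\min_{\overline\Omega}L_v$.
\end{itemize}

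Your route decouples positivity from the identification of the equality set and gives a quantitative lower bound. Its real input is \cref{lem:selected-face-block}: the $v$-cell is a bounded cell, which already places $\overline e$ inside a cell closure compactly contained in $\Omega^\circ$. That plays the role \cref{lem:terminal-unmarked} plays in the paper. The paper's argument, by contrast, does not use which of the two adjacent cells is selected.
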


\begin{proof}
The gradient computation is
\[
 \frac1m u+\frac{m-1}{m}v
 =v-\frac{v-u}{m}=v-n=q.
\]
Both $L_u$ and $L_v$ lie above $F_P$, so their strict convex combination does as well.  Equality occurs precisely when both are active, namely on the closed tropical edge dual to $[u,v]$.

By \cref{lem:terminal-unmarked}, the marked carrier $e$ is a compact internal edge; hence $\overline e\Subset\Omega^\circ$.  The relaxation is strictly positive in the interior.  Thus $L_q>0$ on $\overline e$, while away from $\overline e$ one has $L_q>F_P\ge0$.  Compactness gives the positive minimum.

Since $v-q=n$ is primitive, $[q,v]$ has lattice length one.
\end{proof}

The equality locus in \cref{lem:primitive-intermediate-monomial} rules out an essential cell monomial of gradient $q$.  Suppose that $M_q$ were such a monomial.  Evaluating on $\overline e$, where $L_q=F_P\le M_q$, shows that $M_q-L_q$ is a nonnegative constant.

If this constant were positive, then on the two-dimensional cell where $M_q=F_P$ one would have $L_q<F_P$, contrary to $L_q\ge F_P$.  If it were zero, then $M_q=L_q$ would have contact set $\overline e$, rather than a two-dimensional cell.  Thus any canonical term of gradient $q$ is lower-dimensional-only and is absent from the essential presentation.

Define the shifted collection
\[
 \mathscr S_e=\{L_R:R\in\mathcal R_e\}\cup\{L_q\}.
\]
At each marked point we designate the terms that must remain minimal:
\begin{itemize}
\item at $p_i$, the pair $L_v,L_q$;
\item at $p_j$ with $j\ne i$ whose two adjacent cells lie in $\mathcal R_e$, its two selected adjacent monomials;
\item at $p_j$ with $j\ne i$ whose two adjacent cells lie outside $\mathcal R_e$, its two original unshifted adjacent monomials.
\end{itemize}
The unique-crossing assertion in \cref{lem:selected-face-block} ensures that these cases exhaust the marked points.

\begin{lemma}[Uniform protection gap]
\label{lem:uniform-protection-gap}
There are constants $\alpha,\gamma>0$ such that
\[
 \alpha\le \min_{\overline\Omega}L
 \qquad(L\in\mathscr S_e),
\]
and
\[
 \gamma\le L(p_j)-F_P(p_j)
\]
whenever $L\in\mathscr S_e$ is not one of the designated shifted terms at $p_j$.
\end{lemma}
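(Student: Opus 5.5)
The approach is a finiteness argument: $\mathscr S_e$ is a finite collection of affine functions and there are only $N$ marked points. It therefore suffices to show that each relevant quantity is individually strictly positive, and then take $\alpha$ and $\gamma$ to be the minima of these finitely many positive numbers.

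\emph{The bound $\alpha$.} For $L=L_R$ with $R\in\mathcal R_e$, \cref{lem:selected-monomial-positive} gives $\min_{\overline\Omega}L_R>0$. For $L=L_q$, \cref{lem:primitive-intermediate-monomial} gives $\min_{\overline\Omega}L_q>0$. Since $\mathcal R_e$ is a finite set of bounded cells, we set $\alpha$ to be the minimum of these finitely many positive numbers.

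\emph{The bound $\gamma$.} Fix $p_j$ and a term $L\in\mathscr S_e$ that is not designated at $p_j$. The claim is that $L(p_j)>F_P(p_j)$. By \cref{prop:essential-presentation}, every essential cell monomial lies above $F_P$, and its contact set is exactly its cell closure. By strong genericity, $p_j$ lies in the relative interior of a unique edge $e_j$ and is not a vertex. Hence exactly the two monomials of the cells adjacent to $e_j$ are active at $p_j$. We then split into cases.

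\begin{enumerate}[label=\textup{(\alph*)}]
\item Suppose $L=L_R$ with $R$ not adjacent to $e_j$. Then $p_j\notin\overline R$, so $L_R(p_j)>F_P(p_j)$.
\item Suppose $L=L_R$ with $R$ adjacent to $e_j$. By \cref{lem:no-bounded-self-incidence} and the designation rules, $L_R$ is then designated at $p_j$. For $j\ne i$ with both adjacent cells in $\mathcal R_e$, both selected monomials are designated. By the unique-crossing part of \cref{lem:selected-face-block}, no $p_j$ with $j\ne i$ has exactly one adjacent cell in $\mathcal R_e$. At $p_i$, the only selected adjacent cell is the $v$-cell, and $L_v$ is designated.
\item Suppose $L=L_q$ and $j\ne i$. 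By \cref{lem:primitive-intermediate-monomial}, the contact set of $L_q$ is exactly $\overline e=\overline{e_i}$. By strong genericity (distinct marks lie on distinct edges), $p_j\notin\overline{e_i}$: it is neither on the edge nor an endpoint, since it is not a vertex. Hence $L_q(p_j)>F_P(p_j)$.
\end{enumerate}

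At $p_i$, the term $L_q$ is designated. Thus every non-designated pair $(L,p_j)$ has a strictly positive gap, and $\gamma$ is the minimum of these finitely many gaps.

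The only delicate point is case (b). There we must make sure that no selected monomial is active but undesignated at a marked point. This is exactly where the unique-crossing property of \cref{lem:selected-face-block} and the fact that marks avoid vertices are used. Once this is settled, the rest is finite minimization.
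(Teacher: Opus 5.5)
Your proof is correct and follows the paper's argument. You take $\alpha$ as the minimum of the positive constants from \cref{lem:selected-monomial-positive,lem:primitive-intermediate-monomial}, and $\gamma$ as the minimum of the finitely many gaps. Positivity of each gap comes, as in the paper, from three facts: only the two adjacent essential monomials are active at an edge-interior mark, distinct marks lie on distinct carriers, and $L_q=F_P$ exactly on $\overline e$. Your case (b) simply makes explicit, via the unique-crossing property, the check that the paper leaves to the designation rules.

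One small detail is missing. In a degenerate case such as $N=1$, where $\mathscr S_e=\{L_v,L_q\}$ and both terms are designated at $p_1$, there are no non-designated pairs at all. Your "minimum of these finitely many gaps" is then a minimum over the empty set, so you should say that any $\gamma>0$ works, as the paper does.
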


\begin{proof}
Let $\alpha$ be the minimum of the finitely many positive constants in \cref{lem:selected-monomial-positive,lem:primitive-intermediate-monomial}.  At a marked point in the interior of an edge, exactly the two adjacent essential cell monomials are active by \cref{prop:essential-presentation}.  Since no two marked points lie on the same carrier edge and $L_q=F_P$ only on $\overline e$, every non-designated shifted term has a positive gap at the marked point.

Let $\gamma$ be the minimum of these finitely many gaps.  If there are no such gaps, take any positive value for $\gamma$.
\end{proof}

The deformation preserves every marked incidence.  Every term in $\mathscr S_e$ is shifted by the same amount, while every essential monomial outside $\mathscr S_e$ remains fixed.

At $p_i$ the protected shifted pair is $L_v,L_q$.  For any other mark whose adjacent cells lie in $\mathcal R_e$, the protected pair consists of its two shifted adjacent monomials.  If instead the adjacent cells lie outside $\mathcal R_e$, the protected pair is unshifted, while every shifted term has gap at least $\gamma$.  These three cases are exhaustive because $e$ is the unique marked edge crossing the selected face block.

Choose
\[
 0<\varepsilon<\min\{\alpha,\gamma\}.
\]
Starting from the essential presentation of $F_P$ in \cref{prop:essential-presentation}, set
\[
 E_\varepsilon
 =\min\!\left(
 \{L_R-\varepsilon:R\in\mathcal R_e\}
 \cup\{L_q-\varepsilon\}
 \cup\{L_C:C\in\mathscr C_2(F_P)\setminus\mathcal R_e\}
 \right).
\]
The coefficients of all selected monomials, including the intermediate monomial, are lowered by the same amount, while every unselected essential coefficient is fixed.  A lower-dimensional-only canonical term of gradient $q$ is absent from the essential presentation, so the displayed minimum inserts its lowered coefficient directly.  The collection is finite and all its gradients are integral.  Thus $E_\varepsilon$ is a tropical polynomial once nonnegativity and the zero-boundary condition have been verified.

\begin{lemma}[Admissibility of the coordinated deformation]
\label{lem:coordinated-deformation-admissible}
The function $E_\varepsilon$ is nonnegative on $\Omega$, vanishes on $\partial\Omega$, and satisfies
\[
 P\subset V(E_\varepsilon).
\]
\end{lemma}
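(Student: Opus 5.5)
The argument has three parts, carried out in order: nonnegativity, the zero-boundary condition, and preservation of every marked corner. All three rest on the finite presentation $E_\varepsilon$, a minimum of finitely many affine functions with integral gradients, and on the constants $\alpha,\gamma$ of \cref{lem:uniform-protection-gap}.

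\emph{Nonnegativity.} Every unselected term $L_C$ with $C\notin\mathcal R_e$ is an essential monomial, so $L_C\ge F_P\ge0$ by \cref{prop:essential-presentation}. Every shifted term $L-\varepsilon$ with $L\in\mathscr S_e$ satisfies $L-\varepsilon\ge\alpha-\varepsilon>0$ on $\overline\Omega$, since $\varepsilon<\alpha$. Hence $E_\varepsilon\ge0$.

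\emph{Zero boundary values.} The shifted terms are strictly positive on $\overline\Omega$, so none of them can be active at a point of $\partial\Omega$ where some unshifted term vanishes. It therefore suffices to show that at every $x\in\partial\Omega$ some unselected essential monomial vanishes. Since $F_P(x)=0$ and the essential presentation gives $F_P=\min_C L_C$, some $L_C$ vanishes at $x$. This $C$ cannot lie in $\mathcal R_e$, because those monomials have $\min L_R>0$ by \cref{lem:selected-monomial-positive}. So $E_\varepsilon(x)\le L_C(x)=0$, and together with nonnegativity this gives $E_\varepsilon(x)=0$.

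\emph{Marked corners.} At each $p_j$ I show that the two designated terms are tied and strictly below all other terms, so the minimum is attained at least twice and $p_j\in V(E_\varepsilon)$. By \cref{lem:selected-face-block}, every mark falls into exactly one of three cases, and the proof splits accordingly.

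In the first case, $p_i$ is the mark on $e$ itself. There $L_v(p_i)=L_q(p_i)=F_P(p_i)$, because $p_i\in\overline e$, which is the equality locus of \cref{lem:primitive-intermediate-monomial}. Both terms are lowered by $\varepsilon$, so both take the value $F_P(p_i)-\varepsilon$. Any other shifted term is at least $F_P(p_i)+\gamma-\varepsilon$. The unshifted $L_u$, and every other unshifted term, is at least $F_P(p_i)$, which is strictly larger than $F_P(p_i)-\varepsilon$.

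In the second case, both adjacent cells of $p_j$ lie in $\mathcal R_e$. The same reasoning applies: the two adjacent monomials are both shifted and both equal $F_P(p_j)-\varepsilon$, while every non-designated shifted term has gap at least $\gamma>\varepsilon$ and every unshifted term is at least $F_P(p_j)$.

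In the third case, both adjacent cells lie outside $\mathcal R_e$. The two adjacent monomials are unshifted and equal $F_P(p_j)$. Every other unshifted essential monomial is strictly larger at $p_j$, since exactly two essential monomials are active at an edge-interior point by \cref{prop:essential-presentation}. Every shifted term is at least $F_P(p_j)+\gamma-\varepsilon>F_P(p_j)$.

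The main obstacle is making sure the case split is exhaustive and the gaps are correctly attributed. In the first case, $L_u$ is the one active monomial left unshifted, so it must be compared against the lowered value rather than covered by the $\gamma$-gap. It is also necessary that $L_q$ is not active at any other mark. This holds because its contact set is exactly $\overline e$, and distinct marks lie on distinct edges by strong genericity.
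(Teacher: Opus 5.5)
Your proposal is correct and follows the paper's proof essentially step for step. It uses the same three ingredients: positivity of the shifted terms from $\varepsilon<\alpha$, vanishing on $\partial\Omega$ via an unshifted active monomial of a boundary cell, and the same three-case comparison of each designated pair against the $\gamma$-gap and the unshifted terms. The only point left implicit is that each tied pair consists of distinct affine functions (for instance $v\neq q$ because $v-q=n$), which is immediate.
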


\begin{proof}
Every shifted or adjoined monomial remains positive on $\overline\Omega$ because $\varepsilon<\alpha$, and every unshifted essential monomial is nonnegative.  For each $x\in\partial\Omega$, the essential presentation has an active monomial $L_C(x)=F_P(x)=0$.  Its cell meets the boundary, so it is not in the bounded block $\mathcal R_e$ and its coefficient is unshifted.  Therefore $E_\varepsilon(x)=0$, while positivity of all terms gives $E_\varepsilon\ge0$ on $\Omega$.

At the distinguished point $p_i$,
\[
 L_u(p_i)=L_v(p_i)=L_q(p_i)=F_P(p_i).
\]
The distinct-gradient pair $L_v-\varepsilon,L_q-\varepsilon$ is tied at $F_P(p_i)-\varepsilon$.  Any other shifted term has the same coefficient change and therefore retains its original gap above this pair; every unshifted term has value at least $F_P(p_i)$ and cannot undercut it.

The same comparison applies at a mark whose two adjacent cells are selected: its protected pair is shifted together and remains tied at $F_P(p_j)-\varepsilon$.  For every non-designated shifted term $L$, the difference from the shifted protected value is
\[
 (L(p_j)-\varepsilon)-(F_P(p_j)-\varepsilon)
 =L(p_j)-F_P(p_j)\ge\gamma,
\]
whereas every unshifted term has value at least $F_P(p_j)$.

Finally, if both adjacent cells at $p_j$ lie outside $\mathcal R_e$, the original protected pair remains tied at $F_P(p_j)$.  Every shifted term satisfies
\[
 L(p_j)-\varepsilon-F_P(p_j)
 \ge\gamma-\varepsilon>0.
\]
Thus each marked point is still supported by at least two distinct minimal gradients, and $P\subset V(E_\varepsilon)$.
\end{proof}

\begin{theorem}[Primitive marked edges]
\label{thm:primitive-marked}
Every marked carrier edge has weight one.
\end{theorem}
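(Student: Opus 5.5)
The argument is by contradiction, using the coordinated deformation $E_\varepsilon$ that has just been built. Fix a marked carrier edge $e=e_i$ and suppose $w(e)=m\ge2$. With the notation above ($u,v$ the adjacent gradients, the $v$-cell selected, $q=v-n$), form the block $\mathcal R_e$ from \cref{lem:selected-face-block}, the intermediate monomial $L_q$ from \cref{lem:primitive-intermediate-monomial}, and, for $0<\varepsilon<\min\{\alpha,\gamma\}$ as in \cref{lem:uniform-protection-gap}, the function $E_\varepsilon$. By \cref{lem:coordinated-deformation-admissible}, $E_\varepsilon$ is a nonnegative zero-boundary tropical polynomial whose corner locus contains $P$. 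It therefore lies in $\mathcal V(\Omega,P,0_\Omega)$, and \cref{prop:tropical-relaxation} gives $F_P\le E_\varepsilon$ on $\overline\Omega$.

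The contradiction comes from exhibiting a point where $E_\varepsilon<F_P$. Pick any $R\in\mathcal R_e$, which is possible since $\mathcal R_e$ is nonempty, and any $x\in\operatorname{relint}(R)$. There $F_P(x)=L_R(x)$ by \cref{prop:essential-presentation}, and $L_R-\varepsilon$ is among the terms defining $E_\varepsilon$. Hence
\[
 E_\varepsilon(x)\le L_R(x)-\varepsilon=F_P(x)-\varepsilon<F_P(x),
\]
which contradicts pointwise minimality. Alternatively one can evaluate at $p_i$, where $E_\varepsilon(p_i)\le L_q(p_i)-\varepsilon=F_P(p_i)-\varepsilon$. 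Either way $m\ge2$ is impossible, so $w(e)=1$.

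All the real work sits in the lemmas already proved, so what remains is checking consistency. The delicate point is that the designated protected pair at $p_i$ is $L_v,L_q$, whose gradients differ by the primitive vector $n\ne0$. So $p_i$ stays a genuine corner of $E_\varepsilon$ even though $L_u$ is unshifted and now lies strictly above the lowered pair at $p_i$. This is exactly where $m\ge2$ is used: for $m=1$ we would have $q=u$, and $L_q$ would duplicate the unshifted $L_u$, so the construction would not produce an admissible lowering at $p_i$. I would also note that the orientation choice, with the $v$-cell in $\mathcal R_e$, is legitimate by \cref{lem:selected-face-block}, since exactly one side of $e$ lies in the selected block. I expect no further obstacle, because admissibility and the uniform gaps were secured beforehand.
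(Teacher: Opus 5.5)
Your proof is correct and is essentially the paper's own argument. You get admissibility of $E_\varepsilon$ from \cref{lem:coordinated-deformation-admissible}, then $E_\varepsilon\le L_R-\varepsilon=F_P-\varepsilon$ on the interior of some $R\in\mathcal R_e$ contradicts pointwise minimality. The paper additionally notes that $E_\varepsilon\le F_P$ everywhere, but the argument does not need it.

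One correction to your side comment on where $m\ge2$ is used. For $m=1$ the corner at $p_i$ would actually survive, since the lowered pair $L_v-\varepsilon,L_u-\varepsilon$ still ties there. What breaks is \cref{lem:primitive-intermediate-monomial}: $L_q=L_u$ would touch $F_P$ on the whole unselected $u$-cell. Lowering it then violates the zero-boundary condition or destroys the corner at another mark on that cell, so the constants $\alpha,\gamma$ of \cref{lem:uniform-protection-gap} are not available.
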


\begin{proof}
If a marked carrier edge had weight $m\ge2$, the construction of $E_\varepsilon$ would give an admissible tropical polynomial.  Since $\mathcal R_e$ is nonempty, choose $R\in\mathcal R_e$.  On the interior of $R$, the monomial $L_R$ is uniquely active for $F_P$, while
\[
 E_\varepsilon\le L_R-\varepsilon=F_P-\varepsilon.
\]
Moreover $E_\varepsilon\le F_P$ everywhere because every original term is either retained or lowered.  This contradicts the pointwise minimality of $F_P=G_P0_\Omega$.
\end{proof}

\begin{corollary}[Complete interior mildness]
\label{cor:interior-mildness}
For a strongly generic relaxation $F_P$, every edge $e$ with $\overline e\Subset\Omega^\circ$ has weight one, and every interior dual Newton polygon contains no interior lattice point.
\end{corollary}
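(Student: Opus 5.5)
The corollary combines three earlier results, and the plan is to split the edges and vertices according to whether they carry a mark.

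\emph{Edges.} Let $e$ be an edge of $\Gamma_P$ with $\overline e\Subset\Omega^\circ$. By strong genericity (\cref{thm:strong-genericity}), every marked point lies in the relative interior of an edge, and no marked point is a tropical vertex. So either $\overline e\cap P=\varnothing$, or $e$ is the carrier $e_i$ of some mark $p_i$.

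In the first case, \cref{thm:primitive-unmarked} gives $w(e)=1$ directly. That theorem requires only $\overline e\Subset\Omega^\circ$ and $\overline e\cap P=\varnothing$.

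In the second case, $e$ is a marked carrier edge. Such an edge is automatically compact internal by \cref{lem:terminal-unmarked}, which is the hypothesis used in the construction of $L_q$. \Cref{thm:primitive-marked} then gives $w(e)=1$.

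One small point needs checking: a mark lying at an endpoint of $\overline e$ without lying in $e$ itself. This cannot happen, because endpoints are vertices or boundary points and marks are neither.

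\emph{Newton polygons.} By an interior dual Newton polygon I mean $Q_v=\partial^+F_P(v)$ for an interior tropical vertex $v$. \Cref{thm:empty-newton} already gives $\Int(Q_v)\cap\Z^2=\varnothing$. The only thing to verify is that its hypothesis holds, namely that $v\notin P$, and this is again strong genericity.

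If ``contains no interior lattice point'' is read more strongly, as relative-interior lattice points of every face $Q$ dual to a compact interior face, then the edge case also covers it. For a one-dimensional face, a non-vertex lattice point of the dual segment exists exactly when the weight is at least $2$, which the edge statement has just ruled out.

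\emph{Main obstacle.} This is essentially bookkeeping, so the main obstacle is only making sure the dichotomy of marked versus unmarked edges is exhaustive. That exhaustiveness rests on strong genericity together with \cref{lem:terminal-unmarked}.
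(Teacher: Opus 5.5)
Your proposal is correct and matches the paper's proof: strong genericity keeps marks off tropical vertices, so a compact internal edge is either a marked carrier, handled by \cref{thm:primitive-marked}, or has closure disjoint from $P$, handled by \cref{thm:primitive-unmarked}, and the Newton-polygon claim is \cref{thm:empty-newton}. Your extra checks are harmless but redundant, since $\overline e\Subset\Omega^\circ$ is already a hypothesis here and \cref{thm:primitive-marked} applies to every marked carrier; these checks are that marks cannot sit at edge endpoints and the appeal to \cref{lem:terminal-unmarked}.
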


\begin{proof}
Strong genericity places every marked point in the relative interior of its carrier edge, so no mark is a tropical vertex.  Hence a compact internal edge either is a marked carrier, in which case \cref{thm:primitive-marked} applies, or its closure is disjoint from $P$, in which case \cref{thm:primitive-unmarked} applies.  The Newton-polygon assertion is \cref{thm:empty-newton}.
\end{proof}

\subsection{Terminal edges}

Throughout this subsection, $\Omega$ is the fixed bounded rational convex polygon and $F$ is a nonzero $\Omega$-tropical polynomial.  Thus $F\ge0$ on $\Omega$, $F|_{\partial\Omega}=0$, and concavity gives $F>0$ on $\Omega^\circ$.

\begin{lemma}[Boundary localization at polygon vertices]
\label{lem:boundary-vertex-localization}
The closure of any edge of $V(F)$ cannot meet the relative interior of a side of $\Omega$.  Hence every terminal edge meets $\partial\Omega$ at a polygon vertex.
\end{lemma}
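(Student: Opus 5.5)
The plan is to use the side normal form, \cref{lem:side-normal-form}. Fix a side $S_r$ and a point $x\in\operatorname{relint}(S_r)$. That lemma gives a neighborhood $W_x$ of $x$ with $F=m_F(S_r)\lambda_r$ on $W_x\cap\overline\Omega$. On this set $F$ is given by a single affine function, so no point of $W_x\cap\Omega^\circ$ lies in the corner locus $V(F)$.

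Next, suppose the closure of some edge $e\subset V(F)$ contained $x$. Then points of $e$ arbitrarily close to $x$ would lie in $W_x\cap\Omega^\circ$, since $e\subset\Omega^\circ$. Each such point would be a corner point of $F$ inside a region where $F$ is affine. This is a contradiction, so $\overline e\cap\operatorname{relint}(S_r)=\varnothing$ for every side. Since $\partial\Omega$ is the union of the relative interiors of the sides and the finitely many polygon vertices, any point of $\overline e\cap\partial\Omega$ must be a polygon vertex.

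For the second sentence, let $e$ be a terminal edge, so that $\overline e$ meets $\partial\Omega$ by definition. By the first part, the meeting point is a polygon vertex. By \cref{lem:terminal-interior-endpoint} the edge has exactly one boundary endpoint, and that endpoint is therefore a polygon vertex.

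The only delicate point is that the neighborhood in \cref{lem:side-normal-form} is a full neighborhood in $\R^2$ intersected with $\overline\Omega$, not merely a one-sided germ along the side. That is what rules out an edge approaching $x$ tangentially or in a spiral. Since the lemma is stated exactly in this form, I expect no real obstacle: the argument is a direct corollary of \cref{lem:side-normal-form}, using only that $V(F)$ is disjoint from any open set on which $F$ is affine.
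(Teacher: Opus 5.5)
Your proof is correct and is essentially the paper's argument: \cref{lem:side-normal-form} makes $F$ affine on a full neighborhood of each point of $\operatorname{relint}(S_r)$, so $V(F)$ avoids that neighborhood and no edge closure can reach the point. The appeal to \cref{lem:terminal-interior-endpoint} in your second step is harmless but unnecessary, because the first part already forces every point of $\overline e\cap\partial\Omega$ to be a polygon vertex.
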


\begin{proof}
For every $x\in\operatorname{relint}(S_r)$, \cref{lem:side-normal-form} gives a neighborhood $W_x$ on which
\[
 F=m_F(S_r)\lambda_r.
\]
Thus $F$ is affine on $W_x\cap\Omega^\circ$, so the corner locus is disjoint from this neighborhood.  No tropical edge can therefore have $x$ in its closure.  Since the boundary of the polygon is the union of its side interiors and vertices, every terminal edge must meet $\partial\Omega$ at a polygon vertex.
\end{proof}

For a polygon vertex, write
\[
 v_r=S_r\cap S_{r+1},
 \qquad
 k_r=m_F(S_r),
 \qquad
 k_{r+1}=m_F(S_{r+1}).
\]
Let $D_{\term}^{(r)}(F)$ denote the sum of the weights of the terminal edges meeting $v_r$.

\begin{lemma}[Corner Newton chain]
\label{lem:corner-newton-chain}
Let $C_0,\dots,C_q$ be the two-dimensional linearity cells whose closures contain $v_r$, ordered from the $S_{r+1}$ side to the $S_r$ side, and let $m_j$ be their essential gradients.  Then there are unique real numbers $\alpha_j,\beta_j$ such that
\[
 m_j=\alpha_jn_r+\beta_jn_{r+1}.
\]
They satisfy
\[
 0=\alpha_0<\alpha_1<\cdots<\alpha_q=k_r,
 \qquad
 k_{r+1}=\beta_0>\beta_1>\cdots>\beta_q=0.
\]
Each Newton segment $[m_{j-1},m_j]$ is dual to exactly one terminal edge meeting $v_r$, and its lattice length is the weight of that edge.  Consequently,
\[
 \boxed{
 D_{\term}^{(r)}(F)
 \le
 \|n_r\|k_r+\|n_{r+1}\|k_{r+1}.
 }
\]
\end{lemma}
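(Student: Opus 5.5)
The plan is to read off the local structure of $F$ near the corner $v_r$ from \cref{lem:side-normal-form}, \cref{prop:essential-presentation} and the boundary localization of \cref{lem:boundary-vertex-localization}, and then to do elementary lattice geometry in the basis $n_r,n_{r+1}$.

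\emph{Step 1: local picture at $v_r$.} Since $\Omega$ is a genuine polygon, $n_r$ and $n_{r+1}$ are linearly independent, so the coordinates $(\alpha_j,\beta_j)$ exist and are unique. The edges of $V(F)$ are finitely many and none of their closures meets a side interior. Hence a small disc around $v_r$ meets only the terminal edges through $v_r$, and these are segments emanating from $v_r$. Their directions are ordered angularly inside the corner cone. The complementary sectors are the cells $C_0,\dots,C_q$, ordered from $S_{r+1}$ to $S_r$, and consecutive cells are separated by exactly one terminal edge. The extreme cell $C_0$ contains points of $\operatorname{relint}(S_{r+1})$ near $v_r$, so \cref{lem:side-normal-form} gives $L_{C_0}=k_{r+1}\lambda_{r+1}$. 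Thus $m_0=k_{r+1}n_{r+1}$, which gives $\alpha_0=0$ and $\beta_0=k_{r+1}$. Symmetrically $m_q=k_rn_r$, so $\alpha_q=k_r$ and $\beta_q=0$.

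\emph{Step 2: every $L_{C_j}$ vanishes at $v_r$.} Each $C_j$ contains $v_r$ and $F(v_r)=0$, so $L_{C_j}(v_r)=0$. Hence $L_{C_j}(x)=\langle m_j,x-v_r\rangle=\alpha_j\lambda_r(x)+\beta_j\lambda_{r+1}(x)$, since $\lambda_r,\lambda_{r+1}$ vanish at $v_r$. The corner cone is $\{\lambda_r,\lambda_{r+1}\ge0\}$, and $L_{C_j}\ge F\ge0$ on $\Omega$, so nonnegativity near $v_r$ forces $\alpha_j,\beta_j\ge0$.

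\emph{Step 3: strict monotonicity.} Write $(s,t)=(\lambda_r,\lambda_{r+1})$ near $v_r$; these are linear coordinates on the cone. There $F=\min_j(\alpha_js+\beta_jt)$, a min of linear forms in $(s,t)$ on the quadrant. The set where the $j$-th form is minimal is the sector $C_j$, and these sectors are ordered by the slope $t/s$. Moving from the $S_{r+1}$ side ($s=0$) toward the $S_r$ side ($t=0$), the regions of minimality of a lower envelope of linear forms on a cone are ordered so that the coefficient of $s$ strictly increases and the coefficient of $t$ strictly decreases. Strictness holds because consecutive gradients differ, as the cells are distinct maximal linearity cells. I expect the sign bookkeeping here to be the main obstacle. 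I would check it by comparing adjacent forms along the ray where they tie: on the side of $C_{j-1}$ one needs $\alpha_{j-1}s+\beta_{j-1}t<\alpha_js+\beta_jt$. This must hold for rays closer to $s=0$, which forces $\beta_{j-1}>\beta_j$ and $\alpha_{j-1}<\alpha_j$, given that both forms vanish only at the origin.

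\emph{Step 4: weights and the bound.} The terminal edge between $C_{j-1}$ and $C_j$ is dual to $[m_{j-1},m_j]$, and its weight is the lattice length of that segment by definition. Summing the weights, $D_{\term}^{(r)}(F)$ is at most the sum of the Euclidean lengths $\|m_j-m_{j-1}\|$, because lattice length is at most Euclidean length for integer vectors. By the triangle inequality each term is at most $(\alpha_j-\alpha_{j-1})\|n_r\|+(\beta_{j-1}-\beta_j)\|n_{r+1}\|$. Telescoping with the monotonicity of Step 3 gives at most $k_r\|n_r\|+k_{r+1}\|n_{r+1}\|$.
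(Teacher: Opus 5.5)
Your argument is correct in outline and follows the paper's route: localize at the corner, write $L_{C_j}=\alpha_j\lambda_r+\beta_j\lambda_{r+1}$, obtain monotonicity by comparing consecutive forms on their tie ray, and telescope $w_j\le\|m_j-m_{j-1}\|$. Reading $m_0=k_{r+1}n_{r+1}$ and $m_q=k_rn_r$ directly from \cref{lem:side-normal-form} on the two extreme sectors is a legitimate shortcut. It replaces the paper's domination argument, which shows $\alpha\le k_r$ and $\beta\le k_{r+1}$ with equality only for the boundary terms.

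The sign check in Step~3, however, is wrong as written. It contains two slips that cancel, so your stated conclusion is right but the check does not prove it.
\begin{itemize}
\item With $(s,t)=(\lambda_r,\lambda_{r+1})$, the side $S_{r+1}$ is $\{t=0\}$ and $S_r$ is $\{s=0\}$, not the reverse.
\item Your test, that form $j-1$ is smaller on rays closer to $s=0$, gives at $(s,t)=(0,1)$ the inequality $\beta_{j-1}<\beta_j$. That is the opposite of what you conclude, and it contradicts your own endpoints $\beta_0=k_{r+1}>\beta_q=0$.
\end{itemize}

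The correct bookkeeping is as follows. The tie ray of $C_{j-1}$ and $C_j$ is $\mathbb R_{>0}(s_0,t_0)$ with $s_0,t_0>0$, because terminal edges lie in $\Omega^\circ$ and so do not run along a side. The cell $C_{j-1}$ lies angularly between the direction $(1,0)$, which is the $S_{r+1}$ side, and this ray. The linear form $(\alpha_j-\alpha_{j-1})s+(\beta_j-\beta_{j-1})t=L_{C_j}-L_{C_{j-1}}$ is nonzero. It is positive on the interior of $C_{j-1}$, since the contact set of $L_{C_j}$ is exactly $C_j$. Hence it is positive on the whole open half-plane containing $(1,0)$ and negative on the half-plane containing $(0,1)$. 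Evaluating at these two points gives $\alpha_{j-1}<\alpha_j$ and $\beta_{j-1}>\beta_j$. Your remark that both forms vanish only at the origin is false for the extreme forms $k_{r+1}t$ and $k_rs$, and it is not needed.

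The paper packages the same computation as the concave envelope $h(\tau)=\min_j(\alpha_j+\beta_j\tau)$ with $\tau=\lambda_{r+1}/\lambda_r$, running from $\tau=0$ on $S_{r+1}$ to $\tau=\infty$ on $S_r$. The slopes $\beta_j$ of $h$ strictly decrease, and the tie identity gives $\alpha_j-\alpha_{j-1}=(\beta_{j-1}-\beta_j)\tau_j>0$. With this correction, your Step~4 goes through unchanged.
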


\begin{proof}
Use the affine coordinates
\[
 a=\lambda_r,
 \qquad
 b=\lambda_{r+1}
\]
near $v_r$.

Since $F$ is nonzero, nonnegative, and concave with zero boundary values, it is strictly positive on $\Omega^\circ$.  By \cref{lem:side-normal-form}, this forces $k_r,k_{r+1}>0$.  Because $\Omega$ is polygonal, after shrinking about $v_r$ its local tangent cone is exactly $a,b\ge0$.  The normals $n_r,n_{r+1}$ are linearly independent, so every affine function vanishing at $v_r$ has a unique expression $\alpha a+\beta b$.

Every essential monomial $L_C$ whose cell closure contains $v_r$ satisfies $L_C\ge F\ge0$ on $\Omega$ and $L_C(v_r)=0$; hence its coefficients satisfy $\alpha,\beta\ge0$.

By \cref{lem:side-normal-form}, the boundary cells contribute the two monomials
\[
 k_{r+1}b
 \quad\text{and}\quad
 k_ra,
\]
with gradients $k_{r+1}n_{r+1}$ and $k_rn_r$.  If an essential monomial $\alpha a+\beta b$ had $\alpha>k_r$, it would be strictly above $k_ra$ at every interior point with $a>0$, and hence could not be active on a two-dimensional cell.  Thus $\alpha\le k_r$.  Similarly $\beta\le k_{r+1}$.

Equality with one of these upper bounds, together with a positive value of the other coefficient, would again make the monomial strictly dominated by the corresponding boundary monomial.  Hence the two boundary terms are the endpoints of the chain and every intermediate term has strict inequalities in both coordinates.

All essential monomials not vanishing at $v_r$ have a positive value there and are inactive in a sufficiently small corner neighborhood.  In that neighborhood the tropical function is therefore the homogeneous lower envelope
\[
 F(a,b)=\min_j(\alpha_ja+\beta_jb).
\]
For $a>0$, put $t=b/a$.  The normalized lower envelope is
\[
 h(t)=\min_j(\alpha_j+\beta_jt),
 \qquad t\in[0,\infty).
\]
It is concave and piecewise affine.  As $t$ increases, the slopes of its consecutively active affine pieces are strictly decreasing, so the corresponding $\beta_j$ are strictly decreasing.  At a switch value $t_j>0$ between consecutive active terms,
\[
 \alpha_{j-1}+\beta_{j-1}t_j
 =\alpha_j+\beta_jt_j,
\]
and therefore
\[
 \alpha_j-\alpha_{j-1}
 =(\beta_{j-1}-\beta_j)t_j>0.
\]
This proves the asserted monotonicity and endpoint values.

The ray $b=t_ja$ is exactly the locus on which the two consecutive essential monomials are tied and minimal.  It is therefore the boundary-directed portion of the tropical edge dual to the Newton segment $[m_{j-1},m_j]$.  Conversely, every ray in the local corner fan occurs at one such switch.  Shrinking the corner neighborhood if necessary, there are no interior tropical vertices there, so these rays correspond precisely to the terminal edges meeting $v_r$.

By the definition of tropical weight, if
\[
 m_j-m_{j-1}=w_j\nu_j
\]
with $\nu_j\in\Z^2$ primitive, then $w_j$ is the weight of the corresponding terminal edge.  Since $\|\nu_j\|\ge1$,
\[
 w_j\le\|m_j-m_{j-1}\|.
\]

Writing
\[
 m_j-m_{j-1}
 =(\alpha_j-\alpha_{j-1})n_r
 +(\beta_j-\beta_{j-1})n_{r+1}
\]
and using the monotonicity gives
\[
 \begin{aligned}
 \sum_{j=1}^q w_j
 &\le
 \sum_{j=1}^q\|m_j-m_{j-1}\|\\
 &\le
 \|n_r\|\sum_{j=1}^q(\alpha_j-\alpha_{j-1})
 +\|n_{r+1}\|\sum_{j=1}^q(\beta_{j-1}-\beta_j)\\
 &=\|n_r\|k_r+\|n_{r+1}\|k_{r+1}.
 \end{aligned}
\]
The left side is $D_{\term}^{(r)}(F)$.
\end{proof}

Set
\[
 D_{\term}(F)=\sum_rD_{\term}^{(r)}(F).
\]
On summing \cref{lem:corner-newton-chain} over all polygon vertices, each side quasi-degree occurs at its two endpoints.  Therefore
\[
 \boxed{
 D_{\term}(F)
 \le 2\max_r\|n_r\|D_\partial(F).
 }
\]

\begin{lemma}[Terminal-edge truncation]
\label{lem:terminal-germ-partition}
Let $P\subset\Omega^\circ$ be a nonempty strongly generic configuration and set $F_P=G_P0_\Omega$.  There is a sufficiently small boundary collar such that every regular rational convex inward polygon
\[
 \Omega'\Subset\Omega^\circ
\]
whose boundary lies in that collar and which contains all marked points and interior tropical vertices has the following properties:
\begin{enumerate}[label=\textup{(\roman*)}]
\item $\partial\Omega'$ crosses every terminal edge exactly once and transversely;
\item $\partial\Omega'$ crosses no compact internal edge;
\item the crossing points are canonically partitioned by the polygon vertices met by the corresponding terminal edges.
\end{enumerate}
Consequently, the number
\[
 B_{\term}(F_P)=\#\{\text{terminal edges of }\Gamma_P\}
\]
and the weighted terminal multiplicity
\[
 D_{\term}(F_P)=\sum_{e\ \mathrm{terminal}}w(e)
\]
are independent of the chosen sufficiently small regular inward truncation.
\end{lemma}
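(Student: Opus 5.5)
The plan is to use the corner-fan structure from \cref{lem:corner-newton-chain} and \cref{lem:boundary-vertex-localization} to locate all terminal behaviour near the polygon vertices, and then choose the collar thin enough to see only that structure. The finiteness of $\Gamma_P$ underlies everything. Since $F_P$ has a finite small canonical form (\cref{thm:symplectic-area-comparison}), $\Gamma_P$ has finitely many vertices and edges. Let $\Sigma\Subset\Omega^\circ$ be the compact union of all interior tropical vertices, all compact internal edges and $P$. Compact internal edges have closures in $\Omega^\circ$ by definition, and by \cref{lem:terminal-unmarked} no mark lies on a terminal edge. Put $\delta_0=\dist(\Sigma,\partial\Omega)>0$.

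Next I would describe the terminal edges near the boundary. By \cref{lem:boundary-vertex-localization}, each terminal edge meets $\partial\Omega$ only at a polygon vertex $v_r$. By \cref{lem:terminal-interior-endpoint}, its other endpoint is an interior vertex, so it lies in $\Sigma$. In the proof of \cref{lem:corner-newton-chain} there is a corner neighbourhood $W_r$ of $v_r$ containing no interior tropical vertex, on which $F_P$ is the homogeneous envelope $\min_j(\alpha_ja+\beta_jb)$. There $\Gamma_P\cap W_r$ is the finite union of rays $b=t_ja$ with $0<t_j<\infty$, one per terminal edge at $v_r$. Away from $\bigcup_rW_r$, \cref{lem:side-normal-form} and compactness of the side segments outside the $W_r$ give a uniform collar on which $F_P=k_r\lambda_r$, so $\Gamma_P$ misses it. I then take the collar $\mathcal K=\{x:\dist(x,\partial\Omega)<\delta\}$ with $\delta<\delta_0$ and $\delta$ so small that $\mathcal K\cap\Gamma_P\subset\bigcup_r W_r$. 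The collar then contains only the open initial segments of the terminal rays.

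For (i), take $\Omega'$ regular convex with $\partial\Omega'\subset\mathcal K$ and $\Sigma\subset\Omega'$. A terminal edge is a straight segment from a point of $\Sigma\subset\Int\Omega'$ to $v_r\notin\overline{\Omega'}$, so by convexity of $\Omega'$ it crosses $\partial\Omega'$ exactly once. Transversality would be part of the regularity assumption, meaning the boundary avoids vertices and meets edges only at $C^1$ points transversally; alternatively it is automatic by convexity once no side of $\Omega'$ contains a ray. For (ii), compact internal edges lie in $\Sigma\subset\Int\Omega'$ and so never meet $\partial\Omega'$. For (iii), each crossing lies on a unique terminal edge, which ends at a unique polygon vertex $v_r$; this vertex is the label. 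The partition is canonical because it is read from $\Gamma_P$ and does not depend on $\Omega'$.

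The consequence then follows. The crossing points are in bijection with the terminal edges, and each carries the weight of its edge. Hence $B_{\term}$ and $D_{\term}=\sum_rD^{(r)}_{\term}$ are intrinsic to $\Gamma_P$ and do not depend on $\Omega'$. The main obstacle is the uniform claim that near $\partial\Omega$ the curve consists only of the corner rays. Corner neighbourhoods come from \cref{lem:corner-newton-chain}, but the side-interior neighbourhoods $W_x$ from \cref{lem:side-normal-form} are pointwise. To make them uniform, I would use the finitely many essential monomials not vanishing on a closed subsegment of $S_r$ outside the $W_r$: each is bounded below by a positive constant there, which yields a uniform strip.
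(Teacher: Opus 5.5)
Your proposal is correct and follows essentially the same route as the paper. Both arguments use finiteness of $\Gamma_P$, the corner fans from the proof of \cref{lem:corner-newton-chain}, a collar avoiding the compact set of marks, vertices, and compact internal edges, and convexity of $\Omega'$ for the single crossing; your uniform-strip argument (essential monomials positive on side segments away from the corners) justifies the step the paper only asserts, that a thin enough collar meets $\Gamma_P$ only inside the corner neighbourhoods.
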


\begin{proof}
Because $P\ne\varnothing$ and $P\subset V(F_P)$, the relaxation is nonzero.  By \cref{prop:tropical-relaxation}, it is nonnegative, vanishes on $\partial\Omega$, and has a finite canonical form.  The localization and corner-chain lemmas and \cref{lem:terminal-interior-endpoint} therefore apply; in particular, the tropical graph is finite.

By \cref{lem:boundary-vertex-localization}, every terminal edge meets the boundary at a polygon vertex; by \cref{lem:terminal-interior-endpoint}, its other endpoint is a unique interior tropical vertex.  For each polygon vertex, choose a small corner neighborhood containing no interior tropical vertex.  The local description in the proof of \cref{lem:corner-newton-chain} shows that the graph in this neighborhood consists of finitely many straight boundary-directed portions of the terminal edges meeting that vertex.

The compact internal edges, marked points, and interior tropical vertices form a compact subset of $\Omega^\circ$.  Choose a boundary collar disjoint from this set and thin enough that every portion of $\Gamma_P$ in the collar lies in one of the selected corner neighborhoods.

Let the inward polygon be any one satisfying the lemma.  Its boundary lies in the collar and therefore crosses no compact internal edge.  The interior endpoint of every terminal edge lies inside the truncation, while its polygon endpoint lies outside because $\Omega'\Subset\Omega^\circ$.  Convexity gives exactly one boundary intersection.

Regularity makes this intersection transverse and keeps the boundary away from all marked points and tropical vertices.  Such truncations are obtained from generic rational inward offsets.

The terminal edge itself identifies both its crossing point and its polygon endpoint, giving the asserted partition.  Every sufficiently small regular truncation therefore produces the same finite set of terminal edges, counted once each, and hence the same unweighted and weighted totals.
\end{proof}

\begin{proposition}[Boundary localization of excess]
\label{prop:boundary-excess}
Let $P\subset K\Subset\Omega^\circ$ be a strongly generic configuration, put $N=|P|\ge1$, and set $F_P=G_P0_\Omega$.  Then
\[
 \Xi_P(\Omega^\circ)
 =\frac12\sum_{e\ \mathrm{terminal}}(w(e)-1)
 =\frac12\bigl(D_{\term}(F_P)-B_{\term}(F_P)\bigr).
\]
Consequently,
\[
 \|\Xi_P\|_{\TV}
 \le C_\Omega D_\partial(F_P)
 \le C_{\Omega,K}\sqrt N.
\]
\end{proposition}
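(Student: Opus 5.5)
The plan is to evaluate $\Xi_P(\Omega^\circ)=\sum_v\sigma_{F_P}(v)$ vertex by vertex using \cref{prop:pick-excess}. This gives
\[
 \sigma_{F_P}(v)=I(Q_v)+\frac12\sum_{\eta\in\mathscr E_\Gamma(v)}(w(\eta)-1).
\]
The first term vanishes at every interior tropical vertex by \cref{thm:empty-newton}. So $\Xi_P(\Omega^\circ)$ equals one half of the sum, over all interior vertices $v$ and all germs $\eta$ at $v$, of $w(\eta)-1$.

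Next I regroup this germ sum by the global edges of $\Gamma_P$. A compact internal edge contributes two germs, one at each endpoint; both are interior vertices, and the edge has weight one by \cref{cor:interior-mildness}. So these edges contribute nothing. A terminal edge has, by \cref{lem:terminal-interior-endpoint}, exactly one interior tropical endpoint and one endpoint at a polygon vertex, by \cref{lem:boundary-vertex-localization}. It therefore contributes exactly one germ, with value $w(e)-1$. Marked points are suppressed degree-two points, not tropical vertices, so they contribute nothing. Summing over terminal edges gives
\[
 \frac12\sum_{e\ \mathrm{terminal}}(w(e)-1)=\frac12\bigl(D_{\term}(F_P)-B_{\term}(F_P)\bigr),
\]
using the truncation-independent definitions of \cref{lem:terminal-germ-partition}. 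The one point of bookkeeping to check is that a germ at a vertex in a small corner neighborhood is never double counted. I would secure this by choosing the corner neighborhoods free of interior vertices, as in that lemma.

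For the bound, $\Xi_P\ge0$ by \cref{prop:pick-excess}, so $\|\Xi_P\|_{\TV}=\Xi_P(\Omega^\circ)\le\frac12D_{\term}(F_P)$. The displayed consequence of \cref{lem:corner-newton-chain} gives $D_{\term}(F_P)\le 2\max_r\|n_r\|D_\partial(F_P)$. Finally, \cref{thm:sqrt-complexity} bounds $D_\partial(F_P)$ by $C_{\Omega,K}\sqrt N$, using $P\subset K$.

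I expect the main obstacle to be justifying the edge regrouping rigorously. The steps are: the germ multiset over interior vertices partitions as two germs per compact internal edge plus one germ per terminal edge. No edge has both endpoints on $\partial\Omega$, which is \cref{lem:terminal-interior-endpoint}. Interior mildness applies to every non-terminal edge, including marked carriers.
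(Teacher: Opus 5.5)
Your proposal is correct and matches the paper's proof essentially step by step. Empty Newton polygons and interior mildness remove all excess except the single interior germ of each terminal edge. Then nonnegativity of $\Xi_P$, the corner Newton-chain bound $D_{\term}(F_P)\le 2\max_r\|n_r\|\,D_\partial(F_P)$, and square-root complexity give $\|\Xi_P\|_{\TV}\le C_{\Omega,K}\sqrt N$.
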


\begin{proof}
By \cref{thm:empty-newton}, the interior lattice-point term in \cref{prop:pick-excess} vanishes, and every compact internal edge has weight one by \cref{cor:interior-mildness}.  Each terminal edge has exactly one interior endpoint by \cref{lem:terminal-interior-endpoint}.  Hence summing the remaining weight terms over the interior vertices counts every terminal edge exactly once.  The truncation-independent count is defined in \cref{lem:terminal-germ-partition}.

By \cref{prop:pick-excess}, the measure $\Xi_P$ is nonnegative.  Hence its total variation is its total mass, and the terminal-degree estimate above together with \cref{thm:sqrt-complexity} gives
\[
 \begin{aligned}
 \|\Xi_P\|_{\TV}
 &=\Xi_P(\Omega^\circ)
 =\frac12\bigl(D_{\term}(F_P)-B_{\term}(F_P)\bigr)\\
 &\le\frac12D_{\term}(F_P)
 \le\max_r\|n_r\|\,D_\partial(F_P)\\
 &\le C_{\Omega,K}\sqrt N.
 \end{aligned}
\]
This proves the two asserted inequalities, with $C_\Omega=\max_r\|n_r\|$ in the first one.
\end{proof}

\section{Curvature on rational polygons}
\label{sec:curvature}

The exact source--curvature balance contains a boundary-incidence term and the Pick excess.  Terminal-edge estimates bound the latter; signed layer cake and edgewise coarea bound the former after integration against a test function.  After normalization this gives the $O(N^{-1/2})$ discrepancy.

\subsection{Curvature error}

For a strongly generic $P$, define
\[
 \nu_P=\sum_{p\in P}\delta_p,
 \qquad
 \Theta_P=\MA(F_P)-\nu_P-\Xi_P.
\]
By \cref{thm:exact-local-balance}, for every regular window $U$,
\[
 \Theta_P(U)=\frac12B_P(U)-c_P(U),
\]
and therefore
\[
 |\Theta_P(U)|\le\frac12B_P(U).
\]

\subsection{Tangential coarea}

\begin{lemma}[Regular superlevel windows]
\label{lem:regular-superlevels}
Let $\varphi\in C_c^\infty(\Omega^\circ)$.  There is a Lebesgue-null set $E_\varphi\subset\R$, containing $0$, such that for every $t\notin E_\varphi$:
\begin{enumerate}[label=\textup{(\roman*)}]
\item $t$ is a regular value of $\varphi$ in the plane;
\item the level set $\{\varphi=t\}$ avoids every tropical vertex and every marked point;
\item the level set meets the relative interior of every tropical edge transversely and in finitely many points.
\end{enumerate}
Consequently, $U_t=\{\varphi>t\}$ is a regular window for almost every $t>0$.
\end{lemma}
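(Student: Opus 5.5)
We treat the three conditions separately, show that each fails only for $t$ in a Lebesgue-null set, and then take $E_\varphi$ to be the union of these null sets together with $\{0\}$.

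\emph{Condition (i).} By Sard's theorem for the smooth function $\varphi:\R^2\to\R$, the set of critical values is Lebesgue-null; call it $E_1$.

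\emph{Condition (ii).} The tropical curve $\Gamma_P$ is finite by \cref{prop:tropical-relaxation}, so the tropical vertices together with the marked points form a finite set $Z$. Let $E_2=\varphi(Z)$; this set is finite.

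\emph{Condition (iii).} Each tropical edge $e$ is a straight segment. Parametrize its relative interior affinely by $s\mapsto x_e(s)$ and put $\psi_e=\varphi\circ x_e$, a smooth function of one variable on an open interval. Apply one-dimensional Sard to $\psi_e$ and let $E_3^e$ be its set of critical values, which is null. For $t\notin E_3^e$, each point of $\{\varphi=t\}\cap e^\circ$ has $\psi_e'\neq0$. This gives transversality, because $\nabla\varphi$ is not orthogonal to the direction of $e$ there.

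The step needing care is finiteness of these intersection points. Since $\operatorname{supp}\varphi\Subset\Omega^\circ$ and $t>0$, the set $\{\psi_e=t\}$ lies inside the compact set $x_e^{-1}(\operatorname{supp}\varphi)$, which stays away from the ends of the interval. It is closed in that compact set and consists of regular points of $\psi_e$, which are isolated. A compact set of isolated points is finite. For $t<0$ the level set $\{\varphi=t\}$ is also compact, by the same support argument, so the same reasoning applies there. At $t=0$ the level set contains the whole complement of the support, and finiteness fails; this is why $0$ is put into $E_\varphi$.

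Set $E_\varphi=\{0\}\cup E_1\cup E_2\cup\bigcup_e E_3^e$. This is a finite union of null sets, hence null.

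\emph{Consequence.} Fix $t>0$ with $t\notin E_\varphi$. By (i) and the implicit function theorem, $\partial U_t=\{\varphi=t\}$ is a compact one-dimensional $C^\infty$ submanifold. It is therefore a finite disjoint union of smooth Jordan curves, and it lies inside $\operatorname{supp}\varphi\Subset\Omega^\circ$. So $U_t\Subset\Omega^\circ$, and $U_t$ has finitely many components $U_\alpha$, each bounded by finitely many of these curves. Conditions (ii) and (iii) give exactly the incidence requirements in the definition of a regular window. When $U_t$ is empty, it is the allowed case $r=0$.
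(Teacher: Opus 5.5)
Your decomposition matches the paper's: planar Sard for (i), the finite set of values of $\varphi$ at vertices and marked points for (ii), edgewise one-dimensional Sard for (iii), and $0$ added by hand. The one step that fails as written is the finiteness argument in (iii).

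The set $x_e^{-1}(\operatorname{supp}\varphi)$ does not in general stay away from the ends of the open parameter interval. Both ends of a compact internal edge, and the interior end of a terminal edge, are tropical vertices in $\Omega^\circ$, and such vertices typically lie in $\operatorname{supp}\varphi$. In that case the set is only closed in the open interval, and Sard on the open interval gives no control at the ends. For example, suppose that near an endpoint $v$ (at $s=0$) the restriction of $\varphi$ to $e$ is $t+e^{-1/s}\sin(1/s)$. Then $t$ is not a critical value of $\psi_e$ on the open interval, yet $\{\psi_e=t\}$ is infinite and accumulates at $v$. This can happen even when $t$ is a regular value of $\varphi$ in the plane, since $\nabla\varphi(v)$ need only be orthogonal to $e$.

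The repair uses something you already have. For $t\notin E_2\cup\{0\}$, the extension of $\psi_e$ to the closed interval differs from $t$ at both ends: an interior end is a tropical vertex, and a boundary end has $\varphi=0$. Hence $\{\psi_e=t\}$ is closed in the closed interval and misses its ends. It is therefore a compact subset of the open interval consisting of isolated points, and so it is finite.

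The paper avoids the issue by parametrizing the \emph{closed} edge $[0,\ell(e)]$. There $\varphi\circ\gamma_e$ is smooth up to the ends, so one-dimensional Sard makes every preimage point isolated, including at the ends. Closedness in a compact interval then gives finiteness directly.

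A smaller point: in the consequence you assert, without argument, that $U_t$ has finitely many components, each bounded by whole circles. The paper justifies this as follows. Each level circle has a connected one-sided collar on which $\varphi>t$, so the collar lies in a single component. Every nonempty component is bounded, so its boundary is a nonempty subset of $\{\varphi=t\}$ and contains a full circle. Hence there are at most as many components as circles, and each component boundary is a union of whole circles.
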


\begin{proof}
Extend $\varphi$ by zero to an element of $C_c^\infty(\R^2)$.  By Sard's theorem, the set of critical values of $\varphi:\R^2\to\R$ has measure zero.

Parametrize each of the finitely many closed tropical edges by Euclidean arclength,
\[
 \gamma_e:[0,\ell(e)]\longrightarrow e,
\]
and set $f_e=\varphi\circ\gamma_e$.  One-dimensional Sard implies that the critical values of each smooth function $f_e$ form a null set.  If $\varphi$ is constant on an edge, its constant value is included in the exceptional set.  The finite union of these edgewise critical-value sets is null.

Finally, the values of $\varphi$ at the finitely many tropical vertices and marked points form a finite set.

Let $E_\varphi$ be the union of these sets and $\{0\}$.  If $t\notin E_\varphi$, the planar regular-value condition makes $\{\varphi=t\}$ a smooth one-dimensional submanifold.  Avoidance of the finite vertex and marked sets is automatic.  At an intersection with the relative interior of an edge, one has
\[
 (\varphi\circ\gamma_e)'\ne0,
\]
which is transversality to that edge.  The preimage $f_e^{-1}(t)$ is discrete and closed in the compact interval $[0,\ell(e)]$, hence finite.  Since $\varphi$ is compactly supported and $t>0$, the superlevel set is compactly contained in $\Omega^\circ$.

For such a positive regular value, $\{\varphi=t\}$ is compact and hence a finite union of embedded smooth circles.  Along each circle, the implicit-function theorem selects one local side on which $\varphi>t$.  Compatibility on overlapping half-disc charts places these local sides in a single component of $U_t$.

Every nonempty component of $U_t$ has a boundary circle, so $U_t$ has finitely many components.  The same half-disc description shows that the closures of distinct components are disjoint.  Thus every component boundary satisfies the regular-window conditions.  If $U_t$ is empty, it is regular by convention.
\end{proof}

Let $\varphi\in C_c^\infty(\Omega^\circ)$ be real-valued.  Signed layer cake gives
\[
 \int\varphi\,d\Theta_P
 =\int_0^\infty\Theta_P(\{\varphi>t\})\,dt
 -\int_0^\infty\Theta_P(\{\varphi<-t\})\,dt.
\]

For almost every $t>0$, both superlevel sets are regular by \cref{lem:regular-superlevels} applied to $\varphi$ and to $-\varphi$.  At every such level,
\[
 B_P(\{\varphi>t\})
 =\#(\Gamma_P\cap\{\varphi=t\}),
 \qquad
 B_P(\{\varphi<-t\})
 =\#(\Gamma_P\cap\{\varphi=-t\}),
\]
because each intersection lies in the relative interior of a unique tropical edge and on the boundary of a unique superlevel component.

Hence
\[
 \left|\int\varphi\,d\Theta_P\right|
 \le\frac12\int_0^\infty
 \left[
 \#(\Gamma_P\cap\{\varphi=t\})
 +\#(\Gamma_P\cap\{\varphi=-t\})
 \right]dt.
\]

For each edge parametrization $\gamma_e$, the one-dimensional coarea formula \cite{FlemingRishel1960,Federer1969} gives
\[
 \int_\R\#\{s\in[0,\ell(e)]:\varphi(\gamma_e(s))=t\}\,dt
 =\int_0^{\ell(e)}\left|\frac{d}{ds}\varphi(\gamma_e(s))\right|ds.
\]

The positive and negative level integrals partition the nonzero levels, while the zero level is negligible.  Summing over the finitely many edges therefore yields
\[
 \int_0^\infty
 \left[
 \#(\Gamma_P\cap\{\varphi=t\})
 +\#(\Gamma_P\cap\{\varphi=-t\})
 \right]dt
 =\int_{\Gamma_P}|\nabla_\tau\varphi|\,d\mathcal H^1.
\]

For $\varphi\in C_c^1(\Omega^\circ)$, choose $\varphi_k\in C_c^\infty(\Omega^\circ)$ supported in one fixed compact set, with $\varphi_k\to\varphi$ in $C^1$.  Finiteness of the signed measure $\Theta_P$ gives convergence on the left.  The graph $\Gamma_P$ has finitely many edges, and the tangential derivatives converge uniformly on each edge, so the right-hand side converges as well.  Passing to the limit gives the following estimate.

\begin{theorem}[Tangential-variation estimate]
\label{thm:tangential-estimate}
For every $\varphi\in C_c^1(\Omega^\circ)$,
\[
 \left|
 \int_{\Omega^\circ}\varphi\,
 d\bigl(\MA(F_P)-\nu_P-\Xi_P\bigr)
 \right|
 \le
 \frac12\int_{\Gamma_P}|\nabla_\tau\varphi|\,d\mathcal H^1.
\]
\end{theorem}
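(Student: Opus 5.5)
The plan is to assemble the estimate from the exact regular-window balance of \cref{thm:exact-local-balance}, first for smooth tests and then for $C^1$ tests by density. Write $\Theta_P=\MA(F_P)-\nu_P-\Xi_P$. This is a finite signed atomic measure on $\Omega^\circ$, because $\Gamma_P$ has finitely many vertices and $P$ is finite. By \cref{thm:exact-local-balance}, every regular window $U\Subset\Omega^\circ$ satisfies $|\Theta_P(U)|\le\frac12B_P(U)$.

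First take $\varphi\in C_c^\infty(\Omega^\circ)$. I would apply the signed layer-cake formula to write $\int\varphi\,d\Theta_P$ as $\int_0^\infty\Theta_P(\{\varphi>t\})\,dt-\int_0^\infty\Theta_P(\{\varphi<-t\})\,dt$. This is legitimate for a finite signed measure: decompose $\Theta_P$ into positive and negative parts and apply Fubini to each.

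By \cref{lem:regular-superlevels}, applied to $\varphi$ and to $-\varphi$, for almost every $t>0$ both $\{\varphi>t\}$ and $\{-\varphi>t\}$ are regular windows. Their boundary incidence counts equal $\#(\Gamma_P\cap\{\varphi=\pm t\})$. This holds because each crossing lies in the relative interior of exactly one edge and on exactly one component boundary, and component closures are disjoint. Bounding each integrand by half this count gives
\[
\Bigl|\int\varphi\,d\Theta_P\Bigr|\le\tfrac12\int_{\R\setminus\{0\}}\#(\Gamma_P\cap\{\varphi=t\})\,dt .
\]

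The right side is handled edge by edge. Parametrize each closed edge by arclength, $\gamma_e$, and apply the one-dimensional coarea (Banach indicatrix) formula to $\varphi\circ\gamma_e$: it gives $\int_\R\#\{s:\varphi(\gamma_e(s))=t\}\,dt=\int_e|\nabla_\tau\varphi|\,d\mathcal H^1$. Summing over the finitely many edges recovers $\int_{\Gamma_P}|\nabla_\tau\varphi|\,d\mathcal H^1$. Endpoints shared by several edges affect only finitely many levels, so they do not matter.

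For general $\varphi\in C_c^1(\Omega^\circ)$, mollify to $\varphi_k\in C_c^\infty$ with supports in a fixed compact set and $\varphi_k\to\varphi$ in $C^1$. The left side converges because $\Theta_P$ is a finite combination of Dirac masses. The right side converges because tangential derivatives converge uniformly on each of the finitely many edges, which have finite total length.

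The main obstacle is the identification $B_P(U_t)=\#(\Gamma_P\cap\{\varphi=t\})$ at almost every level, together with checking that $U_t$ really meets every condition in the definition of a regular window. Those conditions are finitely many components, piecewise $C^1$ Jordan boundaries, and transversal crossings avoiding vertices and marks. I would rely on the proof of \cref{lem:regular-superlevels} for both points, and make sure the exceptional level set includes the values of $\varphi$ at vertices and marks as well as the edgewise critical values.
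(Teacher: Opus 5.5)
Your proposal is correct and follows the paper's proof essentially step for step. Both arguments use the signed layer-cake formula with the regular-window bound $|\Theta_P(U)|\le\frac12 B_P(U)$ from the exact source--curvature balance, identify $B_P$ at almost every level with the crossing count $\#(\Gamma_P\cap\{\varphi=\pm t\})$ via the regular-superlevel lemma, integrate that count edge by edge with one-dimensional coarea, and then pass from $C_c^\infty$ to $C_c^1$ tests by density.
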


Combining \cref{thm:tangential-estimate} with \cref{thm:sqrt-complexity} and \cref{prop:boundary-excess} yields:

\begin{theorem}[Quantitative curvature--source estimate]
\label{thm:quantitative-curvature}
There is $C=C(\Omega,K)$ such that for every strongly generic $N$-point configuration $P\subset K$, with $N\ge1$, and every $\varphi\in C_c^1(\Omega^\circ)$,
\[
 \left|
 \int_{\Omega^\circ}\varphi\,
 d\bigl(\MA(F_P)-\nu_P\bigr)
 \right|
 \le
 C\sqrt N\bigl(\|\varphi\|_\infty+\|\nabla\varphi\|_\infty\bigr).
\]
Equivalently, for $u_P=N^{-1/2}F_P$ and $\mu_P=N^{-1}\nu_P$,
\[
 \left|
 \int_{\Omega^\circ}\varphi\,
 d\bigl(\MA(u_P)-\mu_P\bigr)
 \right|
 \le
 \frac{C}{\sqrt N}\bigl(\|\varphi\|_\infty+\|\nabla\varphi\|_\infty\bigr).
\]
\end{theorem}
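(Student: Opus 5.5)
The estimate follows by splitting $\MA(F_P)-\nu_P$ into the curvature error $\Theta_P=\MA(F_P)-\nu_P-\Xi_P$ and the Pick excess $\Xi_P$, and bounding the two pieces separately.

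By the triangle inequality, for $\varphi\in C_c^1(\Omega^\circ)$,
\[
 \left|\int\varphi\,d\bigl(\MA(F_P)-\nu_P\bigr)\right|
 \le\left|\int\varphi\,d\Theta_P\right|+\left|\int\varphi\,d\Xi_P\right|.
\]

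\emph{Curvature error.} \Cref{thm:tangential-estimate} bounds the first term by $\frac12\int_{\Gamma_P}|\nabla_\tau\varphi|\,d\mathcal H^1$. The tangential derivative is pointwise at most $\|\nabla\varphi\|_\infty$, so this is at most $\frac12\|\nabla\varphi\|_\infty\,\mathcal H^1(\Gamma_P)$. The length bound in \cref{thm:sqrt-complexity} gives $\mathcal H^1(\Gamma_P)\le C(\Omega,K)\sqrt N$; that bound needs no genericity and depends only on $P\subset K$.

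\emph{Pick excess.} \Cref{prop:pick-excess} shows that $\Xi_P$ is a nonnegative measure. Hence the second term is at most $\|\varphi\|_\infty\,\Xi_P(\Omega^\circ)$. By \cref{prop:boundary-excess}, $\Xi_P(\Omega^\circ)\le C_{\Omega,K}\sqrt N$, a bound that is uniform over strongly generic $P\subset K$. Adding the two bounds gives the first inequality of the theorem with a constant depending only on $\Omega$ and $K$.

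\emph{Normalized form.} Quadratic homogeneity gives $\MA(N^{-1/2}F_P)=N^{-1}\MA(F_P)$, and by definition $\mu_P=N^{-1}\nu_P$. Dividing the first inequality by $N$ therefore yields the second, with the factor $\sqrt N/N=N^{-1/2}$.

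The only step that could be an obstacle is uniformity of the constant. I would check that each cited constant depends only on $(\Omega,K)$: the length bound comes from the competitor with $M_{K,P}=O_{\Omega,K}(\sqrt N)$, and the excess bound uses $\max_r\|n_r\|\,D_\partial$. Neither depends on $\varphi$ or on the support of $\varphi$; only the $C^1$ norms of $\varphi$ enter. The remaining ingredients, the Sard and coarea arguments and the $C^1$ approximation, are already absorbed into \cref{thm:tangential-estimate}.
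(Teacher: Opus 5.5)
Your proposal is correct and follows the paper's proof: the paper obtains the unnormalized estimate by combining the tangential-variation bound of \cref{thm:tangential-estimate} with the length bound $\mathcal H^1(\Gamma_P)=O_{\Omega,K}(\sqrt N)$ from \cref{thm:sqrt-complexity} and the total-mass bound $\|\Xi_P\|_{\TV}=O_{\Omega,K}(\sqrt N)$ from \cref{prop:boundary-excess}. The normalized form then follows from the quadratic homogeneity $\MA(N^{-1/2}F_P)=N^{-1}\MA(F_P)$, exactly as you describe.
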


\subsection{Total mass}

Fix a strongly generic configuration $P\subset K$ and put $N=|P|\ge1$.  Choose a sufficiently small generic inward truncation $\Omega^{(\eta)}$ as in \cref{lem:terminal-germ-partition}.  The truncation contains every marked point and tropical vertex, crosses each terminal edge exactly once, and crosses no compact internal edge.

Every atom of $\MA(F_P)$ lies at an interior tropical vertex, so restriction to $\Omega^{(\eta)}$ does not change the total Monge--Amp\`ere mass.  The restricted marked cut tree is connected and therefore has one component.  The exact local formula gives
\[
 \MA(F_P)(\Omega^\circ)
 =N+\frac12B_{\term}(F_P)-1+\Xi_P(\Omega^\circ).
\]
Substituting \cref{prop:boundary-excess} cancels the unweighted terminal count.

\begin{theorem}[Exact total mass]
\label{thm:exact-total-mass}
For every strongly generic $P\subset K$, $|P|=N\ge1$,
\[
 \MA(F_P)(\Omega^\circ)
 =N-1+\frac12D_{\term}(F_P).
\]
Moreover,
\[
 D_{\term}(F_P)
 \le C_\Omega D_\partial(F_P)
 \le C_{\Omega,K}\sqrt N.
\]
Therefore
\[
 \MA(N^{-1/2}F_P)(\Omega^\circ)
 =1-\frac1N+\frac{D_{\term}(F_P)}{2N}
 =1+O_{\Omega,K}(N^{-1/2}).
\]
\end{theorem}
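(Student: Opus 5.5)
The plan is to take the truncation $\Omega^{(\eta)}$ of \cref{lem:terminal-germ-partition} as a regular window $U$ and apply \cref{thm:exact-local-balance}. I will first verify that $U=\Omega^{(\eta)}$ is a regular window. It is a convex polygon with boundary in the collar, so it has one component. Its boundary avoids tropical vertices and marked points, and it meets $\Gamma_P$ only transversely, in relative interiors of terminal edges. By \cref{lem:terminal-germ-partition}(i)--(ii), each terminal edge is crossed exactly once and no compact internal edge is crossed, so $B_P(U)=B_{\term}(F_P)$. Since every atom of $\MA(F_P)$ sits at an interior tropical vertex, and all such vertices lie in $U$, we get $\MA(F_P)(U)=\MA(F_P)(\Omega^\circ)$. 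Likewise $\#(P\cap U)=N$, and $\Xi_P(U)=\Xi_P(\Omega^\circ)$ because $\Xi_P$ is also supported on vertices.

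The step that needs care is $c_P(U)=1$. The restriction of the completed marked cut tree $T_P$ to $\overline U$ differs from $T_P$ only by shortening each terminal branch to its crossing point. The crossing point then serves as the formal terminal end, and the restriction is a homeomorphic copy of the tree constructed in \cref{sec:genericity} with this very truncation. Since \cref{thm:internal-marked-tree} shows that this graph is connected, $c_P(U)=1$. To make this rigorous, I would note that removing the outer portions of terminal edges removes only leaf tails, by \cref{lem:terminal-interior-endpoint}; deleting a leaf tail from a tree leaves a tree. I would also recall that terminal edges are unmarked (\cref{lem:terminal-unmarked}), so no cut occurs outside $U$.

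Substituting these values into the exact balance gives
\[
 \MA(F_P)(\Omega^\circ)=N+\tfrac12B_{\term}(F_P)-1+\Xi_P(\Omega^\circ).
\]
\Cref{prop:boundary-excess} gives $\Xi_P(\Omega^\circ)=\tfrac12(D_{\term}-B_{\term})$, so the unweighted counts cancel and $\MA(F_P)(\Omega^\circ)=N-1+\tfrac12D_{\term}(F_P)$.

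The bound $D_{\term}\le 2\max_r\|n_r\|D_\partial$ follows from summing \cref{lem:corner-newton-chain} over the polygon vertices. This uses \cref{lem:boundary-vertex-localization}: every terminal edge ends at a polygon vertex, so every terminal edge is counted. Combined with $D_\partial(F_P)\le C\sqrt N$ from \cref{thm:sqrt-complexity}, this gives $D_{\term}(F_P)\le C_{\Omega,K}\sqrt N$. The normalized statement then follows from the quadratic homogeneity $\MA(aF)=a^2\MA(F)$ with $a=N^{-1/2}$, since $N^{-1}(N-1+\tfrac12D_{\term})=1-1/N+D_{\term}/(2N)$, and both error terms are $O(N^{-1/2})$.
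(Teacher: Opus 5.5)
Your proposal is correct and follows the paper's own argument. You take a small regular inward truncation from \cref{lem:terminal-germ-partition} as the window in \cref{thm:exact-local-balance}, use $B_P(U)=B_{\term}(F_P)$ and connectedness of the restricted cut tree to get $c_P(U)=1$, cancel the unweighted terminal count via \cref{prop:boundary-excess}, bound $D_{\term}$ by summing \cref{lem:corner-newton-chain} and invoking \cref{thm:sqrt-complexity}, and finish with quadratic homogeneity exactly as the paper does.
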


\subsection{Curvature convergence}

Fix $K\Subset\Omega^\circ$.  For every $N\ge1$, let $P_N\subset K$ be a strongly generic $N$-point configuration, and set
\[
 u_N=N^{-1/2}F_{P_N},
 \qquad
 \mu_N=\frac1N\nu_{P_N}
 =\frac1N\sum_{p\in P_N}\delta_p.
\]
Assume that $\mu_N\rightharpoonup\mu$ weakly as probability measures on $\overline\Omega$, where $\mu$ is supported in $K$.

Vague convergence on $\Omega^\circ$ is tested against $C_c(\Omega^\circ)$.  Fix $\psi\in C_c(\Omega^\circ)$, and choose a compact set $L\Subset\Omega^\circ$ containing its support in its interior.  Let $\chi_L\in C_c^1(\Omega^\circ)$ be nonnegative, with $\chi_L=1$ near $L$.  The estimate in \cref{thm:quantitative-curvature} gives a uniform bound for $\MA(u_N)(L)$.

Approximate $\psi$ uniformly by functions $\psi_k\in C_c^1(\Omega^\circ)$ supported in $L$.  For each fixed $k$, let $N\to\infty$ in the quantitative estimate.  The uniform local mass bound then permits $k\to\infty$.  Since $\mu_N\rightharpoonup\mu$, it follows that
\[
 \MA(u_N)\rightharpoonup\mu
\]
vaguely on $\Omega^\circ$.

The total masses converge to one by \cref{thm:exact-total-mass}.  Choose $\chi\in C_c^\infty(\Omega^\circ)$ with $0\le\chi\le1$ and $\chi=1$ near $K$.  Then
\[
 \int\chi\,d\MA(u_N)\longrightarrow1,
\]
and
\[
 \int(1-\chi)\,d\MA(u_N)
 =\MA(u_N)(\Omega^\circ)-\int\chi\,d\MA(u_N)
 \longrightarrow0.
\]
Thus no mass is lost near the boundary.

To pass from vague to weak convergence, let $\varphi\in C(\overline\Omega)$.  Since $\chi=1$ on a neighborhood of $\supp\mu\subset K$,
\[
 \begin{aligned}
 \int_{\overline\Omega}\varphi\,d\bigl(\MA(u_N)-\mu\bigr)
 &=\int_{\Omega^\circ} \chi\varphi\,d\bigl(\MA(u_N)-\mu\bigr)
 +\int_{\Omega^\circ}(1-\chi)\varphi\,d\MA(u_N).
 \end{aligned}
\]
The first term tends to zero by vague convergence, while the second is bounded in absolute value by
\[
 \|\varphi\|_\infty\int(1-\chi)\,d\MA(u_N)\longrightarrow0.
\]

\begin{corollary}[Global curvature convergence]
\label{cor:global-curvature-convergence}
Under these assumptions, regard each interior measure $\MA(u_N)$ as a finite measure on $\overline\Omega$ by extending it by zero to $\partial\Omega$.  Then
\[
 \MA(u_N)\rightharpoonup\mu
\]
weakly on $\overline\Omega$.
\end{corollary}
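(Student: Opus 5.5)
The plan is essentially to assemble the argument displayed just before the statement into a proof of weak convergence on $\overline\Omega$. Weak convergence against $C(\overline\Omega)$ rests on three ingredients: vague convergence on $\Omega^\circ$, convergence of total masses to $1=\mu(\overline\Omega)$, and the fact that $\mu$ gives no mass to a neighborhood of $\partial\Omega$.

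\emph{Step 1: vague convergence on $\Omega^\circ$.} Fix $\psi\in C_c(\Omega^\circ)$ and choose a compact $L\Subset\Omega^\circ$ whose interior contains $\supp\psi$. Pick a cutoff $\chi_L\in C_c^1(\Omega^\circ)$ with $0\le\chi_L\le1$ and $\chi_L=1$ on $L$. Apply \cref{thm:quantitative-curvature} to $\chi_L$. Since $\int\chi_L\,d\mu_N\le1$, this gives
\[
 \MA(u_N)(L)\le 1+C\,N^{-1/2}\bigl(1+\|\nabla\chi_L\|_\infty\bigr),
\]
so the local masses are uniformly bounded. Next approximate $\psi$ uniformly by $\psi_k\in C_c^1$ supported in $L$. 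For fixed $k$, \cref{thm:quantitative-curvature} gives $\int\psi_k\,d(\MA(u_N)-\mu_N)\to0$, and $\int\psi_k\,d\mu_N\to\int\psi_k\,d\mu$ by hypothesis. The uniform bound on $\MA(u_N)(L)$, together with $\mu_N(L)\le1$, controls the error $\psi-\psi_k$ uniformly in $N$. A $3\varepsilon$ argument then gives $\int\psi\,d\MA(u_N)\to\int\psi\,d\mu$.

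\emph{Step 2: tightness near the boundary.} By \cref{thm:exact-total-mass}, $\MA(u_N)(\Omega^\circ)=1-N^{-1}+D_{\term}(F_{P_N})/(2N)$. The bound $D_{\term}\le C_{\Omega,K}\sqrt N$ makes this $1+O(N^{-1/2})\to1$. Choose $\chi\in C_c^\infty(\Omega^\circ)$ with $0\le\chi\le1$ and $\chi=1$ near $K$. By Step 1, $\int\chi\,d\MA(u_N)\to\int\chi\,d\mu=1$, because $\mu$ is a probability measure supported in $K$. Subtracting, $\int(1-\chi)\,d\MA(u_N)\to0$.

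\emph{Step 3: conclusion.} For $\varphi\in C(\overline\Omega)$, split
\[
 \int\varphi\,d(\MA(u_N)-\mu)=\int\chi\varphi\,d(\MA(u_N)-\mu)+\int(1-\chi)\varphi\,d\MA(u_N).
\]
This uses $\int(1-\chi)\varphi\,d\mu=0$. The first term tends to zero by Step 1, since $\chi\varphi\in C_c(\Omega^\circ)$. The second is bounded by $\|\varphi\|_\infty\int(1-\chi)\,d\MA(u_N)\to0$. The measures are extended by zero to $\partial\Omega$, so no boundary atoms arise.

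The only genuinely delicate input is Step 2. The mass identity of \cref{thm:exact-total-mass} must be sharp enough, at order $o(N)$ in the unnormalized scale, for the terminal correction to vanish. The other steps are routine once the quantitative estimate and the mass formula are available.
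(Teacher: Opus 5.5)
Your proposal is correct and follows the paper's argument essentially step for step. The quantitative estimate applied to a cutoff gives the uniform local mass bound and, via $C^1$ approximation, vague convergence on $\Omega^\circ$; the exact total-mass formula with $D_{\term}=O(\sqrt N)$ then gives boundary tightness through a cutoff $\chi$ equal to one near $K$, and the same splitting of $\varphi\in C(\overline\Omega)$ concludes.
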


\section{The Monge--Amp\`ere limit on polygons}
\label{sec:aleksandrov-limit}

The curvature estimate identifies the limiting Monge--Amp\`ere measure.  To recover the potentials, the total-mass bound and the Aleksandrov maximum principle give a common boundary modulus; compactness and stability then identify every subsequential limit.

Let $\Omega$ be a bounded rational convex polygonal body, let $K\Subset\Omega^\circ$, and let $P_N\subset K$ be strongly generic with $|P_N|=N\ge1$.  Assume that
\[
 \mu_N:=\frac1N\sum_{p\in P_N}\delta_p
 \rightharpoonup\mu
\]
weakly as probability measures on $\overline\Omega=\Omega$, where $\mu$ is supported in $K$.  Put
\[
 u_N=N^{-1/2}F_{P_N}.
\]
By \cref{thm:exact-total-mass},
\[
 \sup_N\MA(u_N)(\Omega^\circ)<\infty.
\]
Set $v_N=-u_N$.  Then $v_N$ is convex, nonpositive, and zero on $\partial\Omega$.
In the standard convex convention,
\[
 \partial v_N(x)=-\partial^+u_N(x),
\]
so reflection invariance of planar Lebesgue measure gives $\MA_{\rm cvx}(v_N)=\MA(u_N)$.  The standard convex Aleksandrov theory therefore applies.

\subsection{Compactness}

The planar Aleksandrov maximum principle \cite{Aleksandrov1958,Gutierrez2016,Figalli2017,Mooney2018} gives
\[
 |v_N(x)|^2
 \le C_\Omega\dist(x,\partial\Omega)\,\MA(u_N)(\Omega^\circ).
\]
Hence there is $A=A(\Omega,K)$ such that
\[
 0\le u_N(x)\le A\dist(x,\partial\Omega)^{1/2}.
\]
In particular, $\|u_N\|_\infty\le M$ uniformly.

For
\[
 \Omega_\delta=\{x\in\Omega:\dist(x,\partial\Omega)\ge\delta\},
\]
the set $\Omega_\delta$ is convex because it is the inner parallel body of the convex polygon $\Omega$.  Concavity gives
\[
 \Lip(u_N|_{\Omega_\delta})\le\frac{M}{\delta}.
\]
Indeed, a supergradient at $x\in\Omega_\delta$ can be tested at $x\pm\delta e$ in every unit direction $e$.

Optimizing the boundary estimate $A\delta^{1/2}$ against the interior estimate $Mr/\delta$, where $r=|x-y|$, gives the following modulus.

\begin{proposition}[Uniform global modulus]
\label{prop:global-holder}
There is $C=C(\Omega,K)$ such that
\[
 |u_N(x)-u_N(y)|\le C|x-y|^{1/3}
\]
for every $x,y\in\overline\Omega$ and every $N$.
\end{proposition}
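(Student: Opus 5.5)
The plan is to combine the two estimates recorded just before the statement: the boundary bound $0\le u_N(x)\le A\,\dist(x,\partial\Omega)^{1/2}$ from the Aleksandrov maximum principle, and the interior Lipschitz bound $\Lip(u_N|_{\Omega_\delta})\le M/\delta$ from concavity. Both constants are uniform in $N$. This holds because $\sup_N\MA(u_N)(\Omega^\circ)<\infty$ by \cref{thm:exact-total-mass}, and $\|u_N\|_\infty\le M$ follows from the first bound, or alternatively from \cref{prop:k-free-height}. Fix $x,y\in\overline\Omega$, set $r=|x-y|$, and take $\delta=r^{2/3}$. This choice balances $A\delta^{1/2}$ against $Mr/\delta$, since both become $O(r^{1/3})$. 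For $r$ bounded below, say $r\ge r_0(\Omega)$, the bound $|u_N(x)-u_N(y)|\le 2M\le (2M/r_0^{1/3})\,r^{1/3}$ is immediate, so it suffices to treat small $r$.

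For small $r$ there are two cases. \textbf{Case 1:} both $x$ and $y$ lie in $\Omega_\delta$. The set $\Omega_\delta$ is convex, so the segment $[x,y]$ stays in $\Omega_\delta$, and the Lipschitz bound gives $|u_N(x)-u_N(y)|\le Mr/\delta=Mr^{1/3}$. I should double-check that the Lipschitz bound really holds on $\Omega_\delta$ itself and not only on $\Omega_{2\delta}$. At $x\in\Omega_\delta$ the points $x\pm\delta e$ lie in $\overline\Omega$. For a supergradient $p$ at $x$, testing at $x-\delta e$ with $e=p/|p|$ gives $0\le u_N(x-\delta e)\le u_N(x)-\delta|p|$, hence $|p|\le M/\delta$. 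Concave functions are Lipschitz along segments with constant $\sup|p|$, so this suffices. \textbf{Case 2:} at least one point, say $x$, satisfies $\dist(x,\partial\Omega)<\delta$. Then $\dist(y,\partial\Omega)<\delta+r\le 2\delta$ for $r\le1$. Both values lie in $[0,A(2\delta)^{1/2}]$, so $|u_N(x)-u_N(y)|\le \sqrt2 A\,r^{1/3}$.

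Combining the cases gives the statement with $C=\max\{M,\sqrt2A,2M/r_0^{1/3}\}$, which depends only on $\Omega$ and $K$ through the total-mass bound. I expect no serious obstacle. The only point needing care is the quoted maximum-principle inequality: its constant must be uniform, and it must apply to the polyhedral functions $v_N=-u_N$ extended continuously to $\partial\Omega$ with zero boundary values. This is the standard planar Aleksandrov estimate $|v(x)|^2\le C\diam(\Omega)\dist(x,\partial\Omega)\MA(v)(\Omega)$, which I take as given from \cref{app:background-results}.
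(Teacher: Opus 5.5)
Your proposal is correct and follows essentially the paper's own argument. You split at scale $\delta\sim r^{2/3}$, use the Aleksandrov bound $A\,\dist(x,\partial\Omega)^{1/2}$ near the boundary and the concavity bound $M/\delta$ on the convex inner parallel body away from it, and absorb large $r$ into the uniform height bound. The differences are cosmetic: you take $\delta=r^{2/3}$ and work on $\Omega_\delta$ directly after checking the Lipschitz bound there, whereas the paper takes $\delta=(M/A)^{2/3}r^{2/3}$ and keeps a $2\delta$ buffer.
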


\begin{proof}
For small $r=|x-y|$, set
\[
 \delta=\left(\frac{M}{A}\right)^{2/3}r^{2/3}.
\]
After decreasing the small-distance threshold if necessary, one has $r\le\delta$.  If one point lies within $2\delta$ of the boundary, the other lies within $2\delta+r\le3\delta$, and the boundary estimate gives $O(r^{1/3})$.

If both lie farther than $2\delta$, then $x,y\in\Omega_{2\delta}$.  Convexity of $\Omega_{2\delta}$ places the whole segment $[x,y]$ in $\Omega_{2\delta}\subset\Omega_\delta$, and the interior Lipschitz estimate gives
\[
 |u_N(x)-u_N(y)|\le \frac{M}{\delta}r=O(r^{1/3}).
\]
Larger distances are absorbed by the uniform height bound.
\end{proof}

\subsection{Identification}

Arzel\`a--Ascoli gives uniform precompactness in $C(\overline\Omega)$.  Let $u_{N_j}\to u$ uniformly.  The limit is continuous, concave, nonnegative, and zero on $\partial\Omega$.

With this sign convention, Aleksandrov stability \cite{Gutierrez2016,Figalli2017,Mooney2018} applied to the convex functions $-u_{N_j}$ gives
\[
 \MA(u_{N_j})\rightharpoonup\MA(u)
\]
in the interior.  By \cref{cor:global-curvature-convergence}, the same measures converge to $\mu$.  Hence
\[
 \MA(u)=\mu.
\]

The zero-boundary Aleksandrov Dirichlet theorem on bounded convex domains gives $F_{\mu,\Omega}$, and the comparison principle gives uniqueness; see \cite{Aleksandrov1958,RauchTaylor1977,Mooney2018,Gutierrez2016,Figalli2017}.  The boundary datum is zero, hence affine, so no strict-convexity hypothesis is needed for the polygon.  Therefore
\[
 u=F_{\mu,\Omega}.
\]
Every subsequence therefore has the same cluster point.

\begin{theorem}[Polygonal tropical-to-Monge--Amp\`ere limit]
\label{thm:deterministic-limit}
Let $P_N\subset K$ be strongly generic, $|P_N|=N$, and assume
\[
 \frac1N\sum_{p\in P_N}\delta_p\rightharpoonup\mu,
\]
where $\mu$ is a probability measure supported in $K$.  Then
\[
 N^{-1/2}G_{P_N}0_\Omega\longrightarrow F_{\mu,\Omega}
\]
uniformly on $\overline\Omega$.  Moreover, after extending each interior Monge--Amp\`ere measure by zero to $\partial\Omega$,
\[
 \MA\left(N^{-1/2}G_{P_N}0_\Omega\right)\rightharpoonup\mu
\]
weakly on $\overline\Omega$, and the convergence of potentials holds in $L^p(\Omega)$ for every $1\le p\le\infty$.
\end{theorem}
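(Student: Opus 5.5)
The plan is to assemble \cref{thm:deterministic-limit} from the compactness and identification steps just carried out in this section, together with \cref{cor:global-curvature-convergence}. First, I would record the uniform bounds that hold for the whole sequence. \Cref{thm:exact-total-mass} gives $\sup_N\MA(u_N)(\Omega^\circ)<\infty$. The Aleksandrov maximum principle then yields the boundary estimate $0\le u_N\le A\dist(\cdot,\partial\Omega)^{1/2}$ and the uniform height bound $M$. \Cref{prop:global-holder} supplies the uniform $1/3$-Hölder modulus on $\overline\Omega$. By Arzel\`a--Ascoli, $\{u_N\}$ is precompact in $C(\overline\Omega)$.

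Second, I would identify every cluster point. Suppose $u_{N_j}\to u$ uniformly. Then $u$ is concave, continuous, nonnegative and vanishes on $\partial\Omega$, because the boundary estimate passes to the limit. Apply Aleksandrov stability to the convex functions $-u_{N_j}$; this gives $\MA(u_{N_j})\rightharpoonup\MA(u)$ vaguely in $\Omega^\circ$. \Cref{cor:global-curvature-convergence}, restricted to $C_c(\Omega^\circ)$, says these measures converge to $\mu$, so $\MA(u)=\mu$. The Dirichlet problem has affine (zero) boundary data on a bounded convex domain, so existence and the comparison principle make $F_{\mu,\Omega}$ the unique such solution. Hence $u=F_{\mu,\Omega}$. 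Since every subsequence has a further subsequence converging to this same limit, the full sequence converges uniformly on $\overline\Omega$.

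Third, the remaining claims are quick. Weak convergence of the zero-extended measures on $\overline\Omega$ is exactly \cref{cor:global-curvature-convergence}. That corollary rests on the vague convergence coming from \cref{thm:quantitative-curvature} and on the total-mass convergence $\MA(u_N)(\Omega^\circ)\to1$ from \cref{thm:exact-total-mass}, which rules out mass escaping to the boundary. For $L^p$ convergence, uniform convergence on the bounded set $\Omega$ gives $\|u_N-F_{\mu,\Omega}\|_{L^p}\le|\Omega|^{1/p}\|u_N-F_{\mu,\Omega}\|_\infty\to0$ for every $1\le p\le\infty$.

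The step needing the most care is the identification, specifically checking that the hypotheses of the Aleksandrov stability theorem are met. Uniform convergence on $\overline\Omega$ together with locally bounded masses is what that theorem needs, and both are available here. The boundary behaviour of the limit must come from the uniform square-root barrier, not from the interior convergence alone, since that barrier is what forces $u=0$ on $\partial\Omega$.
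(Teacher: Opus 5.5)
Your proposal is correct and follows the paper's own argument essentially step for step: the total-mass bound from \cref{thm:exact-total-mass}, the Aleksandrov maximum principle with the $1/3$-H\"older modulus, Arzel\`a--Ascoli, stability together with \cref{cor:global-curvature-convergence} to obtain $\MA(u)=\mu$, uniqueness of the zero-boundary Dirichlet solution, and finally the corollary and $\|\cdot\|_{L^p}\le|\Omega|^{1/p}\|\cdot\|_\infty$ for the remaining claims. One small refinement: the square-root barrier is what gives equicontinuity up to $\partial\Omega$, and hence uniform convergence on $\overline\Omega$; once you have that, $u=0$ on $\partial\Omega$ follows automatically, because every $u_N$ vanishes there.
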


\section{General convex domains}
\label{sec:convex-domains}

The polygonal theorem combines local estimates with boundary tightness from the finite terminal graph.  On a general bounded convex domain, the local estimates persist on compact subsets and yield vague curvature convergence in $\Omega$.

Let $\Omega\subset\R^2$ be bounded, open, and convex with nonempty interior, and fix a compact set
\[
 K\Subset\Omega.
\]
No regularity or strict-convexity assumption is imposed on $\partial\Omega$.  Tropical functions are understood with their continuous extensions to $\overline\Omega$.  Each exhaustion polygon $\Delta_j$ denotes the compact closed polygonal body, while $\Delta_j^\circ$ denotes its open interior; the relaxation $F_{\Delta_j,P}$ is defined on $\Delta_j^\circ$ and extended continuously to $\Delta_j$.

For any open set $U\subset\R^2$, write
\[
 \Conf_N(U)=\{(p_1,\dots,p_N)\in U^N:p_i\ne p_j\text{ for }i\ne j\}.
\]
For a nonempty finite $P\subset\Omega$, write
\[
 F_{\Omega,P}=G_P0_\Omega,
 \qquad
 \nu_P=\sum_{p\in P}\delta_p.
\]
Existence, local finiteness, and boundary continuity of this relaxation on an arbitrary bounded convex domain are part of the general tropical-series theory in \cite{KalininShkolnikov2018}.  Positive-time polygonization is the corresponding wavefront phenomenon in \cite{MikhalkinShkolnikov2023}.

On the continuum side, zero is affine boundary data; hence the generalized Dirichlet theory \cite{Aleksandrov1958,RauchTaylor1977} and the modern comparison formulation \cite[Theorem~1 and Remark~1]{Mooney2018} give a unique continuous concave solution for every finite Borel measure on $\Omega$, without strict convexity of the boundary.

For a fixed configuration, \cref{thm:convex-rational-core} identifies each positive level with a rational polygonal model.  The fixed exhaustion \eqref{eq:convex-exhaustion} gives estimates uniform in the configuration and independent of the boundary fan.

On interior compact sets, height and boundary barriers bound the integral gradients.  Weighted Crofton and terminal-excess estimates then give the quantitative curvature bound.  These estimates yield compactness, and Aleksandrov stability identifies the limit.  The boundary-proportional estimate preserves the zero boundary values.

\subsection{Rational exhaustions and genericity}

Because $K\Subset\Omega$ and $\Omega$ is open and convex, one can choose an open rational polygon $\mathcal O$ and nested compact rational convex polygons $\Delta_j$ such that
\begin{equation}
\label{eq:convex-exhaustion}
 K\Subset\mathcal O\Subset\Delta_1^\circ,
 \qquad
 \Delta_j\subset\Delta_{j+1}^\circ\Subset\Omega,
 \qquad
 \bigcup_{j\ge1}\Delta_j^\circ=\Omega.
\end{equation}

\begin{definition}[Fixed-exhaustion genericity]
\label{def:convex-exhaustion-generic}
Fix an exhaustion $\mathscr E=(\Delta_j)$ as in \eqref{eq:convex-exhaustion}.  For fixed $N$ and $j$, let
\[
 \mathcal G_N(\Delta_j)\subset\Conf_N(\Delta_j^\circ)
\]
be the intrinsic strong-genericity locus for the polygon $\Delta_j$: the set of ordered configurations for which the three conclusions of \cref{thm:strong-genericity} hold for the relaxation on $\Delta_j$.  Set
\[
 \mathcal G_N^{\mathscr E}
 =\Conf_N(\mathcal O)\cap\bigcap_{j\ge1}\mathcal G_N(\Delta_j).
\]
An ordered configuration in $\mathcal G_N^{\mathscr E}$ is called \emph{$\mathscr E$-generic}.  An unordered $N$-point set is $\mathscr E$-generic when one, equivalently every, ordering is $\mathscr E$-generic.
\end{definition}

For every fixed rational polygon $\Delta$, the set $\mathcal G_N(\Delta)$ is open, dense, permutation-invariant, and of full Lebesgue measure in $\Conf_N(\Delta^\circ)$.  Indeed, exhaust $\Delta^\circ$ by open rational polygons $\mathcal O_m\Subset\Delta^\circ$ and apply \cref{thm:strong-genericity} on each $\Conf_N(\mathcal O_m)$.  The intrinsic locus $\mathcal G_N(\Delta)$ contains each full-measure generic locus given by that theorem, so its complement has measure zero and is nowhere dense on each $\Conf_N(\mathcal O_m)$, hence on all of $\Conf_N(\Delta^\circ)$.

Openness follows directly from the local-stability condition in the definition of strong genericity.  Baire's theorem and countable subadditivity then show that $\mathcal G_N^{\mathscr E}$ is a dense $G_\delta$ set of full Lebesgue measure in $\Conf_N(\mathcal O)$.  It need not be open.

Universal genericity removes the dependence on the chosen exhaustion.

\begin{definition}[Rational-coordinate test polygons]
\label{def:rational-coordinate-test-polygons}
Let $\mathscr P_{\mathbb Q}(\Omega)$ be the family of compact convex polygons $\Delta$ with nonempty interior, all vertices in $\mathbb Q^2$, and $\Delta\Subset\Omega$.  This family is countable.  For $\Delta\in\mathscr P_{\mathbb Q}(\Omega)$ define
\[
 \widehat{\mathcal G}_N(\Delta)
 =\mathcal G_N(\Delta)
 \cup
 \bigl\{(p_1,\dots,p_N)\in\Conf_N(\Omega):
          p_i\notin\Delta\text{ for some }i\bigr\}.
\]
Finally set
\begin{equation}
\label{eq:universal-generic-locus}
 \mathcal G_N^{\mathrm{univ}}(\Omega)
 =\bigcap_{\Delta\in\mathscr P_{\mathbb Q}(\Omega)}
   \widehat{\mathcal G}_N(\Delta).
\end{equation}
A configuration in $\mathcal G_N^{\mathrm{univ}}(\Omega)$ is called \emph{universally generic in $\Omega$}.
\end{definition}

\begin{lemma}[Countable cofinality]
\label{lem:rational-coordinate-cofinality}
The family $\mathscr P_{\mathbb Q}(\Omega)$ is cofinal among compact subsets of $\Omega$: for every compact $C\Subset\Omega$ there is $\Delta\in\mathscr P_{\mathbb Q}(\Omega)$ with
\[
 C\Subset\Delta^\circ\Subset\Omega.
\]
Moreover, one may choose a nested exhaustion $(\Delta_j)$ satisfying \eqref{eq:convex-exhaustion} with every $\Delta_j\in\mathscr P_{\mathbb Q}(\Omega)$.
\end{lemma}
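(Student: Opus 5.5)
The plan is to thicken $C$ into a rational polygon by elementary convex geometry. First I would set $\delta=\dist(C,\R^2\setminus\Omega)>0$, which is positive because $C$ is compact and $\Omega$ is open. Next I would cover $C$ by finitely many open squares of side less than $\delta/4$, centred at points of $C$ and with vertices in $\mathbb Q^2$. Let $S$ be the finite set of all their vertices, and put $\Delta=\conv(S)$.

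Then $\Delta$ is a compact convex polygon whose vertices lie in $S\subset\mathbb Q^2$. It has nonempty interior, since it contains a square. Its interior $\Delta^\circ$ contains the union of the open squares, and hence contains $C$; because $C$ is compact, this gives $C\Subset\Delta^\circ$. Every point of $S$ lies within distance $\delta/2$ of $C$, and therefore in the convex set $C_{\delta/2}=\{x:\dist(x,\conv C)\le\delta/2\}$. Here I need $\conv C\subset\Omega$ with $\dist(\conv C,\R^2\setminus\Omega)\ge\delta$. This holds because $\Omega$ is convex: the function $x\mapsto\dist(x,\R^2\setminus\Omega)$ is concave on $\Omega$. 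So $\Delta\subset C_{\delta/2}\Subset\Omega$, and $\Delta\in\mathscr P_{\mathbb Q}(\Omega)$. Countability of $\mathscr P_{\mathbb Q}(\Omega)$ follows because each member is the convex hull of a finite subset of $\mathbb Q^2$.

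For the nested exhaustion, I would fix compact sets $C_j=\{x\in\Omega:\dist(x,\R^2\setminus\Omega)\ge1/j\}$ for $j\ge j_0$, where $j_0$ is chosen large enough that $K\Subset C_{j_0}^\circ$; these sets exhaust $\Omega$. I would first pick an open rational polygon $\mathcal O$ with $K\Subset\mathcal O\Subset\Omega$, by applying the cofinality statement to $K$. Then I would define $\Delta_j$ inductively by applying cofinality to the compact set $\overline{\mathcal O}\cup C_j\cup\Delta_{j-1}$, with $\Delta_0=\varnothing$. This gives $\Delta_{j-1}\subset\Delta_j^\circ$, $\overline{\mathcal O}\subset\Delta_1^\circ$ and $C_j\subset\Delta_j^\circ$. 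Hence $\bigcup_j\Delta_j^\circ=\Omega$ and \eqref{eq:convex-exhaustion} holds.

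The only point needing care is the distance estimate, namely that the convex hull does not leave $\Omega$. That is exactly where the convexity of $\Omega$ (concavity of the boundary distance) is used.
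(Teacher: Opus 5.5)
Your proof is correct and follows essentially the same route as the paper. Both take the convex hull of vertices of small rational squares near $C$, keep it inside $\Omega$ by a neighborhood estimate that uses convexity of $\Omega$, and then apply the construction recursively to a compact exhaustion together with the preceding polygon; you use a finite compactness cover where the paper uses a uniform grid over the $r$-neighborhood of $\conv C$.

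There is one small fix. A square centred at an arbitrary point of $C$ need not have rational vertices. Instead, for each point of $C$ take an open square with vertices in $\mathbb Q^2$ and side less than $\delta/4$ that merely contains the point. Its vertices still lie within $\sqrt2\,\delta/4<\delta/2$ of $C$, so the rest of your argument goes through unchanged.
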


\begin{proof}
Replace $C$ by its compact convex hull, which is still compactly contained in the open convex set $\Omega$.  Choose $r>0$ so that the closed $2r$-neighborhood of this hull lies in $\Omega$.  For a sufficiently fine rational square grid, take the convex hull of all grid vertices belonging to squares that meet the closed $r$-neighborhood.  The resulting polygon has rational vertices, contains $C$ in its interior, and lies in the closed $2r$-neighborhood, hence in $\Omega$.  Applying this construction recursively to a compact exhaustion of $\Omega$ together with the preceding polygon gives the nested exhaustion.
\end{proof}

\begin{proposition}[Intrinsic universal genericity]
\label{prop:universal-genericity}
For every $N$, the set $\mathcal G_N^{\mathrm{univ}}(\Omega)$ is permutation-invariant, residual, and of full Lebesgue measure in $\Conf_N(\Omega)$.  If $P\in\mathcal G_N^{\mathrm{univ}}(\Omega)$ and $P\subset\Delta^\circ$ for some $\Delta\in\mathscr P_{\mathbb Q}(\Omega)$, then $P$ is strongly generic for the relaxation on $\Delta$.
\end{proposition}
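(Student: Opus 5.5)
The plan is to prove the three assertions (permutation invariance, residuality and full measure, and the restriction property) directly from the definition \eqref{eq:universal-generic-locus} as a countable intersection, using the facts already recorded for a single rational polygon: for fixed $\Delta$, the intrinsic locus $\mathcal G_N(\Delta)$ is open, dense, permutation-invariant and of full Lebesgue measure in $\Conf_N(\Delta^\circ)$.

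First, permutation invariance. Each $\widehat{\mathcal G}_N(\Delta)$ is permutation-invariant: $\mathcal G_N(\Delta)$ is, because the three conclusions of \cref{thm:strong-genericity} depend only on the unordered set and the relaxation $G_P0_\Delta$ is symmetric. The added set $\{p_i\notin\Delta\text{ for some }i\}$ is clearly symmetric. A countable intersection preserves this, and $\mathscr P_{\mathbb Q}(\Omega)$ is countable.

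Next, measure and residuality, which reduces to showing that each complement
\[
 \Conf_N(\Omega)\setminus\widehat{\mathcal G}_N(\Delta)
 =\Conf_N(\Delta)\setminus\mathcal G_N(\Delta)
\]
(configurations with all points in $\Delta$ but not generic) is Lebesgue-null and nowhere dense in $\Conf_N(\Omega)$. Split this set into two parts.
\begin{itemize}
\item Configurations with some point on $\partial\Delta$. These lie in a finite union of sets $\{p_i\in\partial\Delta\}$, each a closed null set with empty interior.
\item Configurations in $\Conf_N(\Delta^\circ)\setminus\mathcal G_N(\Delta)$. This set is null and relatively closed in the open set $\Conf_N(\Delta^\circ)$, since $\mathcal G_N(\Delta)$ is open there. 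A closed null set has empty interior.
\end{itemize}
The union of the two parts is closed in $\Conf_N(\Omega)$. Indeed, if a limit of nongeneric interior configurations lies in $\Conf_N(\Delta^\circ)$, it stays nongeneric by relative closedness, and otherwise it lands in the first part. So the union is a closed null set, hence nowhere dense. Then $\mathcal G_N^{\mathrm{univ}}(\Omega)$ is a countable intersection of open dense sets of full measure. It is therefore residual by Baire's theorem, since $\Conf_N(\Omega)$ is an open subset of $\R^{2N}$, and it has full measure by countable subadditivity.

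Finally, the restriction property is immediate. If $P\subset\Delta^\circ$, then no $p_i\notin\Delta$, so membership in $\widehat{\mathcal G}_N(\Delta)$ forces $P\in\mathcal G_N(\Delta)$, which is exactly strong genericity on $\Delta$.

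The main obstacle is making sure the per-polygon facts quoted before the definition hold for the intrinsic locus on all of $\Conf_N(\Delta^\circ)$, not just on configurations inside a fixed $\mathcal O_m$.
\begin{itemize}
\item Openness I take from the local-stability clause (iii) of strong genericity.
\item Full measure comes from exhausting $\Delta^\circ$ by the $\mathcal O_m$, as in the paragraph preceding the definition.
\end{itemize}
The only subtlety I would check is that the closedness argument above, needed for nowhere density, uses openness of $\mathcal G_N(\Delta)$ inside $\Conf_N(\Delta^\circ)$ and nothing more.
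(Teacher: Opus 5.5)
Your proof is correct and follows the paper's argument essentially step for step. Both split the complement of $\widehat{\mathcal G}_N(\Delta)$ into boundary-incidence configurations and nongeneric interior configurations, use countability of $\mathscr P_{\mathbb Q}(\Omega)$ to get residuality and full measure, and note that $P\subset\Delta^\circ$ rules out the second alternative in the definition. Your explicit closedness check (the closed set of configurations lying in $\Delta$, minus the open set $\mathcal G_N(\Delta)$) just spells out the openness of $\widehat{\mathcal G}_N(\Delta)$ that the paper asserts directly.
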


\begin{proof}
For fixed $\Delta$, the set $\widehat{\mathcal G}_N(\Delta)$ is open in $\Conf_N(\Omega)$.  Its complement consists of configurations all of whose points lie in $\Delta$, with either at least one point on $\partial\Delta$ or an interior configuration outside $\mathcal G_N(\Delta)$.  The first set is contained in a finite union of codimension-one boundary-incidence sets, and the second has measure zero and is nowhere dense by strong genericity on $\Delta$.  Thus $\widehat{\mathcal G}_N(\Delta)$ is open, dense, and full measure.

The family $\mathscr P_{\mathbb Q}(\Omega)$ is countable, so \eqref{eq:universal-generic-locus} is residual and full measure.  Permutation invariance is immediate.

If $P\subset\Delta^\circ$, the second alternative in the definition of $\widehat{\mathcal G}_N(\Delta)$ is impossible, hence $P\in\mathcal G_N(\Delta)$.
\end{proof}

Exact local reduction uses the configuration-dependent positive-level polygon $\Omega_t(F_{\Omega,P})$, which is distinct from both the fixed exhaustion and the countable family of test polygons.

\begin{theorem}[Fixed-exhaustion convex-domain Monge--Amp\`ere limit]
\label{thm:convex-main-fixed}
Let $P_N\subset K$ be $\mathscr E$-generic configurations with $|P_N|=N$, and suppose
\[
 \mu_N:=\frac1N\sum_{p\in P_N}\delta_p\rightharpoonup\mu,
\]
where $\mu$ is a probability measure supported in $K$.  Put
\[
 u_N=N^{-1/2}F_{\Omega,P_N}.
\]
Then:
\begin{enumerate}[label=\textup{(\roman*)}]
\item for every compact $L\Subset\Omega$ there is a constant $C=C(\Omega,K,L)$ such that, whenever $\varphi\in C_c^1(\Omega)$ and $\supp\varphi\subset L$,
\begin{equation}
\label{eq:convex-quant-main}
 \left|
 \int_\Omega\varphi\,d\bigl(\MA(u_N)-\mu_N\bigr)
 \right|
 \le
 \frac{C}{\sqrt N}
 \bigl(\|\varphi\|_\infty+\|\nabla\varphi\|_\infty\bigr);
\end{equation}
\item $\MA(u_N)\rightharpoonup\mu$ vaguely in $\Omega$;
\item $u_N\to F_{\mu,\Omega}$ uniformly on $\overline\Omega$, where $F_{\mu,\Omega}$ is the unique continuous concave Aleksandrov solution of
\[
 \MA(F_{\mu,\Omega})=\mu,
 \qquad
 F_{\mu,\Omega}|_{\partial\Omega}=0.
\]
\end{enumerate}
\end{theorem}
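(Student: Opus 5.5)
The plan is to reduce everything to the polygonal theory through positive levels. Fix $L\Subset\Omega$ and choose $j$ with $K\cup L\Subset\Delta_j^\circ$. For a configuration $P=P_N$, the level sets $\Omega_t(F_{\Omega,P})=\{F_{\Omega,P}\ge t\}$ are what we use. By the rational-core result (\cref{thm:convex-rational-core}), for $t>0$ each such set is a rational polygon, and on it $F_{\Omega,P}-t$ coincides with the zero-boundary relaxation $G_P0_{\Omega_t}$. The first task is a uniform choice of level. We take $t=t_N:=\tau\sqrt N$, where $\tau=\tau(\Omega,K,L)>0$ is small, and we need $L\cup K\subset\Omega_{t_N}^\circ$. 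The lower bound comes from the fact that $F_{\Omega,P}$ dominates the relaxation on $\Delta_{j+1}$: restriction to $\Delta_{j+1}$ is admissible up to the order relation, or else we compare by a barrier. We also need a positive lower bound $F\ge c\sqrt N$ on $L\cup K$. Any mark-containing tropical series with zero boundary must have $\MA$-mass at least of order $N$ near the marks (by \cref{thm:exact-total-mass} applied on $\Delta_j$). Combining this with the Aleksandrov maximum principle in reverse, i.e.\ concavity and a height lower bound at some point of $K$ propagated to $L$ by concavity, gives $F_{\Omega,P}\ge c\sqrt N$ on $L\cup K$. This is the step I expect to be delicate, and I would first try to prove it from the comparison $F_{\Omega,P}\ge F_{\Delta_j,P}$ on $\Delta_j$ together with the polygonal limit \cref{thm:deterministic-limit} applied to subsequences. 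Because the limit $F_{\mu,\Delta_j}$ is positive on $K\cup L$ uniformly over probability measures $\mu$ on $K$, a contradiction-compactness argument then yields a uniform $c$.

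Given this, part (i) follows from \cref{thm:quantitative-curvature} on the polygon $\Omega_{t_N}$, with source set $P\subset K$. The difficulty is that the constant $C(\Omega_{t_N},K)$ must not depend on the configuration-dependent polygon. So I will rederive it through its two ingredients. First, \cref{thm:tangential-estimate} gives the bound $\frac12\int_{\Gamma_P\cap L}|\nabla_\tau\varphi|$, which only requires $\mathcal H^1(\Gamma_P\cap L)\le C\sqrt N$. Second, we need the Pick-excess mass on $L$. For both, we use the fact that gradients of $F_{\Omega,P}$ on a neighborhood $L'$ of $L$ are $O(\sqrt N)$, by concavity and the height bound $F_{\Omega,P}\le C\sqrt N$. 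The height bound follows from comparison with the competitor $E_P$ built on $\Delta_{j+1}$ extended by zero, or from the $K$-free bound \cref{prop:k-free-height} on an enclosing polygon. A weighted Crofton formula then bounds the weighted length: for a line $\ell$ crossing $L'$, the number of weighted crossings of $\Gamma_P$ equals the total variation of the gradient of $F|_\ell$, which is $O(\sqrt N)$. Integrating over lines gives $\sum_e w(e)\mathcal H^1(e\cap L)\le C\sqrt N$. The excess on $L$ lives only at vertices of terminal edges of the level polygon, by \cref{cor:interior-mildness} and \cref{prop:boundary-excess}. Those terminal edges cross from $L$ out to $\partial\Omega_{t_N}$, so they cross $\partial L'$. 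Hence their weighted count is bounded by the same Crofton slice count, again $O(\sqrt N)$. Dividing by $N$ using the quadratic homogeneity gives \eqref{eq:convex-quant-main}.

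For (ii) and (iii) we argue as in \cref{sec:aleksandrov-limit}. From (i), $\MA(u_N)(L)$ is bounded for each $L$, and vague convergence $\MA(u_N)\rightharpoonup\mu$ follows by density of $C^1_c$ in $C_c$. The potentials satisfy $0\le u_N\le C$, and they have the boundary-proportional bound $u_N\le C\,d(x)$. Here $d$ is a concave barrier vanishing on $\partial\Omega$, for example the minimum over supporting half-planes, normalized using depth $\delta_K$, which works as in \cref{prop:dirichlet-competitor} since the relaxation is minimal among admissible series. Concavity gives interior Lipschitz bounds on compacts. So $(u_N)$ is equicontinuous on $\overline\Omega$ with a common boundary modulus, and Arzel\`a--Ascoli gives uniform subsequential limits $u$ that are concave and zero on $\partial\Omega$. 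Aleksandrov stability yields $\MA(u)=\mu$ in $\Omega$. Uniqueness of the zero-boundary solution \cite{Mooney2018} identifies $u=F_{\mu,\Omega}$, so the whole sequence converges. The main obstacle remains the uniform lower level $t_N\gtrsim\sqrt N$ on $L$, which makes the level polygon $\Omega_{t_N}$ contain $L$ independently of the configuration.
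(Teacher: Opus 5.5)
Your treatment of (ii) and (iii) follows the paper. The local ingredients you list for (i) are exactly those of \cref{lem:convex-gradient-bound,lem:convex-weighted-crofton,lem:convex-excess-crossing}: an $O(\sqrt N)$ gradient bound from concavity and the height bound, weighted Crofton slicing, and injecting terminal germs at vertices in $L$ into crossings of a surrounding regular window.

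The gap is the polygon on which you invoke the finite theory. \Cref{thm:tangential-estimate} rests on the cut-tree theorem, hence on $g=N$ from the dimension count in a full-dimensional chamber. The reduction of $\Xi$ to terminal edges rests on \cref{cor:interior-mildness}, whose marked case uses the marked dual tree. All of this is proved only for $P$ strongly generic \emph{for that polygon}, and your hypothesis supplies this only for the fixed $\Delta_j$.

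The level polygon $\Omega_t(F_{\Omega,P})$ is built from $P$ itself, with $P$-dependent and typically irrational vertices. So it is neither some $\Delta_j$ nor a member of the countable family $\mathscr P_{\mathbb Q}(\Omega)$. Strong genericity concerns the family $P'\mapsto F_{\Omega_t,P'}$ with $\Omega_t$ frozen, and says nothing about the particular $P$ that defines $\Omega_t$. It can genuinely fail. The bounded faces for $\Omega_t$ are the faces of $V(F_{\Omega,P})$ whose closures lie in $\{F_{\Omega,P}>t\}$. If the level line cuts a face compactly contained in $\Omega$, then comparing with a lower level and using \cref{lem:binary-spanning} gives at most $N-1$ bounded faces on $\Omega_t$. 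By \cref{thm:generic-genus}, $P$ is then not strongly generic there.

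Working with $V(F_{\Omega,P})$ directly does not help either. $F_{\Omega,P}$ is only a locally uniform limit of the $F_{\Delta_j,P}$, and the cut-tree, weight-one-carrier and marks-off-vertices properties need not survive that limit.

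The missing idea is to pass the \emph{inequality}, not the combinatorics, to the limit. On $\Delta_j$ genericity holds, and domain monotonicity gives $F_{\Delta_j,P}\le F_{\Omega,P}\le C_\Omega\sqrt N$. So for large $j$ your gradient, Crofton and crossing bounds hold with constants independent of $j$ and of the boundary fan, which yields \eqref{eq:convex-uniform-polygonal-estimate}. Then \cref{prop:convex-exhaustion-stability} and Aleksandrov stability give $\int\varphi\,d\MA(F_{\Delta_j,P})\to\int\varphi\,d\MA(F_{\Omega,P})$ for $\varphi\in C^1_c$ supported in $L$, and the linear estimate survives.

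In particular, the step you call the main obstacle, a uniform level $t_N\gtrsim\sqrt N$, is unnecessary even on your own route. Your constants depend only on a neighborhood $L'$ of $L$ and on $\dist(L',\partial\Omega)$, so any $P$-dependent $t<\min_{L'\cup P}F_{\Omega,P}$ would serve.

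Two smaller points. First, a competitor built on $\Delta_{j+1}$ and extended by zero is not concave, hence not admissible on $\Omega$. Use $\min\{\widehat q_P,M_{K,P}D_\Omega\}$ on $\Omega$ as in \cref{prop:convex-barrier}, or monotonicity from an enclosing polygon. Second, the boundary barrier must have integral slopes, i.e.\ be the tropical distance $D_\Omega$ of \eqref{eq:convex-tropical-distance}, not a minimum over all supporting half-planes.
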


\begin{corollary}[Intrinsic convex-domain Monge--Amp\`ere limit]
\label{thm:convex-main}
Let $P_N\subset K$ be universally generic in $\Omega$, with $|P_N|=N$, and suppose the empirical measures $\mu_N$ converge weakly to a probability measure $\mu$ supported in $K$.  Then conclusions \textup{(i)}--\textup{(iii)} of \cref{thm:convex-main-fixed} hold.  In particular, the hypothesis and conclusion are independent of any selected rational polygonal exhaustion.
\end{corollary}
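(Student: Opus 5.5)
The plan is to reduce the corollary to \cref{thm:convex-main-fixed} by showing that a universally generic configuration is automatically $\mathscr E$-generic for a suitable fixed exhaustion. The construction proceeds in three steps.

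First, use \cref{lem:rational-coordinate-cofinality} to choose a nested exhaustion $\mathscr E=(\Delta_j)$ satisfying \eqref{eq:convex-exhaustion} with every $\Delta_j\in\mathscr P_{\mathbb Q}(\Omega)$. Do the same for the auxiliary open rational polygon: pick $\mathcal O$ as the interior of some polygon in $\mathscr P_{\mathbb Q}(\Omega)$ with $K\Subset\mathcal O\Subset\Delta_1^\circ$, again by cofinality applied to $K$ and then to that polygon. All of these choices depend only on $\Omega$ and $K$, never on $N$ or on the configurations.

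Second, verify $P_N\in\mathcal G_N^{\mathscr E}$ for every $N$. Since $P_N\subset K\subset\mathcal O$, the ordered configuration lies in $\Conf_N(\mathcal O)$. For each $j$ we have $P_N\subset\mathcal O\subset\Delta_j^\circ$ with $\Delta_j\in\mathscr P_{\mathbb Q}(\Omega)$. The last assertion of \cref{prop:universal-genericity} then gives $P_N\in\mathcal G_N(\Delta_j)$. Intersecting over $j$ yields $\mathscr E$-genericity. Permutation invariance of the loci makes ordered versus unordered immaterial.

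Third, apply \cref{thm:convex-main-fixed} to obtain (i)--(iii) with a constant $C(\Omega,K,L)$. That constant depends on the exhaustion only through a choice we have fixed as a function of $(\Omega,K)$, so it is legitimately $C(\Omega,K,L)$. For the independence claim, note two things. The hypothesis, universal genericity, is defined by an intersection over the canonical countable family $\mathscr P_{\mathbb Q}(\Omega)$ and involves no exhaustion. The conclusions concern $F_{\Omega,P_N}$, $\MA(u_N)$, and $F_{\mu,\Omega}$, which are intrinsic to $\Omega$. Moreover, every admissible rational-coordinate exhaustion works equally well, since the second step applies to any of them.

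The main point of care is the second step. One must make sure the exhaustion polygons, and not merely some polygons containing $P_N$, belong to $\mathscr P_{\mathbb Q}(\Omega)$. One must also make sure that strong genericity in the intrinsic sense for $\Delta_j$ is the same notion used in the definition of $\mathcal G_N(\Delta_j)$. Both are arranged by the choice made in the first step and by the definitions as stated, so no genuine obstacle is expected.
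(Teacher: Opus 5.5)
Your proposal is correct and follows the paper's own argument: choose a nested exhaustion by rational-coordinate polygons via the cofinality lemma, use the last assertion of the universal-genericity proposition to conclude that each $P_N$ is strongly generic for every $\Delta_j$ (hence $\mathscr E$-generic), and apply the fixed-exhaustion theorem. Your explicit choice of the auxiliary polygon $\mathcal O$ with $K\Subset\mathcal O\Subset\Delta_1^\circ$, and your remark that the constant depends only on $(\Omega,K,L)$, spell out details that the paper's short proof leaves implicit.
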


\begin{proof}
By \cref{lem:rational-coordinate-cofinality}, choose a nested exhaustion $(\Delta_j)$ from $\mathscr P_{\mathbb Q}(\Omega)$ with $K\Subset\Delta_1^\circ$.  Universal genericity and \cref{prop:universal-genericity} make every $P_N$ strongly generic for every $\Delta_j$, hence $\mathscr E$-generic.  Apply \cref{thm:convex-main-fixed}.
\end{proof}

The theorem applies to flat boundary pieces, corners, irrational supporting directions, and arbitrary boundary regularity compatible with convexity.  Its source configurations lie in a fixed compact set $K\Subset\Omega$.  Clouds approaching $\partial\Omega$ as $N\to\infty$ require boundary-layer estimates.

\begin{remark}[Boundary shape versus boundary-reaching sources]
\label{rem:boundary-shape-source-depth}
Boundary regularity and the distance of the sources from $\partial\Omega$ are independent.  Convexity suffices for corners and flat sides because the estimates are local on compact subsets.  The hypothesis
\[
 \inf_N\dist(P_N,\partial\Omega)>0
\]
follows from $P_N\subset K\Subset\Omega$ and yields the uniform boundary-proportional estimate, hence the common boundary modulus.  The $K$-free height bound in \eqref{eq:convex-height-barrier} is independent of this lower bound.  For sequences with $\dist(P_N,\partial\Omega)\to0$, a uniform boundary modulus depends on boundary-layer estimates.
\end{remark}

\subsection{Rational cores}

For $m\in\Z^2\setminus\{0\}$ define the support defect
\[
 \ell_{\Omega,m}(x)
 =\inner{m}{x}-\min_{y\in\overline\Omega}\inner{m}{y}.
\]
The primitive tropical distance is
\begin{equation}
\label{eq:convex-tropical-distance}
 D_\Omega(x)=\inf_{m\in\Z^2_{\prim}}\ell_{\Omega,m}(x).
\end{equation}
This equals the infimum over all nonzero lattice vectors.  Indeed, if $m=km_0$ with $k\ge1$ and $m_0$ primitive, then
\[
 \ell_{\Omega,m}=k\ell_{\Omega,m_0}\ge\ell_{\Omega,m_0}
\]
on $\overline\Omega$.  By \cite[Lemmas~4.3--4.4]{KalininShkolnikov2018}, $D_\Omega$ is locally a minimum of finitely many integer-slope affine functions in $\Omega$, is positive there, and extends continuously by zero to $\partial\Omega$.  Thus it is a zero-boundary tropical series on $\Omega$; in general it need not be a finite tropical polynomial.

\begin{theorem}[Exact rational-core and replacement theorem]
\label{thm:convex-rational-core}
\label{prop:convex-positive-core}
\label{lem:convex-core-gluing}
\label{prop:convex-compact-rational-reduction}
Let $F$ be a nonnegative concave tropical series on a bounded convex domain $\Omega$, continuous on $\overline\Omega$ and equal to zero on $\partial\Omega$.  Fix
\[
 0<t<\max_{\overline\Omega}F,
 \qquad
 \Delta_t:=\{F\ge t\}.
\]
Then the following hold.
\begin{enumerate}[label=\textup{(\roman*)}]
\item \emph{Finite rational core.}
The set $\Delta_t$ is a compact convex polygon with rational slopes and nonempty interior.  There exist an open neighborhood $U_t\Subset\Omega$ of $\Delta_t$ and finitely many global integer-slope supporting monomials $L_1,\dots,L_M$ of $F$ such that
\begin{equation}
\label{eq:rational-core-finite-presentation}
 F=\min_{1\le j\le M}L_j
 \qquad\text{on }U_t.
\end{equation}
Consequently,
\[
 H_t:=F-t
\]
is a nonnegative zero-boundary tropical polynomial on $\Delta_t$.

\item \emph{Core gluing.}
Let $G$ be a nonnegative zero-boundary tropical polynomial on $\Delta_t$ satisfying $G\le H_t$.  Then
\begin{equation}
\label{eq:rational-core-glued-function}
 \widetilde F(x)=
 \begin{cases}
  t+G(x),&x\in\Delta_t,\\
  F(x),&x\in\Omega\setminus\Delta_t
 \end{cases}
\end{equation}
is a nonnegative concave tropical series on $\Omega$, continuous on $\overline\Omega$ and zero on $\partial\Omega$.

\item \emph{Exact shifted relaxation.}
Assume in addition that $F=F_{\Omega,P}$ for a nonempty finite set $P\subset\Omega$, and let $C\Subset\Omega$ be compact.  Then $F>0$ throughout $\Omega$.  For every
\[
 0<t<\min_{x\in C\cup P}F(x),
\]
one has $C\cup P\Subset\Delta_t^\circ$ and
\begin{equation}
\label{eq:convex-compact-rational-reduction}
 F=t+F_{\Delta_t,P}
 \qquad\text{on }\Delta_t.
\end{equation}
Thus every prescribed compact portion of the direct relaxation on $\Omega$ is represented exactly, up to the additive normalization $t$, by a zero-boundary relaxation on one compact rational polygon.
\end{enumerate}
No regular-value assumption on $t$ is required.
\end{theorem}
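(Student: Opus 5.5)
We prove the three parts in order. Part (i) uses concavity and local finiteness. Part (ii) is a pasting argument. Part (iii) combines (ii) with pointwise minimality in both directions.

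For (i), $\Delta_t$ is a superlevel set of a continuous concave function, so it is compact and convex. Since $t>0=F|_{\partial\Omega}$ and $F$ is continuous on $\overline\Omega$, $\Delta_t$ is disjoint from $\partial\Omega$, hence $\Delta_t\Subset\Omega$. Since $t<\max F$, it has nonempty interior.

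Local finiteness of the tropical series gives finitely many affine pieces near each point of $\Delta_t$, and compactness gives finitely many integer-slope monomials on a neighborhood $U_t$. Each local piece of a concave function is the restriction of a global supporting monomial $L_j\ge F$. Hence $F=\min_jL_j$ on $U_t$, and $\Delta_t=\bigcap_j\{L_j\ge t\}\cap U_t$. By convexity, only the finitely many half-planes $\{L_j\ge t\}$ with integral normals cut out $\Delta_t$, so $\Delta_t$ is a rational polygon. Then $H_t=\min_j(L_j-t)$ on $\Delta_t$ is a finite integer-slope minimum, nonnegative on $\Delta_t$ and zero on $\partial\Delta_t$. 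The one subtle point is that each side of $\Delta_t$ must lie on some $\{L_j=t\}$; this holds because $F$ is affine near a relative-interior point of a side.

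For (ii), continuity follows because $G=0=H_t$ on $\partial\Delta_t$. For the tropical-series property, the only points needing care are those of $\partial\Delta_t$.

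For concavity we use a local-to-global criterion: a continuous function on a convex set that is locally concave is concave. Near $x\in\partial\Delta_t$ we claim
\[
 \widetilde F=\min(F,\,t+G^\ast),
\]
where $t+G^\ast$ is a finite min of monomials extending $t+G$ across $\partial\Delta_t$.

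To justify this, write $G=\min_k g_k$ on $\Delta_t$. Since $G\le H_t$ and $G=0$ on $\partial\Delta_t$, the side normal form (Lemma~\ref{lem:side-normal-form} applied on $\Delta_t$) shows that near a side, $G=m_G(S)\lambda_S$ with $m_G(S)\le m_{H_t}(S)$. Outside $\Delta_t$, where $\lambda_S<0$, this gives $t+m_G\lambda_S\ge t+m_{H_t}\lambda_S=F$ locally. So the minimum picks $F$ outside $\Delta_t$ and $t+G$ inside, which proves the claim. At polygon vertices of $\Delta_t$ we argue analogously using the corner chain structure.

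Nonnegativity, integer slopes and zero boundary values are then immediate.

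For (iii), positivity $F>0$ in $\Omega$ follows from concavity, since $F\not\equiv0$ because $P\subset V(F)$. The choice of $t$ gives $C\cup P\subset\{F>t\}\subset\Delta_t^\circ$. We prove $F-t=F_{\Delta_t,P}$ by two inequalities.

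\begin{itemize}
\item[(a)] $H_t$ is admissible on $\Delta_t$, because $V(H_t)=V(F)\cap\Delta_t^\circ\supset P$. Pointwise minimality gives $F_{\Delta_t,P}\le H_t$.
\item[(b)] Conversely, take $G=F_{\Delta_t,P}$, a finite zero-boundary polynomial on the rational polygon $\Delta_t$ by Proposition~\ref{prop:tropical-relaxation} and Theorem~\ref{thm:symplectic-area-comparison}. By (a), $G\le H_t$. Part (ii) gives a glued $\widetilde F$ on $\Omega$ that is a nonnegative zero-boundary series. Its corner locus contains $P$, since $P\subset\Delta_t^\circ$ where $\widetilde F=t+G$. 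So $\widetilde F$ is admissible for $G_P0_\Omega$, and minimality gives $F\le\widetilde F$. On $\Delta_t$ this reads $H_t\le G$.
\end{itemize}

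Combining (a) and (b) gives equality on $\Delta_t$, which is \eqref{eq:convex-compact-rational-reduction}.

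The main obstacle is the gluing step (ii) near the corners of $\Delta_t$, where local concavity across the boundary must be checked. There I would use the comparison of side quasi-degrees along both adjacent sides, together with homogeneity in corner coordinates as in Lemma~\ref{lem:corner-newton-chain}. This shows that $t+G$ extends as a homogeneous min of monomials lying above $F$ outside $\Delta_t$.
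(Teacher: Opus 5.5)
Parts (i) and (iii) follow the paper. In (i), $\Delta_t=Q_t\cap U_t$ with $Q_t=\bigcap_j\{L_j\ge t\}$ convex, so $\Delta_t$ is clopen in $Q_t$ and equals it; the paper reaches the same conclusion using an auxiliary convex $K_t$ and a first-exit point on a segment. Part (iii) is the same two-sided minimality argument.

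Part (ii) takes a different route. You write $\widetilde F=\min(F,t+G^\ast)$ near each point of $\partial\Delta_t$ and invoke local-to-global concavity, which gives concavity and the tropical property together. The paper instead restricts to lines and compares one-sided derivatives where a line enters and exits $\Delta_t$ through relative interiors of sides. It handles lines through vertices of $\Delta_t$ by approximating with parallel lines and using continuity. It then gets the tropical property from a common polyhedral refinement plus concavity, so it never analyzes a corner of $\Delta_t$.

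Two steps need repair.

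First, outside $\Delta_t$ you write $t+m_{H_t}(S)\lambda_S=F$. The side normal form gives this only on the inward side. It fails, for instance, when $S$ lies on a tropical edge of $F$ or when a tropical vertex of $F$ sits on $S$. What holds, and is enough, is $t+m_{H_t}(S)\lambda_S\ge F$ everywhere. This affine function agrees with the concave $F$ on an open subset of $\Delta_t^\circ$, so it supports $F$ globally; this is the paper's inequality $d^a_{\rm out}\ge d^a_H$. For the same reason, your remark in (i) that $F$ is affine near relative-interior points of sides is false in general. It is also unnecessary, since $F=t$ on $\partial\Delta_t$ by continuity.

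Second, the vertex case is only sketched, and chain monotonicity alone does not finish it. Use corner coordinates $a=\lambda_r$, $b=\lambda_{r+1}$, and set $k^G_r=m_G(S_r)\le k^H_r=m_{H_t}(S_r)$ (similarly for $r+1$). Let $H^\ast$ be the homogeneous extension of $H_t$, defined like $G^\ast$.
\begin{itemize}
\item On $\{a<0\le b\}$ and $\{b<0\le a\}$, the monotonicity in \cref{lem:corner-newton-chain} reduces $G^\ast=\min_j(\alpha_ja+\beta_jb)$ to $k^G_ra$ and $k^G_{r+1}b$ respectively. So $G^\ast\ge H^\ast$ there.
\item On the opposite quadrant $\{a<0,\,b<0\}$ you additionally need that the intermediate chain points lie strictly below the chord from $(0,k_{r+1})$ to $(k_r,0)$. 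This holds because the chain is a convex path: the switch values $t_j$ increase. Then $G^\ast=\min(k^G_ra,k^G_{r+1}b)\ge\min(k^H_ra,k^H_{r+1}b)=H^\ast$.
\item In all regions, $t+H^\ast\ge F$ because each of its monomials supports $F$. If $G\equiv0$, then simply $\widetilde F=\min(F,t)$.
\end{itemize}
Alternatively, skip the corner analysis. Your side argument gives concavity along every line avoiding the finitely many vertices, and the paper's parallel-line approximation extends this to all lines. Concavity together with piecewise affinity with integral slopes then gives the tropical property at the vertices.
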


\begin{proof}
\emph{Finite presentation and polygonization.}
Concavity makes $\Delta_t$ convex.  Since $F$ is continuous on $\overline\Omega$, vanishes on $\partial\Omega$, and $t>0$, the set $\Delta_t$ is compactly contained in $\Omega$.  The strict inequality $t<\max F$ and continuity imply that $\Delta_t$ has nonempty interior.  Moreover,
\begin{equation}
\label{eq:rational-core-boundary-level}
 \partial\Delta_t=\{F=t\}.
\end{equation}

Indeed, $F>t$ at a point of $\Delta_t$ implies, by continuity, that the point is interior to $\Delta_t$, so every boundary point has value $t$.  Conversely, suppose that $x\in\Delta_t^\circ$ and $F(x)=t$.  Choose $y\in\Omega$ with $F(y)>t$.  Since $x$ is interior to $\Delta_t$, for sufficiently small $\varepsilon>0$ the point $z=x-\varepsilon(y-x)$ still lies in $\Delta_t$.  The identity
\[
 x=\frac1{1+\varepsilon}z+
   \frac{\varepsilon}{1+\varepsilon}y
\]
and concavity give
\[
 t=F(x)
 \ge
 \frac1{1+\varepsilon}F(z)
 +\frac{\varepsilon}{1+\varepsilon}F(y)
 >t,
\]
a contradiction.  Hence no point of $\{F=t\}$ lies in $\Delta_t^\circ$, proving \eqref{eq:rational-core-boundary-level}.

In particular, a full-dimensional constant cell can occur only in the maximum set $\{F=\max F\}$, not at a level considered here.

Choose a compact convex set $K_t$ with
\[
 \Delta_t\Subset K_t^\circ\Subset K_t\Subset\Omega.
\]
Local finiteness of the tropical series gives, at every point of $K_t$, a neighborhood on which $F$ is the minimum of finitely many integer-slope affine functions.  Discard any term that is nowhere active in that neighborhood.  If an affine term $L$ is active at a point $x$, then $L(x)=F(x)$ and $L\ge F$ in a neighborhood of $x$.  This local support is automatically global.

Indeed, for arbitrary $y\in\Omega$ and sufficiently small $s>0$, the point $x_s=(1-s)x+sy$ lies in that neighborhood, so
\[
 (1-s)F(x)+sF(y)
 \le F(x_s)
 \le L(x_s)
 =(1-s)L(x)+sL(y).
\]
Since $F(x)=L(x)$, division by $s$ gives $F(y)\le L(y)$.  Thus every locally active affine term extends to a global supporting monomial of $F$.

Choose a finite subcover of $K_t$ and let $L_1,\dots,L_M$ be the union of the resulting supporting monomials.  At every point of an open neighborhood $U_t$ of $K_t$, one of the selected local terms is active, while all selected terms lie above $F$ globally.  Therefore \eqref{eq:rational-core-finite-presentation} holds on $U_t$.

Put
\[
 Q_t:=\bigcap_{j=1}^M\{L_j\ge t\}.
\]
On $K_t$ the finite presentation gives
\[
 Q_t\cap K_t=\{F\ge t\}=\Delta_t.
\]

To prove $Q_t\subset K_t$, suppose otherwise and choose $y\in Q_t\setminus K_t$ and $x\in\Delta_t^\circ$.  Convexity of $Q_t$ implies that the segment $[x,y]$ lies in $Q_t$.  Its first intersection $z$ with $\partial K_t$ then belongs to $Q_t\cap K_t=\Delta_t$, contradicting $\Delta_t\Subset K_t^\circ$.  Hence $Q_t=\Delta_t$.

Thus $\Delta_t$ is a bounded finite intersection of half-planes with integer normals, so it is a compact convex polygon with rational slopes.  The same finite presentation shows that $H_t=F-t$ is a tropical polynomial on $\Delta_t$; because $\partial\Delta_t\subset\{F=t\}$, it is nonnegative and vanishes on the whole boundary.

The argument applies equally at critical values, where the combinatorial type of the polygon changes.  The endpoint $t=\max F$ is excluded because its core may be lower dimensional or may contain a positive-dimensional maximal cell.

\emph{Concavity of the glued function.}
The two formulas in \eqref{eq:rational-core-glued-function} agree on $\partial\Delta_t$, since $G=0$ and $F=t$ there.  Let $S$ be a side of $\Delta_t$, and let $\lambda_S$ be its primitive inward affine defining function.  By \cref{lem:side-normal-form}, on a common neighborhood of each point of $\operatorname{relint}(S)$ one has
\begin{equation}
\label{eq:rational-core-side-multiplicity}
 H_t=m_H(S)\lambda_S,
 \qquad
 G=m_G(S)\lambda_S
\end{equation}
for nonnegative integers $m_H(S)$ and $m_G(S)$.  Since $G\le H_t$ on the inward side, $m_G(S)\le m_H(S)$.

Fix an affine line $\ell$ meeting $\Delta_t$.  If $\ell\cap\Delta_t$ has positive length, parametrize it so that
\[
 \ell^{-1}(\Delta_t)=[a,b].
\]
Assume first that $\ell(a)$ and $\ell(b)$ lie in relative interiors of sides.  At the entry point $a$, the corresponding inward defining function has positive derivative.  If $d_{\rm out}^{a}$ is the derivative of $F\circ\ell$ immediately before entry, and $d_H^a,d_G^a$ are the derivatives of $H_t\circ\ell,G\circ\ell$ immediately after entry, concavity of $F$ and $m_G\le m_H$ give
\[
 d_{\rm out}^{a}\ge d_H^a\ge d_G^a.
\]

At the exit point $b$, the inward defining function has negative derivative.  Writing $d_H^b,d_G^b$ for the derivatives immediately before exit and $d_{\rm out}^{b}$ for the derivative of $F\circ\ell$ immediately after exit, one obtains
\[
 d_G^b\ge d_H^b\ge d_{\rm out}^{b}.
\]

Equivalently, this is the entry calculation applied to the reversed parametrization of $\ell$: reversing orientation changes all three signs.  In the original orientation, the second inequality is the concavity inequality for $F$, while the first follows from $m_G\le m_H$ after multiplication by the negative derivative of the inward defining function.

The one-dimensional derivative of $\widetilde F\circ\ell$ is nonincreasing at both crossings and between them, because the two pieces are concave.  Hence $\widetilde F\circ\ell$ is concave.

If an endpoint of $\ell\cap\Delta_t$ is a polygon vertex, approximate $\ell$ by parallel lines avoiding all vertices.  For points $x,y\in\ell$ and $0\le\theta\le1$, choose corresponding $x_k,y_k$ on the approximating lines.  Concavity there gives
\[
 \widetilde F((1-\theta)x_k+\theta y_k)
 \ge
 (1-\theta)\widetilde F(x_k)+\theta\widetilde F(y_k),
\]
and continuity permits passage to the limit.

A line supporting a side of $\Delta_t$ causes no additional issue: on the side the glued function equals $t=F$, and outside the side it equals the original concave restriction of $F$.  If a line meets the core in a single point, the same parallel-line approximation applies.  Thus the restriction of $\widetilde F$ to every affine line is concave, and therefore $\widetilde F$ is concave on $\Omega$.

For the tropical structure, only points of $\partial\Delta_t$ require consideration.  Near such a point, use the finite local presentation of $F$ established above and the finite presentation of $G$ on $\Delta_t$.  Refine the two finite polyhedral complexes together with the polygonal decomposition by $\partial\Delta_t$.

On every cell of this common finite refinement, $\widetilde F$ is one of the affine pieces of $F$ or of $t+G$, hence has integral gradient.  Thus $\widetilde F$ is locally piecewise affine with integer slopes.  Its concavity expresses it locally as the minimum of these finitely many affine pieces, which is the tropical-series condition.  Nonnegativity and the zero boundary value on $\partial\Omega$ prove part~\textup{(ii)}.

\emph{Shifted relaxation.}
Suppose now that $F=F_{\Omega,P}$ with $P\ne\varnothing$.  The relaxation is not identically zero, because the zero function has empty corner locus.  If $F(x_0)=0$ at an interior point, then for any $y\in\Omega$ continue the ray from $y$ through $x_0$ to a point $z\in\partial\Omega$ and write $x_0=(1-\lambda)y+\lambda z$ with $0<\lambda<1$.  Concavity, nonnegativity, and $F(z)=0$ yield
\[
 0=F(x_0)\ge(1-\lambda)F(y),
\]
so $F(y)=0$.  Since the point was arbitrary, this gives $F\equiv0$, a contradiction, and proves that $F>0$ in $\Omega$.

Fix $C\Subset\Omega$ and $0<t<\min_{C\cup P}F$.  Part~\textup{(i)} gives the compact rational polygon $\Delta_t$ and places $C\cup P$ in its interior.  The function $H_t=F-t$ is a nonnegative zero-boundary tropical polynomial on $\Delta_t$ whose corner locus contains $P$.  By minimality of the polygonal relaxation,
\[
 G:=F_{\Delta_t,P}\le H_t.
\]
Apply part~\textup{(ii)} and glue $t+G$ into the core.  The resulting $\widetilde F$ is a zero-boundary tropical series on $\Omega$ whose corner locus contains every point of $P$, so minimality of $F_{\Omega,P}$ gives $F\le\widetilde F$.  Conversely, $G\le H_t$ gives $\widetilde F\le F$ on $\Delta_t$, and the two functions agree outside.  Hence $\widetilde F=F$, proving \eqref{eq:convex-compact-rational-reduction}.
\end{proof}

\begin{remark}
The rational-core theorem combines positive-level polygonization with shifted restriction; compare \cite[Lemma~10.3]{KalininShkolnikov2018}.  It remains valid at critical levels, including levels with an irregular wavefront.  The boundary normalization forces the additive constant in \eqref{eq:convex-compact-rational-reduction}: $F=t$ on $\partial\Delta_t$, while $F_{\Delta_t,P}=0$ there.
\end{remark}

Thus every fixed positive level reduces the convex-domain geometry to a finite zero-boundary polygonal problem.  The number and arithmetic complexity of the sides may diverge as $t\downarrow0$; the ensuing estimates are independent of that boundary fan.

\subsubsection{Exhaustion stability}

\begin{lemma}[Domain monotonicity]
\label{lem:convex-domain-monotonicity}
Let $\Omega_1\subset\Omega_2$ be bounded admissible convex domains and let $P\Subset\Omega_1$.  Then
\[
 F_{\Omega_1,P}\le F_{\Omega_2,P}
 \qquad\text{on }\Omega_1.
\]
\end{lemma}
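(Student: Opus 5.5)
The plan is to show that the restriction of $F_{\Omega_2,P}$ to $\Omega_1$ is bounded below by $F_{\Omega_1,P}$ by producing, from $F_{\Omega_2,P}$, an admissible competitor in the class defining $F_{\Omega_1,P}$. The natural competitor is
\[
 E=\min\bigl\{F_{\Omega_2,P},\,D\bigr\}\quad\text{on }\overline{\Omega_1},
\]
where $D$ is any zero-boundary tropical series on $\Omega_1$ that is large at the marked points. One would take $D=M D_{\Omega_1}$ with $D_{\Omega_1}$ the primitive tropical distance \eqref{eq:convex-tropical-distance} and $M$ an integer so large that $MD_{\Omega_1}(p)>F_{\Omega_2,P}(p)$ for every $p\in P$; this is possible because $P\Subset\Omega_1$ is finite and $D_{\Omega_1}>0$ there. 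Then $E\le F_{\Omega_2,P}$ on $\Omega_1$ automatically, so this competitor does not directly give the inequality in the stated direction. I will therefore switch strategy, because the claim is $F_{\Omega_1,P}\le F_{\Omega_2,P}$, that is, the smaller domain gives the smaller relaxation.

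The correct competitor is the one on the smaller domain built from $F_{\Omega_2,P}$ by subtracting. Its boundary values on $\partial\Omega_1$ are nonnegative but not zero, so instead I argue by the wave/level structure. Let $F_2=F_{\Omega_2,P}$ and take $t_0=\max_{\partial\Omega_1}F_2\ge0$. The key observation I would aim for is that $\widetilde E=\max\{F_{\Omega_1,P}\text{-competitor}\}$ is unnecessary. A cleaner route is to compare directly through minimality of $F_{\Omega_1,P}$. One shows that $G:=\min\{F_2,\,F_{\Omega_1,P}\}$, extended by zero near $\partial\Omega_1$, lies in $\mathcal V(\Omega_1,P,0)$. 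Its properties follow in turn. It is a minimum of two concave integer-slope tropical series on $\Omega_1$, hence a concave tropical series there. It is nonnegative, and it vanishes on $\partial\Omega_1$ because $F_{\Omega_1,P}$ does. Finally, its corner locus contains $P$: at each $p$, either one function is strictly smaller near $p$ and its corner persists, or both agree at $p$. In the latter case $p$ is a corner of a minimum of two concave functions, each of which is nonsmooth or tied at $p$, and a minimum of concave functions is nonsmooth wherever either active piece is. Minimality of $F_{\Omega_1,P}$ then gives $F_{\Omega_1,P}\le G\le F_2$ on $\Omega_1$, which is the claim.

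The main obstacle is the corner-locus step in the tied case. I would argue it with the local finite presentations, using the form of \cref{prop:essential-presentation} or of \cref{thm:convex-rational-core}(i). Near $p$ the function $G$ is the minimum of the union of the active monomials of both functions at $p$. Those of $F_{\Omega_1,P}$ already contain two distinct gradients attaining the common value, so $p\in V(G)$. The remaining checks are routine: the minimum of tropical series is again locally a finite minimum with integral slopes, and continuity up to $\partial\Omega_1$ holds because $0\le G\le F_{\Omega_1,P}$.
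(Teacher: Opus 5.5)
Your final argument is correct. However, the competitor you discarded in the first paragraph, $E=\min\{F_{\Omega_2,P},MD_{\Omega_1}\}$, is exactly the paper's proof, and you discarded it because of a slip. The inequality $E\le F_{\Omega_2,P}$ is precisely what is wanted. $E$ is a nonnegative tropical series on $\Omega_1$, it vanishes on $\partial\Omega_1$, and it coincides with $F_{\Omega_2,P}$ near every mark (the strict inequality $MD_{\Omega_1}>F_{\Omega_2,P}$ at the marks persists nearby). So $E$ is admissible, and minimality gives $F_{\Omega_1,P}\le E\le F_{\Omega_2,P}$. The detour through $t_0$, $\widetilde E$ and ``subtracting'' in your second paragraph should simply be deleted.

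Your replacement $G=\min\{F_{\Omega_2,P},F_{\Omega_1,P}\}$ is a genuinely different and valid route. It uses the relaxation $F_{\Omega_1,P}$ itself, rather than $MD_{\Omega_1}$, as the zero-boundary barrier on $\Omega_1$; in effect it shows that the admissible class is closed under pairwise minima.

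This buys independence from the tropical distance: you no longer need its positivity, its continuous zero extension to $\partial\Omega_1$, or the choice of $M$. The price is that you need the infimum to be attained, $F_{\Omega_1,P}\in\mathcal V(\Omega_1,P,0_{\Omega_1})$. You use this both for the zero boundary values and for the corner at those marks where $F_{\Omega_1,P}$ is the smaller function. The paper's argument uses only the definition of $F_{\Omega_1,P}$ as an infimum, and its competitor equals $F_{\Omega_2,P}$ near each mark, so the corner condition is immediate there.

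Your corner step is sound, and it can be stated without any case distinction. If $F_{\Omega_i,P}(p)=G(p)$ for some $i\in\{1,2\}$, then $G\le F_{\Omega_i,P}$ gives $\partial^+F_{\Omega_i,P}(p)\subset\partial^+G(p)$. Hence the non-differentiability of $F_{\Omega_i,P}$ at $p$ passes to $G$.
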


\begin{proof}
Put $F_2=F_{\Omega_2,P}$.  Since $D_{\Omega_1}>0$ near the finite set $P$, one may choose an integer $M\ge1$ so large that
\[
 M D_{\Omega_1}>F_2
\]
on neighborhoods of all marked points.  Then
\[
 H=\min\{F_2|_{\Omega_1},M D_{\Omega_1}\}
\]
is a nonnegative tropical series on $\Omega_1$, is zero on $\partial\Omega_1$, and agrees with $F_2$ near each marked point.  It is therefore admissible for the relaxation on $\Omega_1$.  Minimality gives
\[
 F_{\Omega_1,P}\le H\le F_2.
\]
\end{proof}

\begin{lemma}[Locally finite limits with integer slopes]
\label{lem:convex-integer-slope-limit}
Let $U_j\subset U$ be increasing open convex sets with $\bigcup_jU_j=U$, and let $f_j$ be nonnegative concave tropical polynomials on $U_j$ satisfying $f_j\le f_{j+1}$ on $U_j$.  Assume that on every $W\Subset U$ the functions are uniformly bounded and their active gradients lie, for all sufficiently large $j$, in one finite subset of $\Z^2$.  Then the pointwise limit $f$ is a locally finite tropical series and $f_j\to f$ locally uniformly.  If a fixed point $p\in U$ belongs to $V(f_j)$ for every sufficiently large $j$, then $p\in V(f)$.
\end{lemma}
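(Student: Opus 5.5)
The plan is to first build the limit pointwise, then use the finite gradient set on compacts to get a locally finite tropical presentation, then upgrade to local uniform convergence by Dini, and finally read off the corner-locus claim from the finite presentation.

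\textbf{Existence of the limit.} Fix $x\in U$. Then $x\in U_j$ for all large $j$, and the sequence $f_j(x)$ is nondecreasing. It is also bounded, by the uniform bound on a neighborhood $W\Subset U$ of $x$. Hence $f(x)=\lim_jf_j(x)$ exists and is finite. As a pointwise limit of concave functions, $f$ is concave on every $U_j$ and hence on $U$. It is also nonnegative.

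\textbf{Finite presentation on compact convex sets.} Fix a compact convex $W\Subset U$. Choose a compact convex $W'$ with $W\Subset W'^\circ\Subset U$, and $j_0$ such that for $j\ge j_0$:
\begin{itemize}
\item $W'\subset U_j$;
\item every active gradient of $f_j$ on $W'$ lies in a fixed finite set $A\subset\Z^2$.
\end{itemize}
For $x\in W'^\circ$ and $m\in A$, set
\[
 c_m^{(j)}=\sup_{y\in W'}\bigl(f_j(y)-\inner{m}{y}\bigr).
\]
By the completion argument of \cref{lem:completed-coordinate-uniqueness}, applied on the convex set $W'$, we get
\[
 f_j=\min_{m\in A}\bigl(c_m^{(j)}+\inner{m}{\cdot}\bigr)\quad\text{on }W'^\circ.
\]
This uses the local-to-global support argument from the proof of \cref{thm:convex-rational-core}: an active piece at a point of $W'^\circ$ lies above $f_j$ on all of $W'$, by concavity.

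The coefficients $c_m^{(j)}$ are nondecreasing in $j$, since $f_j\le f_{j+1}$. They are bounded above by the uniform height bound and $\diam W'$, so they converge to limits $c_m$. On $W'^\circ$, a minimum of finitely many terms commutes with limits, so
\[
 f=\min_{m\in A}\bigl(c_m+\inner{m}{\cdot}\bigr)\quad\text{on }W'^\circ.
\]
Thus $f$ is a finite tropical polynomial near $W$, with integer slopes. Covering $U$ by such sets shows that $f$ is a locally finite tropical series.

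\textbf{Local uniform convergence.} On $W$, the functions $f_j$ and $f$ are continuous, the sequence $f_j$ is monotone, and $f_j\to f$ pointwise. Dini's theorem then gives uniform convergence on $W$. Alternatively, the explicit bound
\[
 |f_j-f|\le\max_m|c_m^{(j)}-c_m|
\]
on $W'^\circ$ gives it directly.

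\textbf{Corner points.} Suppose $p\in V(f_j)$ for all large $j$. Take $W\ni p$ in its interior. Since $A$ is finite, we can pass to a subsequence along which one fixed pair $m\ne n\in A$ attains the minimum at $p$:
\[
 c_m^{(j)}+\inner{m}{p}=c_n^{(j)}+\inner{n}{p}=f_j(p).
\]
Letting $j\to\infty$ along this subsequence gives
\[
 c_m+\inner{m}{p}=c_n+\inner{n}{p}=f(p).
\]

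One subtlety remains: tropical minimality as defined uses all of $A$, not just the attaining terms. We need $m$ and $n$ to be genuinely active for $f$, that is, $f$ is not smooth at $p$. There are two cases.
\begin{itemize}
\item If both terms stay minimal near $p$ on a full-dimensional set, then $f$ has two distinct gradients near $p$, so $p\in V(f)$.
\item Otherwise, use instead that $f_j$ is nonsmooth at $p$: the superdifferential $\partial^+f_j(p)$ contains two lattice points of $A$. Superdifferentials of a uniformly convergent sequence of concave functions satisfy the upper-limit inclusion $\limsup_j\partial^+f_j(p)\subset\partial^+f(p)$. Hence $\partial^+f(p)$ contains two distinct lattice points of $A$, so $f$ is not affine near $p$.
\end{itemize}
Since $f$ is a tropical polynomial near $p$, not being affine near $p$ means $p\in V(f)$.

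\textbf{Main obstacle.} The expected difficulty is exactly this last point: making sure that nonsmoothness survives the limit. The plan is to use the superdifferential inclusion, together with the fact that the gradient set $A$ is a fixed finite lattice set, so the two distinct supergradients cannot merge in the limit.
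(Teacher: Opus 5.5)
Your proposal is correct and takes essentially the same route as the paper: completed coefficients over a slightly larger compact convex set $W'$, convergence of those coefficients (you use monotonicity where the paper uses uniform convergence), Dini's theorem, and a fixed tied pair $m\ne n$ along a subsequence at $p$. Your supergradient remark explains why the limiting tie is a genuine corner rather than an artifact of the presentation, a step the paper compresses into ``Hence $p\in V(f)$'': two distinct completed monomials touching $f$ at an interior point put $m,n\in\partial^+f(p)$, so $f$ is nonsmooth there. That argument holds in every case, so your case split, including its muddled first bullet, is unnecessary.
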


\begin{proof}
Fix compact convex sets $W\Subset W'\Subset U$.  The hypotheses give a uniform Lipschitz bound on $W'$ and hence a continuous pointwise limit $f$ there.  Monotone convergence to a continuous limit is uniform on $W'$ by Dini's theorem.

Let $A\subset\Z^2$ be a finite set containing all gradients active on $W'$.  Complete the local presentation by setting
\[
 c_{j,m}=\max_{x\in W'}\bigl(f_j(x)-\inner{m}{x}\bigr),
 \qquad m\in A.
\]
Every active supporting monomial occurs among these terms, and therefore
\[
 f_j(x)=\min_{m\in A}(c_{j,m}+\inner{m}{x})
 \qquad(x\in W).
\]
Uniform convergence gives
\[
 c_{j,m}\longrightarrow c_m:=\max_{x\in W'}\bigl(f(x)-\inner{m}{x}\bigr).
\]
Passing to the limit shows
\[
 f(x)=\min_{m\in A}(c_m+\inner{m}{x})
 \qquad(x\in W),
\]
so $f$ is locally tropical.

If $p\in V(f_j)$, at least two distinct gradients are active at $p$.  There are only finitely many pairs in $A$, so along a subsequence one fixed pair $m\ne n$ is active.  Passing to the coefficient limits shows that both corresponding affine functions attain the minimum of $f$ at $p$.  Hence $p\in V(f)$.
\end{proof}

\begin{proposition}[Exhaustion stability]
\label{prop:convex-exhaustion-stability}
For every fixed finite $P\subset K$,
\[
 F_{\Delta_j,P}\nearrow F_{\Omega,P}
\]
locally uniformly in $\Omega$.
\end{proposition}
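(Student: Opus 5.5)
The plan is to combine domain monotonicity with a uniform integer-slope bound and then identify the limit through minimality. Since $\Delta_j\subset\Delta_{j+1}^\circ$, \cref{lem:convex-domain-monotonicity}, applied to $\Delta_j\subset\Delta_{j+1}$ and to $\Delta_j\subset\Omega$, gives
\[
 F_{\Delta_j,P}\le F_{\Delta_{j+1},P}\le F_{\Omega,P}
 \qquad\text{on }\Delta_j .
\]
So the sequence is nondecreasing and bounded above by $F_{\Omega,P}$, and its pointwise limit $f\le F_{\Omega,P}$ exists on $\Omega$.

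\emph{Applying \cref{lem:convex-integer-slope-limit}.} To use this lemma with $U_j=\Delta_j^\circ$, I need, on each $W\Subset\Omega$, uniform boundedness and a finite gradient set for large $j$. Boundedness follows from the upper bound by $F_{\Omega,P}$, which is continuous on $\overline\Omega$.

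\emph{Gradient control.} Choose a compact convex $W'$ with $W\Subset W'^\circ$ and $W'\Subset\Delta_{j_0}^\circ$. For $j\ge j_0$ the function $F_{\Delta_j,P}$ is concave and nonnegative on $\Delta_j\supset\Delta_{j_0}$, and bounded by $M=\max F_{\Omega,P}$. The supergradient test at $x\pm\delta e$, as in the compactness argument of Section~\ref{sec:aleksandrov-limit}, bounds every supergradient on $W'$ by $M/\dist(W',\partial\Delta_{j_0})$. All active gradients are integral, so they lie in a fixed finite subset of $\Z^2$. The lemma then gives that $f$ is a locally finite tropical series and that the convergence is locally uniform. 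Since $P\subset V(F_{\Delta_j,P})$ for all $j$, it also gives $P\subset V(f)$.

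\emph{Identifying the limit.} It remains to show $f\ge F_{\Omega,P}$, which I would do by showing that $f$ is admissible for the relaxation on $\Omega$. Nonnegativity and concavity pass to the limit. The main obstacle is continuity of $f$ up to $\partial\Omega$ with zero boundary values, since the exhaustion polygons may approach $\partial\Omega$ with unbounded arithmetic complexity. Here the bound $0\le f\le F_{\Omega,P}$ suffices: $F_{\Omega,P}$ extends continuously by zero to $\partial\Omega$, so $f$ does as well. Continuity in the interior comes from the local uniform convergence.

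\emph{Conclusion.} Hence $f\in\mathcal V(\Omega,P,0_\Omega)$, and minimality gives $F_{\Omega,P}\le f$. Combined with $f\le F_{\Omega,P}$, this yields $f=F_{\Omega,P}$. The convergence is monotone and locally uniform by Dini's theorem, as in the lemma. One detail to check is that admissibility on a general convex domain only requires local tropical representations in the interior plus the boundary limit, which matches the tropical-series definition used in this section.
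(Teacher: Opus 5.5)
Your proposal is correct and follows essentially the same route as the paper. Both arguments use domain monotonicity and a concavity-based interior slope bound, confining the integer gradients to a finite set. The only difference there is bookkeeping: the paper uses an intermediate compact set $W''$ and $\|F_{\Omega,P}\|_{L^\infty(W'')}$, while you use $\max F_{\Omega,P}$ and $\dist(W',\partial\Delta_{j_0})$. Both then apply the locally-finite-limit lemma to obtain a tropical limit whose corner locus contains $P$, use the squeeze $0\le f\le F_{\Omega,P}$ for the zero boundary values, and use pointwise minimality to conclude $f=F_{\Omega,P}$.
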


\begin{proof}
Write $F_j=F_{\Delta_j,P}$ and $F=F_{\Omega,P}$.  Domain monotonicity gives
\[
 0\le F_j\le F_{j+1}\le F
\]
on $\Delta_j$.  Let $G=\lim_jF_j$ pointwise in $\Omega$.

Fix a compact set $W\Subset\Omega$ and choose a compact convex set $W'$ with $W\Subset W'\Subset\Omega$.  For all sufficiently large $j$, the distance from $W'$ to $\partial\Delta_j$ is bounded below by a positive constant.  Since $F_j\le F$ and $F$ is bounded for the fixed configuration, the standard interior slope estimate for concave functions gives a uniform bound for the gradients active on $W'$.

Concretely, choose a compact convex set $W''$ with $W'\Subset\operatorname{int}W''\Subset\Omega$ and set $r=\dist(W',\partial W'')>0$.  For all sufficiently large $j$, every supergradient $m$ of $F_j$ at a point of $W'$ satisfies
\[
 \|m\|\le \frac{2\|F\|_{L^\infty(W'')}}{r}.
\]
These gradients are integral, so they lie in one finite set.

\Cref{lem:convex-integer-slope-limit} shows that $G$ is a locally finite tropical series and that $F_j\to G$ locally uniformly.  It also shows that every $p\in P$ remains in $V(G)$, because $p\in V(F_j)$ for all large $j$.

The limit is nonnegative and $G\le F$.  If $x_k\to\xi\in\partial\Omega$, then
\[
 0\le G(x_k)\le F(x_k)\longrightarrow0.
\]
Thus adjoining $G(\xi)=0$ gives a continuous extension to $\overline\Omega$ with zero boundary values, so $G$ is admissible for the relaxation defining $F=G_P0_\Omega$.  Pointwise minimality gives $F\le G$, while monotonicity gives $G\le F$; hence $G=F$, proving the assertion.
\end{proof}

If $U\Subset\Omega$, choose $U'$ with $\overline U\subset U'\Subset\Omega$.  For all large $j$, $U'\subset\Delta_j^\circ$, and exhaustion stability gives uniform convergence on $\overline{U'}$.

Apply the standard convex Aleksandrov stability theorem \cite{Gutierrez2016,Figalli2017,Mooney2018} to $-F_{\Delta_j,P}\to-F_{\Omega,P}$.  Since $\partial(-H)=-\partial^+H$ and reflection preserves planar Lebesgue measure, the standard convex Monge--Amp\`ere measure of $-H$ equals our concave-convention measure $\MA(H)$.  Thus
\begin{equation}
\label{eq:convex-fixedN-ma-stability}
 \int_U\phi\,d\MA(F_{\Delta_j,P})
 \longrightarrow
 \int_U\phi\,d\MA(F_{\Omega,P})
 \qquad(\phi\in C_c(U))
\end{equation}
for every fixed $P$.  Equivalently, the Monge--Amp\`ere measures converge vaguely on $U$.

This is the fixed-configuration exhaustion limit: the functions converge locally uniformly in $\Omega$, and the measures converge vaguely there.  The limit $N\to\infty$ uses the uniform interior estimates below.

\subsection{Interior estimates}

The interpolant in \cref{cor:sqrt-interpolant} is independent of the boundary.  For every finite set $P\subset\Omega$ with $|P|=N\ge1$, it gives a tropical polynomial $q_P$ through $P$ whose active gradients have norm $O(\sqrt N)$.  After subtracting its minimum on $\overline\Omega$, write
\[
 \widehat q_P=q_P-\min_{\overline\Omega}q_P,
 \qquad
 H_P=\max_{\overline\Omega}\widehat q_P.
\]
Then
\begin{equation}
\label{eq:convex-q-bounds}
 \widehat q_P\ge0,
 \qquad
 P\subset V(\widehat q_P),
 \qquad
 \Lip(\widehat q_P)\le C\sqrt N,
 \qquad
 H_P\le C_\Omega\sqrt N.
\end{equation}

\begin{proposition}[Height and fixed-depth boundary barriers]
\label{prop:convex-barrier}
Let $P\subset\Omega$ be finite, with $|P|=N\ge1$.  Then
\begin{equation}
\label{eq:convex-height-barrier}
 0\le F_{\Omega,P},
 \qquad
 \|F_{\Omega,P}\|_{L^\infty(\Omega)}\le C_\Omega\sqrt N.
\end{equation}
If, in addition, $P\subset K$, put
\[
 \delta_{\Omega,K}=\min_KD_\Omega>0,
 \qquad
 M_{K,P}=\left\lceil\frac{H_P}{\delta_{\Omega,K}}\right\rceil+1.
\]
Then
\[
 E_{K,P}=\min\{\widehat q_P,M_{K,P}D_\Omega\}
\]
is admissible for $G_P0_\Omega$, and
\begin{equation}
\label{eq:convex-fixed-depth-barrier}
 F_{\Omega,P}(x)\le C_{\Omega,K}\sqrt N\,D_\Omega(x)
 \qquad(x\in\overline\Omega).
\end{equation}
\end{proposition}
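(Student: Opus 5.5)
The plan mirrors the polygonal arguments of \cref{prop:k-free-height} and \cref{prop:dirichlet-competitor}, with the finite barrier on a polygon replaced by the tropical-series barrier $D_\Omega$ of \eqref{eq:convex-tropical-distance}.

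\emph{Height bound.} First note that $F_{\Omega,P}\ge0$ by definition of the admissible class. For the upper bound, fix the interpolant $\widehat q_P$ of \eqref{eq:convex-q-bounds}. Since $P$ is finite and $D_\Omega>0$ on $\Omega$, choose an integer $L_P$ with $L_PD_\Omega(p)>\widehat q_P(p)$ for every $p\in P$. Then set
\[
 E^{\mathrm{ht}}=\min\{\widehat q_P,L_PD_\Omega\}.
\]
This is locally a finite minimum of integer-slope affine functions, because $D_\Omega$ is locally finite by \cite[Lemmas~4.3--4.4]{KalininShkolnikov2018} and $\widehat q_P$ is a tropical polynomial. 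It is nonnegative and extends continuously by zero to $\partial\Omega$, since $0\le E^{\mathrm{ht}}\le L_PD_\Omega\to0$ there. The strict inequality at each $p$ persists on a neighbourhood of $p$, so there $E^{\mathrm{ht}}=\widehat q_P$ and $P\subset V(E^{\mathrm{ht}})$. Hence $E^{\mathrm{ht}}$ is admissible, and pointwise minimality gives
\[
 F_{\Omega,P}\le E^{\mathrm{ht}}\le\widehat q_P\le H_P\le C_\Omega\sqrt N.
\]
The constant does not depend on $L_P$, which is why this bound is free of $K$.

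\emph{Fixed-depth barrier.} Now assume $P\subset K$. The first point to check is that $\delta_{\Omega,K}>0$. Here $D_\Omega$ is continuous and positive on $\Omega$, and $K$ is compact, so its minimum over $K$ is positive. By the choice of $M_{K,P}$,
\[
 M_{K,P}D_\Omega(p)\ge M_{K,P}\delta_{\Omega,K}>H_P\ge\widehat q_P(p)
\]
for every $p\in P$. The admissibility argument above then applies verbatim to $E_{K,P}$, and minimality gives $F_{\Omega,P}\le E_{K,P}\le M_{K,P}D_\Omega$. Finally,
\[
 M_{K,P}\le \frac{C_\Omega\sqrt N}{\delta_{\Omega,K}}+2\le C_{\Omega,K}\sqrt N,
\]
where the last step uses $N\ge1$ to absorb the additive $2$ into the $\sqrt N$ term. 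This is \eqref{eq:convex-fixed-depth-barrier}.

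\emph{Expected obstacle.} The only delicate point is that $E_{K,P}$ is a genuine zero-boundary $\Omega$-tropical series on an arbitrary convex domain, where $D_\Omega$ may be an infinite minimum near $\partial\Omega$. I would handle this by invoking local finiteness of $D_\Omega$ on compact subsets of $\Omega$ together with its continuous extension by zero, which covers both the tropical-series structure and the boundary behaviour. The corner condition at $P$ is local, so it raises no further issue.
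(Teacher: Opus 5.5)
Your proposal is correct and follows essentially the same route as the paper. In both, $\min\{\widehat q_P,MD_\Omega\}$ with a positive integer multiplier $M$ satisfying $MD_\Omega>\widehat q_P$ on $P$ is shown to be admissible, using local finiteness and continuity of $D_\Omega$, and pointwise minimality gives $F_{\Omega,P}\le\widehat q_P\le C_\Omega\sqrt N$ and $F_{\Omega,P}\le M_{K,P}D_\Omega$ with $M_{K,P}\le H_P/\delta_{\Omega,K}+2\le C_{\Omega,K}\sqrt N$. The only difference is order: the paper treats the fixed-depth case first and obtains the $K$-free height bound by replacing $\delta_{\Omega,K}$ with $\min_{p\in P}D_\Omega(p)$, which is equivalent to your choice of $L_P$.
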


\begin{proof}
Assume first that $P\subset K$.  Since $M_{K,P}$ is a positive integer, $M_{K,P}D_\Omega$ is a tropical series with integral slopes.  Hence $E_{K,P}$ is locally tropical.  It is nonnegative, continuous on $\overline\Omega$, and zero on $\partial\Omega$.  Moreover, for every $p\in P$,
\[
 \widehat q_P(p)\le H_P
 <M_{K,P}\delta_{\Omega,K}
 \le M_{K,P}D_\Omega(p).
\]
The strict inequality persists in a neighborhood of $p$, so $E_{K,P}=\widehat q_P$ there and the marked corner survives.  Thus $E_{K,P}$ is admissible.  Pointwise minimality gives
\[
 F_{\Omega,P}\le E_{K,P}\le\widehat q_P,
 \qquad
 F_{\Omega,P}\le E_{K,P}\le M_{K,P}D_\Omega.
\]
The first inequality and \eqref{eq:convex-q-bounds} give the height estimate.  Since
\[
 M_{K,P}\le \frac{H_P}{\delta_{\Omega,K}}+2
 \le C_{\Omega,K}\sqrt N,
\]
the second gives \eqref{eq:convex-fixed-depth-barrier}.

For an arbitrary nonempty finite $P\subset\Omega$, replace $\delta_{\Omega,K}$ in the same construction by $\min_{p\in P}D_\Omega(p)>0$.  The resulting multiplier need not be uniform in $P$, but the inequality $F_{\Omega,P}\le\widehat q_P$ is unchanged.  This gives \eqref{eq:convex-height-barrier} without a fixed-depth hypothesis.
\end{proof}

\begin{remark}[Tropical source depth]
\label{rem:convex-tropical-source-depth}
For a nonempty finite set $P\subset\Omega$, put $N=|P|\ge1$, choose the normalized interpolant $\widehat q_P$ above, set $H_P=\max_{\overline\Omega}\widehat q_P$, and define
\[
 \rho_\Omega(P)=\min_{p\in P}D_\Omega(p)>0.
\]
The same proof, with
\[
 M_{\Omega,P}
 =\left\lceil\frac{H_P}{\rho_\Omega(P)}\right\rceil+1,
\]
gives the depth-resolved estimate
\begin{equation}
\label{eq:convex-depth-resolved-barrier}
 F_{\Omega,P}(x)\le M_{\Omega,P}D_\Omega(x),
 \qquad
 M_{\Omega,P}\le
 \frac{C_\Omega\sqrt N}{\rho_\Omega(P)}.
\end{equation}
Here the constant in the second inequality is enlarged, using $N\ge1$ and $\rho_\Omega(P)\le\|D_\Omega\|_\infty$, to absorb the additive $2$ from the ceiling.  The height estimate is uniform in source depth.  A positive lower bound for $\rho_\Omega(P)$ makes \eqref{eq:convex-depth-resolved-barrier} uniform and gives the boundary modulus used for compactness; for sequences with $\rho_\Omega(P_N)\to0$, uniformity depends on boundary-layer estimates.
\end{remark}

\begin{lemma}[Uniform interior gradient bound]
\label{lem:convex-gradient-bound}
Let $W\Subset\Omega$.  For all sufficiently large $j$, every gradient $m\in\Z^2$ of a linearity cell of $F_{\Delta_j,P}$ whose relative interior meets $W$ satisfies
\[
 \|m\|_\infty\le C_{\Omega,K,W}\sqrt N,
\]
with a constant independent of $j$, $P\subset K$, and $N=|P|$.
\end{lemma}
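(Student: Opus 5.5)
The argument is the standard interior slope estimate for concave functions, with height controlled by the $K$-free bound \eqref{eq:convex-height-barrier} rather than by the fixed relaxation. First fix $W\Subset\Omega$ and choose compact convex sets $W\Subset W'\Subset W''\Subset\Omega$; set $r=\dist(W',\partial W'')>0$, which depends only on $\Omega$ and $W$. By \eqref{eq:convex-exhaustion} there is $j_0=j_0(W)$ with $W''\subset\Delta_j^\circ$ for $j\ge j_0$. This threshold depends only on the exhaustion and $W$, not on $P$ or $N$. In what follows ``sufficiently large $j$'' means $j\ge j_0$.

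Next, for $j\ge j_0$, domain monotonicity (\cref{lem:convex-domain-monotonicity}) gives $0\le F_{\Delta_j,P}\le F_{\Omega,P}$ on $\Delta_j$. The height barrier \eqref{eq:convex-height-barrier} then gives
\[
 0\le F_{\Delta_j,P}\le C_\Omega\sqrt N
 \qquad\text{on }W''.
\]
This bound is uniform in $j$, in $N$, and in $P\subset\Omega$. Alternatively, one could apply \cref{prop:k-free-height} directly on the polygon $\Delta_j$. However, its constant $C_{\Delta_j}$ comes from $\diam\Delta_j\le\diam\Omega$ and is therefore uniform anyway; routing through $F_{\Omega,P}$ avoids checking this.

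Now let $C$ be a linearity cell of $F_{\Delta_j,P}$ whose relative interior meets $W$, and pick $x\in\operatorname{relint}(C)\cap W$. The gradient $m$ of $C$ is a supergradient of $F_{\Delta_j,P}$ at $x$. For any unit vector $e$, the point $x+re$ lies in $W''$, and concavity gives
\[
 \inner{m}{re}\ge F_{\Delta_j,P}(x+re)-F_{\Delta_j,P}(x)\ge -C_\Omega\sqrt N.
\]
Taking $e=-m/\|m\|$ yields $\|m\|\le C_\Omega\sqrt N/r$. Hence $\|m\|_\infty\le\|m\|\le C_{\Omega,W}\sqrt N$. Since $C_{\Omega,W}$ depends only on $\Omega$ and $W$, it is a fortiori a valid $C_{\Omega,K,W}$.

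The only step that needs care is the uniformity in $j$. One must not use $\dist(W,\partial\Delta_j)$, which could be small for early polygons; instead one fixes $W''$ once and restricts to $j\ge j_0$. One must also make sure the height bound does not depend on $\Delta_j$, which the monotonicity-plus-$K$-free-height argument guarantees. No genericity of $P$ is needed.
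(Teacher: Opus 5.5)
Your proof is correct and follows essentially the same route as the paper: domain monotonicity $F_{\Delta_j,P}\le F_{\Omega,P}$, the $K$-free height bound $C_\Omega\sqrt N$, and the interior slope estimate for concave functions at a point of $\operatorname{relint}(C)\cap W$, with a margin $r$ that is fixed for all large $j$. The differences are cosmetic. You fix the margin through an intermediate compact set $W''\subset\Delta_j^\circ$ and test in the single direction $-m/\|m\|$, which gives $\|m\|\le C_\Omega\sqrt N/r$. The paper instead tests the coordinate directions $\pm(r/2)e_i$ and obtains $|m_i|\le 4H/r$.
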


\begin{proof}
Choose $r>0$ such that $\dist(W,\partial\Delta_j)\ge r$ for all sufficiently large $j$.  By domain monotonicity and the height estimate in \cref{prop:convex-barrier},
\[
 0\le F_{\Delta_j,P}\le F_{\Omega,P}\le H,
 \qquad H=C_\Omega\sqrt N.
\]
At a differentiability point $x$ in the cell, $x\pm(r/2)e_i\in\Delta_j$.  Concavity gives
\[
 F(x+(r/2)e_i)\le F(x)+(r/2)m_i,
 \qquad
 F(x-(r/2)e_i)\le F(x)-(r/2)m_i.
\]
Since $0\le F\le H$, one obtains $|m_i|\le4H/r$.  For fixed $N$, integrality confines the active gradients on $W$ to a finite subset of $\Z^2$.  This set is independent of $j$, of $P\subset K$, and of the boundary fan of $\Delta_j$, and its radius is $O(\sqrt N)$.
\end{proof}

\subsubsection{Weighted Crofton}

If an edge $e$ separates cells of gradients $m^+$ and $m^-$, write
\[
 m^+-m^-=w(e)n_e,
\]
where $n_e\in\Z^2$ is primitive and $w(e)\ge1$ is the tropical weight.

\begin{definition}[Regular axis-parallel rectangle]
\label{def:convex-regular-rectangle}
Let $F$ be a tropical polynomial.  An open axis-parallel rectangle $R$ compactly contained in the domain of $F$ is \emph{regular for $V(F)$} if its boundary contains no tropical vertex or marked point, contains no nontrivial segment of a tropical edge, meets every tropical edge transversely in the relative interior of a side of $R$, and has no crossing at a corner of $R$.  In particular, every $x\in V(F)\cap\partial R$ lies on a unique edge $e_x$, and there are finitely many crossings.  This is the rectangular specialization of the regular-window convention in \cref{sec:local-geometry}.
\end{definition}

Panel~\textup{(d)} of \cref{fig:structural-overview} depicts this local-window geometry and the slicing argument behind the estimate.

\begin{lemma}[Weighted Crofton bound]
\label{lem:convex-weighted-crofton}
Let
\[
 R=(a,b)\times(c,d)
\]
be a rectangle compactly contained in the domain of a tropical polynomial $F$.  Suppose every gradient of a cell meeting a neighborhood of $\overline R$ satisfies $\|m\|_\infty\le L$.  Then
\begin{equation}
\label{eq:convex-crofton-bound}
 \sum_e w(e)\Hone(e\cap R)
 \le2L\bigl((b-a)+(d-c)\bigr).
\end{equation}
If $R$ is regular for $V(F)$ in the sense of \cref{def:convex-regular-rectangle}, then
\begin{equation}
\label{eq:convex-weighted-crossing}
 B_w(R):=\sum_{x\in V(F)\cap\partial R}w(e_x)\le8L.
\end{equation}
\end{lemma}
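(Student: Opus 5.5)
The approach is slicing by axis-parallel lines. The weighted length $\sum_e w(e)\Hone(e\cap R)$ is bounded by integrating, over horizontal and vertical slices of $R$, the total variation of the partial derivatives of $F$ along each slice. The crossing bound \eqref{eq:convex-weighted-crossing} then follows by the same jump accounting on the four sides of $\partial R$.

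\textbf{Pointwise inequality per edge.} For an edge $e$ with primitive normal $n_e=(n_1,n_2)$ and unit tangent $\tau_e$, write $\Hone$-a.e.\ point of $e\cap R$ in the form $|n_1|+|n_2|\ge\|n_e\|\ge1$. We have $|\tau_{e,2}|=|n_1|/\|n_e\|$ and $|\tau_{e,1}|=|n_2|/\|n_e\|$. Hence
\[
 w(e)\,\Hone(e\cap R)\le w(e)\int_{e\cap R}\bigl(|n_1|\,|\tau_{e,2}|+|n_2|\,|\tau_{e,1}|\bigr)\,\frac{\|n_e\|}{\|n_e\|}\,d\Hone,
\]
or more simply $w\ell\le w\ell(|\tau_1|+|\tau_2|)\cdot(|n_1|+|n_2|)/\|n_e\|\cdot\|n_e\|/(|n_1|+|n_2|)$. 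The clean form to aim for is
\[
 w(e)\ell(e)\le w(e)|n_1|\,\ell(e)|\tau_2|/|\tau_2|\ldots
\]
Concretely, I will use $w|n_1|\cdot(\text{vertical extent})+w|n_2|\cdot(\text{horizontal extent})\ge w\ell/\|n_e\|\cdot\|n_e\|^2\ge w\ell$, since vertical extent $=\ell|n_1|/\|n_e\|$ and horizontal extent $=\ell|n_2|/\|n_e\|$.

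\textbf{Coarea on slices.} Integrate over the horizontal lines $y=s$, $s\in(c,d)$, meeting $R$. On each such line, $\partial_xF$ restricted to $(a,b)$ is a nonincreasing step function with values in $[-L,L]$. Its jump at a crossing with edge $e$ equals $w(e)|n_1|$, because the gradient jump is $w(e)n_e$. The total jump along the slice is therefore at most $2L$. The coarea (Crofton) formula gives $\int_c^d\sum_{\text{crossings}}w|n_1|\,ds=\sum_e w(e)|n_1|\cdot(\text{vertical extent of }e\cap R)$. This is at most $2L(d-c)$, and symmetrically the vertical slices give at most $2L(b-a)$. Adding the two and using the edgewise inequality above yields \eqref{eq:convex-crofton-bound}. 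Monotonicity of $\partial_xF$ comes from concavity, and the hypothesis on gradients near $\overline R$ covers the slices.

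\textbf{Boundary crossings.} For \eqref{eq:convex-weighted-crossing}, I apply the same one-dimensional argument on each side of $R$. On a horizontal side, regularity makes every crossing transverse and interior to the side, so the tangential derivative $\partial_xF$ is a nonincreasing step function with jumps $w(e_x)|n_{1}(e_x)|$, totalling at most $2L$.

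The main obstacle is that $|n_1|$ may vanish for an edge parallel to the side direction: then the crossing weight $w$ is not controlled by the tangential jump. My fix is to use $|n_1|\ge1$ whenever the crossing is transverse to a horizontal side. Transversality means $e$ is not horizontal, so its normal is not vertical, so $n_1\neq0$, and integrality gives $|n_1|\ge1$. Thus $\sum w\le\sum w|n_1|\le2L$ on each side, and summing over four sides gives $8L$. The same integrality observation, $|n_1|+|n_2|\ge\|n_e\|$, is what justifies the edgewise inequality in the first step.
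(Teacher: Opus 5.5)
Your proposal is correct and follows the paper's argument. You slice $R$ by axis-parallel lines, bound the total jump of the tangential derivative on almost every slice by $2L$ (concavity plus the gradient bound), convert to edge length by the one-dimensional coarea formula, and treat the four sides of a regular $R$ by the same jump count, with integrality giving $|(n_e)_1|\ge1$ for edges transverse to a horizontal side (and symmetrically for vertical sides).

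The only difference is bookkeeping. You keep the exact jump $w(e)|(n_e)_1|$ and pair it with the vertical extent $\Hone(e\cap R)\,|(n_e)_1|/\|n_e\|$, which gives the slightly sharper bound $\sum_e w(e)\|n_e\|\,\Hone(e\cap R)\le2L\bigl((b-a)+(d-c)\bigr)$. The paper instead bounds each jump below by $w(e)$ and then uses $|\tau_x|+|\tau_y|\ge1$.

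When you write it up, replace the garbled false starts of your first paragraph with the single identity $|(n_e)_1||\tau_{e,2}|+|(n_e)_2||\tau_{e,1}|=\|n_e\|\ge1$. Integrality enters only through $\|n_e\|\ge1$; the inequality $|n_1|+|n_2|\ge\|n_e\|$ you cite holds for any real vector and is not what that step uses.
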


\begin{proof}
For almost every vertical line $x=t$, the restriction $y\mapsto F(t,y)$ is a concave piecewise-affine function.  Its one-sided derivatives are the second coordinates of active integer gradients; they form a nonincreasing sequence in $[-L,L]\cap\Z$.  At a transverse crossing with an edge $e$, the derivative jump has magnitude
\[
 |m_2^+-m_2^-|=w(e)|(n_e)_2|.
\]
If the edge is not vertical, then $(n_e)_2\ne0$, so the jump is at least $w(e)$.  Vertical edges contribute nothing to the projection onto the $x$-axis.  The total derivative variation is at most $2L$, and the one-dimensional area formula gives
\[
 \sum_e w(e)\int_{e\cap R}|\tau_x|\,d\Hone
 \le2L(b-a).
\]

Horizontal slices similarly give
\[
 \sum_e w(e)\int_{e\cap R}|\tau_y|\,d\Hone
 \le2L(d-c).
\]

Since $|\tau_x|+|\tau_y|\ge1$ for a unit tangent, adding proves \eqref{eq:convex-crofton-bound}.  The same slope-variation argument on each regular side bounds the sum of crossing weights by $2L$; summing over four sides proves \eqref{eq:convex-weighted-crossing}.

These estimates depend only on the gradient bound $L$ and the side lengths of $R$.  They are independent of the sides, normal directions, and combinatorics of the ambient polygon.
\end{proof}

\begin{corollary}[Uniform local square-root length]
\label{cor:convex-local-length}
For every compact $L\Subset\Omega$ there is $C=C(\Omega,K,L)$ such that, for all sufficiently large $j$, all $N$, and all $P\subset K$ with $|P|=N$,
\begin{equation}
\label{eq:convex-local-length}
 \sum_e w(e)\Hone(e\cap L)\le C\sqrt N
\end{equation}
for the tropical curve of $F_{\Delta_j,P}$.  In particular, its unweighted Euclidean length in $L$ is $O(\sqrt N)$.
\end{corollary}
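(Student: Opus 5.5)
The plan is to cover $L$ by finitely many axis-parallel rectangles and apply the weighted Crofton bound \eqref{eq:convex-crofton-bound} on each, with the gradient bound supplied by \cref{lem:convex-gradient-bound}.

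\emph{Choice of windows and thickening.} Fix $L\Subset\Omega$ and choose $\varepsilon>0$ with the closed $2\varepsilon$-neighborhood $W$ of $L$ compactly contained in $\Omega$. Cover $L$ by finitely many open axis-parallel squares $R_1,\dots,R_k$ of side at most $\varepsilon$, each meeting $L$. Each $R_i$, together with a small neighborhood of $\overline R_i$, then lies in $W$. The number $k$ and the side lengths depend only on $L$ and $\Omega$. They do not depend on $j$, $P$, or $N$.

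\emph{Gradient bound.} Apply \cref{lem:convex-gradient-bound} with the compact set $W$. For all sufficiently large $j$, where the threshold depends only on $W$ and the exhaustion and hence not on $P$ or $N$, every gradient of a linearity cell of $F_{\Delta_j,P}$ whose relative interior meets $W$ satisfies $\|m\|_\infty\le C_{\Omega,K,W}\sqrt N$. Every cell meeting a neighborhood of $\overline R_i$ has relative interior meeting $W$, since it is two-dimensional and meets that neighborhood in an open set. So the hypothesis of \cref{lem:convex-weighted-crofton} holds with $L_{\rm grad}=C_{\Omega,K,W}\sqrt N$. For large $j$ the squares also lie in $\Delta_j^\circ$, so $R_i$ is compactly contained in the domain of $F_{\Delta_j,P}$.

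\emph{Summation.} \Cref{lem:convex-weighted-crofton} gives
\[
 \sum_e w(e)\Hone(e\cap R_i)\le 4\varepsilon C_{\Omega,K,W}\sqrt N
\]
for each $i$. Since $L\subset\bigcup_iR_i$, subadditivity of $\Hone$ yields $\sum_e w(e)\Hone(e\cap L)\le 4k\varepsilon C_{\Omega,K,W}\sqrt N$. This is \eqref{eq:convex-local-length} with $C=C(\Omega,K,L)$. The unweighted statement follows because $w(e)\ge1$.

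\emph{Main obstacle.} The only delicate point is uniformity in $j$. The threshold in \cref{lem:convex-gradient-bound} must be independent of $P$. This holds because it only requires $\dist(W,\partial\Delta_j)\ge r$, which is a condition on the fixed exhaustion.

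It must also be checked that \cref{lem:convex-weighted-crofton} needs no regularity of $R$ for the length bound. Only the crossing bound \eqref{eq:convex-weighted-crossing} needs regularity, and we do not use it here. The vertical-line slicing holds for almost every line, so edges lying on $\partial R_i$ are harmless.
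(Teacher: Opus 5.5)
Your proof is correct and follows the paper's own argument: the paper also covers $L$ by finitely many rectangles lying in a larger compact subset of $\Omega$ and combines \cref{lem:convex-gradient-bound} with the length estimate \eqref{eq:convex-crofton-bound} of \cref{lem:convex-weighted-crofton}. Your version simply writes out the details the paper leaves implicit, including the neighborhood hypothesis and the fact that the $j$-threshold is independent of $P$ and $N$.
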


\begin{proof}
Cover $L$ by finitely many rectangles compactly contained in a larger compact subset of $\Omega$, and apply \cref{lem:convex-gradient-bound,lem:convex-weighted-crofton}.
\end{proof}

\subsection{Local curvature error}

On each exhaustion polygon, the Pick excess is measured by terminal-edge weights.  Interior gradient and Crofton bounds make this estimate uniform on a fixed compact subset of $\Omega$.

\begin{definition}[Terminal edge relative to an exhaustion polygon]
\label{def:convex-terminal-relative}
Let $\Delta$ be a compact rational polygon and let $F$ be a tropical polynomial on $\Delta^\circ$ with zero boundary values.  An edge of $V(F)$ is \emph{terminal relative to $\Delta$} if its closure meets $\partial\Delta$; otherwise it is a compact internal edge.  If such an edge is incident to an interior tropical vertex, its other endpoint is the unique point where its closure reaches $\partial\Delta$.  The term is relative to $\Delta$; the direct convex-domain graph need not have a globally finite terminal structure.
\end{definition}

Now let $\Delta$ be a compact rational polygon, let $P\subset\Delta^\circ$ be strongly generic for the relaxation on $\Delta$, and put $F=F_{\Delta,P}$.  The interior mildness theorem \cref{cor:interior-mildness} says that the dual Newton polygon of every interior vertex of $V(F)$ has no interior lattice point and that every compact internal edge has weight one.  Every interior tropical vertex $v$ of $V(F)$ therefore satisfies
\begin{equation}
\label{eq:convex-excess-terminal}
 \sigma_F(v)=\frac12
 \sum_{\substack{\eta\in\mathscr E_{V(F)}(v)\\
 \text{the underlying edge is terminal relative to }\Delta}}
 (w(\eta)-1).
\end{equation}

\begin{lemma}[Local excess controlled by weighted crossings]
\label{lem:convex-excess-crossing}
Let $F=F_{\Delta,P}$ be strongly generic on a compact rational polygon $\Delta$, and let
\[
 L\Subset R\Subset\Delta^\circ,
\]
where $R$ is regular for $V(F)$ in the sense of \cref{def:convex-regular-rectangle}.  Then
\[
 \Xi_F(L)\le\frac12B_w(R).
\]
\end{lemma}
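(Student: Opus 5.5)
By \eqref{eq:convex-excess-terminal}, every atom of $\Xi_F$ sits at an interior tropical vertex $v$. Its mass is one half of the sum of $w(\eta)-1$ over the germs $\eta$ at $v$ whose underlying edge is terminal relative to $\Delta$. Hence
\[
 \Xi_F(L)\le\frac12\sum_{v\in L}\ \sum_{\substack{\eta\in\mathscr E_{V(F)}(v)\\ \text{terminal}}}w(\eta).
\]
The plan is to charge each terminal germ at a vertex $v\in L$ to a distinct crossing point of $V(F)\cap\partial R$ lying on the same edge. The right-hand side is then at most $\frac12B_w(R)$.

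For the charging step, I would take a vertex $v\in L\subset R$ and a germ $\eta$ at $v$ whose underlying edge $e$ is terminal relative to $\Delta$. By \cref{lem:terminal-interior-endpoint} and \cref{def:convex-terminal-relative}, $e$ is a straight segment from the interior vertex $v$ to a point of $\partial\Delta$. Since $\overline R\Subset\Delta^\circ$, that endpoint lies outside $\overline R$, while $v\in R$. The segment $e$ must therefore leave the convex set $R$, and it does so at exactly one point $x_\eta\in\partial R$. Regularity of $R$ makes this crossing transverse, in the relative interior of a side, with $e_{x_\eta}=e$. This gives a crossing of weight $w(e)=w(\eta)$.

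Injectivity of $\eta\mapsto x_\eta$ is the only delicate point. Two germs with the same crossing would lie on the same edge $e$, since each crossing point lies on a unique edge. A terminal edge has exactly one interior endpoint, so both germs would be germs of $e$ at that single vertex $v$. This can happen only if $e$ is incident to $v$ twice, which is impossible because the other end of $e$ lies on $\partial\Delta$. The map is therefore injective, and summing over $v\in L$ and terminal $\eta$ gives the claimed inequality. I would drop the $-1$ terms as harmless, since $w(\eta)-1\le w(\eta)$.

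The main thing to check is that a terminal edge cannot enter and leave $R$ several times, since that would allow double charging or an uncharged germ. Convexity of $R$ together with straightness of the edge rules this out. Vertices of $V(F)$ outside $L$ contribute nothing to $\Xi_F(L)$ and can be ignored.
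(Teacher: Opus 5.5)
Your proposal is correct and follows essentially the same route as the paper: you use \eqref{eq:convex-excess-terminal} to reduce to terminal germs at vertices in $L$, send each germ to the unique crossing of its straight terminal edge with $\partial R$, obtain injectivity from the uniqueness of the edge through a regular crossing together with the single interior endpoint of a terminal edge, and then drop the $-1$ terms. Your explicit exclusion of a double incidence of $e$ at $v$ just spells out what the paper's one-line injectivity argument uses.
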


\begin{proof}
By \eqref{eq:convex-excess-terminal}, only germs at vertices in $L$ whose underlying edges are terminal contribute to $\Xi_F(L)$.  For such a germ $\eta$ at $v\in L$, let $e_\eta$ be its underlying terminal edge and let $x_\eta$ be the crossing of $e_\eta$ with $\partial R$ in the direction from $v$ toward $\partial\Delta$.  The point $x_\eta$ exists and is unique because $e_\eta$ is a straight segment, $v$ lies inside the convex rectangle $R$, and its other endpoint lies on $\partial\Delta$ outside $R$.

The map $\eta\mapsto x_\eta$ is injective: a terminal edge has one interior endpoint, and regularity of $R$ places every crossing in the relative interior of a unique edge.  Therefore
\[
 2\Xi_F(L)
 =\sum_{\eta\,\mathrm{contributing}}(w(\eta)-1)
 \le\sum_{\eta\,\mathrm{contributing}}w(\eta)
 \le B_w(R),
\]
which proves the estimate.
\end{proof}

\begin{corollary}[Uniform local excess]
\label{cor:convex-local-excess}
For every compact $L\Subset\Omega$ there is $C=C(\Omega,K,L)$ such that, for all sufficiently large $j$, every $N$-point set $P\subset K$ that is strongly generic for $\Delta_j$ satisfies
\begin{equation}
\label{eq:convex-local-excess}
 \Xi_{F_{\Delta_j,P}}(L)\le C\sqrt N.
\end{equation}
\end{corollary}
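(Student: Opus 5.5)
The plan is to combine \cref{lem:convex-excess-crossing} with the weighted crossing bound \eqref{eq:convex-weighted-crossing} of \cref{lem:convex-weighted-crofton}, using \cref{lem:convex-gradient-bound} to supply the gradient bound $L_0=C_{\Omega,K,W}\sqrt N$ uniformly in $j$.

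\emph{Step 1: a fixed cover.} Cover $L$ by finitely many closed axis-parallel rectangles $L^{(1)},\dots,L^{(k)}$. Choose open rectangles
\[
 R_0^{(i)}=(a_i,b_i)\times(c_i,d_i),\qquad L^{(i)}\Subset R_0^{(i)}\Subset W\Subset\Omega,
\]
where $W$ is a fixed compact set and $k$ depends only on $L$ and $\Omega$. Take $j$ so large that $W\Subset\Delta_j^\circ$. Then \cref{lem:convex-gradient-bound} applies on a neighborhood of $W$: every cell gradient of $F_{\Delta_j,P}$ meeting that neighborhood satisfies $\|m\|_\infty\le L_0\le C\sqrt N$, uniformly in $j$, in $P\subset K$, and in $N$.

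\emph{Step 2: regular rectangles.} For a fixed $j$ and $P$, the curve $V(F_{\Delta_j,P})$ is finite near $W$. We therefore perturb each $R_0^{(i)}$ by moving its four sides by small amounts to a rectangle $R^{(i)}$ with $L^{(i)}\Subset R^{(i)}\Subset W$ that is regular in the sense of \cref{def:convex-regular-rectangle}. Only finitely many side positions are bad: those through vertices or marked points, those containing an edge segment, and those creating a corner crossing. Almost every choice in a small interval is therefore admissible.

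\emph{Step 3: summing.} The perturbed rectangle still lies in $W$, so the gradient bound holds near $\overline{R^{(i)}}$. \Cref{lem:convex-excess-crossing} and \eqref{eq:convex-weighted-crossing} give
\[
 \Xi_F(L^{(i)})\le\tfrac12B_w(R^{(i)})\le4L_0 .
\]
The measure $\Xi_F$ is nonnegative by \cref{prop:pick-excess}, so subadditivity yields
\[
 \Xi_F(L)\le\sum_i\Xi_F(L^{(i)})\le4kL_0\le C(\Omega,K,L)\sqrt N .
\]

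The only delicate point is the hypothesis of \cref{lem:convex-excess-crossing} that $P$ is strongly generic on $\Delta_j$, since this is what gives the terminal-only excess formula \eqref{eq:convex-excess-terminal}; it holds by assumption. I also expect to check that the constant of \cref{lem:convex-gradient-bound} depends only on $\dist(W,\partial\Delta_j)$, which is bounded below for large $j$, and not on the boundary fan of $\Delta_j$. This uniformity is the step I would verify most carefully.
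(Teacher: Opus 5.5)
Your proposal is correct and follows the paper's proof essentially step for step. Both arguments use a fixed finite cover by rectangles with a fixed margin, perturb to a configuration-dependent regular rectangle inside that margin, apply the fan-independent gradient bound of \cref{lem:convex-gradient-bound} on a fixed compact set, combine \cref{lem:convex-excess-crossing} with \eqref{eq:convex-weighted-crossing}, and sum over the cover using $\Xi_F\ge0$. One small precision: the corner-crossing condition does not exclude a finite set of side positions. Rather, for each admissible vertical side position it excludes only finitely many horizontal positions, so your ``almost every choice'' conclusion still holds.
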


\begin{proof}
Fix $j$ and $P$ as in the statement and put $F=F_{\Delta_j,P}$.
Choose fixed axis-parallel rectangles $Q_1,\dots,Q_M$ and larger open rectangles $S_1,\dots,S_M$ such that
\[
 L\subset\bigcup_{\alpha=1}^MQ_\alpha^\circ,
 \qquad
 \overline{Q_\alpha}\Subset S_\alpha\Subset\Omega.
\]

For each finite tropical graph, perturb the sides of an intermediate rectangle inside the fixed gap between $\overline{Q_\alpha}$ and $S_\alpha$ to obtain a regular rectangle $R_{\alpha,F}$ with
\[
 \overline{Q_\alpha}\Subset R_{\alpha,F}\Subset S_\alpha.
\]
The gradient estimate is applied on the fixed compact set $\bigcup_\alpha\overline{S_\alpha}$, so its constant is independent of the perturbation, $j$, and $P$.  By \cref{lem:convex-excess-crossing} and \eqref{eq:convex-weighted-crossing},
\[
 \Xi_F(Q_\alpha)\le\frac12B_w(R_{\alpha,F})\le C_\alpha\sqrt N.
\]
Summing over the finite cover proves the claim.
\end{proof}

\subsubsection{Quantitative estimate}

\begin{proposition}[Uniform polygonal estimate on an interior compact set]
\label{prop:convex-uniform-polygonal-estimate}
Fix $L\Subset\Omega$.  There is $C=C(\Omega,K,L)$ such that, for every sufficiently large $j$, every strongly generic $N$-point configuration $P\subset K$, and every $\varphi\in C_c^1(\Omega)$ with $\supp\varphi\subset L$,
\begin{equation}
\label{eq:convex-uniform-polygonal-estimate}
 \left|
 \int\varphi\,d\bigl(\MA(F_{\Delta_j,P})-\nu_P\bigr)
 \right|
 \le C\sqrt N
 \bigl(\|\varphi\|_\infty+\|\nabla\varphi\|_\infty\bigr).
\end{equation}
The constant is independent of the boundary fan of $\Delta_j$.
\end{proposition}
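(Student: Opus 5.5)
The proof repeats the polygonal argument of \cref{thm:tangential-estimate,thm:quantitative-curvature} on the fixed polygon $\Delta_j$. The two global inputs used there, the symplectic-area length bound and the terminal excess bound, are replaced by the local bounds of \cref{cor:convex-local-length,cor:convex-local-excess}. Fix $L\Subset\Omega$ and an intermediate compact $L'$ with $L\Subset (L')^\circ\Subset\Omega$. Take $j$ so large that $L'\subset\Delta_j^\circ$ and that \cref{lem:convex-gradient-bound}, \cref{cor:convex-local-length} and \cref{cor:convex-local-excess} hold on $L'$. Since $P$ is strongly generic for $\Delta_j$, \cref{thm:exact-local-balance} applies to $F=F_{\Delta_j,P}$ on every regular window $U\Subset\Delta_j^\circ$. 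For $\Theta=\MA(F)-\nu_P-\Xi_F$ it gives $|\Theta(U)|\le\frac12B_P(U)$.

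\emph{Step 1: tangential estimate.} For $\varphi\in C_c^\infty(\Omega)$ with $\supp\varphi\subset L$, the superlevel sets $\{\varphi>t\}$ and $\{\varphi<-t\}$ are regular windows for a.e.\ $t>0$ by \cref{lem:regular-superlevels}. These windows lie in $L\Subset\Delta_j^\circ$. Signed layer cake and edgewise coarea then give, exactly as before,
\[
 \Bigl|\int\varphi\,d\Theta\Bigr|\le\frac12\int_{V(F)\cap L}|\nabla_\tau\varphi|\,d\Hone\le\frac12\|\nabla\varphi\|_\infty\,\Hone(V(F)\cap L).
\]
Here the tangential gradient vanishes off $\supp\varphi$, so only the portion of the curve inside $L$ is counted. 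By \cref{cor:convex-local-length} this is at most $C\sqrt N\|\nabla\varphi\|_\infty$. The extension to $C^1$ tests is the same smoothing argument as before, using approximants supported in $L'$. To keep the constant tied to a fixed set, I would state the length bound on $L'$ instead of $L$.

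\emph{Step 2: excess term.} By \cref{prop:pick-excess}, $\Xi_F\ge0$. Hence
\[
 \Bigl|\int\varphi\,d\Xi_F\Bigr|\le\|\varphi\|_\infty\,\Xi_F(L)\le C\sqrt N\|\varphi\|_\infty
\]
by \cref{cor:convex-local-excess}. Writing $\MA(F)-\nu_P=\Theta+\Xi_F$ and adding the two bounds gives \eqref{eq:convex-uniform-polygonal-estimate}. The constant depends only on the gradient bound $C_{\Omega,K,L'}\sqrt N$ and on the fixed rectangle covers, not on the sides or normals of $\Delta_j$.

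\emph{Main obstacle: uniformity of Step 1.} One point needs care. \cref{thm:exact-local-balance} uses $c_P(U)\le B_P(U)$ from \cref{lem:window-components}, and that lemma needs a terminal branch of the $\Delta_j$-curve reaching $\partial\Delta_j$. This holds on each fixed polygon by \cref{lem:terminal-existence,lem:terminal-unmarked}, so no uniformity is required there. The identity itself is exact, and all the size control enters only through the local length and excess bounds. So the delicate point is only that \cref{lem:convex-gradient-bound} gives a gradient bound on $L'$ uniform in $j$, $P\subset K$ and $N$; this lemma is already stated, so I expect the rest to be routine bookkeeping.
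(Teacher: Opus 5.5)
Your proposal is correct and follows essentially the same route as the paper. Apply the tangential-variation estimate (\cref{thm:tangential-estimate}) on the fixed polygon $\Delta_j$, bound the length term by \cref{cor:convex-local-length} and the nonnegative excess term by \cref{cor:convex-local-excess}; uniformity in $j$ then enters only through the fan-independent interior gradient bound. The paper simply cites \cref{thm:tangential-estimate} directly for $C^1$ tests supported in $L$, so your intermediate compact set $L'$ is harmless but not needed.
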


\begin{proof}
By \cref{thm:tangential-estimate},
\[
 \left|
 \int\varphi\,d\bigl(\MA(F_{\Delta_j,P})-\nu_P\bigr)
 \right|
 \le
 \frac12\int_{V(F_{\Delta_j,P})}|\nabla_\tau\varphi|\,d\Hone
 +\|\varphi\|_\infty\Xi_{F_{\Delta_j,P}}(L).
\]
The first term is bounded by $\frac12\|\nabla\varphi\|_\infty$ times the unweighted tropical length in $L$, which is $O(\sqrt N)$ by \cref{cor:convex-local-length}.  The second is $O(\sqrt N)\|\varphi\|_\infty$ by \cref{cor:convex-local-excess}.

The constants in both terms depend only on the fixed interior rectangles and the fan-independent gradient bound.  Hence they are independent of the boundary fan of $\Delta_j$.
\end{proof}

\begin{theorem}[Quantitative estimate on a bounded convex domain]
\label{thm:convex-quantitative}
Let $P\in\mathcal G_N^{\mathscr E}$ satisfy $P\subset K$ and $|P|=N\ge1$.  For every compact $L\Subset\Omega$, there is $C=C(\Omega,K,L)$ such that
\begin{equation}
\label{eq:convex-unscaled-estimate}
 \left|
 \int\varphi\,d\bigl(\MA(F_{\Omega,P})-\nu_P\bigr)
 \right|
 \le C\sqrt N
 \bigl(\|\varphi\|_\infty+\|\nabla\varphi\|_\infty\bigr)
\end{equation}
for every $\varphi\in C_c^1(\Omega)$ supported in $L$.
\end{theorem}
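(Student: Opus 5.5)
Fix $L\Subset\Omega$ and $\varphi\in C_c^1(\Omega)$ with $\supp\varphi\subset L$. The plan is to transfer the uniform polygonal estimate of \cref{prop:convex-uniform-polygonal-estimate} from the exhaustion polygons $\Delta_j$ to the direct relaxation $F_{\Omega,P}$ by a limit in $j$ at fixed $P$. Since $P\in\mathcal G_N^{\mathscr E}$, $P$ is strongly generic for every $\Delta_j$. Since $P\subset K\Subset\mathcal O\Subset\Delta_1^\circ$, the hypotheses of that proposition hold for all sufficiently large $j$, and they hold with a constant $C=C(\Omega,K,L)$ independent of $j$ and of the boundary fan of $\Delta_j$. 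Thus
\[
 \left|\int\varphi\,d\bigl(\MA(F_{\Delta_j,P})-\nu_P\bigr)\right|
 \le C\sqrt N\bigl(\|\varphi\|_\infty+\|\nabla\varphi\|_\infty\bigr)
\]
for all large $j$. The measure $\nu_P$ does not depend on $j$.

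Next, let $j\to\infty$ at fixed $P$. Choose an open $U$ with $L\subset U\Subset\Omega$. \Cref{prop:convex-exhaustion-stability} gives $F_{\Delta_j,P}\to F_{\Omega,P}$ locally uniformly. The Aleksandrov stability statement \eqref{eq:convex-fixedN-ma-stability} then gives $\int\phi\,d\MA(F_{\Delta_j,P})\to\int\phi\,d\MA(F_{\Omega,P})$ for every $\phi\in C_c(U)$. Since $\varphi\in C_c^1(\Omega)\subset C_c(U)$, the left-hand side converges to $\bigl|\int\varphi\,d(\MA(F_{\Omega,P})-\nu_P)\bigr|$. The right-hand side is constant in $j$, so the bound \eqref{eq:convex-unscaled-estimate} follows with the same constant.

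The main point to check is that the constant in \cref{prop:convex-uniform-polygonal-estimate} is genuinely independent of $j$. This rests on \cref{lem:convex-gradient-bound}, whose bound uses domain monotonicity together with the $K$-free height bound $F_{\Delta_j,P}\le F_{\Omega,P}\le C_\Omega\sqrt N$, and on the Crofton and excess bounds \cref{cor:convex-local-length,cor:convex-local-excess}. All of these involve only fixed interior rectangles. I would verify this first, since any hidden dependence on the side structure of $\Delta_j$ would break the limit.

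A secondary subtlety concerns the regular rectangles $R_{\alpha,F}$, which are chosen depending on the graph. This is harmless because they are confined to the fixed gap between $\overline{Q_\alpha}$ and $S_\alpha$, so the constants they produce do not change with the graph. An alternative route would apply \cref{thm:convex-rational-core}(iii) to write $F_{\Omega,P}=t+F_{\Delta_t,P}$ on a level polygon. That route would require strong genericity on the configuration-dependent polygon $\Delta_t$, which $\mathscr E$-genericity does not provide, so the exhaustion limit above is the preferred approach.
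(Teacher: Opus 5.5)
Your proposal is correct and follows the paper's own proof. Because $\mathscr E$-genericity makes $P$ strongly generic on every $\Delta_j$, you apply the fan-independent estimate of \cref{prop:convex-uniform-polygonal-estimate} there, and then let $j\to\infty$ at fixed $P$ using \cref{prop:convex-exhaustion-stability} and the Aleksandrov stability statement \eqref{eq:convex-fixedN-ma-stability}. One small correction: what you need is $\varphi\in C_c(U)$ because $\supp\varphi\subset L\subset U$; the inclusion $C_c^1(\Omega)\subset C_c(U)$ as you wrote it is false.
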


\begin{proof}
For every $j$, $\mathscr E$-genericity makes $P$ strongly generic for $\Delta_j$, so \cref{prop:convex-uniform-polygonal-estimate} applies.  Choose $U$ with $L\Subset U\Subset\Omega$.  By \cref{prop:convex-exhaustion-stability}, $F_{\Delta_j,P}\to F_{\Omega,P}$ uniformly on a neighborhood of $\overline U$.  Passing to the limit in \eqref{eq:convex-uniform-polygonal-estimate} using \eqref{eq:convex-fixedN-ma-stability} proves the result.
\end{proof}

Scaling by $N^{-1/2}$ in dimension two proves \eqref{eq:convex-quant-main}.

\subsection{Compactness and identification}

The fixed-depth estimate in \cref{prop:convex-barrier} gives
\begin{equation}
\label{eq:convex-normalized-barrier}
 0\le u_N\le C_{\Omega,K}D_\Omega.
\end{equation}
and the height estimate gives
\[
 \|u_N\|_{L^\infty(\Omega)}\le C_\Omega.
\]

The continuity of $D_\Omega$ on $\overline\Omega$, together with $D_\Omega|_{\partial\Omega}=0$, turns the first bound into a common boundary modulus.  Every uniform cluster point therefore vanishes on $\partial\Omega$.  The second bound is uniform.  On rational polygons, the total-mass argument also controls the Monge--Amp\`ere mass near the boundary.

For $\delta>0$, the inner parallel body
\[
 \Omega_\delta=\{x\in\Omega:\dist(x,\partial\Omega)\ge\delta\}
\]
is convex.  Concavity and the uniform height bound give
\begin{equation}
\label{eq:convex-interior-lip}
 \Lip(u_N|_{\Omega_\delta})\le\frac{C_\Omega}{\delta}.
\end{equation}

\begin{proposition}[Uniform convex-domain equicontinuity]
\label{prop:convex-equicontinuity}
The family $(u_N)$ is uniformly bounded and equicontinuous on $\overline\Omega$.
\end{proposition}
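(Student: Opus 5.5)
We prove this in the same way as the global modulus of \cref{prop:global-holder}. The boundary estimate now comes from the barrier \eqref{eq:convex-normalized-barrier} instead of the Aleksandrov maximum principle, and the interior estimate comes from the Lipschitz bound \eqref{eq:convex-interior-lip}. Uniform boundedness is the height estimate $\|u_N\|_\infty\le C_\Omega$, so only equicontinuity needs an argument.

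\emph{Step 1: a boundary modulus.} The function $D_\Omega$ is continuous on the compact set $\overline\Omega$ and vanishes on $\partial\Omega$, so it is uniformly continuous. Set
\[
 \omega(\delta)=\sup\{D_\Omega(x):x\in\overline\Omega,\ \dist(x,\partial\Omega)\le\delta\}.
\]
Then $\omega(\delta)\to0$ as $\delta\downarrow0$. To see this, suppose points $x_k$ had $\dist(x_k,\partial\Omega)\to0$ and $D_\Omega(x_k)\ge c>0$. A subsequence converges to a boundary point, where $D_\Omega=0$, which contradicts continuity. By \eqref{eq:convex-normalized-barrier}, we get $0\le u_N(x)\le C_{\Omega,K}\,\omega(\dist(x,\partial\Omega))$ for every $N$. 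We need no rate here, only a modulus.

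\emph{Step 2: combining the two estimates.} Fix $\varepsilon>0$. Choose $\delta>0$ with $C_{\Omega,K}\,\omega(3\delta)<\varepsilon/2$. Then choose $r_0\le\delta$ with $C_\Omega r_0/\delta<\varepsilon$. Now take $x,y\in\overline\Omega$ with $|x-y|=r<r_0$. There are two cases.

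If one of the points lies within $2\delta$ of $\partial\Omega$, the other lies within $3\delta$. By Step 1 both values lie in $[0,\varepsilon/2)$, so their difference is less than $\varepsilon$.

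Otherwise both points lie in $\Omega_{2\delta}$. The inner parallel body $\Omega_{2\delta}$ is convex, so the segment $[x,y]$ lies in $\Omega_{2\delta}\subset\Omega_\delta$. Then \eqref{eq:convex-interior-lip} gives $|u_N(x)-u_N(y)|\le C_\Omega r/\delta<\varepsilon$.

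The choices of $\delta$ and $r_0$ do not depend on $N$, so this proves equicontinuity on $\overline\Omega$. We also record that $u_N$ extends continuously by zero to $\partial\Omega$, since $F_{\Omega,P_N}$ does.

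\emph{Main point to check.} The only delicate point is the interior Lipschitz bound \eqref{eq:convex-interior-lip} on a general convex domain. We test a supergradient at $x\in\Omega_\delta$ at the points $x\pm\delta e$, which lie in $\overline\Omega$. Concavity together with $0\le u_N\le C_\Omega$ then bounds every component of the supergradient by $C_\Omega/\delta$, up to a fixed factor. This is exactly the argument in the proof of \cref{lem:convex-gradient-bound}, so it needs no regularity of $\partial\Omega$.

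For general convex $\Omega$ the boundary modulus is not a power law, in contrast to the polygonal case, which is why Step 1 is phrased through $\omega$. Arzel\`a--Ascoli then applies exactly as in \cref{sec:aleksandrov-limit}.
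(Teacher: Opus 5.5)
Your proof is correct and is essentially the paper's argument: uniform boundedness from the height bound, a boundary modulus from $0\le u_N\le C_{\Omega,K}D_\Omega$ with $D_\Omega$ continuous on $\overline\Omega$ and zero on $\partial\Omega$, and the interior Lipschitz bound \eqref{eq:convex-interior-lip} on the convex inner parallel body, combined through the same two-case split (your $\varepsilon/2$ and explicit $r_0$ just make precise the paper's ``sufficiently small relative to $\delta$''). Your closing remark that the boundary modulus is not a power law is unnecessary and in fact too strong: approximating the inner normal at a nearest boundary point by primitive lattice vectors via Dirichlet's theorem gives $D_\Omega\le C_\Omega\dist(\cdot,\partial\Omega)^{1/2}$ on every bounded planar convex domain, but nothing in your argument depends on it.
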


\begin{proof}
Given $\varepsilon>0$, choose $\delta>0$ such that
\[
 C_{\Omega,K}\sup_{\dist(x,\partial\Omega)\le3\delta}D_\Omega(x)<\varepsilon/4.
\]
If $|x-y|<\delta$ and one point lies within $2\delta$ of the boundary, both values are less than $\varepsilon/4$.  If both points lie in $\Omega_{2\delta}$, convexity puts $[x,y]$ in $\Omega_{2\delta}$, and \eqref{eq:convex-interior-lip} gives the required bound once $|x-y|$ is sufficiently small relative to $\delta$.  The choice is independent of $N$.
\end{proof}

\subsubsection{Vague convergence}

Let $L\Subset\Omega$, and choose a nonnegative $\chi\in C_c^1(\Omega)$ with $\chi=1$ near $L$.  Applying \eqref{eq:convex-quant-main} to $\chi$ gives
\[
 \sup_N\MA(u_N)(L)<\infty.
\]

Fix $\psi\in C_c(\Omega)$ and choose $L_\psi\Subset\Omega$ containing its support in its interior.  Approximate $\psi$ uniformly by $\psi_k\in C_c^1(\Omega)$ supported in $L_\psi$.  For fixed $k$, first let $N\to\infty$ in \eqref{eq:convex-quant-main}; then let $k\to\infty$ using the uniform local mass bound.  Thus
\[
 \int\psi\,d\bigl(\MA(u_N)-\mu_N\bigr)\longrightarrow0.
\]
Since $\mu_N\rightharpoonup\mu$,
\begin{equation}
\label{eq:convex-vague}
 \MA(u_N)\rightharpoonup\mu
 \qquad\text{vaguely on }\Omega.
\end{equation}

\subsubsection{Aleksandrov identification}

Arzel\`a--Ascoli and \cref{prop:convex-equicontinuity} give a uniformly convergent subsequence
\[
 u_{N_k}\longrightarrow u
 \qquad\text{on }\overline\Omega.
\]
The limit is continuous, concave, nonnegative, and zero on $\partial\Omega$.

Applying the same convex stability theorem to $-u_{N_k}\to-u$ on compact subsets gives
\[
 \MA(u_{N_k})\rightharpoonup\MA(u)
 \qquad\text{vaguely on }\Omega.
\]
Together with \eqref{eq:convex-vague}, this yields $\MA(u)=\mu$.

The zero-boundary Aleksandrov Dirichlet problem on a bounded convex domain has a unique continuous concave solution by the generalized Dirichlet theory \cite{Aleksandrov1958,RauchTaylor1977}; in the convex sign convention this is also the linear-boundary-data case of \cite[Theorem~1 and Remark~1]{Mooney2018}, and uniqueness follows from comparison.  Hence $u=F_{\mu,\Omega}$.

Every subsequence has the same cluster point, proving uniform convergence of the full sequence and completing the proof of \cref{thm:convex-main-fixed}.

\subsection{Boundary behavior}

On an arbitrary convex domain, rational reduction at each positive level yields local estimates and hence vague curvature convergence in $\Omega$.  On a rational polygon, the finite terminal graph also yields the total-curvature identity and weak convergence on $\overline\Omega$.

Universal genericity is residual and full measure; on rational polygons, strong genericity is open and the sandpile diagonal applies.  Throughout the limit theorems, the source clouds lie in a fixed compact set $K\Subset\Omega$.

\section{Random points and affine covariance}
\label{sec:random-affine}

For i.i.d. sources, genericity and empirical-measure convergence hold simultaneously almost surely, so the deterministic limits apply pathwise.  The limiting Dirichlet problem is fully affine covariant.  Finite tropical relaxation is covariant under unimodular integral-affine maps.

\subsection{Almost-sure limits}

\begin{theorem}[Almost-sure limits]
\label{thm:random-limit}
Let $X_1,X_2,\dots$ be i.i.d.\ with probability law $\mu$ absolutely continuous with respect to two-dimensional Lebesgue measure, and put
\[
 P_N=\{X_1,\dots,X_N\}.
\]
With probability one:
\begin{enumerate}[label=\textup{(\roman*)}]
\item If $\Omega$ is a bounded rational convex polygonal body and $\mu$ is supported in a fixed $K\Subset\Omega^\circ$, every $P_N$ is strongly generic and
\[
 N^{-1/2}G_{P_N}0_\Omega\longrightarrow F_{\mu,\Omega}
\]
uniformly on $\overline\Omega$, while
\[
 \MA\left(N^{-1/2}G_{P_N}0_\Omega\right)\rightharpoonup\mu
\]
weakly on $\overline\Omega$, after each interior measure is extended by zero to $\partial\Omega$.
\item If $\Omega$ is a bounded open convex domain with nonempty interior and $\mu$ is supported in a fixed $K\Subset\Omega$, every $P_N$ is universally generic in $\Omega$ and the same potential convergence holds uniformly on $\overline\Omega$, while the curvature measures converge vaguely in $\Omega$.
\end{enumerate}
\end{theorem}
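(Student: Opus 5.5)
The plan is to reduce to the deterministic theorems by constructing one probability-one event on which both hypotheses hold for every $N$: genericity of each $P_N$ and weak convergence $\mu_N\rightharpoonup\mu$. On that event, part (i) follows from \cref{thm:deterministic-limit}. Part (ii) follows from \cref{thm:convex-main}, whose conclusions (ii)--(iii) are the vague curvature convergence and the uniform potential convergence.

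\textbf{Empirical measures.} By Varadarajan's theorem \cite{Varadarajan1958}, $\mu_N\rightharpoonup\mu$ almost surely for i.i.d.\ samples in a separable metric space. Here $\mu$ is a probability measure supported in $K$. Hence $P_N\subset K$ for every $N$ outside the null event $\bigcup_i\{X_i\notin K\}$.

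\textbf{Genericity.} For fixed $N$, the vector $(X_1,\dots,X_N)$ has law $\mu^{\otimes N}$. Since $\mu\ll\Leb_2$, this law is absolutely continuous with respect to Lebesgue measure on $(\R^2)^N$.
\begin{itemize}
\item The collision diagonals are Lebesgue-null, so the $X_i$ are almost surely distinct.
\item In case (i), the complement of the strong-genericity locus is Lebesgue-null. This uses \cref{thm:strong-genericity}, applied on an open rational polygon $\mathcal O$ with $K\Subset\mathcal O\Subset\Omega^\circ$, together with permutation invariance of the locus. We can take $\mathcal O$ to be a rational polygon containing $K$ in its interior; such a polygon exists by the grid construction of \cref{lem:rational-coordinate-cofinality}.
\item In case (ii), the non-universally-generic set is Lebesgue-null by \cref{prop:universal-genericity}.
\end{itemize}
So each event $\{P_N\text{ generic}\}$ has probability one. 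Intersecting over the countably many $N$, and with the Varadarajan event, gives the required event.

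\textbf{Main obstacle.} The step needing care is that the genericity locus must be Borel measurable, since absolute continuity only controls measurable sets. For (i), the locus is semilinear, hence Borel. For (ii), it is a countable intersection of the open sets $\widehat{\mathcal G}_N(\Delta)$, hence a $G_\delta$ set. A second point is that ordered versus unordered configurations cause no difficulty, because the loci are permutation-invariant. Finally, the requirement that $\mu$ be supported in $K$ and be a probability measure is exactly what both deterministic theorems assume.
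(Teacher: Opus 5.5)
Your proposal is correct and follows essentially the paper's own argument. Both use Varadarajan's theorem for the empirical measures and absolute continuity of $\mu^{\otimes N}$ to make each $P_N$ strongly (resp.\ universally) generic almost surely. Both then take a countable intersection over $N$ (and, in case (ii), over the countable family $\mathscr P_{\mathbb Q}(\Omega)$) and apply \cref{thm:deterministic-limit} or \cref{thm:convex-main}. Your remarks on Borel measurability of the genericity loci and on $P_N\subset K$ almost surely are details the paper leaves implicit.
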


\begin{proof}
Absolute continuity implies that all sampled points are distinct on one probability-one event.  The empirical measures converge weakly to $\mu$ almost surely by Varadarajan's theorem \cite{Varadarajan1958}.

In the polygonal case, for each fixed $N$ the complement of the strong-genericity locus has Lebesgue measure zero.  A countable intersection over $N$ gives simultaneous strong genericity, and \cref{thm:deterministic-limit} applies.

In the general convex case, for each $N$ and each $\Delta\in\mathscr P_{\mathbb Q}(\Omega)$, the complement of $\widehat{\mathcal G}_N(\Delta)$ has measure zero.  The family of pairs $(N,\Delta)$ is countable, so one probability-one event makes every finite initial segment universally generic.  Then \cref{thm:convex-main} applies.
\end{proof}

\subsection{Continuum covariance}

Let $F$ be concave on $\Omega$, let $T(x)=Ax+b$ with $A\in\GL(2,\R)$, and, for $a>0$, define
\[
 G(y)=aF(T^{-1}y).
\]
If $x=T^{-1}y$, then
\[
 \partial^+G(y)=aA^{-T}\partial^+F(x).
\]
The Monge--Amp\`ere measure transforms as
\[
 \MA(G)=\frac{a^2}{|\det A|}T_\#\MA(F).
\]
Taking $a=|\det A|^{1/2}$ and using uniqueness of the zero-boundary Dirichlet solution gives:

\begin{theorem}[Affine covariance]
\label{thm:affine-covariance}
Let $\Omega\subset\R^2$ be a bounded open convex domain, let $\mu$ be a finite Borel measure on $\Omega$, and let $T(x)=Ax+b$ with $A\in\GL(2,\R)$.  Suppose the zero-boundary Aleksandrov problems for $\mu$ on $\Omega$ and $T_\#\mu$ on $T\Omega$ have unique continuous concave solutions.  Then
\[
 F_{T_\#\mu,T\Omega}
 =|\det A|^{1/2}F_{\mu,\Omega}\circ T^{-1}.
\]
In particular, the statement holds for rational polygons and for every bounded convex domain.
\end{theorem}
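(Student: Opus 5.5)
The plan is to reduce the theorem to the transformation law for superdifferentials, followed by uniqueness of the zero-boundary Aleksandrov problem. Put $a=|\det A|^{1/2}$ and $G(y)=aF_{\mu,\Omega}(T^{-1}y)$ for $y\in T\overline\Omega$. First I will check that $G$ is admissible: it is continuous on $T\overline\Omega=\overline{T\Omega}$, concave (an affine change of variables preserves concavity and $a>0$), and vanishes on $\partial(T\Omega)=T(\partial\Omega)$ because $F_{\mu,\Omega}|_{\partial\Omega}=0$. It also remains to note that $T\Omega$ is again a bounded open convex domain, which is a rational polygon when $\Omega$ is one and $T$ is rational; for the ``in particular'' clause only bounded convexity is needed.

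Next I will verify the superdifferential identity $\partial^+G(y)=aA^{-T}\partial^+F(x)$ with $x=T^{-1}y$. Given $p\in\partial^+F(x)$, the inequality $F(x')\le F(x)+\langle p,x'-x\rangle$, applied at $x'=T^{-1}y'$, gives $G(y')\le G(y)+a\langle p,A^{-1}(y'-y)\rangle=G(y)+\langle aA^{-T}p,y'-y\rangle$. The reverse inclusion follows by the same computation applied to $T^{-1}$. Hence, for every Borel $B\subset T\Omega$, we get $\partial^+G(B)=aA^{-T}\,\partial^+F(T^{-1}B)$. Since a linear map scales Lebesgue measure by the absolute value of its determinant,
\[
 \MA(G)(B)=a^2|\det A|^{-1}\MA(F)(T^{-1}B)=T_\#\MA(F)(B).
\]
Here I will quote the standard fact that $\partial^+F(E)$ is Lebesgue measurable for Borel $E$, so both sides are well defined.

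Taking $F=F_{\mu,\Omega}$ gives $\MA(G)=T_\#\mu$ with zero boundary values on $T\Omega$. The assumed uniqueness, which holds unconditionally on bounded convex domains by the Dirichlet theory and comparison principle recalled in \cref{app:background-results} (zero boundary data being affine), then identifies $G=F_{T_\#\mu,T\Omega}$.

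I expect no real obstacle. The only point needing care is the measure-theoretic bookkeeping: the Borel image of superdifferentials and the correct pushforward direction, $T_\#\MA(F)(B)=\MA(F)(T^{-1}B)$. I will handle it by working with sets of the form $B=TE$.
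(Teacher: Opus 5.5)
Your proposal is correct and follows the paper's own route. Both use the superdifferential law $\partial^+G(y)=aA^{-T}\partial^+F(T^{-1}y)$, the resulting scaling $\MA(G)=a^2|\det A|^{-1}T_\#\MA(F)$, the choice $a=|\det A|^{1/2}$, and uniqueness of the zero-boundary Dirichlet problem; your checks that $G$ is continuous, concave and zero on $\partial(T\Omega)$, together with the measurability remark, supply details the paper leaves implicit.
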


The integrated action
\[
 \mathscr J(\mu,\Omega)=\int_\Omega F_{\mu,\Omega}(x)\,dx
\]
therefore transforms as
\[
 \mathscr J(T_\#\mu,T\Omega)=|\det A|^{3/2}\mathscr J(\mu,\Omega).
\]

\subsection{Finite-level covariance}

If $A\in\GL(2,\Z)$ and $|\det A|=1$, then $A^{-T}$ preserves $\Z^2$.  Pullback by $T^{-1}$ is a bijection between tropical series on $\Omega$ and tropical series on $T\Omega$.  It preserves pointwise order and carries the marked admissible class for $(\Omega,P)$ onto the class for $(T\Omega,T(P))$.

\begin{proposition}[Integral-affine tropical covariance]
\label{prop:integral-affine}
Let $\Omega\subset\R^2$ be a bounded open convex domain and let $P\subset\Omega$ be finite.  For every $T(x)=Ax+b$ with $A\in\GL(2,\Z)$ and $|\det A|=1$,
\[
 G_{T(P)}0_{T\Omega}=(G_P0_\Omega)\circ T^{-1}.
\]
\end{proposition}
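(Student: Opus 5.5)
The plan is to show that pullback by $T^{-1}$ is an order-preserving bijection between the admissible classes $\mathcal V(\Omega,P,0_\Omega)$ and $\mathcal V(T\Omega,T(P),0_{T\Omega})$. A pointwise infimum over one class then pulls back to the pointwise infimum over the other. Since $G_P0_\Omega$ is defined as exactly such an infimum (\cref{prop:tropical-relaxation} on polygons, and the general tropical-series theory of \cite{KalininShkolnikov2018} on convex domains), the identity follows once the bijection is established.

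\textbf{Step 1: monomials.} For an affine monomial $c+\inner{m}{x}$ with $m\in\Z^2$, substitute $x=T^{-1}y=A^{-1}(y-b)$. This gives
\[
 c-\inner{A^{-T}m}{b}+\inner{A^{-T}m}{y}.
\]
Because $A\in\GL(2,\Z)$ with $|\det A|=1$, we have $A^{-1}\in\GL(2,\Z)$. So $A^{-T}$ is a bijection of $\Z^2$, and integral gradients go to integral gradients. The map is injective on gradients, so distinct gradients stay distinct.

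\textbf{Step 2: tropical series.} Let $F$ be an $\Omega$-tropical series, and set $F^T=F\circ T^{-1}$ on $\overline{T\Omega}$. Near $y_0=Tx_0$, the local representation $F=\min_{m\in A}(c_m+\inner{m}{x})$ becomes a local min-plus representation of $F^T$ with gradients $A^{-T}m\in\Z^2$. Local finiteness is preserved because $T$ is a homeomorphism. $F^T$ is continuous and nonnegative on $\overline{T\Omega}$. It vanishes on $\partial(T\Omega)=T(\partial\Omega)$, since $T$ is a homeomorphism of $\R^2$ carrying $\Omega$ onto $T\Omega$.

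\textbf{Step 3: corner loci.} The corner locus transforms by $V(F^T)=T(V(F))$. By Step 1, the minimum at $x$ is attained by two distinct monomials exactly when it is attained by their two distinct transformed monomials at $Tx$. Hence $P\subset V(F)$ if and only if $T(P)\subset V(F^T)$, and the lower bound $F\ge0$ is preserved. The inverse operation is pullback by $T$, with $A$ replaced by $A^{-1}$, which satisfies the same hypotheses. So the map $F\mapsto F^T$ is a bijection between the two admissible classes. It clearly preserves pointwise order.

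\textbf{Step 4: conclusion.} Take the infimum over the two classes:
\[
 \bigl(\inf_{\mathcal V(\Omega,P,0)}G\bigr)\circ T^{-1}=\inf_{\mathcal V(T\Omega,T(P),0)}G'.
\]
Composition with a fixed bijection commutes with pointwise infima, and this gives the asserted equality.

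The only point needing care is the definition of tropical series on a general bounded convex domain, where the series is only locally finite. If the definition there carries extra conditions, for example the boundary-continuity clause, I would check each one against Step 2. Each is a topological or lattice-invariant condition, so I expect no real obstacle. The convexity of $T\Omega$ and its boundedness are likewise immediate.
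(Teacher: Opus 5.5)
Your proposal is correct and follows essentially the same route as the paper: $A^{-T}$ preserves $\Z^2$, so pullback by $T^{-1}$ is an order-preserving bijection of tropical series. This bijection carries the marked admissible class for $(\Omega,P)$ onto the class for $(T\Omega,T(P))$, so the two pointwise infima correspond. Your Steps 1--3 simply write out in detail the verification the paper states in one short paragraph before the proposition.
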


Thus microscopic relaxation is $\GL(2,\Z)$-covariant, and the continuum equation satisfies the full affine covariance of \cref{thm:affine-covariance}.

\subsection{Disk and ellipse profiles}

Let $B_R=\{x:|x|<R\}$ and let $\rho_0>0$ be constant.  The function
\[
 F(x)=\frac{\sqrt{\rho_0}}2(R^2-|x|^2)
\]
is concave, zero on $\partial B_R$, and satisfies $\det D^2F=\rho_0$.  For uniform probability measure on $B_R$, $\rho_0=(\pi R^2)^{-1}$, so
\begin{equation}
\label{eq:disk-profile}
 F_{\mathrm{unif},B_R}(x)
 =\frac{R^2-|x|^2}{2\sqrt\pi R}.
\end{equation}
In particular,
\[
 F_{\mathrm{unif},\mathbb D}(x)
 =\frac{1-|x|^2}{2\sqrt\pi},
 \qquad
 -\Delta F_{\mathrm{unif},\mathbb D}=\frac2{\sqrt\pi}.
\]

If $E=A\mathbb D+b$ and $\mu_E$ is uniform probability measure on $E$, affine covariance gives
\begin{equation}
\label{eq:ellipse-profile}
 F_{\mu_E,E}(y)
 =\frac{|\det A|^{1/2}}{2\sqrt\pi}
 \left(1-|A^{-1}(y-b)|^2\right).
\end{equation}

\begin{remark}[Boundary-reaching source laws]
The bounded-convex-domain theorem assumes support in a fixed compact set $K\Subset\Omega$.  Uniform probability measure on the disk or ellipse reaches the boundary.  Formulas \cref{eq:disk-profile,eq:ellipse-profile} are the corresponding continuum benchmarks.  A direct tropical derivation for boundary-reaching clouds requires estimates uniform as the supports approach $\partial\Omega$.
\end{remark}

\appendix
\section{Sandpile diagonal and deficit}
\label{app:sandpile-diagonal}

Let $\Omega$ be a bounded rational convex polygon.  The fixed-source odometer theorem of \cite{KalininShkolnikovSandpile} combines with the continuum theorem to give the discrete consequence below.  Abelian stabilization and the odometer go back to \cite{Dhar1990}.  The mesh may depend on the complete point configuration.

Let
\[
 \Omega_h=\Omega^\circ\cap h\Z^2
\]
and extend lattice functions by zero outside $\Omega_h$.  We use the unscaled discrete Laplacian
\[
 \Delta_hf(v)=\sum_{|w-v|=h}f(w)-4f(v).
\]
For a fixed finite $P\subset\Omega^\circ$, choose proper lattice roundings $P^h$ as in \cite[Definition~4.1 and Proposition~4.3]{KalininShkolnikovSandpile} and stabilize the maximal stable background plus one grain at each rounded point.  Let $H_{h,P}$ be the odometer.  The upper and lower bounds in \cite[Propositions~4.8 and~5.4]{KalininShkolnikovSandpile} give, for fixed $P$,
\[
 \sup_{z\in\Omega_h}|hH_{h,P}(z)-F_P(z)|\longrightarrow0.
\]

Let $P_N$ satisfy the hypotheses of \cref{thm:deterministic-limit}.

\begin{theorem}[Sandpile diagonal]
\label{thm:soft-diagonal}
There exist configuration-dependent meshes $h_N\to0$ and proper roundings such that
\[
 Nh_N^2\to0
\]
and
\[
 \sup_{z\in\Omega_{h_N}}
 \left|
 \frac{h_N}{\sqrt N}H_{h_N,P_N}(z)-F_{\mu,\Omega}(z)
 \right|
 \longrightarrow0.
\]
\end{theorem}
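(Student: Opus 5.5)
This is a diagonal argument combining the fixed-source odometer limit with \cref{thm:deterministic-limit}.

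\emph{Step 1: choose the mesh for each fixed $N$.} For each $N$, the configuration $P_N$ is a fixed finite subset of $\Omega^\circ$. The fixed-source theorem of \cite{KalininShkolnikovSandpile} then gives
\[
 \sup_{z\in\Omega_h}|hH_{h,P_N}(z)-F_{P_N}(z)|\to0
 \qquad (h\to0),
\]
with proper roundings $P_N^h$ for all sufficiently small $h$. No uniformity in $N$ is available, so we use only this pointwise-in-$N$ statement. For each $N$ we choose $h_N>0$ so small that four conditions hold:
\begin{enumerate}[label=\textup{(\alph*)}]
\item $h_N\le N^{-1}$, which makes $Nh_N^2\le N^{-1}\to0$ and $h_N\to0$;
\item the proper rounding $P_N^{h_N}$ exists and is injective;
\item the odometer error satisfies
\[
 \sup_{z\in\Omega_{h_N}}|h_NH_{h_N,P_N}(z)-F_{P_N}(z)|\le 1;
\]
\item $h_N$ is below any threshold needed for the cited propositions to apply.
\end{enumerate}
Condition (c) is possible because, for fixed $N$, the error tends to zero as $h\to0$. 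The tolerance $1$ could be replaced by any $\varepsilon_N\to0$ with $\varepsilon_N/\sqrt N\to0$. Since only the fixed-$N$ limit is used, $h_N$ depends on the whole configuration $P_N$, as the statement allows.

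\emph{Step 2: combine with the continuum limit.} For $z\in\Omega_{h_N}$ we use the triangle inequality:
\[
 \Bigl|\tfrac{h_N}{\sqrt N}H_{h_N,P_N}(z)-F_{\mu,\Omega}(z)\Bigr|
 \le \tfrac{1}{\sqrt N}\bigl|h_NH_{h_N,P_N}(z)-F_{P_N}(z)\bigr|
 +\bigl|N^{-1/2}F_{P_N}(z)-F_{\mu,\Omega}(z)\bigr|.
\]
By (c), the first term is at most $N^{-1/2}$. The second term is bounded by $\sup_{\overline\Omega}|u_N-F_{\mu,\Omega}|$, which tends to zero by \cref{thm:deterministic-limit}, since $\Omega_{h_N}\subset\overline\Omega$. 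Both bounds are uniform in $z$, which gives the claim.

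\emph{Main obstacle.} The only delicate point is the exact form of the cited fixed-source theorem: it must hold for every finite strongly generic $P\subset\Omega^\circ$ on a rational polygon, with the same odometer normalization $hH_{h,P}$ and the same maximal-stable background. If the cited convergence is stated only along subsequences of $h$, or only for special lattice-compatible meshes, we pick $h_N$ from that admissible set instead. The argument needs only that the set contains arbitrarily small meshes, so the diagonal choice is unaffected. No rate in $h$ is needed anywhere, which is why the mesh cannot be made independent of the configuration.
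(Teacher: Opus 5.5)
Your proposal is correct and follows the paper's proof: a fixed-$N$ choice of $h_N<1/N$ small enough for the fixed-source odometer limit, followed by the triangle inequality with \cref{thm:deterministic-limit}. The only difference is cosmetic: the paper requires an odometer error of at most $1/N$, so its first term is at most $N^{-3/2}$, while your tolerance $1$ gives $N^{-1/2}$, which works equally well.
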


\begin{proof}
For each fixed $N$, choose $h_N>0$ so small that
\[
 h_N<\frac1N
\]
and
\[
 \sup_{z\in\Omega_{h_N}}|h_NH_{h_N,P_N}(z)-F_{P_N}(z)|\le\frac1N.
\]
Then $Nh_N^2\to0$.  Moreover, the difference in the theorem is bounded by
\[
 \frac1{\sqrt N}\sup_{\Omega_{h_N}}|h_NH_{h_N,P_N}-F_{P_N}|
 +\left\|N^{-1/2}F_{P_N}-F_{\mu,\Omega}\right\|_\infty.
\]
The first term is at most $N^{-3/2}$ and the second tends to zero by \cref{thm:deterministic-limit}.
\end{proof}

\subsection{Deficit measure}

Let $\phi_{h,P}^\circ$ be the stabilized configuration and define the final deficit
\[
 d_{h,P}(v)=3-\phi_{h,P}^\circ(v).
\]
The odometer identity is
\[
 d_{h,P}=-\xi_{h,P}-\Delta_hH_{h,P},
\]
where $\xi_{h,P}$ is the sum of delta functions at the rounded sources.

Define
\[
 \mathcal D_N
 =\frac{h_N}{\sqrt N}
 \sum_{v\in\Omega_{h_N}}d_{h_N,P_N}(v)\delta_v.
\]

\begin{theorem}[Deficit law]
\label{thm:deficit-law}
Along the selected diagonal,
\[
 \mathcal D_N\rightharpoonup-\Delta F_{\mu,\Omega}
\]
vaguely in $\Omega^\circ$.
\end{theorem}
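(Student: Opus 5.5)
The plan is to take the odometer identity $d_{h,P}=-\xi_{h,P}-\Delta_hH_{h,P}$, rescale it by $h_N/\sqrt N$, and test it against a fixed $\psi\in C_c^\infty(\Omega^\circ)$. This gives
\[
 \int\psi\,d\mathcal D_N
 =-\frac{h_N}{\sqrt N}\sum_{p\in P_N^{h_N}}\psi(p)
 -\frac{h_N}{\sqrt N}\sum_{v\in\Omega_{h_N}}\psi(v)\,\Delta_{h_N}H_{h_N,P_N}(v).
\]
The source term is bounded by $h_N\sqrt N\|\psi\|_\infty$. Since $Nh_N^2\to0$ on the selected diagonal of \cref{thm:soft-diagonal}, this term vanishes in the limit.

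For the second term, I would use summation by parts, which is exact because $H$ is extended by zero and $\psi$ has compact support inside $\Omega_{h_N}$ for small $h$. This moves the unscaled Laplacian onto $\psi$:
\[
 \sum_v\psi(v)\Delta_hH(v)=\sum_v H(v)\Delta_h\psi(v).
\]
Writing $\Delta_h\psi(v)=h^2\Delta\psi(v)+O(h^4\|\psi\|_{C^4})$, the term becomes
\[
 -\sum_v h^2\,\frac{h_N}{\sqrt N}H_{h_N,P_N}(v)\,\Delta\psi(v)+\text{error}.
\]
The error is controlled by $\sup_v \frac{h_N}{\sqrt N}H_{h_N,P_N}(v)$ times $O(h^2)$, summed over $O(h^{-2})$ sites. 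The sup is uniformly bounded: by the diagonal estimate it lies within $N^{-3/2}$ of $N^{-1/2}F_{P_N}$, and $N^{-1/2}F_{P_N}\le C_\Omega$ by \cref{prop:k-free-height}. So the error is $O(h^2)\to0$.

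The main term is a Riemann sum $\sum_v h^2 g_N(v)\Delta\psi(v)$ with $g_N=\frac{h_N}{\sqrt N}H_{h_N,P_N}$. By \cref{thm:soft-diagonal}, $g_N\to F_{\mu,\Omega}$ uniformly on the lattice sites. Since $F_{\mu,\Omega}$ is continuous and $\Delta\psi$ is smooth with compact support, the sum converges to $-\int F_{\mu,\Omega}\Delta\psi\,dx$. Hence
\[
 \int\psi\,d\mathcal D_N\to-\int F_{\mu,\Omega}\,\Delta\psi,
\]
which is the pairing of the distribution $-\Delta F_{\mu,\Omega}$ with $\psi$.

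To get vague convergence against all $\psi\in C_c(\Omega^\circ)$, I would use a positivity argument rather than a direct density argument. Since $0\le\phi^\circ\le3$, we have $d\ge0$, so $\mathcal D_N\ge0$. Concavity of $F_{\mu,\Omega}$ makes $-\Delta F_{\mu,\Omega}$ a nonnegative Radon measure. Testing the smooth case with a cutoff $\chi\ge\one_L$ gives uniform local mass bounds for $\mathcal D_N$. Uniform approximation of continuous $\psi$ by smooth functions then finishes the argument.

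The only delicate point is the summation-by-parts step near the support edge. I do not expect a real obstacle there, because $\psi$ has compact support in the open polygon and $h_N\to0$.
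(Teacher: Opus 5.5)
Your proposal is correct and follows the paper's proof: the odometer identity, summation by parts moving $\Delta_{h_N}$ onto $\psi$, a Taylor remainder of total size $O_\psi(h_N^2)$, vanishing of the source term because $Nh_N^2\to0$, and a Riemann-sum limit using the uniform convergence of $\frac{h_N}{\sqrt N}H_{h_N,P_N}$ to $F_{\mu,\Omega}$ on the lattice sites. Your last step goes beyond the written proof, which tests only against $C_c^\infty(\Omega^\circ)$: you use $d\ge0$ to get local mass bounds from a smooth cutoff and then approximate uniformly, and this extends the convergence to all of $C_c(\Omega^\circ)$, as the paper's definition of vague convergence requires.
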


\begin{proof}
For $\psi\in C_c^\infty(\Omega^\circ)$, discrete summation by parts gives
\[
 \int\psi\,d\mathcal D_N
 =-\frac{h_N}{\sqrt N}\sum_{j=1}^N\psi(p_{N,j}^{h_N})
 -\sum_vU_N(v)\Delta_{h_N}\psi(v),
\]
where
\[
 U_N(v)=\frac{h_N}{\sqrt N}H_{h_N,P_N}(v).
\]
The first term tends to zero because $h_N\sqrt N\to0$.

For the second term, Taylor expansion gives
\[
 \Delta_{h_N}\psi(v)=h_N^2\Delta\psi(v)+O_\psi(h_N^4).
\]
By \cref{thm:soft-diagonal}, the functions $U_N$ are uniformly bounded.  Only $O_\psi(h_N^{-2})$ lattice sites meet the fixed compact support of $\psi$ and its nearest-neighbor stencil.  The Taylor remainder therefore contributes
\[
 O(1)\,O_\psi(h_N^{-2})\,O_\psi(h_N^4)
 =O_\psi(h_N^2)
 \longrightarrow0.
\]

The main term is a Riemann sum.  Uniform convergence $U_N\to F_{\mu,\Omega}$ gives the limit
\[
 -\int_\Omega F_{\mu,\Omega}\Delta\psi\,dx.
\]
\end{proof}

Since $F_{\mu,\Omega}$ is concave, $-\Delta F_{\mu,\Omega}$ is a positive Radon measure.  If $W\Subset\Omega^\circ$ is a Lipschitz window with
\[
 (-\Delta F_{\mu,\Omega})(\partial W)=0,
\]
then
\[
 \frac{h_N}{\sqrt N}\sum_{v\in W\cap\Omega_{h_N}}d_{h_N,P_N}(v)
 \longrightarrow(-\Delta F_{\mu,\Omega})(W).
\]
Since $W$ is bounded and Lipschitz,
\[
 h_N^2\#(W\cap h_N\Z^2)\longrightarrow |W|.
\]
The average deficit $\overline d_N(W)$ therefore satisfies
\[
 \frac{\overline d_N(W)}{h_N\sqrt N}
 \longrightarrow
 \frac{(-\Delta F_{\mu,\Omega})(W)}{|W|}.
\]

The mesh diagonal is selected configuration by configuration.  A joint limit under $Nh^2\to0$ requires fixed-source estimates uniform in the source number and the configuration.

\section{Numerical diagnostics}
\label{app:numerical-diagnostics}

This appendix contains three numerical checks: marked topology, the many-source Monge--Amp\`ere limit, and the fixed-source Abelian-to-tropical approximation.  For the tropical computations we use the finite representation
\[
 F(x)=\min_{m\in A}\bigl(c_m+\langle m,x\rangle\bigr),
 \qquad A\subset\mathbb Z^2\ \text{finite},
\]
and reconstruct the linearity complex by exact polyhedral clipping.  At a tropical vertex $v$, the Aleksandrov mass is computed from the Newton polygon:
\[
 \MA(F)(\{v\})=
 \Area\conv\{m:\ c_m+\langle m,v\rangle=F(v)\}.
\]
Both the linearity complex and its Monge--Amp\`ere masses are therefore obtained directly from exact polyhedral data.

The proofs are independent of these computations.  The reproducibility archive contains the code, seeds, tables, saved states, and further diagnostics.

\subsection{Topology near the boundary}

For $\Omega=[-1,1]^2$, ten configurations were sampled uniformly from $0.8\Omega$ for each $N\in\{10,20,40,80,160,320\}$.  All sixty runs had
\[
 g=N,
\]
and the trivalent planar identity $V_{\mathrm{int}}=2N+B-2$ held exactly, where $B$ is the number of boundary linearity cells.

The boundary test comprised $128$ top-side configurations with $N\in\{5,10,20,40\}$ and collar thickness in $\{10^{-1},10^{-2},10^{-3},10^{-4}\}$, together with $36$ four-side configurations.  All $164$ fully stabilized generic runs had $g=N$; the longest carrier had length $1.999975$ in a square of side $2$.

For eight marks at one tropical depth, the same-carrier configuration has $g=1$.  Each of the six tested independent normal perturbations, with sizes between $10^{-4}$ and $10^{-2}$, separates the eight carriers and gives $g=8$.

\begin{figure}[!htbp]
\centering
\includegraphics[width=\textwidth]{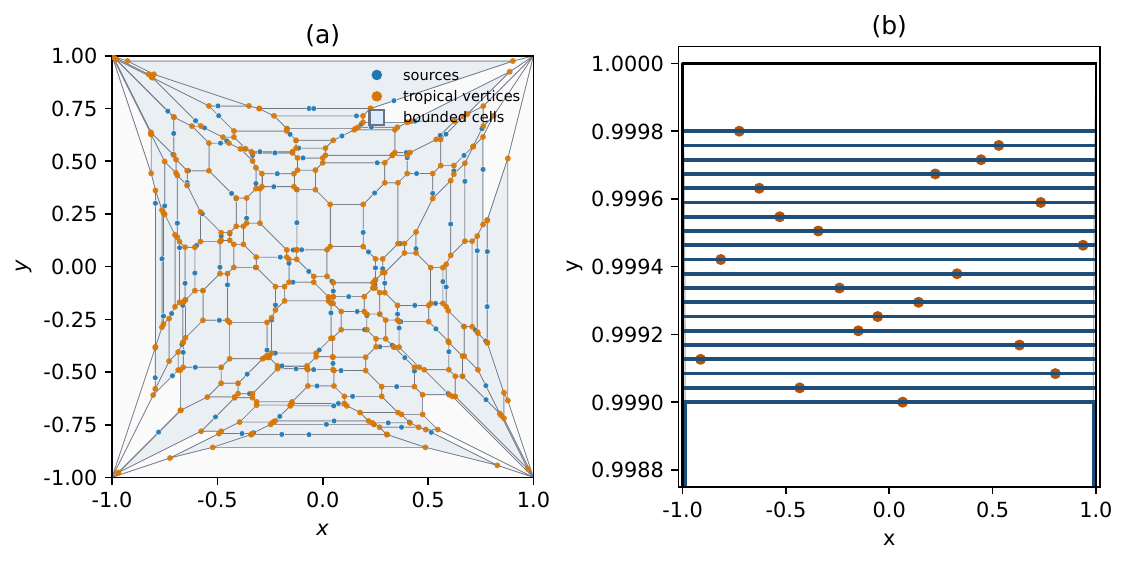}
\caption{Boundary diagnostics.  \textup{(a)} An exact $N=160$ complex with $g=160$.  \textup{(b)} Twenty distinct marked carriers in a collar of thickness $10^{-3}$, shown with vertical magnification.}
\label{fig:num-topology-boundary}
\end{figure}
\FloatBarrier

For the integral-affine conservation laws behind the planar strings in \cref{fig:num-topology-boundary}, see \cite{CaraccioloPaolettiSportiello2010}.

\subsection{Continuum and lattice approximations}

For a compactly supported radial source in the unit disk,
\[
 d\mu_a=\frac{1}{\pi a^2}\mathbf1_{\{|x|\le a\}}\,dx,
 \qquad a=0.6,
\]
the zero-boundary concave Aleksandrov solution is
\[
 F_a(r)=\frac1{\sqrt\pi}
 \begin{cases}
  1-\dfrac a2-\dfrac{r^2}{2a},&0\le r\le a,\\[4pt]
  1-r,&a\le r\le1.
 \end{cases}
\]
The tropical computation uses $R=40$, giving a $32$-sided rational lattice polygon, and compares $F_N/\sqrt N$ with $F_a$ without a fitted parameter.  For $N=25,50,100,200,400$, the ensemble mean relative $L^2$ errors on $r\le0.95$ are respectively
\[
 0.1138,\ 0.0665,\ 0.0556,\ 0.0394,\ 0.0343.
\]
There are five clouds for $N=25,50,100$ and three for $N=200,400$.

For twenty-one fixed $C_c^\infty$ bumps of radius $0.26$, with centers in $|z|\le0.48$, set
\[
 D_N=\max_j\left|\int\varphi_j\,
 d\bigl(N^{-1}\MA(F_N)-\mu_N\bigr)\right|.
\]
The ensemble means of $\sqrt N D_N$ are
$0.1229,0.0802,0.0686,0.0653,0.0557$.

For the matched $N=200$ clouds, increasing $R$ from $40$ to $80$ changes the mean field error from $0.0394\pm0.0024$ to $0.0322\pm0.0011$.  Thus polygonization accounts for part of the residual error.

\begin{figure}[!htbp]
\centering
\includegraphics[width=0.96\textwidth]{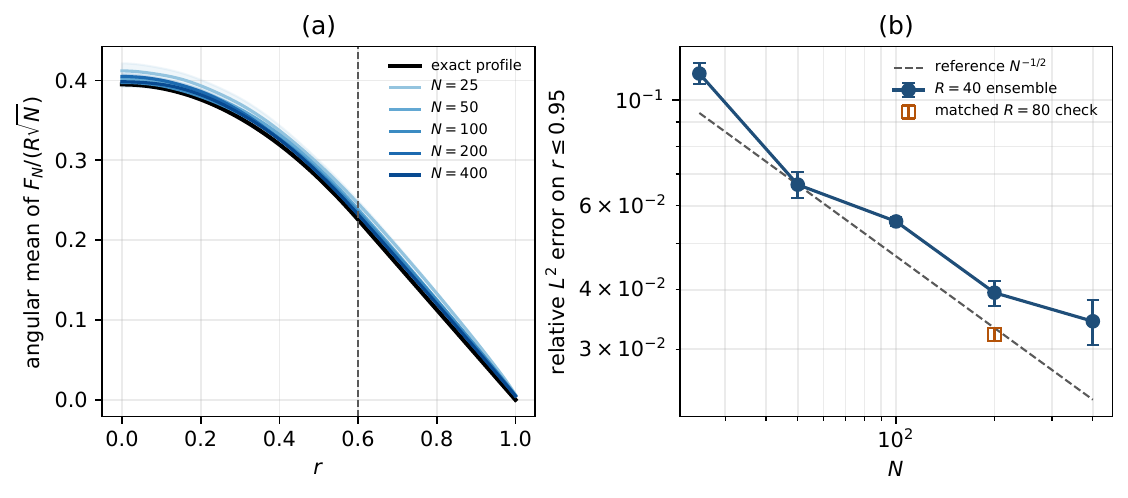}
\caption{Disk benchmark for $a=0.6$.  \textup{(a)} Angular means of the normalized tropical fields and the exact profile; the vertical line marks $r=a$.  \textup{(b)} Relative $L^2$ error on $r\le0.95$, including the matched $R=80$ check.  Error bars are standard errors.  The $N^{-1/2}$ curve is a reference scale, not a fitted exponent.}
\label{fig:num-disk}
\end{figure}
\FloatBarrier

For the fixed-source comparison, return to $\Omega=[-1,1]^2$ and fix eight generic rational source sets with $N=12$.  Starting from the height-$3$ maximal stable background, let $H_{h,P}$ be the Abelian-sandpile odometer on the square lattice of mesh $h$.  The same sources are represented exactly on every mesh.  The relative errors between $hH_{h,P}$ and $F_P=G_P0_\Omega$ are:

\begin{table}[!htbp]
\centering
\footnotesize
\begin{tabular}{@{}rcc@{}}
\toprule
$h$ & relative $L^2$ error & relative $L^\infty$ error \\
\midrule
$1/20$  & $0.03617\pm0.00168$ & $0.05960\pm0.00102$ \\
$1/40$  & $0.01745\pm0.00088$ & $0.02980\pm0.00051$ \\
$1/80$  & $0.00856\pm0.00045$ & $0.01490\pm0.00025$ \\
$1/160$ & $0.00424\pm0.00023$ & $0.00745\pm0.00013$ \\
$1/240$ & $0.00282\pm0.00015$ & $0.00497\pm0.00008$ \\
$1/480$ & $0.00140\pm0.00008$ & $0.00248\pm0.00004$ \\
$1/960$ & $0.00070\pm0.00004$ & $0.00124\pm0.00002$ \\
\bottomrule
\end{tabular}
\caption{Fixed-source refinement for $N=12$: relative errors (mean $\pm$ standard error over eight source sets).}
\label{tab:num-abelian-refinement}
\end{table}
\FloatBarrier

These computations agree with the exact generic topology, the compact-interior source--curvature estimate, and fixed-source Abelian-to-tropical convergence.  The simultaneous $(N,h)$ limit and the boundary-reaching limit $\dist(P_N,\partial\Omega)\to0$ involve different estimates.

\section{Sources for auxiliary results}
\label{app:background-results}

The following external results are used in the proofs.

\subsection{Tropical relaxation and marked topology}

A bounded rational convex polygon is admissible in the sense of \cite[Definition~2.8]{KalininShkolnikov2018}.  The definition of an $\Omega$-tropical series is \cite[Definition~3.1]{KalininShkolnikov2018}; local finiteness follows from Lemmas~3.4 and~3.5 and Corollary~3.6 there.

The admissible class and relaxation operator are given in \cite[Definitions~5.1 and~5.3]{KalininShkolnikov2018}.  Lemma~5.2 proves that the class is nonempty, Lemma~5.7 that its pointwise infimum is tropical, and Proposition~6.1 that repeated one-point waves converge to a limit whose corner locus contains every marked point.  These results give the pointwise-minimal element denoted here by $G_P0_\Omega$.

For a compact rational polygon, the finite small-canonical presentation is stated in \cite[Remark~9.5]{KalininShkolnikov2018}.  Side quasi-degree is Definition~9.6, and Remark~9.7 places every small-canonical gradient in the convex hull of the boundary gradients $m_F(S)n(S)$.  Together they imply the finiteness and slope bounds used in \cref{sec:preliminaries,sec:genericity}.

For an arbitrary bounded convex domain, Lemmas~4.3--4.4 of the same source give continuity and positivity of the tropical distance, and Lemma~10.3 gives a related shifted-core statement.  The rational-core theorem and the exhaustion argument are proved in \cref{sec:convex-domains}.  Mikhalkin--Shkolnikov prove that every positive-time propagation of a compact convex planar domain is a finite rational polygon \cite[Proposition~24 and Corollary~25]{MikhalkinShkolnikov2023}.

Tropical symplectic area is defined in \cite[Definition~14.1]{KalininShkolnikov2018}.  Deformation invariance is Lemma~14.5, the boundary quasi-degree formula is Lemma~14.6, and Corollary~14.7 gives minimality of $V(G_P0_\Omega)$ among tropical curves through $P$.  The form used here is stated in \cref{thm:symplectic-area-comparison}; \cref{lem:dirichlet-competitor-admissible} verifies the admissibility of the Hahn-field/barrier competitor.  The comparison applies to rational polygonal domains.

Theorem~3.3 of \cite{KalininPrieto2023} proves the earlier generic genus identity and includes a tree argument after removal of the marked points.  \Cref{sec:genericity} proves the semilinear coefficient model and the marked primal--dual tree statements used here.

\subsection{Sandpile convergence}

Abelian stabilization and the least-action principle originate in \cite{Dhar1990}; the physical model was introduced in \cite{BakTangWiesenfeld1987}.  In \cite{KalininShkolnikovSandpile}, proper lattice roundings are Definition~4.1 and their existence is Proposition~4.3.  Proposition~4.8 gives the upper odometer bound, and Proposition~5.4 gives the lower bound on a rational polygon.  Hence, for fixed finite $P$ and every $\varepsilon>0$,
\[
 F_P(z)-\varepsilon<hH_{h,P}(z)\le F_P(z)
\]
on the lattice sites for all sufficiently small $h$.  The threshold may depend on the entire configuration, so the diagonal in \cref{app:sandpile-diagonal} is configuration dependent.  Uniformity in $|P|$ depends on quantitative fixed-source estimates.

\subsection{Aleksandrov theory}

The weak solution concept goes back to Aleksandrov's generalized Dirichlet theory \cite{Aleksandrov1958}; Rauch--Taylor give a classical existence theorem for the multidimensional generalized Dirichlet problem \cite{RauchTaylor1977}.  We use Mooney's comparison principle \cite[Proposition~3]{Mooney2018} and maximum principle \cite[Proposition~4]{Mooney2018}, stability under locally uniform convergence \cite[Proposition~5]{Mooney2018}, compactness for normalized convex solutions with bounded Monge--Amp\`ere mass \cite[Proposition~6]{Mooney2018}, existence and uniqueness for continuous boundary data on bounded strictly convex domains \cite[Theorem~1]{Mooney2018}, and the extension to linear boundary data without strict convexity \cite[Remark~1]{Mooney2018}.

We apply these results to $-F$.  For the linear zero boundary datum, Theorem~1 and Remark~1 apply on every bounded convex domain, including polygons.  Existence follows from the generalized Dirichlet theorem and uniqueness from comparison.  The convergence proofs use the maximum principle, compactness, and stability.  For broader treatments, see \cite{Gutierrez2016,Figalli2017}.

\subsection{Convexity, coarea, and empirical measures}

The one-dimensional identity on each tropical edge is a special case of the Fleming--Rishel formula \cite{FlemingRishel1960} and the general coarea theorem \cite{Federer1969}.  Since each edge is a compact segment and the restricted test function is $C^1$, the required statement is the elementary one-dimensional area formula.  The signed version follows by applying layer cake separately to positive and negative levels.

Standard finite-dimensional facts about superdifferentials, affine support planes, and interior slope bounds for concave functions are taken from \cite{Rockafellar1970}.  The support-function and inner-parallel-body terminology agrees with \cite{Schneider2014}.

For the random-cloud theorem, Varadarajan's theorem \cite{Varadarajan1958} gives almost-sure weak convergence of empirical probability measures on separable metric spaces.  The samples lie in the compact set $K\subset\mathbb R^2$, so the hypotheses are automatic.  Absolute continuity is used separately in \cref{sec:random-affine} to avoid the countable family of semilinear discriminants with probability one.

\subsection{Interpolation and Newton polygons}

The coefficient field used in \cref{sec:interpolation} is the Hahn field introduced in \cite{Hahn1907}.  The valuation cancellation underlying tropical dependence is related to the Kapranov-rank viewpoint of \cite{DevelinSantosSturmfels2005}; the interpolation argument itself is proved in the text.

For the face duality between a tropical polyhedral complex and the regular subdivision of its Newton polygon, see \cite{Mikhalkin2005,RichterGebertSturmfelsTheobald2005,MaclaganSturmfels2015}.  The interpretation of Monge--Amp\`ere mass as Newton-polytope area has a convex-analytic antecedent in the Ronkin-function theory of \cite{PassareRullgard2004}.  The dual objects used in the proofs are defined in \cref{sec:preliminaries}.  We use Pick's planar lattice-polygon identity in its original form \cite{Pick1899}.

\section*{Acknowledgments}
The work of E.L., H.S., and M.S. was supported by the Simons Foundation
under grant SFI-MPS-T-Institutes-00007697 and by the Ministry of Education
and Science of the Republic of Bulgaria under grant DO1-239/10.12.2024.
E.L. acknowledges Cinvestav for sabbatical leave during which most of this work was performed, and the Institute for the
Mathematical Sciences of the Americas (IMSA) at the University of Miami for
hospitality and support during the preparation of this work.

We thank Conan Leung and Grigory Mikhalkin for helpful discussions on
tropical and affine geometry.

\medskip
\noindent\textit{Use of generative AI tools.}
During the preparation of this manuscript, the authors used ChatGPT
(OpenAI), Claude (Anthropic), and Gemini (Google) as auxiliary tools for
exploratory discussion, organization, bibliographic checking, and editorial
revision. The research questions, central mathematical ideas, theorem
statements, and proof strategies originated with the authors. No model output
is cited as a mathematical source; the authors independently checked the
mathematical content and take full responsibility for the manuscript.
\clearpage
\printbibliography

\end{document}